\pgfplotsset{compat=1.13}
\tikzset{
  use path for main/.code={%
    \tikz@addmode{%
      \expandafter\pgfsyssoftpath@setcurrentpath\csname tikz@intersect@path@name@#1\endcsname
    }%
  },
  use path for actions/.code={%
    \expandafter\def\expandafter\tikz@preactions\expandafter{\tikz@preactions\expandafter\let\expandafter\tikz@actions@path\csname tikz@intersect@path@name@#1\endcsname}%
  },
  use path/.style={%
    use path for main=#1,
    use path for actions=#1,
  }
}
\tikzset{lift/.style={postaction={decorate,decoration={
         markings,mark=at position .8 with {\arrow[scale=3]{>}}
         }}},
         leaf/.style={dashed,thin,postaction={decorate,decoration={
         markings,mark=at position 0.9999 with {\arrow[scale=2]{>}}
         }}}}
\tikzset{
  use path for main/.code={%
    \tikz@addmode{%
      \expandafter\pgfsyssoftpath@setcurrentpath\csname tikz@intersect@path@name@#1\endcsname
    }%
  },
  use path for actions/.code={%
    \expandafter\def\expandafter\tikz@preactions\expandafter{\tikz@preactions\expandafter\let\expandafter\tikz@actions@path\csname tikz@intersect@path@name@#1\endcsname}%
  },
  use path/.style={%
    use path for main=#1,
    use path for actions=#1,
  }
}
\tikzset{lift/.style={postaction={decorate,decoration={
         markings,mark=at position .8 with {\arrow[scale=3]{>}}
         }}},
         leaf/.style={dashed,ultra thin,postaction={decorate,decoration={
         markings,mark=at position 0.9999 with {\arrow[scale=2]{>}}
         }}}}
\newcommand{\id}{id}
\DeclareFontFamily{OMX}{MnSymbolE}{}
\DeclareSymbolFont{MnLargeSymbols}{OMX}{MnSymbolE}{m}{n}
\DeclareFontShape{OMX}{MnSymbolE}{m}{n}{
    <-6>  MnSymbolE5
   <6-7>  MnSymbolE6
   <7-8>  MnSymbolE7
   <8-9>  MnSymbolE8
   <9-10> MnSymbolE9
  <10-12> MnSymbolE10
  <12->   MnSymbolE12
}{}
\DeclareFontShape{OMX}{MnSymbolE}{b}{n}{
    <-6>  MnSymbolE-Bold5
   <6-7>  MnSymbolE-Bold6
   <7-8>  MnSymbolE-Bold7
   <8-9>  MnSymbolE-Bold8
   <9-10> MnSymbolE-Bold9
  <10-12> MnSymbolE-Bold10
  <12->   MnSymbolE-Bold12
}{}
\let\llangle\@undefined
\let\rrangle\@undefined
\DeclareMathDelimiter{\llangle}{\mathopen}%
                     {MnLargeSymbols}{'164}{MnLargeSymbols}{'164}
\DeclareMathDelimiter{\rrangle}{\mathclose}%
                     {MnLargeSymbols}{'171}{MnLargeSymbols}{'171}
\renewenvironment{proof}[1][\proofname]{\noindent{\bfseries #1.  }}{\qed}
\DeclareMathOperator{\Ima}{Im}
\newcommand{\N}{\mathbbm{N}}                     
\newcommand{\Z}{\mathbbm{Z}}                     
\newcommand{\R}{\mathbbm{R}}                     
\newcommand{\fix}{\mathrm{Fix}}               
\newcommand{\Ham}{\mathrm{Ham}}    
\newcommand{\HF}{\mathrm{HF}}    
\newcommand{\Comp}{\mathrm{Comp}}
\newcommand{\Tu}{\mathrm{T}}
\newcommand{\overbar}[1]{\mkern 1.5mu\overline{\mkern-1.5mu#1\mkern-1.5mu}\mkern 1.5mu}
\newcommand{\topo}{\mathrm{top}}
\newcommand{\hofer}{\mathrm{Hofer}}
\newcommand{\dom}{\mathrm{dom}}
\newcommand{\sing}{\mathrm{sing}}
\newcommand{\selfi}{\mathrm{si}}
\newcommand{\per}{\mathrm{per}}
\newcommand{\Per}{\mathrm{Per}}
\newcommand{\hp}{\mathrm{h}}
\newcommand{\HP}{\mathrm{H}}
\newcommand{\hyp}{\mathrm{hyp}}
\newtheorem{thm}{Theorem}[section]               
\newtheorem*{thm*}{Theorem}               
\newtheorem{cor}[thm]{Corollary}        
\newtheorem*{cor*}{Corollary}        
\newtheorem{lem}[thm]{Lemma}            
\newtheorem{prop}[thm]{Proposition}     
\theoremstyle{definition}
\newtheorem{defn}[thm]{Definition}      
\newtheorem{rem}[thm]{Remark}           
\newtheorem{fact}[thm]{Fact}          
\newcounter{claim}
\newenvironment{claim}[1][]{\refstepcounter{claim}\par\noindent\underline{Claim~\theclaim:}\space#1}{}
\newenvironment{myequation}{\begin{equation*} \begin{gathered}}{\end{gathered} \end{equation*}}
\author{Arnon Chor}
\thanks{A. Chor was partially supported by the Israel Science Foundation grant 667/18}
\address{Arnon Chor, School of Mathematical Sciences, Tel Aviv University, Ramat Aviv, Tel Aviv 69978, Israel.}
\email{\texttt{arnonchor@gmail.com}}
\author{Matthias Meiwes}
\thanks{M. Meiwes was partially supported by the Israel Science Foundation grant 2026/17}
\address{Matthias Meiwes,
Chair for Geometry and Analysis, RWTH Aachen University, Jakobstrasse 2,
DE-52064 Aachen, Germany.}
\email{\texttt{meiwes@mathga.rwth-aachen.de}}
\title{Hofer's geometry and topological entropy}
\begin{document}
\subjclass[2020]{37E30, 37J46}

\begin{abstract}
In this article we study persistence features of topological entropy and periodic orbit growth of Hamiltonian diffeomorphisms on surfaces with respect to Hofer's metric. We exhibit stability of these dynamical quantities in a rather strong sense for a specific family of maps introduced by Polterovich and Shelukhin. 
A crucial ingredient comes from some enhancement of lower bounds for the topological entropy and orbit growth forced by a periodic point, formulated in terms of the geometric self-intersection number and a variant of Turaev's cobracket of the free homotopy class that it induces. 
Those bounds are obtained within the framework of Le Calvez and Tal's forcing theory.

\end{abstract}

\maketitle

\section{Introduction and Results}

Prototypical examples of maps that define a dynamical system of chaotic nature are horseshoe maps, introduced by Smale  \cite{smale}.
In dimension $2$, a horseshoe model is given by stretching a square horizontally and squeezing it vertically before folding it back in the shape of a horseshoe. 
Iterating a horseshoe $T$ has the features of a topologically chaotic system: There is a rich symbolic dynamics on a compact invariant set and in particular there is  exponential growth  of periodic orbits and positive  topological entropy.  
A remarkable property of horseshoes is their local stability. Structural stability asserts that a diffeomorphism $T'$ that is sufficiently $C^1$-close to $T$ contains the same symbolic dynamics as that of $T$, and has at least the topological entropy as $T$.\footnote{In fact a similar result holds for $C^0$-perturbations of $T$, see \cite{Nitecki1971}.}
Horseshoes are prevalent in dynamical systems of complex orbit structure. This is in particular the case for surface diffeomorphisms: By a celebrated result of Katok \cite{Katok1980-2}, any diffeomorphism of positive topological entropy has a hyperbolic fixed point with a transverse homoclinic point and has a horseshoe in some iterate. 
Besides these local stability features of surface maps, there exists "global" analogues in surface dynamics, by restricting to certain isotopy classes of homeomorphisms: A pseudo-Anosov homeomorphism is a factor of a subsystem of any homeomorphism isotopic to it \cite{Handel1985}. In particular it minimizes the entropy in its isotopy class. 


In this paper we exhibit stability phenomena for Hamiltonian diffeomorphisms on surfaces which have flavours of local as well as global nature. We motivate and explain here shortly some results and refer to \ref{subsec:Ham} below for definitions and precise  statements. The group of Hamiltonian diffeomorphisms carries a remarkable conjugation invariant norm $\|\cdot\|_{\hofer}$ inducing a bi-invariant metric $d_{\hofer}$, the Hofer metric. From a point of view that is concerned with $C^k$ topology, $k\geq 0$, this metric is rather flexible, in particular diffeomorphisms close with respect to $d_{\hofer}$ are in general not $C^0$ close, and at least part of the dynamics of some horseshoe might not survive under perturbation. Nevertheless, in terms of lower bounds of some dynamical quantities, remarkable stability features are displayed, even under relatively large perturbations (in terms of Hofer's metric). We exhibit such phenomena in the case of surfaces of genus $g\geq 2$ by considering a specific construction from \cite{polterovich2016autonomous} of the so-called eggbeater maps or eggbeaters, and its generalization by the first author \cite{ArnonThesis}.    
Eggbeaters are Hamiltonian variants of linked twist maps and the latter were studied by various authors, e.g by Thurston \cite{Thurston} as a family of examples of pseudo-Anosov homeomorphisms. Eggbeaters have positive topological entropy and  exponential homotopical orbit growth (cf.  \cite{Devaney1978} for the detection of horseshoes in linked-twist maps).
We are interested to what extent dynamical properties of eggbeaters persists under perturbation. For that fix some $0< \delta< 1$. We say that some dynamical property of $\phi$  $\delta$-persists if it holds for all $\psi$ with $d_{\hofer}(\psi, \phi) < \delta\|\phi\|_{\hofer}$. 
A main result in this article is, cf. Theorem \ref{thm:stable},  that on closed surfaces of genus $g\geq 2$ and for some fixed  $\delta>0$, any given lower bound on the topological entropy ("$h_{\topo} \geq E$")  $\delta$-persists for an unbounded family of eggbeater diffeomorphisms.  An analogous result holds for the exponential homotopical orbit growth. 
 Moreover we exhibit minimality of $h_{\topo}$ up to a fixed finite error on a family of eggbeaters, cf. Corollary \ref{cor1}, which is reminiscent of the minimality property of $h_{\topo}$ for pseudo-Anosov homeomorphisms.

To obtain our persistence results we will apply some lower bounds on the topological entropy for surface homeomorphisms in the presence of orbit types with specific properties, cf. Theorems \ref{intro:thm1}, \ref{intro:thm2} and \ref{HT}, and the first part of the paper is devoted to their proofs. 
We will now motivate and state these results as well as define relevant dynamical notions, and then, below in section \ref{subsec:Ham}, turn to the persistence results mentioned above.

 
\subsection{Free homotopy classes of periodic orbits and forcing}\label{subsec:forcing}

Let $M$ be a closed oriented surface and $f:M \to M$ a homeomorphism isotopic to the identity.  The topological entropy $h_{\topo}(f)$ is one of the most widely used measures for the complexity of the dynamics of $f$ and can be defined as follows:
for a fixed metric $d$ on $M$ denote by $N_{\topo}(f,n,\epsilon)$ the maximal cardinality of a set $X$ in $M$ such that
 $\sup_{0\leq k\leq n} d(f^k(x), f^k(y)) \geq \epsilon$ for all $x,y \in X$ with $x\neq y$. 
The \textit{topological entropy} of $f$ is then
\[
	h_{\topo}(f) := \lim_{\epsilon \to 0} \limsup_{n\to \infty} \frac{\log(N_{\topo}(f,n,\epsilon))}{n},
\]
which is well-defined and  independent of the metric $d$. 
Related measures for dynamical complexity is the growth of periodic points: If $N_{\per}(f,n)$ denotes the number of periodic points $x$ of $f$ of period $\leq n$, then the \text{(exponential) growth rate of periodic points} is defined as  $\Per^{\infty}(f) :=  \limsup_{n \to \infty} \frac{\log (N_{\per}(f,n))}{n}$. 

Although it is in general very hard to calculate or estimate the topological entropy, remarkably,  the existence of certain types of  periodic points of $f$ imply lower bounds on $h_{\topo}(f)$. The following result in dimension one is a typical example of this forcing phenomenon: If a continuous mapping $g$ on $[0,1]$ admits a periodic point of period $n=2^dm$ with $m>1$ odd, then $h_{\topo}(g)>\frac{1}{n}\log(2)$, see \cite{BowenFranks1976}. 
Passing to dimension two, in order to deduce some interesting dynamical information of  $f$, the period alone is not sufficient and one has to take into account further characteristics of a periodic point. Naturally one requires that the desired bounds are invariant under any isotopy of $f$ relative to the periodic orbit, which amounts to consider the braiding information, or braid type, of the periodic orbit in the suspension flow. Thurston-Nielsen theory provides a natural framework for understanding the dynamics that is forced by a given braid type, namely  dynamically minimal maps for a braid type are given by the Thurston-Nielsen canonical form. There are algorithmic methods to obtain the latter, although it becomes sometimes difficult to apply them practically, especially when passing to families of braid types. Entropy bounds for specific (families of) braid types are investigated by many researchers, see e.g. \cite{Hall1994, Song2005, HironakaKin2006} and references therein. The complete braiding information of an orbit, in particular if one considers orbits of higher complexity, is somewhat hard to overlook, and one way to simplify is to project braids back to the surface, in other words, to look for braid type invariants that can be purely presented in terms of the homotopy class of the curve that the orbit traces on $M$ through  an identity isotopy of $f$. It is those kind of invariants that we will consider in this article. 
Especially for dealing with bounds related to those kind of invariants the recently developed forcing theory of Le Calvez and Tal \cite{LeCalvezTal1} building upon Le Calvez' theory of transverse foliations \cite{LeCalvez2015} is perfectly suited. Lower bounds for $h_{\topo}(f)$ and $\Per^{\infty}(f)$ of that kind are contained in \cite{LeCalvezTal1, LeCalvezTal2, Silva}, as applications of their methods, and in \cite{Dowdall2011}, using Thurston-Nielsen theory. Our results improve and extend part of these bounds, and we work closely within the theory in \cite{LeCalvezTal1}.



 Let us introduce some essential notions. 
Let $I= (I_t)_{t\in[0,1]}$ be an identity isotopy for $f$, i.e. a continuous path from the identity to $f$, and consider the set $\dom(I) = M\setminus \fix(I)$, where  $\fix(I)=\{x \in M \, |\, I_t(x) \equiv x \text{ for all } t\in[0,1]\} \subset \fix(f)$ are the points that are fixed throughout the isotopy $I$. We denote by $\widehat{\pi}(X)$ the set of free homotopy classes of loops in a space $X$, and for any loop $\Gamma:S^{1} \to X$, by $[\Gamma] = [\Gamma]_{\widehat{\pi}(X)}$ its free homotopy class in $\widehat{\pi}(X)$. We say that $\alpha \in \widehat{\pi}(\dom(I))$   is \textit{primitive} if there is no representative of $\alpha$ that multiply covers another loop. For $m\in \N$ denote by $m\alpha$ the free homotopy class that is represented by the $m$-fold iteration of a loop that respresents $\alpha$.
For any periodic point $x\in\dom(I)$ of $f$, say of period $q \in \N$, consider the loop $I^q(x)$ given by the concatenation of the paths  $I_t(x)_{t\in [0,1]}$, $I_t(f(x))_{t\in [0,1]}, \cdots$, and $I_t(f^{q-1}(x))_{t\in [0,1]}$. The loop $I^q(x)$ defines a free homotopy class $[I^q(x)]= \alpha\in \widehat{\pi}(\dom(I))$ of loops in $\dom(I)$. An identity isotopy $I$ is \textit{maximal}, if for any fixed point $y\in \dom(I)$ of $f$,  the loop $I(y)= I^1(y)$ is not contractible in $\dom(I)$, cf. section \ref{subsec:identityisotopy}.
Let $\Gamma: S^1 \to \dom(I)$ be a loop and denote by $\mathcal{S}(\Gamma):= \{y \in \dom(I)\, | \, y = \Gamma(t) = \Gamma(t'), t\neq t'\}$ the set of self-intersections of $\Gamma$, and by $\selfi(\Gamma) = \#\mathcal{S}$  its cardinality. The \textit{geometric self-intersection number of $\alpha$} is defined as $\selfi_{\dom(I)}(\alpha) := \min \selfi(\Gamma)$,  where the minimum is taken over all smooth loops $\Gamma$ with $[\Gamma] = \alpha$ in $\widehat{\pi}(\dom(I))$ that are in \textit{general position}, i.e. each self-intersection $y\in \mathcal{S}(\Gamma)$ is transverse and $\# \Gamma^{-1}(y) = 2$.  
   
\begin{thm}\label{intro:thm1}
Let $M$ be a closed oriented surface, $f:M \to M$ a homeomorphism isotopic to the identity, and $I$ a maximal identity isotopy for $f$. Let $\alpha \in \widehat{\pi}(\dom(I))$ be primitive with $\selfi_{\dom(I)}(\alpha) \neq 0$. If there is a $q$-periodic point $x$ of $f$ in $\dom(I)$ with $[I^q(x)] = m\alpha \in \widehat{\pi}(\dom(I))$ for some $m\in \N$,  then both $\Per^{\infty}(f)$ and $h_{\topo}(f)$ are at least equal to  
$\frac{m}{q}\max\{\frac{\log(\selfi_{\dom(I)}(\alpha) + 1)}{16}, \frac{\log(2)}{2}\}$.
\end{thm}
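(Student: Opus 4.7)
The plan is to work within Le Calvez and Tal's forcing theory \cite{LeCalvezTal1}, which builds on Le Calvez's transverse foliation theorem \cite{LeCalvez2015}. The first step is to use maximality of $I$ to obtain a singular foliation $\mathcal{F}$ on $\dom(I)$ dynamically transverse to $I$, and to attach to the $q$-periodic orbit of $x$ a closed $\mathcal{F}$-transverse path $\Gamma_x$, obtained as the concatenation of transverse arcs joining consecutive iterates of $x$ along $I$. The free homotopy class of $\Gamma_x$ in $\dom(I)$ equals $m\alpha$ by hypothesis.

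The second step is to convert the geometric self-intersection data $\selfi_{\dom(I)}(\alpha)=k$ into dynamical information on $\Gamma_x$. Passing to the infinite cyclic cover of $\dom(I)$ associated to $\alpha$, one lifts $\Gamma_x$ to a transverse path winding $m$ times around a fundamental copy of a loop representing $\alpha$. Using the minimality in the definition of $\selfi_{\dom(I)}(\alpha)$ together with primitivity of $\alpha$, at least $k$ many $\mathcal{F}$-transverse self-intersections persist per winding, yielding on the order of $m\cdot k$ transverse self-intersections of $\Gamma_x$ in the full trajectory.

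The third step is to invoke Le Calvez-Tal's forcing theorem. A single $\mathcal{F}$-transverse self-intersection of $\Gamma_x$ forces new periodic orbits via \emph{admissible concatenations} of sub-arcs of $\Gamma_x$, and in particular produces a topological horseshoe in a controlled iterate of $f$; by a direct count of periods and winding numbers this already gives both $\Per^\infty(f)$ and $h_{\topo}(f)$ at least $\frac{m}{q}\cdot\frac{\log(2)}{2}$, which is the second term of the stated maximum. For the refined bound $\frac{m}{q}\cdot\frac{\log(k+1)}{16}$, I would exploit all $k$ transverse self-intersections simultaneously to embed a subshift on $k+1$ symbols into the dynamics, each symbol corresponding to one of $k+1$ distinct admissible ways of ``cutting and re-gluing'' $\Gamma_x$ at the chosen self-intersection, so that $\log(k+1)$ bits of entropy are realized per a common period of order $q/m$ times a universal constant. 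The factor $1/16$ then absorbs the admissibility bookkeeping, the loss incurred in extracting genuinely distinct periodic orbits, and the normalization by the common period.

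The main obstacle I foresee is this refined combinatorial step: one must verify, in the sense of \cite{LeCalvezTal1}, that the whole family of cut-and-reglue concatenations is simultaneously admissible, and that the resulting itineraries give rise to distinct periodic orbits of uniformly bounded common period. Tightening the constants, and in particular producing the explicit factor $1/16$, requires carefully combining the forcing mechanism with the quadratic counting of self-intersections along the $m$-fold iterate and with the redistribution of these intersections along the period-$q$ orbit of $x$.
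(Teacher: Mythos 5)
Your high-level outline matches the architecture of the paper's proof: pass to a transverse foliation for a maximal isotopy, attach a linearly admissible transverse loop to the orbit of $x$, convert geometric self-intersections into $\mathcal{F}$-transverse self-intersections, and then use the forcing mechanism (Proposition 20/23 of Le Calvez--Tal and their realization result, Proposition 26) to produce many new periodic orbits and estimate entropy. The two-term maximum and the general logic of ``one transverse intersection forces a horseshoe; $k$ of them force a subshift on $k+1$ symbols'' are exactly right.

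However, there are two places where your sketch either points in the wrong direction or waves past a genuinely non-trivial step that the paper must address with specific lemmas.

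First, passing to the \emph{infinite cyclic cover} associated to $\alpha$ is not the right place to work. The paper works in the \emph{universal cover} $\widetilde{\dom(\mathcal{F})}$, identified with the hyperbolic disc. This matters because the key step you gloss over --- that the $k = \selfi_{\dom(I)}(\alpha)$ geometric self-intersections can be realized as $\mathcal{F}$-transverse self-intersections of a transverse representative (Lemma~\ref{Lemma10}), and that these data remain stable enough to run the forcing argument on arbitrary concatenations --- is proved via the geometry of the boundary circle $S_\infty$ and the fact that lifts with separating endpoints must intersect $\widetilde{\mathcal{F}}$-transversally (Corollary~\ref{cor2}, Lemmata~\ref{Lemma1}--\ref{Lemma7}). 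In the infinite cyclic cover, lifts of the $k$ self-intersections can collapse or be harder to keep apart, and the ``minimality together with primitivity'' reasoning you invoke does not by itself justify that the geometric intersections become $\mathcal{F}$-transverse. Also, the paper never needs ``on the order of $m\cdot k$ transverse self-intersections''; the factor $m$ is handled by Lemma~\ref{Lemma9}, which replaces $\Gamma$ by a primitive transverse loop $\Gamma'$ with $\Gamma\sim(\Gamma')^m$, and then observes that the resulting admissible loops are of order $\leq lnq/m$.

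Second, you correctly flag that proving simultaneous admissibility of all the cut-and-reglue concatenations, with distinctness of the resulting orbits, is the real difficulty --- but the resolution is not a generic ``bookkeeping'' step. The paper introduces a \emph{spreading} condition on the family of shortcut paths $\gamma_{i,l}$, shows it is satisfied when $l=8$ (Claim~\ref{spread1}, relying on Lemma~\ref{Lemma6} and the length bounds of Lemma~\ref{Lemma7}), and uses \emph{quasi-palindromic} words of length $2n$ rather than arbitrary words, which is what makes Claims~\ref{struwwelpeter} and~\ref{few_equivalent} (distinctness up to a polynomial factor) go through. The constant $1/16$ then comes out exactly: period $2\cdot l\cdot n\cdot q/m = 16nq/m$ at length $2n$ words; it is not an absorbing fudge factor. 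Similarly, the $\log(2)/2$ bound uses a second spreading pair with $l=1$, giving period $2nq/m$. Without the spreading notion and the quasi-palindromic structure, I do not see how you would get the concatenations admissible or the itineraries distinct, so this is a genuine gap in the proposal rather than a detail to be tightened.
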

If $M = S^2$ and $\selfi_{\dom(I)}(\alpha) \neq 0$, a fixed positive lower bound for $h_{\topo}(f)$ and $\Per^{\infty}(f)$ was obtained in \cite[Theorem 41]{LeCalvezTal1} and improved in \cite{LeCalvezTal2} and \cite{Silva}.
Theorem \ref{intro:thm1} improves these lower bounds, the main addition here is that the bounds grow with the complexity of $\alpha$.
We note that for many choices of $M$ and $\alpha$, similar lower bounds were obtained by Dowdall in \cite{Dowdall2011} using Thurston-Nielsen theory.

Theorem \ref{intro:thm1} implies similar lower bounds that are independent of the isotopy $I$. 
If $M$ has genus $\geq 2$, then, since the space of homeomorphisms isotopic to the identity is contractible, the free homotopy class $\alpha := [I^q(x)]_{\widehat{\pi}(M)}$ in $\widehat{\pi}(M)$ for a $q$-periodic point $x \in M$ of $f$ does not depend on the choice of identity isotopy $I$, and we may say that $x$ is a \textit{$q$-periodic point of class $\alpha$}.\footnote{This partitions $q$-periodic points into  equivalence classes which coincide with $f^q$-Nielsen classes of those points, see e.g. \cite{Jiang1996}.}
Theorem \ref{intro:thm1} and a result in \cite{Beguin2016} about existence of maximal isotopies, cf. section \ref{subsec:identityisotopy}, yields the following, see section \ref{sec:proof_entropy}:

\begin{thm}\label{intro:thm2}
Let $M$ be a closed oriented surface of genus $g\geq2$ and $\alpha$ a primitive class in $\widehat{\pi}(M)$ with $\selfi_M(\alpha) \neq 0$. Let $f:M \to M$ be a homeomorphism that is isotopic to the identity. If $f$ has a $q$-periodic point $x$ of class $\alpha$,  then $\Per^{\infty}(f)$ and $h_{\topo}(f)$ are at least equal to  $\frac{1}{q}\max\{\frac{\log(\selfi_M(\alpha) + 1)}{16}, \frac{\log(2)}{2}\}$. 
\end{thm}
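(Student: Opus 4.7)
The plan is to deduce Theorem \ref{intro:thm2} from Theorem \ref{intro:thm1} by fixing a well-chosen identity isotopy for $f$ and transferring the hypotheses from $\widehat{\pi}(M)$ to $\widehat{\pi}(\dom(I))$. First I would invoke the existence of maximal identity isotopies from \cite{Beguin2016} (as recalled in Section \ref{subsec:identityisotopy}) to pick a maximal identity isotopy $I$ for $f$. Since $g\geq 2$ guarantees that the space of homeomorphisms isotopic to the identity is contractible, the class $\alpha = [I^q(x)]_{\widehat{\pi}(M)}$ does not depend on this choice, as noted just before the theorem.

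Next I would check that the orbit of $x$ lies in $\dom(I)$, which is needed to apply Theorem \ref{intro:thm1}. If $x$ is not a fixed point of $f$, then no point of its orbit lies in $\fix(f) \supset \fix(I)$, so the orbit sits in $\dom(I)$. If $x$ is a fixed point of $f$ and also lies in $\fix(I)$, then the loop $I^q(x)$ is constant, forcing $\alpha = 0$ in $\widehat{\pi}(M)$ and contradicting $\selfi_M(\alpha) \neq 0$. Either way the full orbit of $x$ lies in $\dom(I)$, so we may form $\beta := [I^q(x)]_{\widehat{\pi}(\dom(I))}$.

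The main step is to verify the hypotheses of Theorem \ref{intro:thm1} for $\beta$ with multiplicity $m=1$. Let $\iota\colon \dom(I) \hookrightarrow M$ denote the inclusion, so that $\iota_*\beta = \alpha$. Writing $\beta = n\gamma$ with $\gamma \in \widehat{\pi}(\dom(I))$ primitive and $n \geq 1$, we have $\iota_*\gamma \neq 0$ (otherwise $\alpha = n\iota_*\gamma = 0$), so if $n \geq 2$ then $\alpha = n\iota_*\gamma$ is a nontrivial $n$-fold multiple, contradicting primitivity of $\alpha$. Hence $n = 1$ and $\beta$ itself is primitive. Moreover, any smooth loop in $\dom(I)$ representing $\beta$ in general position is also a loop in $M$ in general position representing $\alpha$, so the minimum defining $\selfi_{\dom(I)}(\beta)$ runs over a smaller family than that defining $\selfi_M(\alpha)$, whence
\[
\selfi_{\dom(I)}(\beta) \;\geq\; \selfi_M(\alpha) \;>\; 0.
\]
Applying Theorem \ref{intro:thm1} to $\beta$ with $m = 1$ then gives
\[
h_{\topo}(f),\;\Per^{\infty}(f) \;\geq\; \frac{1}{q}\max\left\{\frac{\log(\selfi_{\dom(I)}(\beta)+1)}{16},\, \frac{\log(2)}{2}\right\} \;\geq\; \frac{1}{q}\max\left\{\frac{\log(\selfi_M(\alpha)+1)}{16},\, \frac{\log(2)}{2}\right\},
\]
which is the desired estimate.

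The only genuine obstacle I anticipate is the comparison across the inclusion $\iota$: namely, ensuring that primitivity and the geometric self-intersection count transfer in the correct direction. Both follow from elementary considerations about multiples of homotopy classes and from the fact that loops in $\dom(I)$ form a subfamily of loops in $M$, so the argument is short once a maximal $I$ is in hand.
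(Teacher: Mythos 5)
Your proof is correct and follows essentially the same route as the paper's: pick a maximal identity isotopy via \cite{Beguin2016}, observe that the resulting class in $\widehat{\pi}(\dom(I))$ is primitive and has self-intersection number at least $\selfi_M(\alpha)$ because the minimizing family of loops shrinks under the inclusion, and then invoke Theorem \ref{intro:thm1} with $m=1$. Your added verification that the full orbit of $x$ lies in $\dom(I)$ is a point the paper treats implicitly, but it is handled the same way in spirit.
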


In other words, a periodic point $x$ of class $\alpha$ that admits geometric self-intersections forces lower bounds for topological entropy and exponential orbit growth, and the more self-intersections the higher the bounds. 

Theorem \ref{intro:thm2} raises the following question: What can be said about the classes in $\widehat{\pi}(M)$ of those periodic points that are forced by the periodic point $x$ of class $\alpha$?
We address this question in connection with growth and consider the \textit{exponential homotopical orbit growth rate} given by  $$ \HP^{\infty}(f):=\limsup_{n \to \infty} \frac{\log(N_{\hp}(f,n))}{n}, $$
where $N_{\hp}(f,n)$ is the number of distinct classes of periodic points of period less then $n$. Does $\HP^{\infty}(f)$ satisfy similar bounds as $\Per^{\infty}(f)$? 
While the proof of Theorem \ref{intro:thm2} doesn't give a positive lower bound on $\HP^{\infty}(f)$ in terms of $\selfi_M(\alpha)$, we can bound $\HP^{\infty}(f)$ in terms of another invariant of $\alpha$. We adapt a construction of Turaev \cite{Turaev1978, Turaev1991} and then use it to define a growth rate $\Tu^{\infty}(\alpha)$. We explain this construction shortly here, see section \ref{sec:Turaev} for details. 

A loop $\Gamma:[0,1] \to M$ in general position representing a nontrivial class $\alpha \in \widehat{\pi}(M)$ splits at each self-intersection point $y \in \mathcal{S}(\Gamma)$ into two (oriented) closed loops $u^y_1$ and $u^y_2$ based at $y$, representing two classes $\alpha^y_1, \alpha^y_2 \in \widehat{\pi}(M)$, where we choose the labeling such that the initial tangent points of $u^y_1$ and $u^y_2$ define the orientation of $\Sigma$. Denoting by $\mathcal{Y}$ the intersection points with nontrivial $\alpha^y_1$ and $\alpha^y_2$,
\[
	v(\alpha) = \sum_{y \in \mathcal{Y}} \alpha^y_1 \otimes \alpha^y_2 - \alpha^y_2 \otimes \alpha^y_1 \in \Z[\widehat{\pi}(M)^*] \otimes \Z[\widehat{\pi}(M)^*]
\]
defines (by linear extension) \textit{Turaev's cobracket} on the free $\Z$-module over the set $\widehat{\pi}(M)^*$ of non-trivial free homotopy classes of loops in $M$. 
We consider a variant of this construction, and assume additionally that $ x_0 = \Gamma(0) = \Gamma(1)$ is not an intersection point of $\Gamma$. For each self-intersection point $y= \Gamma(t) = \Gamma(t'), t< t'$, by composing with the paths $\Gamma_{[0,t]}$ and its inverse $\overline{\Gamma_{[0,t]}}$ we obtain loops $\Gamma_{[0,t]}u ^y_i \overline{\Gamma_{[0,t]}}$, $i=1,2$, based at $x_0 = \Gamma(0) = \Gamma(1)$. These loops define elements $a^y_i \in \pi_1(M,x_0)$, $i=1,2$, see figures \ref{fig:a1} and \ref{fig:a2}.

Let $g \in \pi_1(M,x_0)$ be the element represented by $\Gamma$. Denote by $[\pi_1(M,x_0)^*]_g$ the set of $g$-equivalence classes of the nontrivial elements in $\pi_1(M,x_0)$, where we say that two elements are \textit{$g$-equivalent} if one is a conjugation of the other by a multiple of $g$, and denote the equivalence class with $[\cdot]_g$. The element
\[
	\mu(g) = \sum_{y \in \mathcal{Y}} [a^y_1 ]_g \otimes [ a^y_2]_g - [a^y_2]_g \otimes [a^y_1]_g\in \Z[\pi_1(M,x_0)^*]_g \otimes \Z[\pi_1(M,x_0)^*]_g
\]
is independent of the choice of the loop $\Gamma$ based at $x_0$ that represents $g$,  moreover $\mu(g)$ behaves well with respect to conjugation, see Lemma \ref{lem:mu_welldefined}. One actually obtains via $\mu$ an invariant of $\alpha \in \widehat{\pi}(M)$ that is a (strict) refinement of the invariant $v$.
We continue to define a growth rate as follows. First, for a subset $S=\{s_1, \cdots, s_m\} \subset \pi_1(M,x_0)$, let $\widehat{N}(n,S)$ be the number of distinct conjugacy classes of elements in $\pi_1(M,x_0)$ that can be written as a product of $\leq n$ factors from $S$, and define  $\Gamma(S):= \limsup_{n\to \infty} \frac{\log(\widehat{N}(n,S))}{n}$. Given $g\in \pi_1(M,x_0)$ and a set $\mathfrak{S}$ of $g$-equivalence classes of elements in $\pi_1(M,x_0)$, we moreover define 
$\Gamma(\mathfrak{S},g) = \inf \, \Gamma(S)$, where the infimum is taken over all sets $S\subset \pi_1(M,x_0)$ such that each element represents exactly one $g$-equvivalence class in $\mathfrak{S}$.
Now, writing $\mu(g) = \sum_{\mathfrak{a}, \mathfrak{b} \in \pi_1(M,x_0)^*_g} k_{\mathfrak{a}, \mathfrak{b}}\left( \mathfrak{a} \otimes \mathfrak{b}\right)$, 
 with $k_{\mathfrak{a},\mathfrak{b}} \in \Z$, we let  $\Comp(\mu(g))$ be the collection of terms $\mathfrak{a} \otimes \mathfrak{b}$ with $k_{\mathfrak{a},\mathfrak{b}} >0$.
For each $\mathcal{T} \subset \Comp(\mu(g))$, let $\mathcal{T}_+ = \{\mathfrak{a} \, | \, \mathfrak{a} \otimes \mathfrak{b} \in \mathcal{T}\}$ and   $\mathcal{T}_- = \{\mathfrak{b} \, | \, \mathfrak{a} \otimes \mathfrak{b} \in \mathcal{T}\}$, and define 
\[
\Gamma_{\mathcal{T}}^{g} := \min_{\pm} \min_{\mathfrak{S}}\left\{\Gamma(\mathfrak{S} \cup [g]_g, g)\, \middle| \, \mathfrak{S} \subset \mathcal{T}_{\pm}, \, \#\mathfrak{S}= \left\lceil \frac{1}{2}\#\mathcal{T}\right\rceil \right\}. \]
One shows that $T^{\infty} = \max \{\Gamma_{\mathcal{T}}^{g}\, | \, \mathcal{T} \subset \Comp(\mu(g))\}$ is actually invariant under conjugation of $g$, hence defines a growth rate $T^{\infty}(\alpha) \in [0,+\infty)$ associated to each free homotopy class $\alpha$ of loops in $M$.  
\color{black}

We obtain the following result. 

  
\begin{thm}\label{HT}
Let $M$ be a closed oriented surface of genus $g\geq 2$ and $\alpha \in \widehat{\pi}(M)$ be a primitive class.  Let $f:M\to M$ be a homeomorphism isotopic to the identity. If $f$ has a $q$-periodic point $x$ of class $\alpha$, then $\HP^{\infty}(f) \geq \frac{1}{q}\Tu^{\infty}(\alpha)$. 
\end{thm}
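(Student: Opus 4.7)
The plan is to combine a maximal-isotopy existence result with Le Calvez--Tal's forcing theory to produce many periodic points of $f$ whose free homotopy classes realize distinct conjugacy classes of words in carefully chosen elements of $\pi_1(M,x_0)$; the definition of $\Tu^{\infty}(\alpha)$ is then exactly what counts these classes. As in the derivation of Theorem \ref{intro:thm2} from Theorem \ref{intro:thm1}, first fix (via B\'eguin--Crovisier--Le Roux) a maximal identity isotopy $I$ of $f$ with $x\in\dom(I)$ and $[I^q(x)]_{\widehat{\pi}(M)}=\alpha$. Put the loop $\Gamma=I^q(x)$ in general position in $\dom(I)$, pick a basepoint $x_0=\Gamma(0)\notin\mathcal{S}(\Gamma)$, and let $g\in\pi_1(M,x_0)$ be the class represented by $\Gamma$; for each $y\in\mathcal{S}(\Gamma)$ one then has the associated $a_1^y,a_2^y\in\pi_1(M,x_0)$ from the definition of $\mu(g)$. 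Le Calvez--Tal's forcing, applied essentially as in the proof of Theorem \ref{intro:thm1}, then guarantees that for every admissible concatenation $w=s_{i_1}\cdots s_{i_n}$ of shortcuts---where each $s_{i_j}$ is one of the loops $a_1^{y_j},a_2^{y_j}$ or $g$---there exists a periodic point $z_w$ of $f$ in $\dom(I)$ of period at most $nq$ whose free homotopy class in $M$ equals the conjugacy class of $w$ in $\pi_1(M,x_0)$.

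Fix now $\mathcal{T}\subset\Comp(\mu(g))$, a sign $\pm$, and $\mathfrak{S}\subset\mathcal{T}_{\pm}$ with $\#\mathfrak{S}=\lceil\#\mathcal{T}/2\rceil$, and choose a representative set $S\subset\pi_1(M,x_0)$ of $\mathfrak{S}\cup\{[g]_g\}$. The bound $\lceil\#\mathcal{T}/2\rceil$ together with the choice of sign ensures that at most one side of each contributing self-intersection is used among the representatives, so that any word in $S$ gives an LCT-admissible loop and non-conjugate words in $\pi_1(M,x_0)$ produce forced orbits $z_w$ with distinct free homotopy classes. Consequently
\[
N_{\hp}(f,nq)\;\geq\;\widehat{N}(n,S),
\]
whence $\HP^{\infty}(f)\geq\tfrac{1}{q}\Gamma(S)$. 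Taking the infimum over $S$ gives $\tfrac{1}{q}\Gamma(\mathfrak{S}\cup[g]_g,g)$, then the minima over $\mathfrak{S}$ and over $\pm$ give $\tfrac{1}{q}\Gamma_{\mathcal{T}}^{g}$, and finally the maximum over $\mathcal{T}$ yields $\HP^{\infty}(f)\geq\tfrac{1}{q}\Tu^{\infty}(\alpha)$.

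The main obstacle is the admissibility-and-independence assertion invoked in the first paragraph: one must verify that for an appropriate choice of $\mathfrak{S}$ the elements of $S$ can be freely concatenated into LCT-admissible loops, and that the resulting forced orbits faithfully realize all distinct conjugacy classes. The antisymmetry of $\mu(g)$ is what forces the factor $\tfrac{1}{2}$: picking strictly more than half of $\mathcal{T}_{\pm}$ could, via the cancellations $[a_1^y]_g\otimes[a_2^y]_g=-[a_2^y]_g\otimes[a_1^y]_g$, introduce pairs of generators whose products are conjugate in unwanted ways and so collapse the count. The $g$-equivalence itself reflects the natural ambiguity arising from the choice of occurrence $\Gamma(t)=\Gamma(t')$ of a self-intersection (shifting to $\Gamma(t{+}1)=\Gamma(t')$ conjugates the corresponding $a_i^y$ by $g$), and is built into the definition of $\Tu^{\infty}(\alpha)$ precisely so that the forcing argument factors through conjugacy in $\pi_1(M,x_0)$.
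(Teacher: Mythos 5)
Your outline follows the paper's overall strategy (maximal isotopy, shortcuts at self-intersections contributing to $\mu(g)$, counting conjugacy classes of words), but you flag "the main obstacle" and then do not resolve it, and it is precisely there that the real work lies. Le Calvez--Tal's forcing results (Propositions~\ref{fundprop}, \ref{Proposition23}, \ref{Proposition23_1}) apply only at self-intersections that are \emph{$\mathcal{F}$-transverse}, not at arbitrary topological self-intersections of $\Gamma$. It is not at all clear a priori that the intersections contributing terms to $\Comp(\mu(g))$ are $\mathcal{F}$-transverse, and this cannot be deduced "essentially as in the proof of Theorem~\ref{intro:thm1}'', which goes through Lemma~\ref{Lemma10} (geometric self-intersections of a geodesic representative) rather than through $\mu$. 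The paper closes this gap via Claim~\ref{claim:nontransverse}: one may choose $t<t'$ for each term so that all lifts meeting at $\widetilde\gamma(t)=\widetilde\gamma'(t')$ intersect $\widetilde{\mathcal{F}}$-transversally, and the proof shows that contributions to $\mu(g)$ coming from $\mathcal{F}$-\emph{non}transverse self-intersections cancel in pairs (this uses Lemma~\ref{Lemma7} together with the $g$-equivalence built into $\mu$ in an essential way). This cancellation is the entire reason $\mu$ is the correct invariant here, and a proof that omits it is incomplete.

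Separately, your explanation of the factor $\lceil\#\mathcal{T}/2\rceil$ is not correct. Antisymmetry of $\mu(g)$ is already accounted for when one passes to $\Comp(\mu(g))$ (the positive-coefficient terms), so it cannot be the source of the $1/2$. The actual reason is that for each term $\mathfrak{a}\otimes\mathfrak{b}\in\mathcal{T}$, the shortcut loop $\gamma|_{[0,t]}\gamma|_{[t',1]}$ (with $t<t'$) realizes exactly one of $\mathfrak{a}$, $\mathfrak{b}$; by pigeonhole at least $\lceil\#\mathcal{T}/2\rceil$ of the terms are realized on the same side. Choosing $\mathfrak{S}$ entirely inside $\mathcal{T}_+$ or entirely inside $\mathcal{T}_-$ guarantees, via the orientation labelling of $u^y_1,u^y_2$, that all the corresponding $\mathcal{F}$-transverse self-intersections have the \emph{same sign}, which is exactly what Proposition~\ref{Proposition23_1} requires in order for the iterated shortcuts to remain admissible of order $nq$. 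Your phrase "at most one side of each contributing self-intersection is used'' is in the right spirit, but the coherence of signs across \emph{different} self-intersection points is the essential constraint, and it is this that forces both the $\pm$-choice and the $1/2$ in the definition of $\Gamma_{\mathcal{T}}^g$.
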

By a version of Ivanov's inequality it holds that $h_{\topo}(f) \geq \HP^{\infty}(f)$ \cite{Ivanov1982, Jiang1996}, see also \cite{Alves-Cylindrical}. Hence Theorem $\ref{HT}$ provides another lower bound on $h_{\topo}(f)$ in terms of the complexity of $\alpha$. 
In section \ref{sec:self_intersections} we exhibit an infinite family of classes in $\widehat{\pi}(M)$ such that for all of those classes $\alpha$,  $\Tu^{\infty}(\alpha) \geq \log({\selfi_M(\alpha)/4})$, see Lemmata \ref{cl:self_intersections_surface} and \ref{lem:T_comp}. In particular, the bound in Theorem \ref{HT} turns out to be sometimes significantly better than that in Theorem \ref{intro:thm2}.  
 
 We outline the arguments for the proof of Theorem \ref{HT}, and refer to section \ref{sec:Turaev} for details. Assume there is a $q$-periodic point $x$ of $f$ that is in class $\alpha$ with $T^{\infty}(\alpha)>0$. By the results in \cite{Beguin2016} one can choose a maximal identity isotopy $I$ for $f$. Le Calvez' theory of transverse foliation  yields a singular foliation $\mathcal{F}$ on $M$ that is transverse to $I$, see section \ref{sec:foliation}. This means in particular that there is loop $\Gamma$ transverse to $\mathcal{F}$ which is freely homotopic to $I^q(x)$ in $\dom (I)$, and we say that $\Gamma$ is associated to $x$.  Le Calvez and Tal's results provide methods to manipulate $\Gamma$ in order to obtain other transverse loops associated to periodic points of $f$. Roughly speaking, in good situations such loops might be created by starting at $\Gamma(0)$, following $\Gamma$ positively transverse to $\mathcal{F}$, stopping at some intersection point $y$ which is reached for the first time, and finally continuing along $\Gamma$ in positive direction after a turn at $y$. The turn is to the left or to the right, depending on which direction is positively transverse to the foliation. In other words, one creates a shortcut of $\Gamma$ at $y$. Moreover, one might iterate this procedure and create shortcuts at several self-intersection points of $\Gamma$.  
It is a bit subtle, especially for the iterated shortcuts, when the created loops are associated to periodic points, and one crucial assumption is that the intersections where the shortcuts are created are transverse with respect to the foliation, see section \ref{sec:foliation}.  
If $g$ denotes the element represented by $\Gamma$ in $\pi_1(M, \Gamma(0))$,  each element in $\Comp(\mu(g))$ gives rise to a self-intersection point $y$ that is actually $\mathcal{F}$-transverse. Moreover one can create several shortcuts to an iterate of $\Gamma$. We show that for (sufficiently large) $n>0$  the loops $\Gamma'= \gamma_{\rho_1}\gamma_{\rho_2} \cdots \gamma_{\rho_n}$ are associated to periodic points of period $nq$, where $\Gamma'$ is a concatenation of $n$ loops $\gamma_{\rho_i}$, each is either equal to $\Gamma$ or obtained from $\Gamma$ by a shortcut at some of the above  intersection points $y$  with one additional assumption: the turns at the chosen intersection points have to be either all to the left or all to the right.
This observations allows us to conclude the inequality $H^{\infty}(f)\geq \frac{1}{q}T^{\infty}(\alpha)$.



\subsection{Hamiltonian diffeomorphisms and persistence of topological entropy}\label{subsec:Ham}
We now turn to applications of Theorem \ref{intro:thm1} in the context of Hamiltonian diffeomorphisms.


Let us consider a closed symplectic manifold $(M, \omega)$ and denote by $\Ham(M, \omega)$ the group of Hamiltonian diffeomorphisms on $M$, that is the group of those diffeomorphisms that are the time-$1$-map of the (time-dependent) flow generated by the Hamiltonian vector field $X_{H_t}$ of some $H:S^1 \times M \to \R$, cf. section \ref{sec:floer_hom}. 
The group $\Ham(M, \omega)$ carries a distinctive bi-invariant metric $d_{\hofer}$, the Hofer metric, which plays a central role in the study of rigidity phenomena of Hamiltonian diffeomorphisms. $d_{\hofer}(\varphi,\psi)$ for any $\varphi,\psi \in \Ham(M, \omega)$ may be defined to be $\|\varphi \circ \psi^{-1}\|_{\hofer}$, where
\begin{equation*}
\|\theta\|_{\hofer} := \inf_{H} \int_{0}^{1}(\max_M H_t - \min_M H_t) \, dt,
\end{equation*}
with the infimum taken over all Hamiltonian functions  $H : [0,1] \times M \to \R$ whose associated Hamiltonian flow has $\theta$ as time-$1$-map. It follows directly from the definition that small perturbations in the sense of Hofer's metric are not necessarily small in the $C^0$ metric.\footnote{More subtle is the fact that also the converse fails in general, see e.g. \cite{EntovPolterovichPy}.}   			
The geometry of $(\Ham(M, \omega), d_{\hofer})$ and its interplay with dynamics has been thoroughly studied since its discovery by Hofer and his work in the early 90's, see \cite{PolterovichHam} for an extensive account with many results and references.

Hofer's metric plays an important role for stability features of Floer homology. 
Let us explain this shortly in our context, more details can be found in section \ref{sec:floer_hom}. Given that $(M, \omega)$ is symplectically aspherical and atoroidal, then for a free homotopy class $\alpha$ the action functional of a Hamiltonian $H$ on the smooth representatives $\mathcal{L}^{\alpha}(M)$ of $\alpha$ is given by $\mathcal{A}_H(x) = \int_0^1 H(t,x(t)) dt - \int_{\bar{x}} \omega$, where $\bar{x}:S^1 \times [0,1] \to M$ is a smooth map from an annulus to $M$, where one boundary component $S^1\times \{1\}$ maps to $x$ and the other $S^1 \times \{0\}$ to a fixed representative of $\alpha$. In case $H$ is nondegenerate one defines the Floer homology as the homology of a chain complex generated by $1$-periodic orbits for $X_{H_t}$, and chain maps given by counting $0$-dimensional moduli spaces of solutions $u:\R \times S^1 \to M$ of a certain perturbed non-linear Cauchy-Riemann equation $\bar{\partial}_{H,J}(u)=0$ whose asymptotes $\lim_{s\to \pm\infty} u(s,t) = x_{\pm}(t)$ are $1$-periodic orbits $x_{\pm}(t)$ for $X_{H_t}$. 
Since action decreases along such solutions, the Floer homology can be filtered by action, taking into account only periodic orbits of action $< a$, for varying $a \in \R$.  The full Floer homology for a non-trivial free homotopy class $\alpha$ vanishes, but one can take the filtration into account in order to understand structure and existence of orbits in $\alpha$. An elegant and fruitful way to keep track of the filtered Floer homology is to use the theory of persistent modules and barcodes \cite{polterovich2019topological}. With this terminology any Hamiltonian gives rise to a barcode, i.e. a multiset of intervals in $\R$. The barcode, in this setting, only depends on the Hamiltonian diffeomorphism that is  the time-$1$-map of $X_{H_t}$. Action estimates on continuation maps in Floer homology lead to stability properties of barcodes with respect to Hofer's metric, and hence in particular to persistence of certain fixed points.

A special family $\mathcal{E}_g$ of Hamiltonian diffeomorphisms on surfaces $M=\Sigma_g$ of genus $g$, called eggbeaters,  were introduced by Polterovich and Shelukhin in \cite{polterovich2016autonomous}. The construction was carried out for surfaces of genus $g\geq 4$ and later extended by the first author in \cite{ArnonThesis} to surfaces of genus $g\geq 2$. Computations of certain barcode-invariants were carried out and it was proved that $\mathcal{E}_g$ provide a class of Hamiltonian diffeomorphisms whose Hofer-distance to the space of autonomous Hamiltonian diffemorphisms can be arbitrary large. The same results pass to some products $\Sigma_g\times N$ \cite{polterovich2016autonomous,zhang2019}, and the constructions allow to embed the free group of two generators into the asymptotic cone of the group of Hamiltonian diffeomorphisms equipped with Hofer's metric \cite{10authors,ArnonThesis}.

   One may formulate the above results as (a consequence of) persistence of certain dynamical properties of eggbeaters. Eggbeaters are prototypical for a chaotic dynamical system. While this chaotic behaviour, as opposed to integrable behaviour, served as motivation to investigate these maps in  \cite{polterovich2016autonomous}, the question of persistence of chaos, that is persistence of entropy, exponential orbit complexity etc., has not been addressed yet. The following result gives some answers in the setting of surfaces of genus $g\geq 2$:

\begin{thm}\label{thm:stable}
Let $(\Sigma_g,\sigma_g)$ be a surface of genus $g\geq 2$ with an area form $\sigma_g$. There is a sequence $\phi_l \in \mathcal{E}_g$ of eggbeaters on $\Sigma_g$ with $M_l := \|\phi_l\|_{\hofer} \to \infty$ and constants $\delta, c>0$, such that for all $l \in \N$ and all $\psi\in \Ham(M,\omega)$ with $d_{\hofer}(\psi, \phi_l) < \delta M_l$, 
 \begin{equation*}
      \HP^{\infty}(\psi) \geq \log(C M_l^2),
     \end{equation*}
and in particular 
 \begin{equation}\label{bound_log}
 h_{\topo}(\psi) \geq \log(C M_l^2).
  \end{equation}
\end{thm}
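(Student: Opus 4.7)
The strategy is to exhibit, within the eggbeater family, fixed points whose associated free homotopy classes on $\Sigma_g$ have Turaev growth rate $\Tu^{\infty}$ at least logarithmic in $M_l^2$; to use filtered Floer homology and barcode stability to show these fixed points persist under Hofer perturbations of size $\delta M_l$; and then to apply Theorem~\ref{HT} to a surviving fixed point to lower-bound $\HP^{\infty}(\psi)$ by $\log(CM_l^2)$. The entropy bound~\eqref{bound_log} then follows from Ivanov's inequality $h_{\topo}(\psi) \geq \HP^{\infty}(\psi)$.

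First I would recall the construction of $\mathcal{E}_g$ from \cite{polterovich2016autonomous,ArnonThesis}. A member $\phi_{\lambda}$ is built from two families of parallel embedded annuli together with a twist parameter $\lambda$, and $\|\phi_{\lambda}\|_{\hofer}$ is comparable to $\lambda$. Taking $\phi_l := \phi_{\lambda_l}$ for some $\lambda_l \to \infty$, the nondegenerate fixed points of $\phi_l$ are indexed by reduced combinatorial words in the symbols labelling the two annulus families. Each such word $w$ picks out a fixed point $x_w$ whose orbit traces, under the natural identity isotopy, a loop on $\Sigma_g$ of free homotopy class $\alpha_w \in \widehat{\pi}(\Sigma_g)$. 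Choosing $w_l$ of fixed length $k$ (independent of $l$) from the family treated in Lemmata~\ref{cl:self_intersections_surface} and~\ref{lem:T_comp}, I would ensure that the resulting class $\alpha_l := \alpha_{w_l}$ satisfies $\selfi_{\Sigma_g}(\alpha_l) \gtrsim \lambda_l^{2k}$ and hence $\Tu^{\infty}(\alpha_l) \geq \log(\selfi_{\Sigma_g}(\alpha_l)/4) \geq \log(CM_l^2)$ for a suitable $C>0$.

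The central and most delicate step is Floer-theoretic persistence. Since $g \geq 2$ the surface is symplectically aspherical and atoroidal, so the filtered Floer homology in the non-trivial free homotopy class $\alpha_l$ is well-defined and assembles into a barcode $\mathcal{B}(\phi_l;\alpha_l)$ in the sense of \cite{polterovich2019topological}. I would show that the generator corresponding to $x_{w_l}$ is the endpoint of a bar in $\mathcal{B}(\phi_l;\alpha_l)$ of length at least $cM_l$ for some uniform $c>0$, by adapting the action computations in \cite{polterovich2016autonomous,ArnonThesis,10authors} from the contractible setting to the homotopy class $\alpha_l$: the action of $x_{w_l}$ grows linearly in $\lambda_l$, while any Floer-theoretic partner cancelling it must lie in the same class and have action at distance $\gtrsim M_l$. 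With $\delta := c/2$, the isometry theorem for barcodes implies that for any $\psi$ with $d_{\hofer}(\psi,\phi_l) < \delta M_l$ the barcode $\mathcal{B}(\psi;\alpha_l)$ still contains a bar of positive length, so $\psi$ has a fixed point $y$ of free homotopy class $\alpha_l$.

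Finally, Theorem~\ref{HT} applied to $y$ with $q=1$ gives $\HP^{\infty}(\psi) \geq \Tu^{\infty}(\alpha_l) \geq \log(CM_l^2)$, and $h_{\topo}(\psi) \geq \HP^{\infty}(\psi)$ yields the entropy estimate. The hard part is the barcode step: identifying $x_{w_l}$ as the endpoint of a specific long bar in $\mathcal{B}(\phi_l;\alpha_l)$ and controlling the linear-in-$M_l$ action gap to the next generator. The self-intersection and Turaev estimates of section~\ref{sec:self_intersections} contribute the $\log(M_l^2)$ factor; combined with the linear action estimate they fix the constants $\delta$ and $C$.
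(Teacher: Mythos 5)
Your plan is essentially the paper's proof: the action-gap statement (Proposition~\ref{prop:action_gap_surface}, imported from \cite{polterovich2016autonomous} and \cite{ArnonThesis}) gives a bar of length $\geq 2\delta M_l+1$ in the Floer barcode of $\phi_l$ in class $\alpha_{k_l}$, the Dynamical Stability Theorem~\ref{thm:dynamic_stability} (after approximating $\psi$ by nondegenerate diffeomorphisms and passing to a limit) propagates a fixed point of that class to $\psi$, and Theorem~\ref{HT} together with Lemma~\ref{lem:T_comp} and Ivanov's inequality yield the claimed bounds. The only cosmetic slip is the self-intersection estimate: the word is $VH$, fixed, and the class is $\alpha_{k_l}=(i_{\Sigma_g})_*[a^{m_{k_l}}b^{n_{k_l}}]$ with $m_{k_l},n_{k_l}$ linear in $k_l\sim M_l$, so $\selfi_{\Sigma_g}(\alpha_{k_l})\sim M_l^2$ by Lemma~\ref{cl:self_intersections_surface}, rather than $\lambda_l^{2k}$ for a word of length~$k$.
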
 

In particular, using the terminology at the beginning of the introduction, for any constant $E$ there is an unbounded family  in $(\Ham(\Sigma_g,\sigma_g), d_{\hofer})$ on which ("$h_{\topo}\geq E$") $\delta$-persists. 


The sequence $\phi_l$ satisfies $h_{\topo}(\phi_l) \leq \log(C'M_l^2)$ for some constant $C' > C$, see remark \ref{rem:entropy_eggb}, and hence the lower bounds of \eqref{bound_log} are "optimal" from an asymptotic viewpoint, or, in other words, eggbeaters are "almost" minimal points for the topological entropy on $\Ham(\Sigma_g,\sigma_g)$. More precisely we obtain 
\begin{cor}\label{cor1}
Let $(\Sigma_g,\sigma_g)$ be as above. Then there is  constants $K>0$ and $\delta>0$, and a sequence of $\phi_l \in \mathcal{E}_g$ with  $M_l := \|\phi_l\|_{\hofer} \to \infty$ and $h_{\topo}(\phi_l) \to \infty$, such that all $\psi \in \Ham(\Sigma_g, \sigma_g)$ with $d(\psi, \phi_l) <  \delta M_l$ satisfy  
$$h_{\topo}(\psi) \geq h_{\topo}(\phi_l) - K.$$
\end{cor}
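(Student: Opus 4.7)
The corollary should follow almost immediately by combining Theorem \ref{thm:stable} with the upper bound on $h_{\topo}(\phi_l)$ asserted in Remark \ref{rem:entropy_eggb}. The plan is to reuse the same sequence $\phi_l \in \mathcal{E}_g$ and the same constant $\delta > 0$ produced in Theorem \ref{thm:stable}, and then to extract $K$ as the logarithmic gap between the (asymptotic) lower and upper bounds on topological entropy of eggbeaters.

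More concretely, first I would recall that Theorem \ref{thm:stable} gives a constant $C > 0$, a $\delta > 0$, and a sequence $\phi_l$ with $M_l := \|\phi_l\|_{\hofer} \to \infty$ such that every $\psi \in \Ham(\Sigma_g, \sigma_g)$ with $d_{\hofer}(\psi, \phi_l) < \delta M_l$ satisfies
\[
h_{\topo}(\psi) \geq \log(C M_l^2).
\]
In particular (by applying this at $\psi = \phi_l$), $h_{\topo}(\phi_l) \geq \log(CM_l^2) \to \infty$. Next I would invoke Remark \ref{rem:entropy_eggb}, which states that there exists a constant $C' > C$ such that $h_{\topo}(\phi_l) \leq \log(C' M_l^2)$ for all $l$.

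Putting these two inequalities together, for any $\psi$ with $d_{\hofer}(\psi, \phi_l) < \delta M_l$ we get
\[
h_{\topo}(\psi) \geq \log(C M_l^2) = \log(C' M_l^2) - \log(C'/C) \geq h_{\topo}(\phi_l) - \log(C'/C).
\]
Setting $K := \log(C'/C)$, which depends only on the construction of $\mathcal{E}_g$ and is independent of $l$, yields the desired inequality.

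There is essentially no obstacle to overcome here beyond verifying the quoted upper bound from Remark \ref{rem:entropy_eggb}; the substantive work is entirely contained in Theorem \ref{thm:stable} (for the lower bound that survives the Hofer perturbation) and in the explicit construction of eggbeaters (for the matching upper bound). Accordingly, the proof should be written as a short deduction, emphasizing that the optimality of the logarithmic rate $\log(M_l^2)$ on both sides is what allows the additive error $K$ to be uniform in $l$.
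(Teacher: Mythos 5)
Your proposal is correct and matches the paper's intended argument: the paper states Corollary \ref{cor1} immediately after noting that $h_{\topo}(\phi_l) \leq \log(C' M_l^2)$ (Remark \ref{rem:entropy_eggb}), and presents the corollary as a direct combination of this upper bound with the lower bound $h_{\topo}(\psi) \geq \log(C M_l^2)$ from Theorem \ref{thm:stable}, yielding $K = \log(C'/C)$ exactly as you do. Your explicit application of the lower bound at $\psi = \phi_l$ to verify $h_{\topo}(\phi_l) \to \infty$ is a helpful minor addition that the paper leaves implicit.
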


The sequence $\phi_l$ defines an element in the asymptotic cone of $(\Ham(\Sigma_g, \sigma_g), d_{\hofer})$. An interesting question is whether similar minimality properties hold also for defining sequences of most elements in  the image of the embedding of $F_2$ into the asymptotic cone of $(\Ham(\Sigma_g, \sigma_g), d_{\hofer})$ obtained in \cite{10authors}, and it motivates efforts to further strengthen the bounds obtained in  Theorem \ref{HT}. 



Finally, we will also observe that the stability of lower bounds obtained in Theorem \ref{thm:stable} are Hofer-generic and we show

\begin{thm}\label{thm:generic}
Let $(\Sigma_g,\sigma_g)$ be a surface of genus $g\geq 2$, and let $M\geq 0$. There is an open and dense set $U \subset \Ham(\Sigma_g,\sigma_g)$ with respect to the topology induced by Hofer's metric $d_{\hofer}$ such that $h_{\topo}(\psi) \geq  M$ for all $\psi \in U$.  
\end{thm}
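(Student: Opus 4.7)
The plan is to deduce the theorem from Theorem \ref{thm:stable} combined with a localization of the eggbeater construction.

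For openness, I take $U$ to be the interior of $\{\psi \in \Ham(\Sigma_g, \sigma_g) : h_{\topo}(\psi) \geq M\}$ with respect to $d_{\hofer}$. By definition $U$ is Hofer-open, and $h_{\topo}(\psi) \geq M$ for all $\psi \in U$. The content of Theorem \ref{thm:stable} is that this interior is non-empty: it contains the Hofer-ball $B_{d_{\hofer}}(\phi_l, \delta M_l)$ for any $l$ with $\log(CM_l^2) \geq M$.

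Density is the substantive part. Given $\psi_0 \in \Ham(\Sigma_g, \sigma_g)$ and $\epsilon > 0$, I aim to produce $\psi \in U$ with $d_{\hofer}(\psi, \psi_0) < \epsilon$. The first step is a localized version of the eggbeater. Choosing a subsurface $\Sigma' \subset \Sigma_g$ of genus $\geq 2$ with arbitrarily small area $a > 0$, I adapt the construction of \cite{polterovich2016autonomous, ArnonThesis} to produce $\phi_l^{\loc} \in \Ham(\Sigma_g, \sigma_g)$ supported in $\Sigma'$. The Hamiltonian defining an eggbeater is linear in the twist parameter and its oscillation scales like the area of its support, so one has $\|\phi_l^{\loc}\|_{\hofer} = O(a \cdot M_l)$, while the topological entropy $h_{\topo}(\phi_l^{\loc})$ is controlled by the twist parameter and is comparable to $\log(CM_l^2)$ independent of $a$. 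Picking $l$ large and then $a$ sufficiently small, I arrange $\|\phi_l^{\loc}\|_{\hofer} < \epsilon/2$ and $h_{\topo}(\phi_l^{\loc}) \geq M + M_0$ for a buffer $M_0$ to be fixed below. I then set $\psi := \psi_0 \cdot \phi_l^{\loc}$, so that bi-invariance yields $d_{\hofer}(\psi, \psi_0) = \|\phi_l^{\loc}\|_{\hofer} < \epsilon/2$.

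The remaining step is to show $\psi \in U$. The robust periodic orbits of $\phi_l^{\loc}$ that give its entropy bound via Theorems \ref{intro:thm2} and \ref{HT} are detected as long bars in the filtered Floer homology for free homotopy classes of loops supported in $\Sigma'$ with large geometric self-intersection number (which remain non-trivial classes in $\Sigma_g$), precisely as in the proof of Theorem \ref{thm:stable}. Left-composition by $\psi_0$ shifts the relevant action filtration by an amount controlled by $\|\psi_0\|_{\hofer}$, so these long bars persist provided the buffer $M_0$ is chosen to exceed this shift. The associated periodic orbits of $\psi$ still lie in the same free homotopy classes of large geometric self-intersection, yielding $h_{\topo}(\psi) \geq M$. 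Running the same Floer-theoretic stability argument on Hofer-small perturbations of $\psi$ then places $\psi$ in the Hofer-interior $U$, completing the density step.

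The main obstacle is the last step: the Floer-theoretic analysis of the barcode of $\psi = \psi_0 \cdot \phi_l^{\loc}$. While bi-invariance trivializes the distance estimate, controlling the persistence of the robust bars and their associated periodic orbits under left-composition with an arbitrary $\psi_0$ is delicate and constitutes the core technical work, amounting to extending the mechanism of Theorem \ref{thm:stable} from specific eggbeaters to their left-translates by arbitrary Hamiltonian diffeomorphisms.
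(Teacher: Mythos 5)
Your setup for openness and the overall plan (build a Hofer-small eggbeater-like map near $\psi_0$ and use robust fixed points in a highly self-intersecting class) is the right starting point, and your idea of shrinking the area of the support to make $\|\phi_l^{\loc}\|_{\hofer}$ small is in the same spirit as the paper's choice of a small scaling parameter $\delta$ for the embedding $i_{\Sigma_g}:(C_g,\delta\omega_0)\hookrightarrow(\Sigma_g,\sigma_g)$. However, the step where you argue that the long bars of $\phi_l^{\loc}$ "persist" in the barcode of $\psi := \psi_0\cdot\phi_l^{\loc}$ by an "action-filtration shift" has a genuine gap. There is no such shift argument: the periodic orbits of $\psi_0\cdot\phi_l^{\loc}$ are not in any controlled correspondence with those of $\phi_l^{\loc}$, and the filtered Floer homology (and hence the barcode) of a product of two Hamiltonian diffeomorphisms is not obtained from that of one factor by translating the filtration. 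The Dynamical Stability Theorem controls barcodes of maps that are Hofer-close; it says nothing about barcodes of $\psi_0\cdot\phi$ versus $\phi$ when $\psi_0$ is arbitrary. You in effect flag this issue yourself, but it is not merely a technical delicacy to be overcome -- the claim as stated is false and needs to be replaced by a different mechanism.

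The paper's resolution is to not take the bare product. One first fixes a nondegenerate fixed point $x$ of the rescaled eggbeater $\phi$ in the class $\alpha_k$ of large self-intersection, and then cuts off a Hamiltonian generating $\psi_0^{-1}$ near $x$ to produce $\tau$ with $\|\tau\|_{\hofer}<\varepsilon/3$ and $\tau_t\circ(\psi_0)_t\equiv\mathrm{id}$ on a neighborhood of $x$. Setting $\chi=\phi\circ\tau\circ\psi_0$, one sees that $\chi_t(x)=\phi_t(x)$ for all $t$, so $x$ survives as a nondegenerate fixed point of $\chi$ in the same class $\alpha_k$, and $d_{\hofer}(\chi,\psi_0)<\varepsilon$ after a further nondegenerate perturbation $\hat\chi$ supported away from the orbit of $x$. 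This is the crucial move that your proposal is missing. Note also that the paper does not need the long bars here: the mere existence of one fixed point of $\hat\chi$ in $\alpha_k$ forces $\mathrm{HF}_*^{\bullet}(\hat\chi)_{\alpha_k}\neq 0$, hence some bar of positive length $\sigma$; Dynamical Stability then gives a fixed point in class $\alpha_k$ for every map in $B_{d_{\hofer}}(\hat\chi,\sigma')$ with $\sigma'<\sigma/2$, and Theorem \ref{intro:thm1} converts each such fixed point into the entropy bound. So the "buffer $M_0$ to absorb an action shift" and the extension of the stable long-bar mechanism of Theorem \ref{thm:stable} to arbitrary left-translates are both unnecessary once one uses the cutoff trick, and the step that your proposal identifies as the core technical work is in fact circumvented rather than carried out.
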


The proofs of Theorems \ref{thm:stable} and Theorems \ref{thm:generic} are given in Section~\ref{sec:egg_beaters}.

A recent result in \cite{Khanevsky2019}, building on different methods, cf. \cite{Brandenbursky}, asserts, that for surfaces $\Sigma_g$ of genus $g\geq 1$ there are for any $C\geq 0$ a class of Lagrangian pairs $(L,L')$, $L,L'\subset \Sigma_g$, such that $h_{\topo}(\varphi) >  C$ for all $\varphi$ with $\varphi(L) = L'$. It would be interesting to understand whether those sets of pairs may satisfy some stability properties similar to those of Theorems \ref{thm:stable} and \ref{thm:generic}.
 
Very recently the authors in \cite{Ginzburg2021} show that the topological entropy of a Hamiltonian diffeomorphism $\varphi$ on a closed surface coincides with its barcode entropy $\hbar(\varphi)$ which they introduce, and which measures the growth of the number of certain bars in the barcode of the iterates of $\varphi$.  
Hence, together with the results in this paper, this shows that the results obtained in Theorems \ref{thm:stable}, \ref{thm:generic} and Corollary \ref{cor1} hold additionally for  $\hbar$.  This is noteworthy, since it is a priori not clear which stability properties hold for $\hbar$ with respect to Hofer's metric. 

Refined stability properties with respect to $d_{\hofer}$ (for small perturbations) on the isotopy classes of braids that periodic orbits of Hamiltonian surface diffeomorphism induce in the suspension, are studied in a current project of M. Alves and the second author \cite{AlvesMeiwesBraids}. One dynamical consequence of their study is that $h_{\topo}$ is lower semi-continuous on $(\Ham(\Sigma, \omega), d_{\hofer})$  for closed surfaces $\Sigma$. 
 
Related questions of global robustness of positive entropy for families of contactomorphisms on contact manifolds were studied extensively and fruitfully in recent years by various methods.  A large class of contactomorphisms are those that arise via Reeb flows and there is an abundance of contact manifolds for which the topological entropy or the exponential orbit growth rate is positive for all Reeb flows. Examples and dynamical properties of those manifolds are investigated in 
\cite{AlvesSchlenkAbbon, Alves-Cylindrical,  Alves-Anosov, Alves-Legendrian, AlvesColinHonda, AlvesMeiwes, Cote2020, FrauenfelderSchlenk2006, MacariniSchlenk2011, Meiwesthesis}. Some of these results generalize to positive contactomorphisms \cite{Dahinden2018}, and results on the dependence of some lower bounds on topological entropy with respect to their positive contact Hamiltonians has been obtained in \cite{Dahinden2020}. Forcing results for Reeb flows are obtained in \cite{AlvesPirnapasov}. A related discussion and results on questions of $C^0$-stability of the topological entropy of geodesic flows can be found in \cite{LLMM}.  
While the approach in our paper is suited to dimension $2$, different methods yield higher dimensional symplectic manifolds for which conclusions similar to that of Theorems \ref{thm:stable} and \ref{thm:generic} hold. This will be discussed elsewhere.

\subsection{Structure of the paper}

In Section~\ref{sec:foliation} we recall the theory of transverse foliations on surfaces, which is the setting in which Theorems~\ref{intro:thm1}, \ref{intro:thm2} and \ref{HT} is proved. In Section~\ref{sec:proof_foliation} we restrict our attention to loops with so-called $\mathcal{F}$-transverse self-intersections, and prove a few key claims.

In Section~\ref{sec:proof_entropy} we prove Theorems~\ref{intro:thm1} and \ref{intro:thm2}, using the tools developed in the previous sections. In Section~\ref{sec:Turaev} we define the growth rate $\Tu^{\infty}$ of a free homotopy class and prove Theorem~\ref{HT}.

Section~\ref{sec:floer} gives a short background on Floer theory and persistence modules, which is then used in Section~\ref{sec:egg_beaters} to derive bounds on entropy and periodic orbit growth of large perturbations with respect to Hofer's metric of eggbeater maps. This will prove in particular Theorems~\ref{thm:stable} and~\ref{thm:generic}. 

\subsection{Acknowledgement}
Most of the work was completed while Matthias Meiwes was a postdoctoral researcher at the School of Mathematical Sciences of Tel Aviv University. He would like to thank this institute and especially Lev Buhovski, Yaron Ostrover, and Leonid Polterovich for their hospitality. He was also partially supported by the Chair for Geometry and Analysis of the RWTH Aachen. He would like to thank Umberto Hryniewicz for his support.

We would like to especially thank Leonid Polterovich for his important suggestions and for fruitful discussions. We would also like to thank Marcelo Alves, Umberto Hryniewicz and Felix Schlenk for their helpful suggestions and discussions.   
  
\section{Transverse foliations and transverse intersections}\label{sec:foliation}

In this section we will give the definitions and results from the theory of transverse foliations that are relevant for this paper.  We follow mainly \cite{LeCalvezTal1}. 

\subsection{Surface foliations and transverse paths}
In the following let $M$ be an oriented surface. The plane $\R^2$ will be endowed with the usual orientation. 
A \textit{path} on $M$ is a continuous map $\gamma:J \to M$, defined on an interval $J\subset \R$. A path $\gamma$ is \textit{proper} if $J$ is open and the preimage of every compact subset of $M$ is compact.
 A \textit{line} is an injective and proper path $\lambda: J \to M$, it inherits a natural orientation induced by the usual orientation of $\R$. If $M=\R^2$, the complement of $\lambda$ has two connected components, one on the right, $R(\lambda)$, and one on the left, $L(\lambda)$. 
A \textit{loop} is a continuous map $\Gamma:S^1 = \R/ \Z \to M$. It lifts to a path $\gamma: \R \to M$ with $\gamma(t+1) = \gamma(t)$ for all $t\in \R$, the \textit{natural lift} of $\Gamma$. For two closed finite intervals $J,J'$ and paths $\gamma:J \to M$, $\gamma':J \to M$ we denote by $\gamma \gamma'$ the usual concatenation of paths. In particular $\Gamma^m$ for $m\in \N$ is the $m$-fold iteration of a loop $\Gamma$, i.e. $\Gamma^m(t) := \gamma(mt)$. The path obtained by reverse parametrization of $\gamma$ is denoted by $\overline{\gamma}$.

A \textit{singular oriented foliation} on  $M$ is an oriented topological foliation $\mathcal{F}$ defined on an open set of $M$. This set is called the domain of $\mathcal{F}$ and is denoted by $\dom (\mathcal{F})$. The complement $M \setminus \dom (\mathcal{F})$ is the \textit{singular set}, denoted by $\sing (\mathcal{F})$. 
We denote by $\phi_z$ the leaf passing through $z\in \dom (\mathcal{F})$, $\phi_z^+$ the positive, and by $\phi_z^-$ the negative half-leaf. 
A path $\gamma : J \to M$ is \textit{(positively) transverse to $\mathcal{F}$} or \textit{$\mathcal{F}$-transverse} if its image does not meet the singular set and if, for every $t_0 \in J$, there is a continuous chart $h: W \to (0,1)^2$ at $\gamma(t_0)$ compatible with the orientation and sending the foliation $\mathcal{F}|_{W}$ onto the vertical foliation oriented downwards such that the map $\pi_1 \circ h \circ \gamma$ is increasing in a neighborhood of $t_0$, where $\pi_1$ is the vertical projection. 
Let $\widetilde{\dom(\mathcal{F})}$ be the universal covering space of $\dom(\mathcal{F})$. $\mathcal{F}|_{\dom (\mathcal{F})}$ lifts to a (non-singular) foliation $\widetilde{\mathcal{F}}$ on $\widetilde{\dom(\mathcal{F})}$. 
Note that since there is no non-singular foliation on $S^2$, $\widetilde{\dom(\mathcal{F})}$ is always homeomorphic to $\R^2$.
Every lift of an $\mathcal{F}$-transverse path $\gamma$ is an $\widetilde{\mathcal{F}}$-transverse path on $\widetilde{\dom(\mathcal{F})}$. 
We say that $\widetilde\gamma: \R \to \widetilde{\dom(\mathcal{F})}$ is a \textit{lift of the loop $\Gamma$ }(\textit{to $\widetilde{\dom(\mathcal{F})}$}) if it is a lift of its natural lift $\gamma: \R \to \dom(\mathcal{F})$. 

If $\mathcal{F}$ is a \textit{non-singular} foliation on $\R^2$, then two $\mathcal{F}$-transverse paths $\gamma: J \to \R^2$ and $\gamma': J'\to \R^2$ are  \textit{equivalent for $\mathcal{F}$} if there exists an increasing homeomorphism $h:J \to J'$ such that $\phi_{\gamma'(h(t))} = \phi_{\gamma(t)}$ for every $t\in \R$. 
In general, if $\mathcal{F}$ is  a (possibly \textit{singular}) foliation on an oriented surface $M$, two transverse paths $\gamma: J \to M$ and $\gamma': J'\to M$ are called \textit{equivalent for $\mathcal{F}$} if they can be lifted to the universal covering space $\widetilde{\dom{(\mathcal{F})}}$ of $\dom(\mathcal{F})$ as paths that are equivalent for the lifted foliation $\widetilde{\mathcal{F}}$. 
A loop $\Gamma: S^1 \to \dom(\mathcal{F})$ is called \textit{positively transverse} to $\mathcal{F}$ if this holds for the natural lift $\gamma: \R \to \dom(\mathcal{F})$. Two $\mathcal{F}$-transverse loops $\Gamma$ and $\Gamma'$ are \textit{equivalent} if there exists two lifts $\widetilde\gamma: \R \to \widetilde{\dom(\mathcal{F})}$ and $\widetilde\gamma': \R \to \widetilde{\dom(\mathcal{F})}$ of $\Gamma$ and $\Gamma'$, respectively, a deck transformation $T$ and an orientation preserving homeomorphism $h:\R \to \R$ invariant by $t\mapsto t+1$ and such that for all $t\in \R$, 
\begin{align*}
\widetilde{\gamma}(t+1) = T(\widetilde{\gamma}(t)), \, \widetilde\gamma'(t+1) = T(\widetilde\gamma'(t)), \, \phi_{\widetilde\gamma'(h(t))} = \phi_{\widetilde\gamma(t)}.
\end{align*}

\subsection{$\mathcal{F}$-transverse intersection}
We will now recall the definition of $\mathcal{F}$-transverse intersection, which is a central notion in \cite{LeCalvezTal1}. 

Let $\lambda_0, \lambda_1$ and $\lambda_2$ be three lines in $\R^2$.  $\lambda_2$ is \textit{above} $\lambda_1$ \textit{relative to} $\lambda_0$ (or $\lambda_1$ is \textit{below} $\lambda_2$ \textit{relative to} $\lambda_0$) if:
\begin{itemize}
\item the three lines are pairwise disjoint
\item none of the lines separates the two others
\item if $\gamma_1, \gamma_2$ are two disjoint paths that join $z_1=\lambda_0(t_1)$ resp. $z_2 = \lambda_0(t_2)$ to $z'_1\in \lambda_1$ resp.  $z'_2 \in \lambda_2$ and do not meet the three lines but at the ends, then $t_2 >t_1$. 
\end{itemize}


Assume $\mathcal{F}$ is a non-singular foliation on $\R^2$. 
Let $\gamma_1: J_1 \to \R^2$ and $\gamma_2: J_2 \to \R^2$ be two transverse paths such that $\phi_{\gamma_1(t_1)} = \phi_{\gamma_2(t_2)} = \phi$.  $\gamma_1$ and $\gamma_2$ \textit{intersect $\mathcal{F}$-transversally and positively at $\phi$} if there exist $a_1, b_1$ in $J_1$ and $a_2, b_2 \in J_2$ satisfying $a_1 < t_1 < b_1$, $a_2 < t_2 < b_2$, such that 
\begin{itemize}
\item $\phi_{\gamma_2(a_2)}$ is below $\phi_{\gamma_1(a_1)}$ relative to $\phi$, and
\item $\phi_{\gamma_2(b_2)}$ is above $\phi_{\gamma_1(b_1)}$ relative to $\phi$. 
\end{itemize}
In this situation we also say that $\gamma_1$ intersects $\gamma_2$  $\mathcal{F}$-transversally and positively, $\gamma_2$ and $\gamma_1$ intersect $\mathcal{F}$-transversally and negatively, or $\gamma_2$ intersects $\gamma_1$ $\mathcal{F}$-transversally and negatively at $\phi$. 
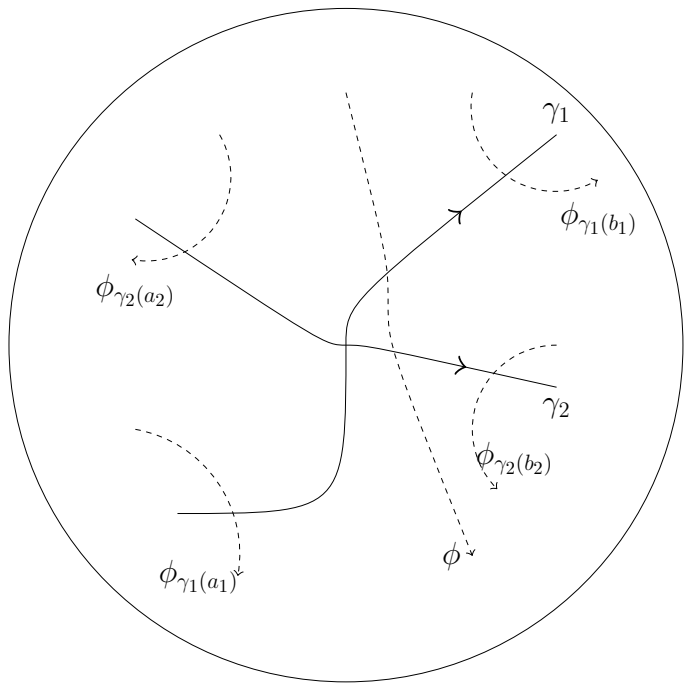
\begin{figure}
    \centering
    \scalebox{0.7}{
    \begin{tikzpicture}[scale=0.8]
        \coordinate (O) at (0,0);
        
        \draw (O) circle (8);
        
        \draw[lift] (-4,-4) .. controls (0,-4) .. (O) .. controls (0,1) .. (5,5);
        \draw[lift] (-5,3) .. controls (-0.5,0) .. (O) .. controls (0.5,0) .. (5,-1);
        
        \draw[leaf] (0,6) .. controls (1,2) .. (1,1) .. controls (1,0) .. (3,-5);
        \draw[leaf] (-3,5) arc (30:-100:2);
        \draw[leaf] (-5,-2) arc (80:-10:3);
        \draw[leaf] (3,6) arc (170:300:2);
        \draw[leaf] (5,0) arc (90:225:2);
        
        \draw (2.5,-5) node {\Large $\phi$};
        \draw (5,5.5) node {\Large $\gamma_1$};
        \draw (5,-1.5) node {\Large $\gamma_2$};
        \draw (4,-2.7) node {\Large $\phi_{\gamma_2(b_2)}$};
        \draw (6,3) node {\Large $\phi_{\gamma_1(b_1)}$};
        \draw (-5,1.3) node {\Large $\phi_{\gamma_2(a_2)}$};
        \draw (-3.5,-5.5) node {\Large $\phi_{\gamma_1(a_1)}$};
        
    \end{tikzpicture}
    }
    \caption{$\gamma_1$ and $\gamma_2$ intersect $\mathcal{F}$-transversally and positively at $\phi$.}
    \label{fig:transverse_intersection}
\end{figure}

\begin{rem}\label{rem:transitive}
One has the following transitivity property: Let $\gamma_1: J_1 \to \R^2, \gamma_2: J_2 \to \R^2$, and $\gamma_3: J_3 \to \R^2$ be transverse paths with $\phi_{\gamma_1(t_1)} = \phi_{\gamma_2(t_2)} = \phi_{\gamma_3(t_3)}= \phi$.  
If $\gamma_1$ and $\gamma_2$ intersect $\mathcal{F}$-transversally and positively at $\phi$, and $\gamma_2$ and $\gamma_3$ intersect $\mathcal{F}$-transversally and positively at $\phi$, then $\gamma_1$ and $\gamma_3$  intersect $\mathcal{F}$-transversally and positively at $\phi$. 
\end{rem}
Let now  $\mathcal{F}$ be a (possibly singular) foliation on an oriented surface $M$. Let $\gamma_1: J_1 \to M$ and $\gamma_2: J_2 \to M$ be two transverse paths such that $\phi_{\gamma_1(t_1)} = \phi_{\gamma_2(t_2)} = \phi$. 
We say that $\gamma_1$ and $\gamma_2$ \textit{intersect $\mathcal{F}$-transversally and positively at $\phi$ (resp. negatively at $\phi$)} if there exists paths $\widetilde\gamma_1: J_1 \to \widetilde{\dom(\mathcal{F})}$ and $\widetilde\gamma_2: J_2 \to \widetilde{\dom(\mathcal{F})}$, lifting $\gamma_1$ and $\gamma_2$, with a common leaf $\widetilde{\phi} = \phi_{\widetilde\gamma_1(t_1)} = \phi_{\widetilde\gamma_2(t_2)}$ that lifts $\phi$ such that $\widetilde\gamma_1$ and $\widetilde\gamma_2$ intersect $\widetilde{\mathcal{F}}$-transversally and positively at $\widetilde{\phi}$ (resp. negatively at $\widetilde{\phi}$). 
If two paths $\gamma_1$ and $\gamma_2$ intersect $\mathcal{F}$-transversally, there is $t_1'$ and $t_2'$ such that $\gamma_1(t_1') = \gamma_2(t_2')$ and such that $\gamma_1$ and $\gamma_2$ intersect $\mathcal{F}$-transversally at $\phi_{\gamma_1(t_1')} = \phi_{\gamma_2(t_2')}$. 
We say that $\gamma_1$ and $\gamma_2$ intersect $\mathcal{F}$-transversally at $\gamma_1(t_1') = \gamma_2(t_2')$.  
A transverse path $\gamma$ has a \textit{(positive) $\mathcal{F}$-transverse self-intersection at $\gamma(t_1) = \gamma(t_2)$}, $t_1 < t_2$,  if for every lift $\widetilde\gamma$ there is a deck transformation $U$ such that $\widetilde\gamma$ and $U\widetilde\gamma$ have a (positive) $\widetilde{\mathcal{F}}$-transverse intersection at $\widetilde\gamma(t_1) = U\widetilde\gamma(t_2)$. 
A transverse loop $\Gamma$ has an \textit{$\mathcal{F}$-transverse self-intersection at $\Gamma(t_1) = \Gamma(t_2)$} if its natural lift $\gamma$ has an $\mathcal{F}$-transverse self-intersection at $\gamma(t_1) = \gamma(t_2)$. 

Finally, we say that $\gamma:[a,b] \to \R^2$ (for a regular foliation $\mathcal{F}$ on $\R^2$) has a \textit{leaf on its right} resp. \textit{a leaf on its left}, if there is a leaf $\phi$ such that $\phi$ is above resp. below $\phi_{\gamma(a)}$ relative to $\phi_{\gamma(b)}$. We say that $\gamma:[a,b] \to M$ has \textit{a leaf on its right } resp. \textit{a leaf on its left}, if a lift of $\gamma$ to $\widetilde{\mathcal{F}}$ has a leaf on its right resp. a leaf on its left.

\subsection{Identity isotopies}\label{subsec:identityisotopy}

Let in the following $f$ be a homeomorphism on $M$ that is isotopic to the identity. Let $\mathcal{I}$ be the set of isotopies $I = (f_t)_{t\in[0,1]}$ between the identity and $f$. Here isotopy means a continuous path of homeomorphisms with respect to the topology defined by the uniform convergence of maps and their inverses on compact sets. For $I \in \mathcal{I}$ the  \textit{trajectory} $I(z)$ of a point $z\in M$ is defined to be the path $t \mapsto f_t(z)$.

Let $\fix(I) := \bigcap_{t\in[0,1]} \fix(f_t)$ and $\dom (I) = M \setminus \fix(I)$. There is the following preorder on $\mathcal{I}$.  
We define  $I < I'$  if 
\begin{itemize}
\item $\fix(I) \subset \fix(I')$
\item $I'$ is homotopic to $I$ relative to $\fix(I)$.
\end{itemize}
For each $I \in \mathcal{I}$, there is $I'\in \mathcal{I}$ with $I< I'$ and such that $I'$ is maximal with respect to $<$. This was proved in \cite{Jaulent2014} with certain restrictions and in \cite{Beguin2016} in full generality. See also \cite{HLS2016} for the case of diffeomorphisms. 
Maximal elements are exactly those $I \in \mathcal{I}$ such that for every $z\in \fix(f) \setminus \fix(I)$ the loop $I(z)$ is not contractible in $\dom(I)$, see \cite{Jaulent2014}.
A foliation $\mathcal{F}$ on $M$ is called \textit{transverse to I} if  
\begin{itemize}
\item the singular set $\sing(\mathcal{F})$ coincides with $\fix(I)$.
\item  for every $z\in \dom(I)$ the trajectory $I(z)$ is homotopic in $\dom(I)$ relative to the endpoints to a path $\gamma$
positively transverse to $\mathcal{F}$. 
\end{itemize}
One has the following fundamental  result. 
\begin{thm}\label{Theorem_foliation}\cite{LeCalvez2015}
For any maximal isotopy $I\in \mathcal{I}$ there exists a singular oriented foliation $\mathcal{F}$ on $M$ that is \textit{transverse to $I$}.
\end{thm}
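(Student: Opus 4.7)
The plan is to lift the dynamics to the universal cover of $\dom(I)$ and build an equivariant foliation there, following the strategy of Le Calvez.

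First, let $\tilde{\pi}\colon \tilde{M} \to \dom(I)$ denote the universal covering; since any oriented surface admitting a non-singular foliation must be non-compact or of zero Euler characteristic, and the construction we seek only needs $\tilde{M} \cong \R^2$, one passes (if needed) to each connected component of $\dom(I)$ separately. Lift $I|_{\dom(I)}$ to an identity isotopy $\tilde{I}$ on $\tilde{M}$ whose time-one map $\tilde{f}$ commutes with the deck transformation group $G$. The first substantive step is to show that $\tilde{f}$ is fixed-point free. Indeed, by the characterization of maximal isotopies recalled above, every fixed point of $f$ lying in $\dom(I)$ has a non-contractible trajectory in $\dom(I)$, whereas a fixed point $\tilde{z}$ of $\tilde{f}$ would project to a fixed point $z$ of $f$ with $\tilde{I}(\tilde{z})$ a path from $\tilde{z}$ to itself, i.e.\ $I(z)$ contractible. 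So $\tilde{f}$ is a Brouwer homeomorphism, and the problem reduces to producing a $G$-equivariant oriented topological foliation of $\tilde{M}$ by lines with respect to which each trajectory $\tilde{I}(\tilde{z})$ is homotopic relative to endpoints to a positively transverse path.

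The core step is to build this equivariant foliation combinatorially via \emph{brick decompositions}: locally finite coverings of $\tilde{M}$ by closed topological disks with pairwise disjoint interiors and trivalent adjacency, simultaneously $G$-invariant and $\tilde{f}$-free in the sense that $\tilde{f}(B)\cap B = \emptyset$ for every brick $B$. Zorn's lemma on the partial order of refinements produces a maximal such decomposition $\mathcal{B}$. Maximality rules out local attractor or repeller configurations in the induced dynamics on the dual graph, and a combinatorial analysis of the remaining patterns orients the edges of the dual consistently; smoothing yields a non-singular oriented foliation $\tilde{\mathcal{F}}$ on $\tilde{M}$. By construction $\tilde{\mathcal{F}}$ is $G$-equivariant and descends via $\tilde{\pi}$ to an oriented foliation on $\dom(I)$, which is extended to $M$ by declaring the points of $\fix(I)$ to be singularities. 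The trajectory-transversality condition then follows because each $\tilde{I}(\tilde{z})$, by freeness of the bricks it traverses, can be pushed off to a positively transverse representative in each brick and concatenated.

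The main obstacle is the combinatorial construction in the second paragraph: without the equivariance requirement, a transverse foliation for a Brouwer homeomorphism follows from Brouwer's plane translation theorem and its refinements due to Guillou and Le~Roux, but insisting that the foliation respect the $G$-action forces one to work at the level of equivariant brick decompositions. The delicate point is verifying that maximality alone suffices to orient the dual graph globally and that the resulting transverse structure captures the whole isotopy (each trajectory is transverse up to homotopy rel endpoints), not merely that individual orbits of $\tilde{f}$ cross leaves in a positive sense.
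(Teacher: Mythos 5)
The paper does not prove this theorem; it is Le Calvez's equivariant foliated Brouwer translation theorem, cited directly from \cite{LeCalvez2015} as a black box, so there is no in-paper argument to compare your proposal against.

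As a blind reconstruction of the proof strategy in that reference, your sketch is faithful in outline: lifting $I$ to the universal cover of $\dom(I)$, using the maximality of $I$ (via the characterization that fixed points of $f$ in $\dom(I)$ have non-contractible trajectories) to conclude that the lift $\tilde f$ is fixed-point free, hence a Brouwer homeomorphism of the plane commuting with the deck group $G$, and then working through $G$-equivariant maximal free brick decompositions to manufacture the transverse foliation. The reduction to a Brouwer homeomorphism is correct and is exactly where maximality of $I$ does its work. That said, the clause asserting that ``a combinatorial analysis of the remaining patterns orients the edges of the dual consistently; smoothing yields a non-singular oriented foliation'' compresses into one sentence the entire technical substance of Le Calvez's paper: the attractor/repeller structure attached to a maximal free decomposition, the induced preorder on bricks and its gluing properties, the passage from the oriented dual graph to an actual topological foliation of $\R^2$ by properly embedded lines, and the verification that whole isotopy trajectories (not merely orbit segments) are homotopic rel endpoints to positively transverse paths. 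Your closing paragraph correctly flags these as the live difficulties, which is the right instinct, but it also confirms that what you have written is a roadmap to a proof rather than a proof. For the purposes of the present paper, which takes Theorem~\ref{Theorem_foliation} as an input, citing \cite{LeCalvez2015} is the appropriate move.
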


\subsection{Admissible paths}
Let $I$ be a maximal isotopy to $f$ and $\mathcal{F}$ be transverse to $I$. 
Let $I_{\mathcal{F}}(z)$ denote the class of paths that are positively transverse to $\mathcal{F}$, that join $z$ and $f(z)$, and that are homotopic in $\dom(I)$ to $I(z)$ relative to the endpoints. Every path in this class, if the context is clear also denoted by $I_{\mathcal{F}}(z)$, is called a \textit{transverse trajectory} to $z$. One defines for every $n$, $I_{\mathcal{F}}^n(z) := \Pi_{0\leq k < n} I_{\mathcal{F}}(f^k(z))$.

A path $\gamma:[a,b] \to \dom(I)$ positively transverse to $\mathcal{F}$ is called \textit{admissible of order $n$} if it is equivalent to a path $I^n_{\mathcal{F}}(z)$ for some $z\in \dom(I)$. $\gamma$ is called \textit{admissible of order $\leq n$} if it is a subpath of a path that is admissible of order $n$. A transverse path that has a leaf on its right or a leaf on its left and that is admissible of order $\leq n$, is also admissible of order $n$ (\cite[Proposition 19]{LeCalvezTal1}). 

The concept of $\mathcal{F}$-transverse intersections allows to show admissibility for various transverse paths. 
The "fundamental proposition" in \cite{LeCalvezTal1} is 
\begin{prop}\cite[Proposition 20]{LeCalvezTal1}\label{fundprop}
Let $\gamma_1:[a_1, b_1] \to M$ and $\gamma_2:[a_2,b_2] \to M$ be transverse paths that intersect $\mathcal{F}$-transversally at $\gamma_1(t_1) = \gamma_2(t_2)$. If $\gamma_1$ is admissible of order $n_1$ and $\gamma_2$ is admissible of order $n_2$, then ${\gamma_1}|_{[a_1, t_1]}\gamma_2|_{[t_2,b_2]}$ and $\gamma_2|_{[a_2,t_2]}\gamma_1|_{[t_1,b_1]}$ are admissible of order $n_1 + n_2$. Furthermore either one of these paths is admissible of order $\min(n_1,n_2)$ or both paths are admissible of order $\max(n_1,n_2)$. 
\end{prop}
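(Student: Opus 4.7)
The plan is to lift the data to the universal cover $\widetilde{\dom(\mathcal{F})}\simeq\R^2$ of $\dom(\mathcal{F})$, on which $\mathcal{F}$ pulls back to a nonsingular oriented foliation $\widetilde{\mathcal{F}}$. By the definition of $\mathcal{F}$-transverse intersection at $\gamma_1(t_1)=\gamma_2(t_2)$, I would select lifts $\widetilde\gamma_1,\widetilde\gamma_2$ sharing a common leaf $\widetilde\phi=\phi_{\widetilde\gamma_1(t_1)}=\phi_{\widetilde\gamma_2(t_2)}$, and record the witnesses $a_i'\in(a_i,t_i)$, $b_i'\in(t_i,b_i)$ certifying that $\phi_{\widetilde\gamma_2(a_2')}$ lies below $\phi_{\widetilde\gamma_1(a_1')}$ and $\phi_{\widetilde\gamma_2(b_2')}$ lies above $\phi_{\widetilde\gamma_1(b_1')}$, both relative to $\widetilde\phi$. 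This above/below configuration is the only feature of the intersection I will use.

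\textbf{Reduction to discrete trajectories.} Admissibility of $\gamma_i$ of order $n_i$ supplies a point $z_i\in\dom(I)$ with $I^{n_i}_{\mathcal{F}}(z_i)$ equivalent to $\gamma_i$. Since $\mathcal{F}$-equivalence preserves both admissibility and the location of $\mathcal{F}$-transverse intersections, I would replace $\gamma_i$ by the lifted discrete trajectory $\widetilde{I^{n_i}_{\mathcal{F}}}(\widetilde z_i)$, and arrange that $\widetilde\phi$ is crossed by both orbits. The crossing then occurs inside a specific step $I_{\mathcal{F}}(\widetilde{f^k(z_1)})$ of the first orbit and a specific step $I_{\mathcal{F}}(\widetilde{f^\ell(z_2)})$ of the second, for integers $0\leq k\leq n_1-1$, $0\leq\ell\leq n_2-1$.

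\textbf{Swap at the common leaf.} The core construction is a swap. Using the strict above/below configuration at $\widetilde\phi$, I would argue that the leaves $\phi_{\widetilde{f^{k+1}(z_1)}}$ and $\phi_{\widetilde{f^{\ell+1}(z_2)}}$ sit on opposite sides of $\widetilde\phi$ in precisely the arrangement that allows one discrete orbit to be continued as the other across $\widetilde\phi$, and that this arrangement is compatible with the $f$-action on $\widetilde{\dom(\mathcal{F})}$. A foliation-chart argument near $\widetilde\phi$ then produces a point $z_3\in\dom(I)$ whose transverse trajectory $I^{n_1+n_2}_{\mathcal{F}}(z_3)$ is equivalent to $\gamma_1|_{[a_1,t_1]}\gamma_2|_{[t_2,b_2]}$; the symmetric construction, with the roles of the two orbits exchanged, realizes the other concatenation. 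Both realizations have order at most $n_1+n_2$, which proves the first assertion.

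\textbf{The dichotomy and main obstacle.} For the refined bound I would count steps: the first swap uses at most $k+1$ steps of orbit one and $n_2-\ell-1$ of orbit two, for a total of $p:=k+n_2-\ell$, while the reverse swap uses $(n_1+n_2)-p$. A short case analysis on $p$ versus $\min(n_1,n_2)$ gives the dichotomy: if $p\leq\min(n_1,n_2)$ or $(n_1+n_2)-p\leq\min(n_1,n_2)$, then one of the concatenations is admissible of order $\min(n_1,n_2)$; otherwise $p,\,(n_1+n_2)-p<\max(n_1,n_2)$, and both are admissible of order $\max(n_1,n_2)$. The main obstacle, where most of the technical work lies, is justifying that the swapped path is genuinely realized by a discrete orbit of $f$ and not merely by a transverse path crossing the same sequence of leaves. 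This requires propagating the strict separation from the single common leaf $\widetilde\phi$ along the dynamics and correctly accounting for how the deck-transformation structure interacts with $\mathcal{F}$-equivalence classes; it is exactly what distinguishes an $\mathcal{F}$-transverse intersection from a merely topological one, and it is the crux of the argument.
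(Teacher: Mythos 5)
This proposition is cited from Le Calvez--Tal \cite[Proposition 20]{LeCalvezTal1} and not reproved in the paper, so the comparison is against that reference. Your proposal is a sketch with a gap at precisely the point you yourself flag. After the (correct) reduction to lifted discrete trajectories in the universal cover, you assert that ``a foliation-chart argument near $\widetilde\phi$'' produces a point $z_3$ whose $(n_1+n_2)$-step transverse trajectory realizes the concatenation --- but that sentence is the content of the proposition, not a proof of it, and your closing paragraph concedes as much. The difficulty is global, not local: a chart near $\widetilde\phi$ controls the foliation only in a neighborhood of one leaf, whereas what must be produced is a single lifted orbit that shadows $\widetilde z_1$'s trajectory for some iterates, crosses over near $\widetilde\phi$, and then shadows $\widetilde z_2$'s. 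In Le Calvez--Tal this is extracted from the Brouwer structure of the lifted homeomorphism (Brouwer lines transverse to the lifted foliation and how the two lifted trajectories sit relative to them), not from a chart computation.

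The step-counting argument for the dichotomy also does not go through. You set $p = k + n_2 - \ell$ and treat the two concatenations as admissible of orders $\leq p$ and $\leq n_1+n_2-p$, then argue arithmetically on $p$ versus $\min(n_1,n_2)$. But admissibility of order $\leq m$ means ``is a subpath of a path equivalent to $I^m_{\mathcal{F}}(w)$ for some $w$''; it is not a count of which steps of the original orbits $z_1,z_2$ contributed to the concatenation. Granting the existence of $z_3$ only yields admissibility of order $n_1+n_2$, and no argument is supplied that an order-$p$ realization exists. Moreover your case analysis implicitly upgrades ``order $\leq m$'' to ``order $m$,'' which requires the leaf-on-one-side hypothesis of Proposition 19 (recalled in the paper directly above the statement) and is never verified. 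The dichotomy in the cited proof arises from a topological case distinction in the construction of the realizing orbit, not from arithmetic on step counts.
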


From this proposition one can deduce for example the following
\begin{prop}\label{Proposition23}(\cite[Proposition 23]{LeCalvezTal1})
Suppose that $\gamma: [a,b] \to M$ is a transverse path admissible of order $n$ and that $\gamma$ has an $\mathcal{F}$-transverse self-intersection at $\gamma(s) = \gamma(t)$ with $s< t$. Then $\gamma|_{[a, s]}\gamma|_{[t,b]}$ is admissible of order $n$ and $\gamma|_{[a,s]}(\gamma|_{[s,t]})^q\gamma|_{[t,b]}$ is admissible of order $qn$ for every $q\geq 1$. 
\end{prop}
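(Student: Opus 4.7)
I would derive both assertions from the Fundamental Proposition~\ref{fundprop} applied to $\gamma$ against itself. Taking $\gamma_1=\gamma_2=\gamma$, both admissible of order $n$, which intersect $\mathcal{F}$-transversally at $\gamma_1(s)=\gamma_2(t)$, Proposition~\ref{fundprop} produces that the two concatenations
\[
\gamma|_{[a,s]}\gamma|_{[t,b]} \quad\text{and}\quad \gamma|_{[a,t]}\gamma|_{[s,b]}=\gamma|_{[a,s]}\bigl(\gamma|_{[s,t]}\bigr)^2\gamma|_{[t,b]}
\]
are admissible of order $2n$, and that at least one of them is already admissible of order $\min(n,n)=n$. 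Pinning down that the shortcut in particular attains order $n$ is the crux of the first assertion: I would argue this by re-applying Proposition~\ref{fundprop}, this time to the pair $(\gamma,\gamma|_{[a,s]}\gamma|_{[t,b]})$, which again share the $\mathcal{F}$-transverse crossing at the endpoints of the excised loop; the two resulting concatenations are $\gamma$ itself and the shortcut, and chasing the alternative in this now-asymmetric situation forces the shortcut to be of order $n$.

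For the second assertion I would argue by induction on $q\ge 1$. The base case $q=1$ is the hypothesis. For the inductive step, set $\alpha_q:=\gamma|_{[a,s]}\bigl(\gamma|_{[s,t]}\bigr)^q\gamma|_{[t,b]}$ and assume $\alpha_q$ is admissible of order $qn$. The last occurrence of the point $\gamma(s)=\gamma(t)$ inside $\alpha_q$ — namely the right endpoint of the $q$-th copy of $\gamma|_{[s,t]}$ — is a point at which $\alpha_q$ and $\gamma$ intersect $\mathcal{F}$-transversally (at position $s$ on the $\gamma$ side). Applying Proposition~\ref{fundprop} at this junction concatenates the initial segment of $\alpha_q$ up to that point with $\gamma|_{[s,b]}$ to produce $\alpha_{q+1}$, admissible of order $qn+n=(q+1)n$, closing the induction.

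The main obstacle I foresee is verifying the $\mathcal{F}$-transverse intersection used in the induction step. By definition, the self-intersection of $\gamma$ at $\gamma(s)=\gamma(t)$ is witnessed in $\widetilde{\dom(\mathcal{F})}$ by a deck transformation $U$ and a lift $\widetilde\gamma$ such that $\widetilde\gamma$ and $U\widetilde\gamma$ cross $\widetilde{\mathcal{F}}$-transversally at $\widetilde\gamma(s)=U\widetilde\gamma(t)$. The natural lift of $\alpha_q$ is built by successively gluing on deck-translates of $\widetilde\gamma|_{[s,t]}$ by powers of $U$, so that the required transverse intersection of $\alpha_q$ with $\gamma$ at the last junction corresponds, after lifting, to the crossing of an appropriate $U^j$-translate of $\widetilde\gamma$ with a lift of $\gamma$ at a point whose $\widetilde{\mathcal{F}}$-transversality is inherited from the original crossing. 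Making this deck-translate bookkeeping explicit — and, for the shortcut assertion, confirming that the second invocation of Proposition~\ref{fundprop} really does eliminate the unwanted branch of the alternative — is the essential technical step; everything else is a direct application of Proposition~\ref{fundprop}.
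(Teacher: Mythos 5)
The paper does not prove this proposition itself — it cites it from Le~Calvez--Tal. So there is no in-paper proof to compare against, but one can still test your plan against the hypotheses of Proposition~\ref{fundprop} and against the argument the paper \emph{does} spell out for the generalisation (Proposition~\ref{Proposition23_1}), which sketches the intended structure.

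Your plan for the powers $(\gamma|_{[s,t]})^q$ is essentially sound: the induction works, and the lifting/deck-translate bookkeeping you flag as the main obstacle is exactly the point; applying $U^{-1}$ to the original transverse self-intersection $\widetilde\gamma, U\widetilde\gamma$ at $\widetilde\gamma(s)=U\widetilde\gamma(t)$ shows that the last passage of the natural lift of $\alpha_q$ through a lift of $\gamma(s)=\gamma(t)$ meets an appropriate translate of $\widetilde\gamma$ transversally, so Proposition~\ref{fundprop} applies and gives order $(q+1)n$.

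The plan for the shortcut $\gamma|_{[a,s]}\gamma|_{[t,b]}$ has a genuine gap, in two ways. First, the pair $(\gamma,\,\gamma|_{[a,s]}\gamma|_{[t,b]})$ does \emph{not} satisfy the hypothesis of Proposition~\ref{fundprop}: any lift of the shortcut passing through a lift $\widetilde z$ of the junction coincides with a lift of $\gamma$ on one entire side of $\widetilde z$ (either the incoming arc $\widetilde\gamma|_{[a,s]}$ or the outgoing arc $U\widetilde\gamma|_{[t,b]}$). On that side the two lifts cross the same nested stack of leaves, so the required ``$\phi_{\widetilde\gamma_2(a_2)}$ above/below $\phi_{\widetilde\gamma_1(a_1)}$ relative to $\phi_{\widetilde z}$'' condition fails — those leaves separate one another and cannot be compared. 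Second, and independently: even if the intersection were $\mathcal{F}$-transverse, the concatenations produced would be $\gamma$ itself and the shortcut, and the alternative ``one of them is admissible of order $\min(n,2n)=n$ or both of order $\max(n,2n)=2n$'' is already satisfied by the known fact that $\gamma$ has order $n$. The dichotomy gives you nothing new about the shortcut, so the asymmetric re-application cannot ``force'' the conclusion. The correct route — the one Le Calvez--Tal use and that the paper reproduces verbatim in the proof of Proposition~\ref{Proposition23_1} — is a pumping argument: assume the shortcut is \emph{not} admissible of order $n$; then the alternative in Proposition~\ref{fundprop} forces $\gamma|_{[a,s]}(\gamma|_{[s,t]})^2\gamma|_{[t,b]}$ to be admissible of order $n$, and iterating against $\gamma$ forces $\gamma|_{[a,s]}(\gamma|_{[s,t]})^q\gamma|_{[t,b]}$ to be admissible of order $n$ for \emph{all} $q$, which is impossible because a path equivalent to $I_{\mathcal{F}}^{m}(z)$ for bounded $m$ cannot cross an unbounded set of pairwise distinct leaves. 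That impossibility is the missing ingredient in your plan.
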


Besides Proposition \ref{Proposition23} we will use the following, a bit more general, statement:
\begin{prop}\label{Proposition23_1}
Let $k>0$. Suppose that $\gamma: [a,b] \to M$ is a transverse path admissible of order $n$ and that $\gamma$ has positive $\mathcal{F}$-transverse self-intersections at $\gamma(s_i) = \gamma(t_i)$, $i=1, \cdots k$, with $a < s_1 < t_1 < s_2 < t_2 < \cdots<s_k < t_k < b$. 
Then $\gamma|_{[a, s_1]}\gamma|_{[t_1,s_2]}\gamma|_{[t_2,s_3]}\cdots \gamma|_{[t_k,b]}$ is admissible of order $n$.  The same holds true if all $\mathcal{F}$-transverse self-intersections above are negative. 
\end{prop}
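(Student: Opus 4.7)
The plan is to proceed by induction on $k$, the number of self-intersections. The base case $k=1$ is precisely Proposition~\ref{Proposition23}. For the inductive step, I assume the statement for $k-1$, apply Proposition~\ref{Proposition23} to the self-intersection at $\gamma(s_1)=\gamma(t_1)$, and obtain that $\gamma':=\gamma|_{[a,s_1]}\gamma|_{[t_1,b]}$, reparametrized on a single interval, is a transverse path admissible of order $n$.

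The key verification is that $\gamma'$ inherits the remaining $k-1$ positive $\mathcal{F}$-transverse self-intersections. For each $i\geq 2$, both $s_i$ and $t_i$ lie in $(t_1,b)$, hence in the second subpath of $\gamma'$. Given any lift $\widetilde{\gamma'}$ of $\gamma'$ to $\widetilde{\dom(\mathcal{F})}$, its restriction to the $[t_1,b]$-piece is a lift of $\gamma|_{[t_1,b]}$, which extends to some lift $\widetilde{\gamma}$ of all of $\gamma$. The positive $\mathcal{F}$-transverse self-intersection hypothesis at $(s_i,t_i)$ supplies a deck transformation $U_i$ so that $\widetilde{\gamma}$ and $U_i\widetilde{\gamma}$ intersect $\widetilde{\mathcal{F}}$-transversally and positively at $\widetilde{\gamma}(s_i)=U_i\widetilde{\gamma}(t_i)$. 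Since this condition depends only on the behaviour of the lifts in small neighborhoods of $s_i,t_i$, and these neighborhoods sit inside the interior of $[t_1,b]$, the same $U_i$ yields a positive $\widetilde{\mathcal{F}}$-transverse intersection of $\widetilde{\gamma'}$ and $U_i\widetilde{\gamma'}$ at the reparametrized analogues of $s_i,t_i$. Applying the inductive hypothesis to $\gamma'$ and its $k-1$ inherited positive self-intersections then produces the full shortcut $\gamma|_{[a,s_1]}\gamma|_{[t_1,s_2]}\cdots\gamma|_{[t_k,b]}$ as admissible of order $n$. The case of all negative self-intersections is handled identically, with ``positive'' replaced by ``negative'' throughout.

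The main obstacle is the inheritance step: a lift of $\gamma'$ is assembled from lifts of two subpaths of $\gamma$ glued over a preimage of $\gamma(s_1)=\gamma(t_1)$ by a deck transformation, and it is not itself a lift of $\gamma$, so the transverse self-intersection property does not transfer by a purely formal argument. The extension-of-lifts trick above circumvents this by upgrading only the second glued piece to a full lift of $\gamma$, which suffices precisely because all of the remaining self-intersections lie entirely in that piece, so the local nature of $\widetilde{\mathcal{F}}$-transverse intersection lets the chosen deck transformation $U_i$ be transported unchanged to $\gamma'$.
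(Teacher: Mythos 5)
Your proof takes a genuinely different route from the paper's. You induct on $k$, applying Proposition~\ref{Proposition23} at each step and relying on the claim that the shortcut $\gamma' = \gamma|_{[a,s_1]}\gamma|_{[t_1,b]}$ inherits the remaining positive $\mathcal{F}$-transverse self-intersections. The paper instead applies Proposition~\ref{Proposition23} once, then for each $i \geq 2$ compares the current shortcut with the \emph{un-truncated} $\gamma$ via the dichotomy of Proposition~\ref{fundprop}, excluding the unwanted alternative (that $\gamma|_{[a,s_i]}(\gamma|_{[s_i,t_i]})^q\gamma|_{[t_i,b]}$ is admissible of order $n$ for all $q$) by the impossibility argument from the proof of~\cite[Proposition~23]{LeCalvezTal1}.

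The crux of your argument, the inheritance step, is where a gap opens. You assert that the positive $\widetilde{\mathcal{F}}$-transverse intersection at $\widetilde\gamma(s_i) = U_i\widetilde\gamma(t_i)$ ``depends only on the behaviour of the lifts in small neighborhoods of $s_i, t_i$.'' That is not what the definition says: $\widetilde{\mathcal{F}}$-transversality is witnessed by parameters $a_1 < s_i < b_1$ and $a_2 < t_i < b_2$ whose leaves sit in the prescribed positions relative to $\phi$, and these parameters may lie anywhere in the domains of the lifted paths, including in $(s_1,t_1)$. Both of your truncated lifts $\widetilde{\gamma'}$ and $U_i\widetilde{\gamma'}$ lose (or have replaced by a deck-translated segment) the parameter range $(s_1, t_1)$, so witnesses falling there become unusable, and you offer no replacement. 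The statement you actually need, namely that the witnesses for an $\widetilde{\mathcal{F}}$-transverse intersection can always be chosen arbitrarily close to the intersection parameters, is plausible and likely implicit in the Le~Calvez--Tal theory, but it is neither proven nor cited; as written, the inheritance claim is unjustified. The paper's route avoids the worst of this by keeping $\gamma$ un-truncated as the second path fed into Proposition~\ref{fundprop}, so its witnesses are always available, and by never needing to assert that the shortcut has an $\mathcal{F}$-transverse \emph{self}-intersection at all.
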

\begin{proof}
By Proposition \ref{Proposition23}, $\gamma|_{[a, s_1]}\gamma|_{[t_1,b]}$ is admissible of order $n$. 
Furthermore it is straightforward to see that  $\gamma|_{[a, s_1]}\gamma|_{[t_1,b]}$ and $\gamma$ have a positive $\mathcal{F}$-transverse intersection at $\gamma|_{[a, s_1]}\gamma|_{[t_1,b]}(s_2) = \gamma(t_2)$. Hence, by  Proposition \ref{fundprop},  $\gamma|_{[a, s_1]}\gamma|_{[t_1,s_2]}\gamma|_{[t_2,b]}$ is admissible of order $n$  or $\gamma|_{[a,t_2]}\gamma|_{[s_2,b]}= \gamma|_{[a,s_2]}(\gamma|_{[s_2,t_2]})^2 \gamma|_{[t_2,b]}$ is admissible of order $n$. Repeating this argument inductively, that is applying Proposition \ref{fundprop} to the paths $\gamma|_{[a, s_1]}\gamma|_{[t_1,b]}$ and $\gamma|_{[a,s_2]}(\gamma|_{[s_2,t_2]})^k\gamma|_{[s_2, b]}$, we get that $\gamma|_{[a, s_1]}\gamma|_{[t_1,s_2]}\gamma|_{[t_2,b]}$ is admissible of order $n$ or \linebreak  $\gamma|_{[a,s_2]}(\gamma|_{[s_2,t_2]})^q \gamma|_{[t_2,b]}$ is admissible of order $n$ for all $q\geq 1$. That the latter is impossible is shown in \cite[Proof of Proposition 23]{LeCalvezTal1}. Hence, repeating this argument for $i=3, \cdots, k$, shows the claim. 
\end{proof}

Certain assumptions on a transverse loop $\Gamma$ guarantee the existence of periodic points of $f$. 
An $\mathcal{F}$-transverse loop $\Gamma$ is \textit{linearly admissible of order $q$} if 
there exists two sequences $(r_k)_{k\geq 0}$ and $(s_k)_{k\geq 0}$ of natural numbers with 
\begin{align*}
\begin{split}
&\lim_{k\to \infty} r_k = \lim_{k\to \infty} s_k = +\infty, \, \limsup_{k\to \infty} r_k/s_k \geq 1/q, \text{and such that }\\
&\gamma|_{[0,r_k]} \text{ is admissible of order } \leq s_k.
\end{split}
\end{align*}
If $z$ is a periodic point of period $q$, then there exists a transverse loop $\Gamma'$ whose natural lifts satisfies $\gamma'|_{[0,1]} = I_{\mathcal{F}}^q(z)$. A transverse loop $\Gamma$ is called \textit{associated} to $z$ if it is $\mathcal{F}$-equivalent to $\Gamma'$. 
Note that $\Gamma$ is then linearly admissible of order $q$.
The following important realization result asserts a partial converse. 
\begin{prop}(\cite[Proposition 26]{LeCalvezTal1})\label{Proposition26}
Let $\Gamma$ be a linearly admissible transverse loop of order $q$ that has an $\mathcal{F}$-transverse self-intersection. 
Then for every rational number $r/s \in (0,1/q]$ written in an irreducible way, $\Gamma^r$ is associated to a periodic point of period $s$. 
\end{prop}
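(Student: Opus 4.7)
The plan is to exploit the $\mathcal{F}$-transverse self-intersection of $\Gamma$ together with Propositions~\ref{fundprop} and~\ref{Proposition23} to manufacture arbitrarily long admissible transverse paths that trace an iterate of $\Gamma$ at a prescribed asymptotic speed $r/s$, and then to use a compactness argument to extract a genuine period-$s$ point of $f$ whose transverse trajectory is $\mathcal{F}$-equivalent to $\Gamma^r$.

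First I would fix a natural lift $\gamma:\R\to M$ of $\Gamma$ and an $\mathcal{F}$-transverse self-intersection $\gamma(t_1)=\gamma(t_2)$ with $0\leq t_1<t_2$ and $t_2-t_1<1$. By Proposition~\ref{Proposition23} the "shortcut loop" $\gamma|_{[t_1,t_2]}$ may be iterated any number of times at that intersection point while preserving admissibility, at the cost of multiplying the order. Combining this with linear admissibility of order $q$, choose $k$ large so that $\gamma|_{[0,r_k]}$ is admissible of order $\leq s_k$ with $r_k/s_k$ arbitrarily close to $1/q$. For a target rational $r/s\in(0,1/q]$ and every large $N$, I would concatenate a chunk of $\gamma|_{[0,r_k]}$ with a controlled number of shortcut insertions to obtain an admissible path of order $\leq Ns$ whose underlying loop is, up to a bounded number of copies of $\Gamma$, equal to $\Gamma^{Nr}$. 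The arithmetic is feasible precisely because $r/s\leq 1/q$: the "excess order" above the natural speed of $\Gamma$ can be absorbed by additional shortcut insertions, which add to the order without introducing further copies of $\Gamma$.

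Next, each admissible path of order $\leq Ns$ is, by definition, equivalent to $I^{Ns}_{\mathcal{F}}(z_N)$ for some $z_N\in\dom(I)$. I would extract an accumulation point $z_\infty$ of the sequence $(z_N)$ on $M$. The construction is arranged so that, after lifting to the universal cover $\widetilde{\dom(I)}$ and writing $T$ for the deck transformation associated to the loop $\Gamma$, the lift of $f^{Ns}(z_N)$ lies close to $T^{Nr}$ applied to the chosen lift of $z_N$. Passing to the limit gives $\widetilde{f^s}(\widetilde{z_\infty})=T^r\widetilde{z_\infty}$ for appropriate lifts, hence $f^s(z_\infty)=z_\infty$ and $I^s_{\mathcal{F}}(z_\infty)$ is $\mathcal{F}$-equivalent to $\Gamma^r$. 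The irreducibility of $r/s$ then rules out a smaller period, since any period $s'<s$ would force a translation equation on $\widetilde{\dom(I)}$ implying $\gcd(r,s)>1$.

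The main obstacle is this realization step: one must ensure that $z_\infty$ does not fall into $\fix(I)$, and that the $\mathcal{F}$-equivalence class of its transverse trajectory survives the limit. This requires tracking the lifts of $z_N$ and of the admissible paths in $\widetilde{\dom(\mathcal{F})}$, and using the separation between distinct $T$-translates of $\gamma$ to confine the lifts of $z_N$ to a bounded region in a fundamental strip for $T$. With such control in place, the compactness argument produces a non-degenerate periodic point realizing exactly the translation number $r/s$ along the $\Gamma$-axis, completing the proof.
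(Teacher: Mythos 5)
The paper does not prove this proposition: it is quoted verbatim as Proposition~26 from Le~Calvez--Tal~\cite{LeCalvezTal1}, so there is no in-paper argument to compare against. Evaluating your reconstruction on its own merits, it has a genuine gap at the realization step. You produce, for each large $N$, a point $z_N$ with $I^{Ns}_{\mathcal{F}}(z_N)$ equivalent to (roughly) $\gamma|_{[0,Nr]}$, and then try to extract a periodic point by taking an accumulation point $z_\infty$. But equivalence of transverse paths is a statement about which leaves of $\widetilde{\mathcal{F}}$ are visited, not about pointwise proximity, so nothing forces $\widetilde{f}^{Ns}(\widetilde{z}_N)$ to be metrically close to $T^{Nr}\widetilde{z}_N$ in $\widetilde{\dom(I)}$. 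More fundamentally, a limit of points $z_N$ whose orbits approximately close up after $Ns$ iterates need not be periodic at all: the orbit of $z_\infty$ can escape to the singular set or to the ends of $\dom(I)$, and the ``translation equation'' you want in the limit simply does not survive an accumulation argument. You flag this yourself as ``the main obstacle,'' but the proposed remedy (confining lifts to a fundamental strip using separation of $T$-translates) is exactly the missing content, and it cannot be supplied by compactness alone.

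The actual proof in \cite{LeCalvezTal1} goes through a fixed-point theorem rather than compactness. One passes to the annulus $\widetilde{\dom(I)}/T$, where $T$ is the shift of a lift of $\Gamma$, and the induced homeomorphism there acquires a nontrivial rotation interval precisely because the linear admissibility estimates (boosted from ``$\leq n$'' to ``$=n$'' via the transverse self-intersection and Proposition~\ref{fundprop}/\ref{Proposition23}-type manipulations, together with the ``leaf on its right and left'' criterion) show that both slow and fast transverse rotations occur. Rational rotation numbers in the interior of the rotation set are then realized by periodic orbits via a Poincar\'e--Birkhoff/Franks-type realization theorem, and the irreducibility of $r/s$ pins down the period $s$ exactly. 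This index-theoretic/fixed-point input is the step your compactness scheme is missing, and it is not a technicality one can route around: without it, one only obtains pseudo-periodic behavior, not actual periodic points. The parts of your sketch that build long admissible paths by inserting shortcuts at the $\mathcal{F}$-transverse self-intersection are on the right track and do match the spirit of the admissibility bookkeeping in \cite{LeCalvezTal1}; it is the passage from admissibility to an honest periodic orbit that needs the fixed-point machinery.
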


\section{Non-simple free homotopy classes and $\mathcal{F}$-transverse self-intersections}\label{sec:proof_foliation}
Let throughout this section $M$ be an oriented closed surface,  $\mathcal{F}$ a (singular) oriented foliation on $M$.
In this section we will study $\mathcal{F}$-transverse loops in $\dom(\mathcal{F})$ that are not freely homotopic to a multiple of a simple loop and derive some useful properties. At first, we show that the existence of a $\mathcal{F}$-transverse  self-intersection is sufficient for a loop to be of this type. 
\begin{lem}\label{lem:transverse_not_simple}
If $\Gamma$ is an $\mathcal{F}$-transverse loop in $\dom(\mathcal{F})$ that has an $\mathcal{F}$-transverse self-intersection then it is not freely homotopic in $\dom(\mathcal{F})$ to a multiple of a simple loop. 
\end{lem}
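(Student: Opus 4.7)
The plan is to argue by contradiction, working in the universal cover $\widetilde{\dom(\mathcal{F})}\cong \R^2$. Assume $\Gamma$ has an $\mathcal{F}$-transverse self-intersection yet is freely homotopic in $\dom(\mathcal{F})$ to $\Gamma_0^m$ for some simple loop $\Gamma_0$. The case where $\Gamma_0$ is null-homotopic is easily dispatched: then $\Gamma$ itself is null-homotopic, so its natural lift would lift to a periodic $\widetilde{\mathcal{F}}$-transverse map $\R\to \R^2$, which is impossible because $\widetilde{\mathcal{F}}$-transverse paths to a non-singular foliation on $\R^2$ meet each leaf at most once and hence are injective and proper, with images embedded lines. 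Therefore $\Gamma_0$ represents a nontrivial class in $\pi_1(\dom(\mathcal{F}))$, and as every essential simple closed curve on a surface is primitive, the associated deck transformation $T_0$ is primitive.

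Let $\widetilde{\gamma}:\R\to \R^2$ be a lift of the natural lift of $\Gamma$; by the above it is an embedded proper line. Let $T$ be the deck transformation with $T\widetilde{\gamma}(t)=\widetilde{\gamma}(t+1)$. Free homotopy $\Gamma\simeq\Gamma_0^m$ gives $T$ conjugate to $T_0^m$, and after changing $\widetilde{\gamma}$ by a deck translate we may assume $T=T_0^m$. The $\mathcal{F}$-transverse self-intersection at $\gamma(t_1)=\gamma(t_2)$ supplies a deck transformation $U$ with $\widetilde{\gamma}$ and $U\widetilde{\gamma}$ having an $\widetilde{\mathcal{F}}$-transverse intersection at $\widetilde{\gamma}(t_1)=U\widetilde{\gamma}(t_2)$; one checks $U\notin \langle T\rangle$, for if $U=T^j$ then by injectivity of $\widetilde{\gamma}$ one obtains $t_1=t_2+j$ and $U\widetilde{\gamma}$ is merely the reparametrization $\widetilde{\gamma}(\cdot+j)$, which sweeps the same leaves in the same order and so precludes the ``above/below'' switch required by the definition. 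Let $\widetilde{L_0}$ be the image of a lift of the natural lift of $\Gamma_0$; it is an embedded proper $T_0$-invariant line in $\R^2$ by simplicity of $\Gamma_0$.

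The crucial step is to convert these data into a topological contradiction. Lifting the free homotopy $\Gamma\simeq \Gamma_0^m$ produces a $T$-equivariant proper homotopy in $\R^2$ from $\widetilde{\gamma}$ to a $T$-equivariant reparametrization of $\widetilde{L_0}$, and applying $U$ yields an analogous homotopy between $U\widetilde{\gamma}$ and $U\widetilde{L_0}$. Thus $\widetilde{\gamma}$ and $U\widetilde{\gamma}$ share their pairs of ends at infinity with $\widetilde{L_0}$ and $U\widetilde{L_0}$ respectively. Simplicity of $\Gamma_0$ forces the translates $\widetilde{L_0}$ and $U\widetilde{L_0}$ to be disjoint when $U\notin \langle T_0\rangle$ and to coincide as subsets when $U\in \langle T_0\rangle\setminus \langle T\rangle$; in either case their pairs of ends are unlinked on the boundary circle of a suitable $\langle T_0\rangle$-equivariant compactification of $\R^2$. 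But an $\widetilde{\mathcal{F}}$-transverse intersection between two embedded proper lines forces their pairs of ends to link, since it records a strict crossing of two monotone paths in the leaf space of $\widetilde{\mathcal{F}}$ via the ``above/below'' ternary relation. This contradicts the unlinking above. The main obstacle is precisely this last implication, i.e.\ converting the local ``above/below'' relation on three leaves from the definition into a rigid global linking statement at infinity; a possibly cleaner alternative that avoids explicit compactification considerations is to descend to the annular cover $A_0$ of $\dom(\mathcal{F})$ associated to $\langle T_0\rangle$ and argue there separately for the cases $U\in \langle T_0\rangle$ (impossible because two $T$-translates of an embedded transverse line project to monotone paths in the leaf space of $\widetilde{\mathcal{F}_{A_0}}$ that cannot cross) and $U\notin \langle T_0\rangle$ (impossible because simplicity of the core curve $\Gamma_0^{A_0}\subset A_0$ obstructs different lifts of $\Gamma$ in $A_0$ from being in a crossing configuration).
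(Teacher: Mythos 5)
Your outline shares the paper's basic setup (work in the universal cover, use the deck transformation $U$ from the definition of $\mathcal{F}$-transverse self-intersection, and contradict the simplicity of the target curve), but the central implication is left unproved, and you correctly flag this yourself. The gap is genuine and not easily patched along the route you sketch: you want to convert the local ``above/below'' structure at a shared leaf into a global assertion that the ends at infinity of $\widetilde\gamma$ and $U\widetilde\gamma$ link on a compactification circle, and you offer no argument for that step. Note moreover that this is exactly the content of Lemma~\ref{lem:transverse_sep}, and the paper derives \emph{that} linking statement \emph{from} Lemma~\ref{lem:transverse_not_simple} (via the hyperbolic specialization of the present proof), not the other way around. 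Using the linking fact to prove Lemma~\ref{lem:transverse_not_simple} would therefore require an independent argument, which your sketch does not supply, and the annular-cover alternative you propose at the end is likewise stated without a proof at its decisive points.

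The paper's route is different in kind and sidesteps these issues. Instead of any compactification at infinity, one fixes an auxiliary Riemannian metric on $\dom(\mathcal{F})$ and an $\epsilon>0$ (uniform by compactness of $\Gamma(S^1)$) so that any leaf meeting the $\epsilon$-neighbourhood of a lift of $\gamma$ must cross that lift. From the $\widetilde{\mathcal{F}}$-transverse intersection one then builds a bi-infinite family of translates $T^{2kl}\widetilde\gamma_2$ whose $\epsilon$-neighbourhoods are pairwise disjoint and mutually separating, yielding the quantitative divergence estimate \eqref{nonsimple}: for every $C>0$, $\widetilde\gamma_1$ has points in both $L(\widetilde\gamma_2)$ and $R(\widetilde\gamma_2)$ at distance $>C$ from $\widetilde\gamma_2$. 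This estimate is manifestly stable under free homotopy (a homotopy of loops between two compact loops lifts to a bounded equivariant homotopy), so the same holds for the corresponding lifts $\lambda_1,\lambda_2$ of any loop $\Lambda$ freely homotopic to $\Gamma$. In particular $\lambda_1$ crosses from one side of $\lambda_2$ to the other yet does not coincide with it, which is impossible if $\Lambda$ is a multiple of a simple loop (two lifts of the natural lift of such a $\Lambda$ have either disjoint or identical images). This argument requires no hyperbolicity, no circle at infinity, and no separate treatment of the degenerate topologies of $\dom(\mathcal{F})$; these are real advantages, since the hyperbolic structure is only introduced after Lemma~\ref{lem:transverse_not_simple} and precisely because of it. If you want to complete your approach, the missing ingredient is exactly such a metric separation estimate, at which point you may as well follow the paper's proof.
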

\begin{proof}
Choose a Riemannian metric $g$ on $\dom(\mathcal{F})$ and let $\widetilde{g}$ be the lift to the universal cover $\widetilde{\dom(\mathcal{F})}$, $\widetilde{d}(\cdot, \cdot)$ its induced metric.
For a lift $\widetilde{\gamma}$ of $\gamma$ and for $\epsilon > 0$ denote by $\mathcal{U}(\widetilde\gamma, \epsilon):= \{x \in \widetilde{\dom(\mathcal{F})} \, |\, \exists t\in \R \text{ with } \widetilde{d}(\widetilde{\gamma}(t), x)  < \epsilon\}$ the $\epsilon$-neighbourhood of $\widetilde\gamma$. 
 Since $\Gamma$ is $\mathcal{F}$-transverse we can choose $\epsilon> 0$ sufficiently small such that, for any lift $\widetilde\gamma$ of $\gamma$, every leaf of $\widetilde{\mathcal{F}}$ that intersects $\mathcal{U}(\widetilde\gamma, \epsilon)$  also intersects $\widetilde\gamma$.
Let now $\widetilde\gamma_1$ and $\widetilde\gamma_2$ be two lifts of $\gamma$ that intersect $\mathcal{F}$-transversally and positively at $\widetilde\gamma_1(t_1) = \widetilde\gamma_2(t_2)$.
We show that 
\begin{align}\label{nonsimple}
\begin{split}
&\forall C>0, \exists a, b \in \R \text{ such that } \widetilde\gamma_1(a) \in R(\widetilde\gamma_2), \, \widetilde\gamma_1(b) \in L(\widetilde\gamma_2), \text{ and } \\ &\widetilde\gamma_1(a), \, \widetilde\gamma_1(b) \text{ do not lie in the $C$-neighbourhood } \mathcal{U}(\widetilde\gamma_2, C) \text{ of }\widetilde\gamma_2.
\end{split}
\end{align} 
If $\Lambda$ is a loop freely homotopic to $\Gamma$, then \eqref{nonsimple} still holds for the lifts $\lambda_1$ and $\lambda_2$ of $\Lambda$  that are obtained by lifting a homotopy of $\Gamma$ to $\Lambda$ to homotopies that extend $\widetilde\gamma_1$ and $\widetilde\gamma_2$, respectively. In particular, the images of $\lambda_1$ and $\lambda_2$ are non-identical and so $\Lambda$ cannot be a multiple of a simple loop. 

Let $T: \widetilde{\dom(\mathcal{F})} \to \widetilde{\dom(\mathcal{F})}$ be the deck transformation that is given by $T(\widetilde\gamma_1(t)) = \widetilde\gamma_1(t+1)$. 
There is some $k \in \N$ sufficiently large such that $\widetilde\gamma_2$ lies on the right of $\phi_{\widetilde\gamma_1(t_1+k)}= T^k(\phi_{\widetilde\gamma_1(t_1)})$ and $\widetilde\gamma_2$ lies on the left of $\phi_{\widetilde\gamma_1(t_1-k)}= T^{-k}(\phi_{\widetilde\gamma_1(t_1)})$. 
Consider the lifts $T^{2kl}\widetilde\gamma_2(t):= T^{2kl}(\widetilde\gamma_2(t))$, $l\in \Z$, of $\gamma$. 
$\widetilde\gamma_1$ and $T^{2kl}\widetilde\gamma_2$  intersect $\mathcal{F}$-transversally and positively at $\widetilde\gamma_1(t_1 + 2kl) = T^{2kl}\widetilde\gamma_2(t_2)$. 
$T^{2kl}\widetilde\gamma_2$ is on the right of $\phi_{\widetilde\gamma_1(t+2kl+k)}= T^{2kl+k}(\phi_{\widetilde\gamma_1(t)})$, and on the left of $\phi_{\widetilde\gamma_1(t+2kl-k)}= T^{2kl-k}(\phi_{\widetilde\gamma_1(t)})$. Moreover, $\phi_{\widetilde\gamma_1(t+2kl+k)}$ is on the left of $T^{2kl}\widetilde\gamma_2$ and $\phi_{\widetilde\gamma_1(t+2kl-k)}$ is on the right of $T^{2kl}\widetilde\gamma_2$.
It follows that for all $l_1, l_2 \in \Z, l_1< l_2$, $T^{2kl_2}\widetilde\gamma_2$ is on the right of $T^{2kl_1}\widetilde\gamma_2$ and $T^{2kl_1}\widetilde\gamma_2$ is on the left of $T^{2kl_2}\widetilde\gamma_2$. 
No leaf of $\widetilde{\mathcal{F}}$ intersects both $T^{2kl_1}\widetilde\gamma_2$ and $T^{2kl_2}\widetilde\gamma_2$ for $l_1 \neq l_2$, hence the sets $\mathcal{U}( T^{2kl}\widetilde\gamma_2, \epsilon)$,  $l\in \Z$, are pairwise disjoint. 
Since any path from $T^{2kl_1}\widetilde\gamma_2$ to $T^{2kl_2}\widetilde\gamma_2$,  $l_1< l_2 \in \Z$, has to cross the lifts $T^{2k(l_1+1)}\widetilde\gamma_2, T^{2k(l_1+2)}\widetilde\gamma_2, \cdots, T^{2k(l_2-1)}\widetilde\gamma_2$, the $\widetilde{d}$-distance of the images of  $T^{2kl_1}\widetilde\gamma_2$ and $T^{2kl_2}\widetilde\gamma_2$ in $\widetilde{\dom(\mathcal{F})}$ is bounded from below by $2\epsilon(l_2 - l_1-1) + 2\epsilon = 2\epsilon(l_2 - l_1)$. 
Hence, for any $C> 0$ we have for $l > \frac{C}{2\epsilon}$ that $\widetilde\gamma_1(t_1 + 2kl)\in L(\widetilde\gamma_2)$, $\widetilde\gamma_2(t_1 - 2kl) \in R(\widetilde\gamma_2)$, and both points do not lie in $\mathcal{U}(\widetilde\gamma_2, C)$, see Figure \ref{fig:Lemma3.1}. 

\end{proof}

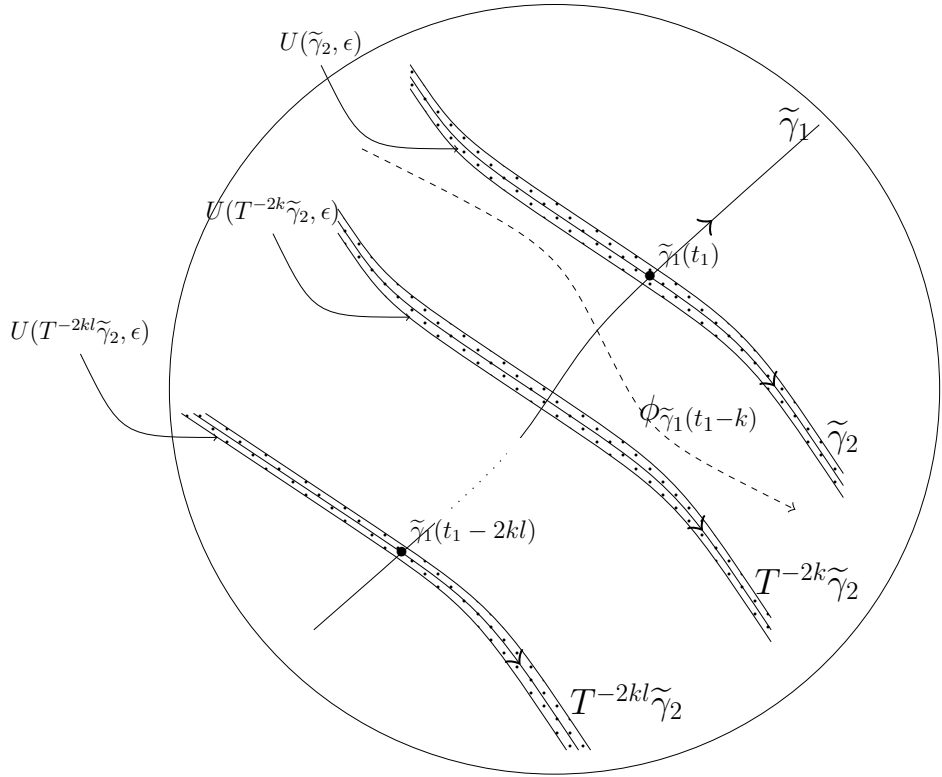
\begin{figure}
    \centering
    \scalebox{0.8}{
    \begin{tikzpicture}[scale=0.8]
        \coordinate (O) at (0,0);
        
        \draw (O) circle (8);
        
								\path[name path=l1] (-5,-5) .. controls (-1,-1.5) .. (O) .. controls (1,1.5) .. (5.5,5.5);

				\begin{scope}
				\clip (-6,-6) rectangle (-2.5,-2.5);
        \draw[use path=l1,lift];
				\end{scope}
				
				\begin{scope}
				\clip (-1,-1) rectangle (6,6);
        \draw[use path=l1,lift];
				\end{scope}
				\begin{scope}
				\clip (-2.5,-2.5) rectangle (-1,-1);
        \draw[use path=l1,loosely dotted, line width =0.2mm];
				\end{scope}
				
				\path[name path=l2] (-3,6.5) .. controls (-2,5) .. (1,3) .. controls (4,1) .. (6,-2);
				
				\path[name path=p1,  yshift=0.25cm] (-3,6.5) .. controls (-2,5) .. (1,3) .. controls (4,1) .. (6,-2);
        \path[name path=p2,yshift=-0.25cm] (-3,6.5) .. controls (-2,5) .. (1,3) .. controls (4,1) .. (6,-2);
				\draw[use path=l2, lift];
				\draw[use path=p1, -];
				\draw[use path=p2,-];
				
				\draw[lift,yshift=-3cm,xshift=-1.5cm] (-3,6.5) .. controls (-2,5) .. (1,3) .. controls (4,1) .. (6,-2);
				
				\path[name path=p3, yshift=-2.75cm, xshift=-1.5cm] (-3,6.5) .. controls (-2,5) .. (1,3) .. controls (4,1) .. (6,-2);
				\path[name path=p4,yshift=-3.25cm, xshift=-1.5cm](-3,6.5) .. controls (-2,5) .. (1,3) .. controls (4,1) .. (6,-2);
								
				\draw[use path=p3,-];
       \draw[use path=p4,-];

			\path[name path=l4,yshift=-5.5cm,xshift=-5.5cm] (-2,5) .. controls (-2,5) .. (1,3) .. controls (4,1) .. (6,-2);
			        			\path[name path=p5,yshift=-5.5cm,xshift=-5.25cm] (-2,5) .. controls (-2,5) .. (1,3) .. controls (4,1) .. (6,-2);
			\path[name path=p6,yshift=-5.5cm,xshift=-5.75cm] (-2,5) .. controls (-2,5) .. (1,3) .. controls (4,1) .. (6,-2);
			\draw[use path=l4,lift];
   \draw[use path=p5,-];
\draw[use path=p6,-];

	\draw[leaf] (-4,5) .. controls (0,3) .. (1,1) .. controls (2,-1) .. (5,-2.5);

\tikzfillbetween[
    of=p1 and p2] {pattern=mydots};
\tikzfillbetween[
    of=p3 and p4] {pattern=mydots};
				\tikzfillbetween[
    of=p5 and p6] {pattern=mydots};
				
				
				 \path[name intersections={of={l1} and {l2}, by={I12}}];
				 \path[name intersections={of={l1} and {l4}, by={I14}}];
				 \node at (I12) {\textbullet};
         \node at (I14) {\textbullet};

        \draw (3,-0.5) node {\Large $\phi_{\widetilde\gamma_1(t_1-k)}$};
        \draw (5,5.5) node {\Large $\widetilde\gamma_1$};
        \draw (6,-1) node {\Large $\widetilde\gamma_2$};
				\draw (5.25,-4) node {\Large $T^{-2k}\widetilde\gamma_2$};
				\draw (1.5,-6.5) node {\Large $T^{-2kl}\widetilde\gamma_2$};
        \node[anchor=south west] at (I12)  {$\widetilde\gamma_1(t_1)$};
        \node[anchor=south west] at (I14)  {$\widetilde\gamma_1(t_1-2kl)$};
       
        \draw[->,yshift=5cm, xshift=5cm] (170:10) .. controls (-9,0) .. (180:7) node[pos=0,above] {$U(\widetilde\gamma_2,\epsilon)$};
				        \draw[->,yshift=-1cm] (170:10) .. controls (-9,0) .. (180:7) node[pos=0,above] {$U(T^{-2kl}\widetilde\gamma_2,\epsilon)$};
  \draw[->,yshift=1.5cm, xshift=4cm] (170:10) .. controls (-9,0) .. (180:7) node[pos=0,above] {$U(T^{-2k}\widetilde\gamma_2,\epsilon)$};
        
    \end{tikzpicture}
    }
    \caption{Situation in the proof of Lemma \ref{lem:transverse_not_simple}. The points $\widetilde\gamma_1(t_1-2kl)$ are on the right of $\widetilde\gamma_2$ for all $l>0$ and are not contained in $U(\widetilde\gamma_2,C)$ if $l>C/2\epsilon$.}
    \label{fig:Lemma3.1}
\end{figure}

Let now, and throughout the section if not explicitly stated otherwise, $\Gamma : S^1 \to \dom(\mathcal{F})$ be an \textit{$\mathcal{F}$-transverse loop that is not freely homotopic in $\dom(\mathcal{F})$ to a multiple of a simple loop}, denote $\gamma: \R \to \dom(\mathcal{F})$ its natural lift. We will assume that $\dom(\mathcal{F})$ is connected, otherwise consider instead of $\dom(\mathcal{F})$ the connected component that contains $\Gamma$.  
In the remainder of the section we will prove a few properties of lifts of $\Gamma$ to $\widetilde{\dom(\mathcal{F})}$. In particular we will see that the converse of Lemma \ref{lem:transverse_not_simple} holds, i.e. that $\Gamma$ has an $\mathcal{F}$-transverse self-intersection. 

Since there is a primitive non-simple free homotopy class of loops in $\dom(\mathcal{F})$,  $\dom(\mathcal{F})$ is not homeomorphic to a sphere, a sphere minus one or two points or the torus. Equip $\dom(\mathcal{F})$ with a complex structure. By the uniformization theorem, we can identify the universal cover $\widetilde{\dom(\mathcal{F})}$ with the unit disc. It follows that $\dom(\mathcal{F})$ admits a complete hyperbolic metric $g_{\hyp}$ that lifts to the Poincar\'e metric on $\widetilde{\dom(\mathcal{F})}$. 
We obtain a circle compactification $\widetilde{\dom(\mathcal{F})} \cup S_{\infty}$ by adding the boundary at infinity $S_{\infty}$ to $(\widetilde{\dom(\mathcal{F})}, \widetilde{g_{\hyp}})$. 
In the hyperbolic surface $(\dom(\mathcal{F}), g_{\hyp})$ there is a sequence of compact sub-surfaces, $C_1 \subset \cdots  C_k \subset C_{k+1} \cdots M$, such that their boundary $\partial C_k$ is a finite union of simple closed geodesics, and every non-trivial closed curve in $\dom(\mathcal{F})$ is freely homotopic to a closed curve that is contained in $C_k$ for sufficiently large $k$, see e.g. \cite[Proposition 2.3]{Basmajian2008}. 
In particular every free homotopy class that is not a multiple of a simple class has a geodesic representative. 

For a lift $\widetilde\gamma$ of $\gamma$ to the universal cover $\widetilde{\dom(\mathcal{F})}$ and a deck transformation $U$ on $\widetilde{\dom(\mathcal{F})}$, we write $U\widetilde{\gamma}: \R \to \widetilde\dom(\mathcal{F})$ for the lift satisfying $U\widetilde\gamma(t) = U(\widetilde\gamma(t))$ for every $t\in \R$. We call the deck transformation $T$ with $T\widetilde\gamma(t) = \widetilde\gamma(t+1)$ the \textit{shift} of $\widetilde\gamma$. Note that every deck transformation $U$ extends to a homeomorphism $\overline{U}$ on $\widetilde{\dom(\mathcal{F})} \cup S_{\infty}$ and, since $\Gamma$ has a geodesic representative in $\dom(\mathcal{F})$, any shift $T$ for some lift $\widetilde\gamma$ of $\Gamma$ is a hyperbolic transformation, in particular $\overline{T}$ admits exactly two fixed points $\widetilde{\gamma}^+ = \lim_{t\to+ \infty} \widetilde{\gamma}(t) = \lim_{n\to + \infty} T^n\widetilde\gamma(t) \in S_{\infty}$ and   $\widetilde{\gamma}^- = \lim_{t\to- \infty} \widetilde{\gamma}(t) = \lim_{n\to + \infty} T^{-n}\widetilde\gamma(t)\in S_{\infty}$. 
For any lift $\widetilde\gamma$ we get two non-empty arcs  $\widehat{L}(\widetilde{\gamma}) := (\widetilde\gamma^+, \widetilde\gamma^-)$ and $\widehat{R}(\widetilde{\gamma}) := (\widetilde\gamma^-, \widetilde\gamma^+)$ on $S_{\infty}$,  where we equip $S_{\infty}$ with the counter-clockwise orientation.   
   We say that two lifts $\widetilde\gamma_1: \R \to \widetilde{\dom(\mathcal{F})}$ and 
$\widetilde\gamma_2: \R \to \widetilde{\dom(\mathcal{F})}$ are  \textit{translates of each other} if the shifts $T_1$ and $T_2$ for $\widetilde\gamma_1$ and $\widetilde\gamma_2$, respectively, coincide. 
If the free homotopy class of $\Gamma$ is primitive then this holds if and only if $\widetilde\gamma_1(t+k) = \widetilde\gamma_2(t)$ for some $k\in \Z$ and all $t \in \R$.
Using that $\Gamma$ has a closed geodesic representative in $(\dom(\mathcal{F}), g_{\hyp})$, one obtains that two lifts $\widetilde\gamma_1$ and $\widetilde\gamma_2$ are not translates of each other if and only if  $\widetilde\gamma_1^{\pm}$ and $\widetilde\gamma_2^{\pm}$ are four pairwise distinct points in $S_{\infty}$. Since $\Gamma$ is not freely homotopic to a multiple of a simple loop, there are two lifts $\widetilde\gamma_1$ and $\widetilde\gamma_2$  such that $\widetilde\gamma_1^{\pm}$ \textit{separate} $\widetilde\gamma_2^{\pm}$ in $S_{\infty}$, i.e. $\widehat{L}(\widetilde\gamma_1) \cap \widehat{R}(\widetilde\gamma_2) \neq \emptyset$ and  $\widehat{R}(\widetilde\gamma_1) \cap \widehat{L}(\widetilde\gamma_2) \neq \emptyset$. 

The following lemma states that the foliation separates two lifts with different asymptotics. 
\begin{lem}\label{Lemma1}
Let $\widetilde\gamma_1$ and $\widetilde\gamma_2$ be two lifts of $\gamma$ 
and assume there is $t_1, t_2\in \R$ such that $\phi_{\widetilde\gamma_1(t_1)} = \phi_{\widetilde\gamma_2(t_2)} =: \phi$. 
\begin{itemize}
\item If $\widetilde\gamma_1^- \in \widehat{R}(\widetilde\gamma_2)$ (resp. $\widetilde\gamma_1^- \in \widehat{L}(\widetilde\gamma_2)$), then 
there is $a_1 <  t_1$ and $a_2 <  t_2$ such that $\phi_{\widetilde\gamma_1(a_1)}$ is above (resp. below) $\phi_{\widetilde\gamma_2(a_2)}$ relative to $\phi$. 
\item If $\widetilde\gamma_1^+ \in \widehat{L}(\widetilde\gamma_2)$ (resp. $\widetilde\gamma_1^+  \in (\widehat{R}(\widetilde\gamma_2)$), then there is $b_1 > t_1$ and $b_2 > t_2$ such that 
$\phi_{\widetilde\gamma_1(b_1)}$ is below (resp. above) $\phi_{\widetilde\gamma_2(b_2)}$ relative to $\phi$.
\end{itemize}
\end{lem}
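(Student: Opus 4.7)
The plan is to work entirely in the universal cover $\widetilde{\dom(\mathcal{F})}$, which is homeomorphic to an open disk and carries the lifted non-singular foliation $\widetilde{\mathcal F}$. First I would note that the leaf $\phi$ is a proper line separating $\widetilde{\dom(\mathcal{F})}$ into two open half-disks $D_-$ and $D_+$. A short argument, using that a positively transverse path cannot recross a leaf (after crossing $\phi$ it lies on one definite side, and re-entering the other side would violate monotonicity of the transverse coordinate in the straightening chart), shows that each $\widetilde\gamma_i$ crosses $\phi$ exactly once, namely at $t_i$. By positive transversality and the orientation of $\phi$, I label $D_-,D_+$ so that $\widetilde\gamma_i|_{(-\infty,t_i)}\subset D_-$ and $\widetilde\gamma_i|_{(t_i,+\infty)}\subset D_+$ for both $i=1,2$. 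In particular, the backward endpoints $\widetilde\gamma_1^-$ and $\widetilde\gamma_2^-$ both lie on the arc $A_-$ of $S_\infty$ that closes $D_-$.

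For the first item, I would use the hypothesis $\widetilde\gamma_1^-\in\widehat R(\widetilde\gamma_2)$ to pin down the cyclic order of $\phi^\pm,\widetilde\gamma_2^\pm,\widetilde\gamma_1^-$ on $S_\infty$. I would then pick $a_1 \ll t_1$ so that $\widetilde\gamma_1(a_1)\in R(\widetilde\gamma_2)\cap D_-$ (possible since the backward tail of $\widetilde\gamma_1$ converges to $\widetilde\gamma_1^-\in\widehat R(\widetilde\gamma_2)$) and $a_2 \ll t_2$ so that the leaf $\phi_{\widetilde\gamma_2(a_2)}$ is distinct from $\phi_{\widetilde\gamma_1(a_1)}$ and from $\phi$, and such that the backward tail $\widetilde\gamma_2|_{(-\infty,a_2]}$ lies in $L(\widetilde\gamma_2\cup\phi_{\widetilde\gamma_1(a_1)})$, so that $\phi_{\widetilde\gamma_2(a_2)}$ does not separate $\phi$ from $\phi_{\widetilde\gamma_1(a_1)}$. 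The three defining conditions for $\phi_{\widetilde\gamma_1(a_1)}$ being above $\phi_{\widetilde\gamma_2(a_2)}$ relative to $\phi$ can then be verified directly: pairwise disjointness is automatic for distinct leaves of a non-singular foliation; the no-separation condition follows from disk topology using the location of $\widetilde\gamma_1(a_1)$ in $R(\widetilde\gamma_2)\cap D_-$; and the parameter ordering condition on $\phi$ is obtained by taking as the two required disjoint auxiliary paths the sub-arcs $\widetilde\gamma_1|_{[a_1,t_1]}$ and $\widetilde\gamma_2|_{[a_2,t_2]}$ (possibly after a tiny isotopy to make them disjoint and to avoid tangential contact with the three reference leaves outside the endpoints), and reading off the order $t_2>t_1$ from the orientation of $\phi$ together with the established cyclic position of $\widetilde\gamma_1^-,\widetilde\gamma_2^-$ on $S_\infty$.

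The remaining three items would follow by symmetry. Swapping the roles of $\widetilde\gamma_1$ and $\widetilde\gamma_2$ (and hence the arcs $\widehat R$ and $\widehat L$) interchanges \emph{above} and \emph{below}, thus covering the $\widetilde\gamma_1^-\in\widehat L(\widetilde\gamma_2)$ case. Reversing the parametrization of both $\widetilde\gamma_i$, which exchanges their forward and backward endpoints on $S_\infty$, combined with a simultaneous reversal of the foliation orientation, which interchanges $D_-$ and $D_+$ and also flips \emph{above}/\emph{below}, reduces the two forward statements on $\widetilde\gamma_1^+$ to the two backward ones already proved.

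The hard part will be the orientation bookkeeping: one must carefully track how positive transversality, the orientation of $\phi$ as a leaf, the surface orientation, and the induced cyclic orientation of $S_\infty$ combine so that ``$\widetilde\gamma_1^-$ is on the right of $\widetilde\gamma_2$'' translates into ``$\phi_{\widetilde\gamma_1(a_1)}$ is above $\phi_{\widetilde\gamma_2(a_2)}$ relative to $\phi$'' with the correct sign rather than its opposite. A secondary technical point is ensuring the no-separation clause in the definition of \emph{above}, which depending on how the leaves in $D_-$ spiral may require refining the choice of $a_1,a_2$ (taking them more negative, using properness of the transverse lifts to push $\widetilde\gamma_i(a_i)$ close to the respective endpoints on $S_\infty$) in order to guarantee that $\phi_{\widetilde\gamma_2(a_2)}$ does not separate $\phi$ from $\phi_{\widetilde\gamma_1(a_1)}$.
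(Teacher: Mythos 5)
The proposal has a genuine gap, and it is precisely at the point you flag as a "secondary technical point." Controlling the leaves $\phi_1 = \phi_{\widetilde\gamma_1(a_1)}$ and $\phi_2 = \phi_{\widetilde\gamma_2(a_2)}$ — not merely the points $\widetilde\gamma_i(a_i)$ — is the real content of the lemma. Pushing $a_i$ toward $-\infty$ so that $\widetilde\gamma_i(a_i)$ is near $\widetilde\gamma_i^-$ on $S_\infty$ tells you nothing about where the half-leaves $\phi_i^\pm$ go: a leaf through a point close to $S_\infty$ can still sweep across a large portion of the disk, and in particular nothing in your argument prevents $\phi_1$ from crossing $\widetilde\gamma_2$ between $\widetilde\gamma_2(a_2)$ and $\widetilde\gamma_2(t_2)$, in which case $\phi_1$ would separate $\phi$ from $\phi_2$ (or vice versa) and the "above" relation would fail or come out with the wrong sign. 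You address only the direction in which $\phi_2$ might separate $\phi$ from $\phi_1$, not the symmetric failure mode where $\phi_1$ separates $\phi$ from $\phi_2$, and in any case the clause "backward tail of $\widetilde\gamma_2$ lies in $L(\widetilde\gamma_2\cup\phi_{\widetilde\gamma_1(a_1)})$" is circular as a prescription for $a_2$.

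A telling symptom is that your argument never uses the standing hypothesis that $\Gamma$ is not freely homotopic to a multiple of a simple loop. This hypothesis is exactly what the paper exploits: it furnishes a deck transformation $S$ with $(S\widetilde\gamma)^+\in\widehat L(\widetilde\gamma)$ and $(S\widetilde\gamma)^-\in\widehat R(\widetilde\gamma)$ for every lift $\widetilde\gamma$. From $S$ one builds the "barrier" lifts $T_1^{-n_1}S\widetilde\gamma_1$ and $T_2^{-n_2}S^{-1}\widetilde\gamma_2$, whose $L$-region and $R$-region respectively can be arranged disjoint. Choosing $a_1, a_2$ so that $\widetilde\gamma_1(a_1)$ lies in $L(T_1^{-n_1}S\widetilde\gamma_1)$ and $\widetilde\gamma_2(a_2)$ in $R(T_2^{-n_2}S^{-1}\widetilde\gamma_2)$, positive transversality then forces the half-leaves $\phi_1^-$ and $\phi_2^+$ to remain trapped in these disjoint regions, while $\phi_1^+$ and $\phi_2^-$ are trapped in $R(\widetilde\gamma_1)\cap R(\widetilde\gamma_2)$ and $L(\widetilde\gamma_1)\cap L(\widetilde\gamma_2)$. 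One can then form disjoint connected sets $A_1 \supset \phi_1$ and $A_2 \supset \phi_2$ inside $R(\phi)$ and read off the parametric order on $\phi$ from the order of $\partial A_1\cap\phi$ versus $\partial A_2\cap\phi$. This barrier construction — not a careful choice of how negative $a_1, a_2$ are — is the mechanism that verifies both the no-separation clause and the ordering clause, and it is missing from your outline.
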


\begin{proof}
Since $\Gamma$ is not freely homotopic to a multiple of a simple loop, we can choose a deck transformation $S$ on $\widetilde{\dom(\mathcal{F})}$ such that for any lift $\widetilde\gamma$ of $\gamma$,  $(S\widetilde\gamma)^{+}\in \widehat{L}(\widetilde\gamma)$ and $(S\widetilde\gamma)^{-}\in \widehat{R}(\widetilde\gamma)$. It holds then more generally that for all lifts $\widetilde\gamma$ and for all $n\in \Z$,   $(T^nS\widetilde\gamma)^+  \in   \widehat{L}(\widetilde\gamma)$, $(T^nS\widetilde\gamma)^{-} \in \widehat{R}(\widetilde\gamma)$, 
$(T^nS^{-1}\widetilde\gamma)^+\in \widehat{R}(\widetilde\gamma)$ and  $(T^nS^{-1}\widetilde\gamma)^- \in \widehat{L}(\widetilde\gamma)$, where $T$ denotes the shift for $\widetilde\gamma$. 

Let now  $\widetilde\gamma_1$ and $\widetilde\gamma_2$ be two lifts considered in the Lemma, and $T_1$ and $T_2$ their shifts. 
We show that if $\widetilde\gamma_1^- \in \widehat{R}(\widetilde\gamma_2)$ then there is $a_1 < t_1$ and $a_2 < t_2$ such that 
$\phi_{\widetilde\gamma_1(a_1)}$ is above $\phi_{\widetilde\gamma_2(a_2)}$ relative to $\phi$.  The other three statements of the Lemma are proved similarly. 
So assume $\widetilde\gamma_1^- \in \widehat{R}(\widetilde\gamma_2)$. Since $\widetilde\gamma_1^- \neq \widetilde\gamma_2^-$ are the unique repelling fixed points in $\dom(\mathcal{F}) \cup S_{\infty}$ of $\overline{T_1}$ and $\overline{T_2}$, respectively, there is $n_1, n_2 \in \N$ such that 
$L(T_1^{-n_1}S\widetilde\gamma_1) \cap R(T_2^{-n_2}S^{-1}\widetilde\gamma_2) = \emptyset$.
Choose $a_1 < t_1$ and $a_2 < t_2$ such that $\widetilde\gamma_1(a_1) \in  L(T_1^{-n_1}S\widetilde\gamma_1)$ and $\widetilde\gamma_2(a_2) \in 
R(T_2^{-n_2}S^{-1}\widetilde\gamma_2)$, see Figure \ref{fig:lemma_2.2}.  Let $\phi_i := \phi_{\widetilde\gamma_i(a_i)}$ for $i=1,2$.
Since the lifts are positively transverse to $\mathcal{F}$ we have that $\phi_1^- \in  L(T_1^{-n_1}S\widetilde\gamma_1)$ and 
$\phi^+_2 \in R(T_2^{-n_2}S^{-1}\widetilde\gamma_2)$, as well as $\phi_1^{+} \in R(\widetilde\gamma_1) \cap R(\widetilde\gamma_2)$ and $\phi_2^- \in L(\widetilde\gamma_1) \cap L(\widetilde\gamma_2)$. 
Hence  $A_1 := L(T_1^{-n_1}S\widetilde\gamma_1) \cup (R(\widetilde\gamma_1) \cap R(\widetilde\gamma_2) \cap R(\phi))$ contains $\phi_1$ and 
$A_2:=R(T_2^{-n_2}S^{-1}\widetilde\gamma_2) \cup (L(\widetilde\gamma_1) \cap L(\widetilde\gamma_2)) \cap R(\phi))$ contains $\phi_2$. $A_1$ and $A_2$ are disjoint, are both connected, are both contained in  $R(\phi)$, and  the points on $\phi$ that are on the boundary of $A_1$ lie above the points of $\phi$ that lie on the boundary of $A_2$. It follows that $\phi_1$ is above $\phi_2$ relative to $\phi$. 
\end{proof}

\begin{figure}
    \centering
    \scalebox{0.8}{
    \begin{tikzpicture}[scale=0.8]
        \coordinate (O) at (0,0);
        
        \draw (O) circle (8);
        
        \path[name path=lift1] (265:8) .. controls (1,-5) and (0,6) .. (70:8) node[above]{\Large $\tilde{\gamma}_1^+$} node[pos=0,below]{\Large $\tilde{\gamma}_1^-$};
        \path[name path=lift2] (200:8) .. controls (-3,0) and (4,2) .. (40:8) node[right]{\Large $\tilde{\gamma}_2^+$} node[pos=0,left]{\Large $\tilde{\gamma}_2^-$};
        \path[name path=lift3] (300:8) .. controls (0,-5) .. (240:8) node[left]{\Large $T_1^{-n_1} S \tilde{\gamma}_1$};
        \path[name path=lift4] (160:8) .. controls (-5,0) and (-4,-2) .. (220:8) node[left]{\Large $T_2^{-n_2} S^{-1} \tilde{\gamma}_2$};
        
        \path[name path=leaf1] (130:8) .. controls (2,2) and (4,0) .. (0:8) node[pos=0.8,above]{\Large $\phi$};
        \path[name path=leaf2] (250:8) .. controls (0,-7.5) and (2,-5) .. (320:8) node[pos=0.8,above]{\Large $\phi_1$};
        \path[name path=leaf3] (145:8) .. controls (-5,3) .. (210:8) node[pos=0.2,right]{\Large $\phi_2$};
        
        \foreach \i in {1,2,3,4} {
            \draw[use path=lift\i, lift];
        }
        
        \foreach \j in {1,2,3} {
            \draw[use path=leaf\j, leaf];
        }
        
        \foreach \i in {1,2} {
            \foreach \j in {1,2,3} {
                \path[name intersections={of={lift\i} and {leaf\j}, by={I\i\j}}];
                \node at (I\i\j) {\textbullet};
            }
        }
        
        \node[anchor=west] at (I21) {$\tilde{\gamma}_2(t_2)$};
        \node[anchor=south west] at (I11) {$\tilde{\gamma}_1(t_1)$};
        \node[anchor=north west] at (I12) {$\tilde{\gamma}_1(a_1)$};
        \node[anchor=south east] at (I23) {$\tilde{\gamma}_2(a_2)$};
        
        \path[name path=circle_right] (0:8) arc (0:-120:8);
        \path[name path=circle_left] (130:8) arc (130:220:8);
        
        \path[name path=right1, intersection segments={of=lift3 and lift1, sequence={A1[reverse] -- B1}}];
        \path[name path=right2, intersection segments={of=right1 and lift2}];
        \path[name path=right3, intersection segments={of=right2 and leaf1}];
        \path[name path=right, intersection segments={of={right3 and circle_right}, sequence={A1 -- B1}}, pattern=mydots];
        
        \path[name path=left1, intersection segments={of=lift4 and lift2, sequence={A1[reverse] -- B1}}];
        \path[name path=left2, intersection segments={of=left1 and lift1}];
        \path[name path=left3, intersection segments={of=leaf1 and left2, sequence={A0 -- B0[reverse]}}];
        \path[name path=left, intersection segments={of={left3 and circle_left}, sequence={A* -- B*}}, pattern=mydots];
        
        \draw[->] (-20:10) .. controls (8,-1) .. (-20:6) node[pos=0,below] {$A_1$};
        \draw[->] (170:10) .. controls (-9,0) .. (180:7) node[pos=0,above] {$A_2$};
    \end{tikzpicture}
    }
    \caption{Lifts and leaves in the proof of Lemma \ref{Lemma1}. \\ The dotted areas are $A_1 = L(T_1^{-n_1} S \tilde{\gamma}_1) \cup (R(\tilde{\gamma}_1) \cap R(\tilde{\gamma}_2) \cap R(\phi))$, \\ $A_2 = R(T_2^{-n_2} S^{-1} \tilde{\gamma}_2) \cup (L(\tilde{\gamma}_1) \cap L(\tilde{\gamma}_2) \cap R(\phi))$.}
    \label{fig:lemma_2.2}
\end{figure}

\begin{cor}\label{cor2}
Let $\widetilde\gamma_1$ and $\widetilde\gamma_2$ be two lifts of $\gamma$ such that 
$\widetilde{\gamma}^{\pm}_1$ separate $\widetilde{\gamma}_2^{\pm}$ on $S_{\infty}$. Then $\widetilde\gamma_1$ and $\widetilde\gamma_2$ intersect $\widetilde{\mathcal{F}}$-transversally. 
In particular, $\Gamma$ has at least one $\mathcal{F}$-transverse self-intersection.
\end{cor}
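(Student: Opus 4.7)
The plan is to first produce a common leaf $\phi$ of $\widetilde{\mathcal{F}}$ intersected by both lifts, and then to apply Lemma~\ref{Lemma1} at $\phi$ to extract the $\widetilde{\mathcal{F}}$-transverse intersection.

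To produce the common leaf, I would extend each lift continuously to a map $\overline{\widetilde\gamma_i}\colon [-\infty,+\infty]\to \widetilde{\dom(\mathcal{F})}\cup S_\infty$, sending $\pm\infty$ to $\widetilde\gamma_i^{\pm}$ (these limits being available from the hyperbolic discussion preceding Lemma~\ref{Lemma1}). Let $A_i$ be the compact connected image in the closed disc, so that $A_i\cap S_{\infty}=\{\widetilde\gamma_i^{\pm}\}$. The separation hypothesis says that $\widetilde\gamma_2^-$ and $\widetilde\gamma_2^+$ lie in distinct components of $S_\infty\setminus\{\widetilde\gamma_1^{\pm}\}$. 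A Jordan-style argument — whose validity I would justify by approximating $A_1$ by a simple arc with the same endpoints on $S_\infty$ (possible because $\widetilde{\dom(\mathcal{F})}\cong \mathbb{D}$ is simply connected) and then applying the classical Jordan curve theorem — shows $A_2$ must meet $A_1$ in the interior. Since $A_i\cap S_{\infty}=\{\widetilde\gamma_i^{\pm}\}$ and the four endpoints are distinct, this yields $t_1,t_2\in\R$ with $\widetilde\gamma_1(t_1)=\widetilde\gamma_2(t_2)$; set $\phi:=\phi_{\widetilde\gamma_1(t_1)}$.

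With $\phi$ in hand, the separation hypothesis is exactly the input of Lemma~\ref{Lemma1}: one of $\widetilde\gamma_1^{\pm}$ lies in $\widehat{R}(\widetilde\gamma_2)$ and the other in $\widehat{L}(\widetilde\gamma_2)$. Applying the lemma twice — once to the past asymptote and once to the future asymptote — produces $a_1<t_1$, $a_2<t_2$ and $b_1>t_1$, $b_2>t_2$ such that $\phi_{\widetilde\gamma_1(a_1)}$ and $\phi_{\widetilde\gamma_2(a_2)}$ are ordered one way relative to $\phi$, while $\phi_{\widetilde\gamma_1(b_1)}$ and $\phi_{\widetilde\gamma_2(b_2)}$ are ordered the opposite way. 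This is precisely the definition of an $\widetilde{\mathcal{F}}$-transverse intersection of $\widetilde\gamma_1$ and $\widetilde\gamma_2$ at $\phi$ (its sign, positive or negative, is determined by the cyclic order of the four endpoints on $S_\infty$), establishing the first statement.

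For the ``in particular'' assertion, as noted just before Lemma~\ref{Lemma1} the hypothesis that $\Gamma$ is not freely homotopic to a multiple of a simple loop guarantees the existence of two lifts $\widetilde\gamma$ and $U\widetilde\gamma$ of $\gamma$ (for some deck transformation $U$) whose endpoints interleave on $S_\infty$. The first part of the corollary applied to this pair produces an $\widetilde{\mathcal{F}}$-transverse intersection at $\widetilde\gamma(t_1)=U\widetilde\gamma(t_2)$. Since $\widetilde\gamma$ and $U\widetilde\gamma$ have distinct asymptotic endpoints they are not translates of each other, so $t_1\not\equiv t_2\pmod{1}$, and the projection $\gamma(t_1)=\gamma(t_2)$ is a genuine $\mathcal{F}$-transverse self-intersection of $\Gamma$. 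The main obstacle is the Jordan-style reduction in the first step, since $A_1$ is not a priori a simple arc; careful bookkeeping (via approximation by a simple arc rel endpoints, and the fact that transverse paths avoid $S_\infty$ in their interior) is what I would need to flesh out to fully justify the common-leaf step.
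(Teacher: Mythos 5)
Your overall strategy is the paper's: apply Lemma~\ref{Lemma1} and read off the $\widetilde{\mathcal{F}}$-transverse intersection from the definition, then specialize to two lifts with interleaving endpoints, which exist because $\Gamma$ is not freely homotopic to a multiple of a simple loop. You are in fact more careful than the paper, which merely states that the first claim ``follows directly from the Lemma'' — you correctly observe that Lemma~\ref{Lemma1} presupposes the existence of a common leaf $\phi$, and you set out to supply it.

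The one genuine problem is the approximation step proposed to salvage the Jordan-curve argument: replacing $A_1$ by a simple arc with the same endpoints on $S_\infty$ does not preserve whether $A_2$ is met, so this reduction does not close the gap you yourself flag at the end. No approximation is needed, though. A positively transverse path to a non-singular foliation of the plane cannot self-intersect: a self-intersection would produce a positively transverse closed loop, which would bound a disc on whose interior the foliation would be forced to have a singularity (Poincar\'e--Hopf), contradicting non-singularity of $\widetilde{\mathcal{F}}$. Hence $\widetilde\gamma_1$ is injective and proper — a line in the paper's sense — and $A_1$ is a genuine Jordan arc in $\widetilde{\dom(\mathcal{F})} \cup S_\infty$. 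The Jordan arc theorem now gives the separation directly, the common leaf follows, and the remainder of your argument is sound.
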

\begin{proof}
The first statement follows directly from the Lemma and the definition of an $\widetilde{\mathcal{F}}$-transverse intersection. 
Since $\Gamma$ lies in a non-simple free homotopy class, there are at least two such lifts $\widetilde\gamma_1$ and $\widetilde\gamma_2$ of $\gamma$. 
\end{proof}

We also note:
\begin{lem}\label{lem:transverse_sep}
If two lifts $\widetilde\gamma_1$ and $\widetilde\gamma_2$ of $\gamma$ intersect $\widetilde{\mathcal{F}}$-transversally, then $\widetilde\gamma_1^{\pm}$ separate $\widetilde\gamma_2^{\pm}$ in $S_{\infty}$.
\end{lem}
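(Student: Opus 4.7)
The plan is to directly locate $\widetilde{\gamma}_2^{\pm}$ on $S_{\infty}$ relative to $\{\widetilde{\gamma}_1^{\pm}\}$, using the above/below data from the $\widetilde{\mathcal{F}}$-transverse intersection together with the positive transversality of $\widetilde{\gamma}_1,\widetilde{\gamma}_2$ to $\widetilde{\mathcal{F}}$, and then to read off the separation directly rather than argue by contradiction. Unpacking the hypothesis: at the intersection $\widetilde{\gamma}_1(t_1) = \widetilde{\gamma}_2(t_2)$ lying on a common leaf $\phi$, the definition of $\widetilde{\mathcal{F}}$-transverse intersection yields parameters $a_1 < t_1 < b_1$ and $a_2 < t_2 < b_2$ such that $\phi_{\widetilde{\gamma}_2(a_2)}$ is below $\phi_{\widetilde{\gamma}_1(a_1)}$ relative to $\phi$ and $\phi_{\widetilde{\gamma}_2(b_2)}$ is above $\phi_{\widetilde{\gamma}_1(b_1)}$ relative to $\phi$. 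Since distinct leaves are disjoint, $\phi_{\widetilde{\gamma}_i(a_i)} \subset R(\phi)$ and $\phi_{\widetilde{\gamma}_i(b_i)} \subset L(\phi)$. The non-separation requirement built into the definition of ``above/below'' rules out a nested configuration of two leaves on the same side of $\phi$, so $\phi_{\widetilde{\gamma}_1(a_1)},\phi_{\widetilde{\gamma}_2(a_2)}$ lie side by side in $R(\phi)$, as do $\phi_{\widetilde{\gamma}_1(b_1)},\phi_{\widetilde{\gamma}_2(b_2)}$ in $L(\phi)$, and the above/below datum fixes in each pair which leaf sits closer to $\phi^{-}$ and which to $\phi^{+}$.

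Working in the hyperbolic disc model of $\widetilde{\dom(\mathcal{F})}$, I would next read off the definition of ``above'' in the side-by-side configuration: the two endpoints of $\phi_{\widetilde{\gamma}_2(b_2)}$ on $\widehat{L}(\phi)$ both lie closer to $\phi^{+}$ than the two endpoints of $\phi_{\widetilde{\gamma}_1(b_1)}$, and dually, on $\widehat{R}(\phi)$ the endpoints of $\phi_{\widetilde{\gamma}_2(a_2)}$ lie closer to $\phi^{-}$ than those of $\phi_{\widetilde{\gamma}_1(a_1)}$. Positive transversality, applied at $t \leq a_i$ and $t \geq b_i$, gives $\widetilde{\gamma}_i^{-} \in \widehat{R}(\phi_{\widetilde{\gamma}_i(a_i)})$ and $\widetilde{\gamma}_i^{+} \in \widehat{L}(\phi_{\widetilde{\gamma}_i(b_i)})$; tracking the orientation of each leaf induced by the foliation, these arcs are precisely the ``inner'' sub-arcs of $\widehat{R}(\phi)$ and $\widehat{L}(\phi)$ cut out by the endpoints of $\phi_{\widetilde{\gamma}_i(a_i)}$ and $\phi_{\widetilde{\gamma}_i(b_i)}$, respectively. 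Combining these two facts, $\widetilde{\gamma}_2^{+}$ is strictly closer to $\phi^{+}$ than $\widetilde{\gamma}_1^{+}$ on $\widehat{L}(\phi)$, and $\widetilde{\gamma}_2^{-}$ strictly closer to $\phi^{-}$ than $\widetilde{\gamma}_1^{-}$ on $\widehat{R}(\phi)$. Translating this to the partition of $S_{\infty} \setminus \{\widetilde{\gamma}_1^{\pm}\}$ into the arcs $\widehat{L}(\widetilde{\gamma}_1)$ and $\widehat{R}(\widetilde{\gamma}_1)$ places $\widetilde{\gamma}_2^{+}$ and $\widetilde{\gamma}_2^{-}$ in opposite arcs, which is exactly the separation claim.

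The most delicate part, and where I expect the bulk of the work to lie, is the orientation bookkeeping: I need to verify that the side of an oriented leaf into which a positively transverse path passes corresponds to the ``inner'' sub-arc cut out by that leaf on the half of $S_{\infty}$ containing its endpoints, and similarly that ``above relative to $\phi$'' corresponds in the side-by-side pair to the leaf whose endpoints lean toward $\phi^{+}$. Non-generic boundary cases, where some of the points $\widetilde{\gamma}_i^{\pm}$ coincide with endpoints on $S_{\infty}$ of one of the leaves $\phi_{\widetilde{\gamma}_j(a_j)}$ or $\phi_{\widetilde{\gamma}_j(b_j)}$, can be absorbed by a small perturbation of $a_i,b_i$ or by a limiting argument, and do not affect the final separation conclusion.
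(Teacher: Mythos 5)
Your approach is genuinely different from the paper's, and it contains a real gap: you repeatedly treat leaves of $\widetilde{\mathcal{F}}$ as lines with well-defined endpoints on $S_{\infty}$. You write $\widehat{R}(\phi_{\widetilde{\gamma}_i(a_i)})$ and $\widehat{L}(\phi_{\widetilde{\gamma}_i(b_i)})$, speak of ``the two endpoints of $\phi_{\widetilde{\gamma}_2(b_2)}$ on $\widehat{L}(\phi)$,'' and even devote a closing paragraph to perturbing away the degenerate case where $\widetilde{\gamma}_i^{\pm}$ coincides with such a leaf endpoint. None of this is available: the notation $\widehat{L}(\cdot), \widehat{R}(\cdot)$ is defined in the paper only for lifts of $\Gamma$, which converge to single points of $S_{\infty}$ because $\Gamma$ has a closed geodesic representative (so a lift is within bounded Hausdorff distance of the axis of its shift). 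There is no such control on leaves of $\widetilde{\mathcal{F}}$. A leaf is merely an injective proper path in the disc, and its end may accumulate on a nondegenerate subarc of $S_{\infty}$ rather than on a point; in that case $\overline{R(\phi_{\widetilde{\gamma}_i(a_i)})} \cap S_{\infty}$ and $\overline{L(\phi_{\widetilde{\gamma}_i(a_i)})} \cap S_{\infty}$ overlap and the ``side-by-side subarcs'' picture you are using to pin down $\widetilde{\gamma}_1^{\pm}$ versus $\widetilde{\gamma}_2^{\pm}$ collapses. (There is also a smaller notational clash: the paper uses $\phi^{\pm}$ for half-leaves, not for boundary points of a leaf.)

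The chain from the above/below data to the separation claim is the right geometric intuition, and your argument would go through if all the leaves involved did limit to points on $S_{\infty}$; but you must not assume that. The paper's proof of Lemma~\ref{lem:transverse_sep} is designed precisely to avoid any statement about leaves at infinity: it invokes the construction from the proof of Lemma~\ref{lem:transverse_not_simple}, run with the hyperbolic metric $g_{\hyp}$, to produce parameters $t \to \pm\infty$ with $\widetilde{\gamma}_1(t)$ escaping the image of $\widetilde{\gamma}_2$ while lying on one fixed side of $\widetilde{\gamma}_2$ for $t \gg 0$ and the other side for $t \ll 0$. The metric escape rules out $\widetilde{\gamma}_1^{\pm} \in \{\widetilde{\gamma}_2^{\pm}\}$, and the side information then forces $\widetilde{\gamma}_1^{+} \in \widehat{L}(\widetilde{\gamma}_2)$ and $\widetilde{\gamma}_1^{-} \in \widehat{R}(\widetilde{\gamma}_2)$, which is the separation. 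To repair your proof along its own lines you would either need to establish that the particular leaves $\phi_{\widetilde{\gamma}_i(a_i)}, \phi_{\widetilde{\gamma}_i(b_i)}$ have well-defined endpoints, or reformulate the argument entirely in terms of the closures $\overline{R(\cdot)}, \overline{L(\cdot)}$ in the closed disc without using leaf endpoints; the latter is close to what Lemma~\ref{lem:transverse_not_simple} already does, at which point the paper's route becomes the natural one.
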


\begin{proof}
Let $\widetilde\gamma_1$ and $\widetilde\gamma_2$ two lifts that intersect $\widetilde{\mathcal{F}}$-transversally.
Since $\Gamma$ has a closed geodesic representative and since the boundary at infinity $S_{\infty}$ of $(\widetilde{\dom(\mathcal{F})}, \widetilde{g_{\hyp}})$ can be identified with equivalence classes of geodesic rays, where two geodesic rays are equivalent if and only if they stay in finite distance to each other, it suffices to show that $\widetilde{d}(\widetilde\gamma_1(t), \widetilde\gamma_2(t)) \to +\infty$ as $t\to \pm\infty$, where $\widetilde{d}$ is the metric on $\widetilde{\dom(\mathcal{F})}$ induced by Poincar\'e metric $\widetilde{g_{\hyp}}$. But this follows directly from the proof of Lemma \ref{lem:transverse_not_simple} by considering in that proof as a metric on $\dom(\mathcal{F})$ the hyperbolic metric $g_{\hyp}$.   
\end{proof}

We proceed with a few further observations.
\begin{lem}\label{Lemma4}
There are two lifts $\widetilde\gamma_1$ and $\widetilde\gamma_2$ of $\gamma$ such that 
\begin{itemize}
\item $\widetilde\gamma_1$ and $\widetilde\gamma_2$ intersect $\widetilde{\mathcal{F}}$-transversally and positively, 
\item there are no lifts $\widetilde\gamma$ of $\gamma$  that intersect both  $\widetilde\gamma_1$ and $\widetilde\gamma_2$ $\widetilde{\mathcal{F}}$-transversally and positively, and
\item there are no lifts $\widetilde\gamma$ of $\gamma$ that intersect both $\widetilde\gamma_1$ and $\widetilde\gamma_2$ $\widetilde{\mathcal{F}}$-transversally and negatively. 
\end{itemize}
\end{lem}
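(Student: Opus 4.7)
The plan is to extract, from the non-empty set $\mathcal{B}$ of pairs of lifts of $\gamma$ that have a positive $\widetilde{\mathcal{F}}$-transverse intersection, an extremal element for which no offending common intersector can exist. Non-emptiness of $\mathcal{B}$ follows from Corollary \ref{cor2}, after reversing the roles of the two lifts if needed so that the intersection is positive rather than negative. The first step is to translate positivity of the transverse intersection into combinatorial data on $S_\infty$. By Lemma \ref{lem:transverse_sep}, the four endpoints $\widetilde\gamma_1^\pm, \widetilde\gamma_2^\pm$ of a pair in $\mathcal{B}$ separate one another; by Lemma \ref{Lemma1}, positivity pins down their specific cyclic order on $S_\infty$. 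A third lift $\widetilde\gamma$ then crosses both $\widetilde\gamma_1$ and $\widetilde\gamma_2$ positively iff $\widetilde\gamma^-$ and $\widetilde\gamma^+$ lie respectively in a specific pair of complementary open arcs of $S_\infty$ determined by $\widetilde\gamma_1^\pm$ and $\widetilde\gamma_2^\pm$, and crosses both negatively iff $\widetilde\gamma^-$ and $\widetilde\gamma^+$ lie in the same two arcs with their roles swapped. Hence the existence of an offending lift of either sign is governed by the same pair of arcs.

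If such an offending $\widetilde\gamma$ exists, I would then replace $(\widetilde\gamma_1, \widetilde\gamma_2)$ by a pair still in $\mathcal{B}$ whose endpoint quadruple is strictly nested inside the original one on $S_\infty$: namely $(\widetilde\gamma_1, \widetilde\gamma)$ in the positive case and $(\widetilde\gamma, \widetilde\gamma_2)$ in the negative case. One verifies that the new pair still has the prescribed cyclic order of endpoints, and hence a positive $\widetilde{\mathcal{F}}$-transverse intersection by the combinatorial characterisation above. Iterating this refinement yields a descending sequence of pairs in $\mathcal{B}$ with strictly shrinking arcs on $S_\infty$.

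The main obstacle, and the heart of the argument, is to show that this refinement terminates. I plan to handle this by attaching to each pair in $\mathcal{B}$ a non-negative integer that strictly decreases under every refinement step and thus bottoms out in finitely many steps. Writing $T_1, T_2$ for the shifts of $\widetilde\gamma_1, \widetilde\gamma_2$, a natural candidate is a combinatorial length of the element $T_1^{-1} T_2 \in \pi_1(\dom(\mathcal{F}))$ with respect to a fixed finite generating set, or equivalently the number of fundamental domains crossed by a geodesic joining appropriate lifts, an integer quantity for which strict nesting on $S_\infty$ forces a strict drop. Once no further refinement is possible, the terminal pair $(\widetilde\gamma_1,\widetilde\gamma_2)$ forbids the existence of any common positive intersector and of any common negative intersector simultaneously, which is the content of the lemma.
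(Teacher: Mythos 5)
Your overall strategy (exhibit a nested-arcs refinement step, then show the refinement terminates) is in the same spirit as the paper's proof, which also orders pairs by inclusion of the boundary arcs $C(\mathfrak{p}) = (\widetilde\gamma^+, \widetilde\gamma'^-)$ and $D(\mathfrak{p}) = (\widetilde\gamma^-, \widetilde\gamma'^+)$ and picks a maximal element. The combinatorial translation of positive transversal intersection into cyclic order of endpoints on $S_\infty$, via Lemmata \ref{lem:transverse_sep} and \ref{Lemma1} and Corollary \ref{cor2}, is correctly identified.

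The genuine gap is in the termination argument, which you rightly flag as the heart of the matter. You propose that the word length of $T_1^{-1}T_2$ in a fixed generating set of $\pi_1(\dom(\mathcal{F}))$ strictly decreases under each refinement step, on the grounds that strict nesting of the arcs on $S_\infty$ "forces a strict drop." This is not justified and I do not believe it is true: the refinement replaces one shift $T_2$ by the shift $T$ of the new lift, and there is no monotone relationship between the position of the axis of $T$ inside the arcs $C,D$ and the word length of $T_1^{-1}T$. For instance, there is nothing preventing the replacement axis from belonging to a conjugate of the shift by an element of arbitrarily large word length, with endpoints still strictly nested. Nesting of the four endpoints constrains the axes geometrically, but word length is not controlled by endpoint position alone. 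Without a proof of the claimed strict decrease, your induction has no base, and the whole argument falls through.

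The paper closes this gap differently. It does not exhibit a decreasing integer at all; it proves directly that above any fixed pair $\mathfrak{p}$ there are only \emph{finitely many} pairs in the partial order, so maximal elements exist. The finiteness is obtained from (i) the observation that a lift $\widetilde\gamma_*$ whose endpoints land in $C(\mathfrak{p})$ and $D(\mathfrak{p})$ must actually cross both $\widetilde\gamma$ and $\widetilde\gamma'$, (ii) the free and cocompact action of the deck group, which forces only finitely many translates of a fundamental chunk $\widetilde\gamma|_{[0,1]}$ to meet $\widetilde\gamma|_{[0,1]}$, and (iii) the north-south dynamics of the shift $\overline{T}$ on $S_\infty$, which bounds the powers $T^n$ that can keep the endpoints of $T^n U \widetilde\gamma$ inside the two arcs. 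If you want to salvage your approach with a decreasing invariant, the quantity that actually works is something like the number of translation-equivalence classes of lifts whose endpoints lie in $C(\mathfrak{p})$ and $D(\mathfrak{p})$, which is finite by (i)--(iii) and strictly drops under strict nesting; but proving its finiteness is precisely the content of the paper's argument, so this does not shortcut anything.
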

\begin{rem}\label{rem:afterLemma4} Note that if $\widetilde\gamma_1$ and $\widetilde\gamma_2$ satisfy the properties stated in the Lemma, then so do $U\widetilde\gamma_1$ and $U\widetilde\gamma_2$ for any deck transformation $U$ on $\widetilde{\dom(\mathcal{F})}$, as well as $\widetilde\gamma_1$ and $T_1\widetilde\gamma_2$, where $T_1$ is the shift for $\widetilde\gamma_1$. 
\end{rem}

\begin{proof}[Proof of \ref{Lemma4}]
Recall that two lifts are translates of each other if their shifts coincide. We will denote in this proof by $[\widetilde\gamma]$ the equivalence class of a lift $\widetilde\gamma$ with respect to this equivalence relation.   
Consider in the following all pairs  $([\widetilde\gamma], [\widetilde\gamma'])$, where $\widetilde\gamma$ and $\widetilde\gamma'$ are lifts of $\gamma$ that  intersect $\widetilde{\mathcal{F}}$-transversally and positively. For each pair $\mathfrak{p} = ([\widetilde\gamma], [\widetilde\gamma'])$ consider the (well-defined) arcs  $C(\mathfrak{p}) := (\widetilde\gamma^{+}, \widetilde\gamma'^{-})$ and 
$D(\mathfrak{p}) := (\widetilde\gamma^-, \widetilde\gamma'^+)$ in $S_{\infty}$, see Figure \ref{fig:lemma_4}. 
We say $\mathfrak{p}_1 = ([\widetilde\gamma_1], [\widetilde\gamma_1']) \leq \mathfrak{p}_2 = ([\widetilde\gamma_2], [\widetilde\gamma_2'])$  
if $C(\mathfrak{p}_2) \subset C(\mathfrak{p}_1)$ and $D(\mathfrak{p}_2) \subset D(\mathfrak{p}_1)$. '$\leq$' defines a partial order on the set of pairs above. 

For a given pair $\mathfrak{p} = ([\widetilde\gamma], [\widetilde\gamma'])$ as above there are only finitely many equivalence classes $[\widetilde\gamma_*]$ such that for a representative $\widetilde\gamma_*$, 
\begin{align}\label{A1}
\widetilde\gamma_*^{+} \in  C(\mathfrak{p}) \text{  and } \widetilde\gamma_*^- \in D(\mathfrak{p}), 
\end{align}
as well as finitely many classes  $[\widetilde\gamma_*]$ such that for a representative $\widetilde\gamma_*$, 
\begin{align}\label{A2}
\widetilde\gamma_*^{-} \in  C(\mathfrak{p}) \text{ and } \widetilde\gamma_*^+ \in D(\mathfrak{p}).
\end{align}
Indeed, note that if $\widetilde\gamma_*$ satisfies \eqref{A1} or \eqref{A2}, then it intersects $\widetilde\gamma$ and $\widetilde\gamma'$. Also, the group of deck transformations act freely and co-compactly, so there are only finitely many deck transformations $U$ such that $U\widetilde\gamma|_{[0,1]} \cap \widetilde\gamma|_{[0,1]}\neq \emptyset$. 
Hence, for every lift $\widetilde\gamma_*$ with  $\widetilde\gamma_*\cap \widetilde\gamma \neq \emptyset$, there is $r_*, r \in \Z$ such that $T^r U \widetilde\gamma = T_*^{r_*} \widetilde\gamma_*$, for one of the finite deck transformation $U$ from above, where $T$, $T_*$ denote the shift for $\widetilde\gamma$, and $\widetilde\gamma_*$ respectively. In particular, $[\widetilde\gamma_*] = [T^r U \widetilde\gamma]$. 
Assume $(U \widetilde\gamma)^+ \in (\widetilde\gamma^+, \widetilde\gamma^-)$. Then, for all $n\in \Z$,  $T^n U \widetilde\gamma$ does not satisfy \eqref{A2}. Furthermore, note that $\overline{T}$ restricted to $S_{\infty}$ has exactly two fixed points, one repelling fixed point,   $\widetilde\gamma^-$, and one attracting fixed point, $\widetilde\gamma^+$. Hence there is $r < s$ such that no representative of $[T^n U \widetilde\gamma]$ satisfies \eqref{A1} for $n \notin [r, s]$. Similarly, if $(U \widetilde\gamma)^+ \in (\widetilde\gamma^-, \widetilde\gamma^+)$, then there is $r < s$ such that no representative of $[T^n U \widetilde\gamma]$ satisfies \eqref{A1} or \eqref{A2} for $n \notin [r, s]$.
Hence there are only finitely many equivalence classes $[\widetilde\gamma_*]$ such that its representatives satisfy \eqref{A1} or \eqref{A2}. 

We conclude that for every pair $\mathfrak{p}$ there are only finitely many pairs that are $\geq \mathfrak{p}$ and so there exists maximal elements. 
Let $\widetilde\gamma_1$ and $\widetilde\gamma_2$ be lifts of $\gamma$ such that $\mathfrak{p}_0:=([\widetilde\gamma_1], [\widetilde\gamma_2])$ is maximal with respect to '$\leq$'. 
In particular, there is no $\widetilde\gamma_*$ that satisfies \eqref{A1} or \eqref{A2} for $\mathfrak{p}_0$, which means, using Lemma \ref{lem:transverse_sep}, that the properties stated in the Lemma hold for the lifts $\widetilde\gamma_1$ and $\widetilde\gamma_2$.
\end{proof}

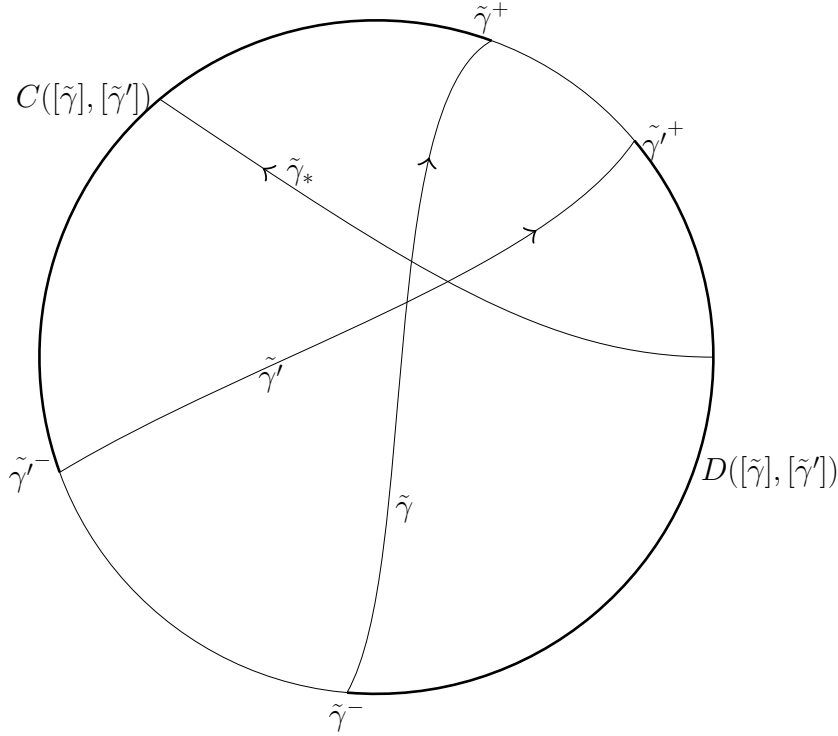
\begin{figure}
    \centering
    \scalebox{0.7}{
    \begin{tikzpicture}[scale=0.8]
        \coordinate (O) at (0,0);

        \draw (O) circle (8);
        
        \path[name path=lift1] (265:8) .. controls (1,-5) and (0,6) .. (70:8) node[above]{\Large $\tilde{\gamma}^+$} node[pos=0,below]{\Large $\tilde{\gamma}^-$} node[pos=0.3, right]{\Large $\tilde{\gamma}$};
        \path[name path=lift2] (200:8) .. controls (-3,0) and (4,2) .. (40:8) node[right]{\Large $\tilde{\gamma'}^+$} node[pos=0,left]{\Large $\tilde{\gamma'}^-$} node[pos=0.3, right]{\Large $\tilde{\gamma'}$};
       
              \path[name path=lift3] (0:8) .. controls (4,0) and (1,2) .. (130:8) node[pos=0.8,above]{\Large $\tilde{\gamma}_*$};

				\path[name path=circle_right] (70:8) arc (70:200:8);
        \path[name path=circle_left] (40:8) arc (40:-95:8);
        \draw[use path=circle_right, line width=0.5mm];
				\draw[use path=circle_left, line width=0.5mm];

        \foreach \i in {1,2,3} {
            \draw[use path=lift\i, lift];
        }

		\draw node[anchor=east] at (130:8) {\Large $C([\tilde{\gamma}],[ \tilde{\gamma}'])$};
		\draw node[anchor=west] at (340:8){\Large $D([\tilde{\gamma}], [\tilde{\gamma}'])$};
      
    \end{tikzpicture}
    }
    \caption{Lifts in the proof of Lemma \ref{Lemma4}. For each pair $([\widetilde\gamma], [\widetilde\gamma'])$, there are only finitely many $[\widetilde\gamma_*]$ such that ${\widetilde\gamma_*}^+ \in C([\widetilde\gamma], [\widetilde\gamma'])$ and ${\widetilde\gamma_*}^- \in D([\widetilde\gamma], [\widetilde\gamma'])$. }
    \label{fig:lemma_4}
\end{figure}

We get the following consequence of Lemma \ref{Lemma4}. 

\begin{lem}\label{Lemma6}
There is $\overline{t}, \underline{t} \in [0,1)$ with $\Gamma(\overline{t}) = \Gamma(\underline{t})$ such that there exists two lifts $\widetilde\gamma_1$ and $\widetilde\gamma_2$ of $\Gamma$ to $\widetilde{\dom(\mathcal{F})}$ that intersect $\widetilde{\mathcal{F}}$-transversally and positively in $\widetilde\gamma_1(\overline{t}) = \widetilde\gamma_2(\underline{t})$, and such that  
\begin{enumerate}
\item if two lifts $\widetilde\gamma$ and $\widetilde\gamma'$ intersect $\widetilde{\mathcal{F}}$-transversally and positively, then $\widetilde\gamma$  does not intersect the leaves $\phi_{\widetilde\gamma'(\overline{t} +k)}$, for all $k\in \Z$; and
\item   if two lifts $\widetilde\gamma$ and $\widetilde\gamma'$ intersect $\widetilde{\mathcal{F}}$-transversally and negatively, then $\widetilde\gamma$  does not intersect the leaves $\phi_{\widetilde\gamma'(\underline{t} +k)}$, for all $k\in \Z$.
\end{enumerate}
\end{lem}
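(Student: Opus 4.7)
The plan is to build on the maximal pair provided by Lemma \ref{Lemma4} and make an extremal choice of crossing point. Let $\widetilde\gamma_1, \widetilde\gamma_2$ be the lifts given by Lemma \ref{Lemma4}, so that they intersect $\widetilde{\mathcal{F}}$-transversally and positively with the property that no lift of $\gamma$ intersects both of them positively, nor both negatively. By the definition of $\widetilde{\mathcal{F}}$-transverse intersection, there exist parameters $a, b \in \R$ with $\widetilde\gamma_1(a) = \widetilde\gamma_2(b)$ at which the positive transverse crossing occurs. Up to replacing $\widetilde\gamma_1, \widetilde\gamma_2$ by suitable shifts (which preserves the conclusion of Lemma \ref{Lemma4} by Remark \ref{rem:afterLemma4}), we may take $\bar t := a$ and $\underline t := b$ both in $[0,1)$, so that $\Gamma(\bar t) = \Gamma(\underline t)$.

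To guarantee conditions (1) and (2), we must select the crossing point extremally. Among all pairs of lifts of $\gamma$ intersecting $\widetilde{\mathcal{F}}$-transversally and positively there is a finiteness phenomenon analogous to the one exploited in the proof of Lemma \ref{Lemma4}: for any pair $([\widetilde\gamma], [\widetilde\gamma'])$ only finitely many equivalence classes of lifts have asymptotes inside prescribed arcs $C, D$ of $S_\infty$. Combined with the partial ordering from Lemma \ref{Lemma4}, this will allow one to choose the crossing whose associated common leaf is maximally outward, in the sense that among all positive $\widetilde{\mathcal{F}}$-transverse crossings between deck-translates of $\widetilde\gamma_1, \widetilde\gamma_2$ there is no crossing strictly beyond the chosen one in the foliation order. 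We assume $\bar t, \underline t$ correspond to such an extremal crossing.

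With this extremal choice we verify (1) by contradiction. Suppose two lifts $\widetilde\gamma, \widetilde\gamma'$ intersect $\widetilde{\mathcal{F}}$-transversally and positively yet $\widetilde\gamma \cap \phi_{\widetilde\gamma'(\bar t + k)} \neq \emptyset$ for some $k \in \Z$. Then the path $\widetilde\gamma$ reaches a lift of the leaf $\phi_{\Gamma(\bar t)}$ that $\widetilde\gamma'$ already traverses. Applying Lemma \ref{Lemma1} to relate $\widetilde\gamma^\pm, \widetilde\gamma'^\pm$ on $S_\infty$ to the asymptotes $\widetilde\gamma_1^\pm, \widetilde\gamma_2^\pm$, one can transport the crossing via an appropriate deck transformation $U$ to produce a lift $\widetilde\gamma'' = U\widetilde\gamma$ (or $U\widetilde\gamma'$) that intersects both $\widetilde\gamma_1$ and $\widetilde\gamma_2$ $\widetilde{\mathcal{F}}$-transversally and positively, contradicting the defining property of $(\widetilde\gamma_1, \widetilde\gamma_2)$ in Lemma \ref{Lemma4}. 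Condition (2) is proved by the symmetric argument with positive replaced by negative and $\bar t$ replaced by $\underline t$.

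The hardest part will be making the extremality in the second paragraph rigorous, in particular showing that a single pair $(\widetilde\gamma_1, \widetilde\gamma_2)$ and a single crossing point can be chosen so that conditions (1) and (2) hold \emph{simultaneously} for all lifts, not just for those of asymptotics close to $\widetilde\gamma_1, \widetilde\gamma_2$. This requires a finer combinatorial analysis of leaves in $\widetilde{\dom(\mathcal{F})}$ and their positions relative to the arcs $\widehat{L}(\widetilde\gamma), \widehat{R}(\widetilde\gamma)$ on $S_\infty$, together with a finiteness argument in the style of Lemma \ref{Lemma4} but applied to an ordering that tracks individual leaves rather than only the endpoint positions of lifts.
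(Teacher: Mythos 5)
Your starting point is right: take the pair $\widetilde\gamma_1,\widetilde\gamma_2$ furnished by Lemma~\ref{Lemma4}, normalize via Remark~\ref{rem:afterLemma4} so that $\overline t, \underline t \in [0,1)$, and aim to derive a contradiction from the defining property of this pair. That is exactly the paper's strategy. However, the second paragraph of your proposal introduces a gap that is not actually there and then, in the final paragraph, you flag it as an unfinished ``hardest part.'' No extremal choice of crossing within the pair is needed, and no finer combinatorial analysis of leaf positions on $S_\infty$ is required. The Lemma~\ref{Lemma4} pair already does all the work, for \emph{all} $k \in \Z$ simultaneously, precisely because Remark~\ref{rem:afterLemma4} gives you that the pairs $(\widetilde\gamma_1, T_1^k\widetilde\gamma_2)$ all satisfy Lemma~\ref{Lemma4} as well, and these intersect at $\widetilde\gamma_1(\overline t + k)$.

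The missing ingredient in your third paragraph is the transitivity of $\widetilde{\mathcal{F}}$-transverse positive intersection (Remark~\ref{rem:transitive}). Concretely: given $\widetilde\gamma, \widetilde\gamma'$ intersecting transversally and positively and supposing $\widetilde\gamma$ meets $\phi_{\widetilde\gamma'(\overline t + k)}$, choose $U$ with $U\widetilde\gamma' = \widetilde\gamma_1$. Then $\widetilde\gamma' = U^{-1}\widetilde\gamma_1$ and $U^{-1}(T_1^k\widetilde\gamma_2)$ form a Lemma~\ref{Lemma4} pair intersecting transversally positively on $\phi_{\widetilde\gamma'(\overline t + k)}$. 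Since $\widetilde\gamma$ and $\widetilde\gamma'$ already intersect $\widetilde{\mathcal{F}}$-transversally, Lemma~\ref{lem:transverse_sep} and Lemma~\ref{Lemma1} show they intersect transversally \emph{with the same sign} at any common leaf, so they do so positively at $\phi_{\widetilde\gamma'(\overline t + k)}$. By transitivity, $\widetilde\gamma$ and $U^{-1}(T_1^k\widetilde\gamma_2)$ intersect transversally positively, so $\widetilde\gamma$ intersects both members of a Lemma~\ref{Lemma4} pair positively — the contradiction. Your ``transport the crossing via an appropriate deck transformation'' is the right instinct, but without transitivity, the $k$-shift, and the sign-consistency observation the argument does not close. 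The extremality machinery you sketch is a red herring and can be discarded entirely.
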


\begin{proof}
Let $\widetilde\gamma_1$ and $\widetilde\gamma_2$ be two lifts of $\gamma$ that satisfy the properties in Lemma \ref{Lemma4}, in particular, $\widetilde\gamma_1$ and $\widetilde\gamma_2$ intersect $\widetilde{\mathcal{F}}$-transversally and positively in $ \widetilde\gamma_1(\overline{t})= \widetilde\gamma_2(\underline{t})$ for some $\overline{t}, \underline{t}\in \R$. By Remark \ref{rem:afterLemma4} we can choose $\widetilde\gamma_1$ and $\widetilde\gamma_2$ such that also $\overline{t}, \underline{t} \in [0,1)$. Furthermore by the same remark the properties of Lemma \ref{Lemma4} are satisfied for $\widetilde\gamma_1$ and  $\widetilde\gamma_{k,2}:={T_1^k}\widetilde\gamma_2$, for any $k\in \Z$, where $T_1$ is the shift for $\widetilde\gamma_1$. Note that $\widetilde\gamma_1$ and $\widetilde\gamma_{k,2}$ intersect $\widetilde{\mathcal{F}}$-transversally in $\widetilde\gamma_1(\overline{t} + k)= \widetilde\gamma_{k,2}(\underline{t})$. Analogously, for all $k\in \Z$, the properties of Lemma \ref{Lemma4} hold for  $\widetilde\gamma_{k,1} = {{T}^k_2}\widetilde\gamma_1$ and $\widetilde\gamma_2$, where $T_2$ is the shift for $\widetilde\gamma_2$. 

We prove now (1), the proof of (2) is analogous.
So let $\widetilde\gamma$ and $\widetilde\gamma'$ be two lifts of $\gamma$ that intersect $\widetilde{\mathcal{F}}$-transversally and positively. 
Let $U$ be the deck transformation such that $U\widetilde\gamma' = \widetilde\gamma_1$. Then, for all $k\in \Z$,  $\widetilde\gamma'=U^{-1}\widetilde\gamma_1$ and  $U^{-1}\widetilde\gamma_{k,2}$ satisfy the properties of Lemma $\ref{Lemma4}$ and they intersect $\widetilde{\mathcal{F}}$-transversally and positively in $\widetilde\gamma'(\overline{t} + k)$.  
Assume that $\widetilde\gamma$ intersects the leaf $\phi_{\widetilde\gamma'(\overline{t} +k)}$ for some $k\in \Z$. Then, $\widetilde\gamma$ and $\widetilde\gamma'$ also intersect $\mathcal{F}$-transversally and positively at $\phi_{\widetilde{\gamma'}(\overline{t} + k)}$. By the transitivity property, see Remark \ref{rem:transitive}, $\widetilde\gamma$ intersects $U^{-1}\widetilde\gamma_{k,2}$ $\widetilde{\mathcal{F}}$-transversally and positively, which contradicts the properties of Lemma $\ref{Lemma4}$ for the lifts $\widetilde\gamma'$ and  $U^{-1}\widetilde\gamma_{k,2}$.

\end{proof}

\begin{lem}\label{Lemma7}
Let $a_0,a_1,b_0,b_1 \in \R$ with $a_0 < b_0$ and $a_1 < b_1$. Let $\widetilde\gamma_0, \widetilde\gamma_1$ be two lifts of $\gamma$ which are not translates of each other and
 such that 
$\widetilde\gamma_0|_{[a_0,b_0]}$ is equivalent to $\widetilde\gamma_1|_{[a_1,b_1]}$. Then 
\begin{enumerate}
\item $\max\{b_0-a_0, b_1 -a_1\} \leq 1$ if $\widetilde\gamma_0$ and $\widetilde\gamma_1$ intersect $\widetilde{\mathcal{F}}$-transversally,
\item $\min\{b_0-a_0, b_1 -a_1\} > \max\{\lfloor b_0-a_0 \rfloor, \lfloor b_1 -a_1 \rfloor\} - 2$,
\item $\max\{b_0-a_0, b_1 -a_1\} < 6$. 
\end{enumerate}
\end{lem}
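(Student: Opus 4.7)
Set $h:[a_0,b_0]\to[a_1,b_1]$ to be the increasing homeomorphism realizing the equivalence, so $\phi_{\widetilde\gamma_1(h(t))}=\phi_{\widetilde\gamma_0(t)}$, and let $T_i$ denote the shift of $\widetilde\gamma_i$. Write $L_i:=b_i-a_i$. The classification of commuting elements in $\pi_1$ of a hyperbolic surface, together with the standing assumption that $\Gamma$ is not freely homotopic to a multiple of a simple loop, implies that any two lifts of $\gamma$ with distinct shifts have non-commuting shifts; hence, since $\widetilde\gamma_0$ and $\widetilde\gamma_1$ are not translates, $T_0$ and $T_1$ do not commute, and in particular $T_0^k\widetilde\gamma_1$ is not a translate of $\widetilde\gamma_1$ for any $k\neq 0$. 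The elementary observation used throughout, obtained by applying $\phi_{\widetilde\gamma_0(t+1)}=T_0\phi_{\widetilde\gamma_0(t)}$ to both sides of the equivalence and iterating, is that whenever $t,t+k\in[a_0,b_0]$ for a positive integer $k$,
\[
\phi_{\widetilde\gamma_1(h(t+k))}=T_0^{k}\,\phi_{\widetilde\gamma_1(h(t))},
\]
with the symmetric identity holding from the $\widetilde\gamma_0$-side with $T_1$ in place of $T_0$.

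For (1), suppose for contradiction that the intersection is positive and $L_1>1$ (the negative case uses Lemma \ref{Lemma6}(2) and the case $L_0>1$ follows by swapping roles). Since $\overline{t}\in[0,1)$ in Lemma \ref{Lemma6}, some integer translate $\overline{t}+k$ lies in $[a_1,b_1]$, so $\phi_{\widetilde\gamma_1(\overline{t}+k)}$ is a shared leaf and hence is traversed by $\widetilde\gamma_0$ at time $h^{-1}(\overline{t}+k)\in[a_0,b_0]$. This directly contradicts item (1) of Lemma \ref{Lemma6} applied to the pair $(\widetilde\gamma_0,\widetilde\gamma_1)$, so $L_0,L_1\leq 1$.

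For (2), assume without loss of generality that $L_0\leq L_1$ and set $n:=\lfloor L_1\rfloor$. Applying the symmetric form of the observation from the $\widetilde\gamma_0$-side shows that $\widetilde\gamma_0$ traverses the $n+1$ pairwise disjoint leaves $\phi,T_1\phi,\ldots,T_1^{n}\phi$ (where $\phi=\phi_{\widetilde\gamma_0(h^{-1}(a_1))}$) at times $h^{-1}(a_1),h^{-1}(a_1+1),\ldots,h^{-1}(a_1+n)$ lying in $[a_0,b_0]$. Since the $\widetilde\gamma_0$-parameter separation between consecutive such hits is at least one unit (leaves of $\widetilde\gamma_0$ are permuted by $T_0$ and $T_1^{j+1}\phi\neq T_0^{m}T_1^{j}\phi$ for $0<m<1$), and allowing at most a bounded partial-unit contribution at each of the two endpoints of the interval, a direct counting yields the stated strict inequality $L_0>n-2$.

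For (3), split according to whether the endpoints $\widetilde\gamma_0^{\pm}$ and $\widetilde\gamma_1^{\pm}$ separate on $S_\infty$. If they separate, Corollary \ref{cor2} produces a $\widetilde{\mathcal{F}}$-transverse intersection and (1) gives $\max\{L_0,L_1\}\leq 1<6$. If they do not separate, the shared leaves correspond to a "band" of leaves in $\widetilde{\dom(\mathcal{F})}$ joining $\widetilde\gamma_0$ to $\widetilde\gamma_1$; iterating the basic observation yields, for each integer $k\geq 1$ with $k\leq L_0$, an equivalence of shorter parameter length between $\widetilde\gamma_1$ and the non-translate lift $T_0^{k}\widetilde\gamma_1$. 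Using that $T_0$ and $T_1$ act as two non-commuting hyperbolic isometries on the circle at infinity, the band of shared leaves must close up after a bounded number of such iterations; tracking this bookkeeping explicitly yields $\max\{L_0,L_1\}<6$. The main obstacle is exactly this non-separating case of (3): items (1) and (2) follow rather directly from Lemma \ref{Lemma6} and the iterated basic observation respectively, while extracting the sharp constant $6$ demands a careful control of how the width of the shared band of leaves shrinks under the action of the non-commuting hyperbolic elements $T_0$ and $T_1$.
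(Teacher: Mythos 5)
Your item (1) is fine and is essentially the paper's argument, but items (2) and (3) both contain genuine gaps.

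For (2), the key claim in your argument is that the $\widetilde\gamma_0$-parameter separation between consecutive hits at times $h^{-1}(a_1+j)$ and $h^{-1}(a_1+j+1)$ is at least one. This is unjustified and not true in general: the equivalence homeomorphism $h$ is only an orientation-preserving homeomorphism, so it need not be roughly parameter-preserving, and $\widetilde\gamma_0$ may traverse the leaves from $T_1^j\phi$ to $T_1^{j+1}\phi$ in parameter length less than one. (Your parenthetical invokes ``$T_0^{m}$ for $0<m<1$'', which does not even define a deck transformation.) If your separation claim were true it would give $L_0\geq n$, which is strictly stronger than the stated $L_0>n-2$ and which the lemma's statement deliberately avoids claiming. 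The paper's actual argument is quite different: it picks $\overline t$ via Lemma \ref{Lemma6}, considers the set of lifts $\widetilde\gamma$ having a positive $\widetilde{\mathcal{F}}$-transverse intersection with $\widetilde\gamma_1|_{(\overline t+k,\overline t+l)}$ (a set whose cardinality is determined by the parameter length, by periodicity and the fact that $\Gamma$ has at least one $\mathcal{F}$-transverse self-intersection), notes these lifts must also meet the equivalent subpath of $\widetilde\gamma_0|_{[a_0,b_0]}$, and then compares cardinalities. That monotone counting of transverse intersections is what forces $b_0-a_0\geq l-k-1$; your argument does not recover it.

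For (3), your non-separating case ends with ``tracking this bookkeeping explicitly yields $\max\{L_0,L_1\}<6$,'' which is precisely the step that needs to be carried out — as stated it is not a proof. The paper's argument instead proceeds by contradiction: assuming $b_1-a_1\geq 6$ and that $\widetilde\gamma_0,\widetilde\gamma_1$ do not intersect transversally, it applies (2) to get $b_0-a_0>4$, introduces $\widetilde\gamma_2:=T_0\widetilde\gamma_1$, uses (2) again to produce a parameter interval of length $>1$ on which $\widetilde\gamma_2$ and $\widetilde\gamma_1$ are equivalent, and then shows by a case analysis on the cyclic order of $\widetilde\gamma_1^{\pm}, \widetilde\gamma_2^{\pm}$ on $S_\infty$ (using that $T_0$ is hyperbolic and that leaves cross both curves) that $\widetilde\gamma_1$ and $\widetilde\gamma_2$ must in fact intersect $\widetilde{\mathcal{F}}$-transversally, contradicting (1). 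This argument is concrete and self-contained, whereas yours leaves the central estimate — how the width of the shared band shrinks — entirely unproved.
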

\begin{proof}
Part $(1)$ follows directly from Lemma \ref{Lemma6}.
For (2) it is sufficient, by symmetry, to show that $b_0-a_0 > \lfloor b_1-a_1 \rfloor -2$, and to show this, we may assume $b_1-a_1 \geq 3$. 
Choose $\overline{t} \in [0,1)$ according to Lemma \ref{Lemma6}. Let $k<  l \in \Z$ with $a_1\leq \overline{t} + k< a_1 +1$, and $b_1-1 < \overline{t} + l \leq b_1$. 
Any lift $\widetilde\gamma$ that intersects $\widetilde\gamma_1$  $\widetilde{\mathcal{F}}$-transversally and positively in $\widetilde\gamma(t) = \widetilde\gamma_1(t_1)$ for some $t\in \R$ and $t_1 \in (\overline{t} +k, \overline{t} +l)$, intersects already 
$\widetilde\gamma_1|_{[\overline{t} +k, \overline{t} +l]}$ $\widetilde{\mathcal{F}}$-transversally and positively, and hence also $\widetilde\gamma_0|_{[a_0,b_0]}$ $\widetilde{\mathcal{F}}$-transversally and positively in  $\widetilde\gamma(t') =  \widetilde\gamma_0(t_0)$ for some $t'\in \R$ and $t_0 \in (a_0,b_0)$. The latter holds since a subpath of $\widetilde\gamma_0|_{[a_0,b_0]}$ is equivalent to $\widetilde\gamma_1|_{[\overline{t} +k, \overline{t} +l]}$. 
There are finitely many (pairwise non-identical) images $\{\widetilde\gamma(t)\, |\, t\in \R\}$ of such lifts $\widetilde\gamma$, say $N$. 
Since $\Gamma$ has at least one ${\mathcal{F}}$-transverse self-intersection, there are, for any subpath of some lift of $\gamma$ of length $<l-k-1$, strictly less then $N$ images of lifts that intersect this subpath $\widetilde{\mathcal{F}}$-transversally and positively. Hence $b_0 -a_0 \geq l-k-1$. On the other hand, $l-k > \lfloor b_1 - a_1 \rfloor -1$, and $(2)$ follows. 

To show $(3)$, we argue by contradiction. So assume the contrary, w.l.o.g. $b_1 -a_1 \geq 6$. By $(1)$, we can assume that $\widetilde\gamma_0$ and $\widetilde\gamma_1$ do not intersect $\widetilde{\mathcal{F}}$-transversally.  
By $(2)$, $b_0 - a_0 > 4$. 
Consider $\widetilde\gamma_2 := T_0\widetilde\gamma_1$, where $T_0$ is the shift for $\widetilde\gamma_0$, see Figure \ref{fig:lemma_2.7}.
$\widetilde\gamma_2|_{[a_1, b_1]}$ is equivalent to $\widetilde\gamma_0|_{[a_0 +1, b_0 +1]}$.
Let $c_1\in (a_1, b_1)$ be the parameter such that $\widetilde\gamma_2|_{[a_1, c_1]}$ is equivalent to $\widetilde\gamma_0|_{[a_0 +1, b_0]}$.  
So, again by $(2)$, $c_1 -a_1 > \lfloor b_0 - (a_0 +1) \rfloor - 2 \geq 1$. 
On the other hand, $\widetilde\gamma_2|_{[a_1, c_1]}$, which is equivalent to $\widetilde\gamma_0|_{[a_0 +1, b_0]}$, is by assumption equivalent to a subpath of $\widetilde\gamma_1|_{[a_1, b_1]}$. 
Note now, that $\widetilde\gamma_2$ intersects $\widetilde\gamma_1$ $\widetilde{\mathcal{F}}$-transversally. Indeed, since $\widetilde\gamma_1$ and $\widetilde\gamma_0$ are no translates of each other, the attracting fixed point of $\overline{T_0}$ in $S_{\infty}$ is disjoint from $\widetilde\gamma_1^+$. It follows that either $\widetilde\gamma_2^{\pm} = (T_0 \widetilde\gamma_1)^{\pm}$ separate $\widetilde\gamma_1^{\pm}$, or these limit points have one of the following four orders on $S_{\infty}$: 
$(a) \, \widetilde\gamma_1^-, \widetilde\gamma_1^+, \widetilde\gamma_2^+, \widetilde\gamma_2^-$; $(b)\,  \widetilde\gamma_1^+, \widetilde\gamma_1^-, \widetilde\gamma_2^-, \widetilde\gamma_2^+$; $(c) \, \widetilde\gamma_1^-, \widetilde\gamma_1^+, \widetilde\gamma_2^-, \widetilde\gamma_2^+$; or $(d) \,  \widetilde\gamma_1^+, \widetilde\gamma_1^-,  \widetilde\gamma_2^+, \widetilde\gamma_2^-$.
Since the asymptotics of $\widetilde\gamma_0$ and $\widetilde\gamma_1$ do not separate in $S_{\infty}$, $\overline{T_0}$ turns $\widetilde\gamma_1^-$ and $\widetilde\gamma_1^+$ to the same direction on $S_{\infty}$ and so $(a)$ and $(b)$ can be excluded. Since there are leaves of $\widetilde{\mathcal{F}}$ that intersect both $\widetilde\gamma_1$ and $\widetilde\gamma_2$, $(c)$ and $(d)$ can be excluded. 
Hence $\widetilde\gamma_2^{\pm}$ separate $\widetilde\gamma_1^{\pm}$ and by Corollary \ref{cor2}, $\widetilde\gamma_1$ and $\widetilde\gamma_2$ intersect $\widetilde{\mathcal{F}}$-transversally. By $(1)$,  $c_1 -a_1 \leq 1$, and we obtain a contradiction. 
\end{proof}

\begin{figure}
    \centering
    \scalebox{0.8}{
    \begin{tikzpicture}[scale=0.8]
        \coordinate (O) at (0,0);
        
        \draw (O) circle (8);
        
        \path[name path=lift1] (225:8) .. controls (-5,-6) and (0,6) .. (70:8) node[above]{\Large $\widetilde{\gamma}_0$};
        \path[name path=lift2] (240:8) .. controls (0,-4) and (2,4) .. (40:8) node[right]{\Large $\widetilde{\gamma}_1$};
        \path[name path=lift3] (255:8) .. controls (0,-7) and (1,5) .. (55:8) node[right]{\Large $\widetilde{\gamma}_2 = T_0\widetilde{\gamma}_1$};
        
        \path[name path=leaf1] (-4,-1) .. controls (-2,-3) .. (-1,-5);
        \path[name path=leaf2] (-4,0) .. controls (-1,-2) .. (1,-4);
        \path[name path=leaf3] (-2,3) .. controls (1,2) .. (4,1);
        \path[name path=leaf4] (-1,5) .. controls (1,3) .. (2.5,3);

        \foreach \i in {1,2,3} {
            \foreach \j in {1,2,3,4} {
                \ifthenelse{\i = 2 \AND \j = 2}
                { }
                {
                    \path[name intersections={of={lift\i} and {leaf\j}, by={I\i\j}}];
                    \node at (I\i\j) {\textbullet};
                }
            }
        }

        \foreach \i in {1,2,3} {
            \draw[use path=lift\i, lift];
        }
        
        \foreach \j in {1,2,3,4} {
            \draw[use path=leaf\j, leaf];
        }
        
        \node[anchor=east] at (I11) {$\widetilde{\gamma}_0(a_0)$};
        \node[anchor=west] at (I12) {$\widetilde{\gamma}_0(a_0+1)$};
        \node at ($(I13)+(-1,-0.2)$) {$\widetilde{\gamma}_0(b_0)$};
        \node[anchor=east] at (I14) {$\widetilde{\gamma}_0(b_0+1)$};
        \node[anchor=east] at (I21) {$\widetilde{\gamma}_1(a_1)$};
        \node at ($(I23)+(0.9,0.2)$) {$\widetilde{\gamma}_1(b_1)$};
        \node[anchor=west] at (I32) {$\widetilde{\gamma}_2(a_1)$};
        \node at ($(I33)+(-0.8,-0.2)$) {$\widetilde{\gamma}_2(c_1)$};
        \node at ($(I34)+(-0.5,0.5)$) {$\widetilde{\gamma}_2(b_1)$};
    \end{tikzpicture}
    }
    \caption{Lifts and leaves in the proof of Lemma \ref{Lemma7} $(3)$.}
    \label{fig:lemma_2.7}
\end{figure}

We end this section with the following two lemmata about the existence of a convenient choice of loops equivalent to $\Gamma$.    

\begin{lem}\label{Lemma9}
Let $\Gamma$ be any $\mathcal{F}$-transverse loop and assume that $[\Gamma]_{\widehat{\pi}(\dom(\mathcal{F}))} = m \alpha$, where $\alpha$ is a primitive free homotopy class and $m>1$. Then there is an $\mathcal{F}$-transverse loop $\Gamma'$ with $[\Gamma']_{\widehat{\pi}(\dom(\mathcal{F}))} = \alpha$ such that $\Gamma$ is equivalent to $(\Gamma')^m$.
\end{lem}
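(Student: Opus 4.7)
The plan is to lift the construction to the universal cover, build an $\widetilde{\mathcal{F}}$-transverse path with the correct shift and leaf set, and project back down to $\dom(\mathcal{F})$.

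First, I fix a lift $\widetilde{\gamma} : \R \to \widetilde{\dom(\mathcal{F})}$ of the natural lift of $\Gamma$, with shift $T$ satisfying $\widetilde{\gamma}(t+1) = T\widetilde{\gamma}(t)$. Since $[\Gamma]_{\widehat{\pi}(\dom(\mathcal{F}))} = m\alpha$ and $\alpha$ is primitive, $T$ factors as $T = T_0^m$ where $T_0$ is a primitive deck transformation representing $\alpha$. Because $\widetilde{\mathcal{F}}$ is non-singular on $\widetilde{\dom(\mathcal{F})} \cong \R^2$ with leaves separating the plane, positive transversality forces $\widetilde{\gamma}$ to meet each leaf at most once: otherwise $\widetilde{\gamma}$ would have to return to one side of some leaf after leaving it, contradicting the consistent sign of crossings. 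Hence the leaf set $\mathcal{L} := \{\phi_{\widetilde{\gamma}(t)} : t \in \R\}$ is totally ordered by $t$.

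The key technical step is showing that $\mathcal{L}$ is $T_0$-invariant. For this I would use the complete hyperbolic metric on $\dom(\mathcal{F})$ already introduced at the start of the section, identifying $\widetilde{\dom(\mathcal{F})}$ with the hyperbolic disc and adjoining its boundary at infinity $S_\infty$. When $\alpha$ is hyperbolic, $T = T_0^m$ and $T_0$ share the same pair of fixed points $T_0^\pm \in S_\infty$, and the dynamics of $\overline{T}$ on $S_\infty$ gives $\widetilde{\gamma}(t) \to T_0^\pm$ as $t \to \pm\infty$. Each leaf of $\widetilde{\mathcal{F}}$ extends to two endpoints on $S_\infty$, and a Jordan-curve argument combined with positive transversality yields
\[
\phi \in \mathcal{L} \;\;\Longleftrightarrow\;\; \text{the endpoints of } \phi \text{ separate } T_0^+ \text{ from } T_0^- \text{ on } S_\infty.
\]
The reverse implication uses that a separating leaf must be crossed by any curve running asymptotically from $T_0^-$ to $T_0^+$. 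Since $T_0$ fixes $\{T_0^+, T_0^-\}$ setwise, it preserves the separation condition, giving $T_0 \mathcal{L} = \mathcal{L}$. The case where $\alpha$ is represented by a cusp loop (so $T_0$ parabolic) runs analogously, with horoballs around the common parabolic fixed point playing the role of separating leaves.

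Given this $T_0$-invariance, I pick a fundamental domain $\mathcal{L}_0$ for the $\langle T_0 \rangle$-action on $\mathcal{L}$, a basepoint $p_0$ on the first leaf of $\mathcal{L}_0$, and construct an $\widetilde{\mathcal{F}}$-transverse arc $\eta : [0,1] \to \widetilde{\dom(\mathcal{F})}$ from $p_0$ to $T_0 p_0$ that crosses exactly the leaves of $\mathcal{L}_0$ in order. Extending by $T_0$-equivariance through $\widetilde{\gamma'}(t+k) := T_0^k \eta(t)$ for $t \in [0,1]$, $k \in \Z$, produces an $\widetilde{\mathcal{F}}$-transverse path with shift $T_0$ and leaf set $\bigcup_k T_0^k \mathcal{L}_0 = \mathcal{L}$. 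This descends to a loop $\Gamma' : S^1 \to \dom(\mathcal{F})$ that is $\mathcal{F}$-transverse with $[\Gamma']_{\widehat{\pi}(\dom(\mathcal{F}))} = \alpha$. To verify $\Gamma \sim (\Gamma')^m$, I observe that the lift $t \mapsto \widetilde{\gamma'}(mt)$ of the natural lift of $(\Gamma')^m$ has shift $T_0^m = T$ and leaf set $\mathcal{L}$. Since both $\widetilde{\gamma}$ and this lift traverse the totally ordered set $\mathcal{L}$ monotonically with the same shift, an orientation-preserving homeomorphism $h : \R \to \R$ with $h(t+1) = h(t)+1$ can be chosen matching their leaf trajectories, completing the equivalence.

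The main obstacle is the separation characterization of $\mathcal{L}$ in the second step: making it rigorous requires that leaves of $\widetilde{\mathcal{F}}$ are proper embedded lines with well-defined endpoints on $S_\infty$ (to be extracted from the structure of transverse foliations used in the section) together with a careful uniform treatment of the hyperbolic and parabolic cases. Once this characterization is established, the remaining construction of $\widetilde{\gamma'}$ and the verification of equivalence are essentially bookkeeping based on the $T_0$-equivariant ordered structure of $\mathcal{L}$.
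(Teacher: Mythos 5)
Your approach is genuinely different from the paper's, which never invokes the circle at infinity for this lemma. The paper shows directly that $\widetilde\gamma$ and $S\widetilde\gamma$ are $\widetilde{\mathcal{F}}$-equivalent (where $S^m=T$) by an inductive argument: if some leaf met by $\widetilde\gamma$ were strictly above or below the corresponding leaf met by $S\widetilde\gamma$, that drift would propagate through $S^2, \dots, S^m$, contradicting $S^m\widetilde\gamma = T\widetilde\gamma$ being a mere reparametrization of $\widetilde\gamma$ (since a positively transverse line meets each leaf at most once, the leaves it crosses are totally ordered, and no one of them can be above or below another relative to a third). The quotient path $\widetilde\gamma'$ is then produced by a selector over the $S$-invariant set $Z = \bigcup_{k=0}^{m-1} S^k\widetilde\gamma$: on each leaf $\phi_{\widetilde\gamma(t)}$ take the extremal point of $Z$, which is manifestly $S$-invariant and transverse.

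The gap in your proposal is the one you flag yourself, and it is not a mere technicality: leaves of the lifted topological foliation $\widetilde{\mathcal F}$ are proper embedded lines in $\widetilde{\dom(\mathcal F)} \cong \R^2$, but they need not converge to points of $S_\infty$; a leaf can accumulate on an entire arc of $S_\infty$. Consequently the displayed equivalence ``$\phi\in\mathcal L \iff \text{endpoints of }\phi\text{ separate }T_0^+\text{ from }T_0^-$'' is not even well-posed, and replacing ``endpoints'' by ``limit set'' does not help, since the closures of the two complementary half-planes may both contain $T_0^+$ or $T_0^-$. This cannot be ``extracted from the structure of transverse foliations'': it simply fails for general singular oriented foliations. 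A second, smaller gap is that even granting $T_0\mathcal L = \mathcal L$, you assert the existence of a transverse arc $\eta$ from $p_0$ to $T_0p_0$ crossing exactly the leaves of a fundamental domain $\mathcal L_0$ without constructing it; in a non-Hausdorff leaf space this requires an argument, which is precisely what the paper's selector over $Z$ supplies. To salvage your plan you would need to prove $T_0\mathcal L=\mathcal L$ by purely order-theoretic means on the leaf space (as the paper does), at which point the $S_\infty$ machinery becomes superfluous.
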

\begin{lem}\label{Lemma10}
Let $\Gamma$ be an $\mathcal{F}$-transverse loop such that $[\Gamma]_{\widehat{\pi}(\dom(\mathcal{F}))}\in \widehat{\pi}(\dom(\mathcal{F}))$ is primitive, and let $k:=\selfi_{\dom(\mathcal{F})}([\Gamma])$. Then, up to a modification of $\Gamma$ in its equivalence class, there are pairwise distinct points $x_1, \cdots x_k \in \dom(\mathcal{F})$, pairwise distinct parameters $t_1, \cdots t_k$, $t'_1, \cdots t'_k \in [0,1)$ with  $t_i < t'_i$, for $i\in \{1, \cdots, k\}$,  and  lifts $\widetilde\gamma_1, \cdots \widetilde\gamma_k$ of $\Gamma$, pairwise no translates of each other, such that, for all $i\in \{1, \cdots, k\}$, $x_i = \Gamma(t_i) = \Gamma(t'_i)$ and $\widetilde\gamma$ and $\widetilde\gamma_i$ intersect $\widetilde{\mathcal{F}}$-transversally (positively or negatively) in $\widetilde\gamma(t_i) = \widetilde\gamma_i(t'_i)$. 
\end{lem}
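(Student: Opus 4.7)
The plan is to produce, inside the $\mathcal{F}$-equivalence class of $\Gamma$, a representative whose self-intersections witness the required $\widetilde{\mathcal{F}}$-transverse structure, by combining a count of lifts with separating endpoints at infinity (furnished by the closed geodesic representative of $[\Gamma]$) with the characterization of $\widetilde{\mathcal{F}}$-transverse intersections provided by Corollary~\ref{cor2} and Lemma~\ref{lem:transverse_sep}. The case $k=0$ is vacuous, so I assume $k\geq 1$; then $[\Gamma]$ is not a multiple of a simple class and the general setup of this section applies.

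First I would pass to the closed geodesic representative $\Gamma_0$ of $[\Gamma]$ in the hyperbolic metric $g_{\hyp}$ on $\dom(\mathcal{F})$. By a classical result on hyperbolic surfaces, $\Gamma_0$ realizes $\selfi_{\dom(\mathcal{F})}([\Gamma])=k$, all self-intersections being transverse and located at distinct points. Fix a lift $\widetilde\gamma_0$ with shift $g\in\pi_1(\dom(\mathcal{F}))$. For each self-intersection $\Gamma_0(t_i)=\Gamma_0(t_i')$ with $t_i<t_i'\in[0,1)$, let $V_i$ be the unique deck transformation satisfying $V_i\widetilde\gamma_0(t_i)=\widetilde\gamma_0(t_i')$; the lift $\widetilde\gamma_i^{(0)}:=V_i^{-1}\widetilde\gamma_0$ then satisfies $\widetilde\gamma_0(t_i)=\widetilde\gamma_i^{(0)}(t_i')$, and by the geodesic property of $\Gamma_0$ the endpoints of $\widetilde\gamma_i^{(0)}$ separate $\widetilde\gamma_0^{\pm}$ on $S_\infty$. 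Distinct self-intersections correspond to distinct double cosets $\langle g\rangle V_i\langle g\rangle$ in $\pi_1(\dom(\mathcal{F}))$, and a brief check shows this forces the cosets $V_i^{-1}\langle g\rangle$ to be distinct as well, so the $\widetilde\gamma_i^{(0)}$ are pairwise non-translates (and none is a translate of $\widetilde\gamma_0$ since each $V_i$ lies outside $\langle g\rangle$ by injectivity of $\widetilde\gamma_0$).

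Next I would transport this combinatorial configuration to lifts of $\Gamma$. Since $\Gamma$ and $\Gamma_0$ are freely homotopic, their lifts share endpoints on $S_\infty$, so the translation classes above correspond to lifts $\widetilde\gamma_1,\ldots,\widetilde\gamma_k$ of $\Gamma$ whose endpoints still separate $\widetilde\gamma^{\pm}$. By Corollary~\ref{cor2}, each pair $(\widetilde\gamma,\widetilde\gamma_i)$ has a $\widetilde{\mathcal{F}}$-transverse intersection $\widetilde\gamma(s_i)=\widetilde\gamma_i(s_i')$; after replacing each $\widetilde\gamma_i$ by a suitable reparametrized translate within its translation class, I can arrange $s_i,s_i'\in[0,1)$ and set $t_i:=s_i$, $t_i':=s_i'$. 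This gives $k$ pairs of parameters and $k$ lifts $\widetilde\gamma_i$ of $\Gamma$ verifying all structural requirements of the lemma except possibly the distinctness of the points $x_i=\Gamma(t_i)$, the pairwise distinctness of the $2k$ values $t_i,t_i'$, and the orderings $t_i<t_i'$. To finish, I would perform a small modification of $\Gamma$ inside its $\mathcal{F}$-equivalence class --- a leafwise $C^0$-small perturbation combined with a reparametrization --- to break these codimension-at-least-one coincidences; leafwise perturbations preserve $\widetilde{\mathcal{F}}$-transversality, which depends only on the asymptotic endpoints on $S_\infty$.

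The main obstacle I expect is this final perturbation step: executing it simultaneously for all $k$ pairs without creating new coincidences and without moving the $\widetilde{\mathcal{F}}$-transverse intersection points around in an uncontrolled way. This amounts to a transversality/genericity argument for leafwise perturbations inside the equivalence class of a transverse loop, and although each individual coincidence is codimension-at-least-one, arranging the orderings $t_i<t_i'$ on $[0,1)$ requires combining data from different lifts and may require an additional swap of the roles played by $\widetilde\gamma$ and $\widetilde\gamma_i$ on some pairs.
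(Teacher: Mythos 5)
Your proposal follows essentially the same route as the paper's proof: use the closed geodesic representative to produce $k$ pairwise non-translate lifts whose endpoints at infinity separate those of $\widetilde\gamma$, transport them to lifts of $\Gamma$ by lifting the free homotopy, invoke Corollary~\ref{cor2} to obtain the $\widetilde{\mathcal{F}}$-transverse intersections, and then arrange $t_i<t_i'$ by the same swap you anticipate (replacing $\widetilde\gamma_i$ with $S_i^{-1}\widetilde\gamma$ when $s_i>s_i'$). The coincidence-avoidance that you flag as the main remaining obstacle is handled in the paper more simply by a single upfront modification of $\Gamma$ within its equivalence class ensuring that no three lifts pass through a common point of $\widetilde{\dom(\mathcal{F})}$, rather than by a genericity argument at the end.
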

\begin{proof}[Proof of  Lemma \ref{Lemma9}]
Let $\widetilde\gamma$ be a lift of $\Gamma$ and $T$ the shift for $\widetilde\gamma$. There is a deck transformation $S$ on $\widetilde{\dom(\mathcal{F})}$ such that $S^m = T$ and such that after identifying $S$ with an element in the fundamental group of $\dom(\mathcal{F})$its projection to $\widehat{\pi}(\dom(\mathcal{F}))$ is $\alpha$. Consider the lifts $S^k \widetilde\gamma$ of $\Gamma$ for $k= 0,1, \cdots$. 
We first show that these lifts are all pairwise equivalent in $\widetilde{\dom(\mathcal{F})}$. For that it suffices to show that $S\widetilde\gamma$ is equivalent to $\widetilde\gamma$. 
Note that $S^k\widetilde\gamma$ intersects $\widetilde\gamma$ for all $k\in \N$, otherwise, for some $k\in \N$,  $S^k\widetilde\gamma$ is completely on the left or completely on the right of $\widetilde\gamma$ and by induction this will hold for $S^{km}\widetilde\gamma$, a contradiction to $S^{km} = T^k$. For each $k\in \N$ choose $s_k, t_k \in \R$ such that $\widetilde\gamma(s_k) = S^k\widetilde\gamma(t_k)$. We argue by contradiction and assume that $S\widetilde\gamma$ is not equivalent to $\widetilde\gamma$. Then there is $s', t' \in \R$ such that $\phi_{\widetilde\gamma(s')}$ is above or below $\phi_{S\widetilde\gamma(t')}$ relative to $\phi_{\widetilde\gamma(s_1)}$. Assume that $\phi_{\widetilde\gamma(s')}$ is below $\phi_{S\widetilde\gamma(t')}$ relative to $\phi_{\widetilde\gamma(s_1)}$, and $\phi_{\widetilde\gamma(s')}$ and $\phi_{S\widetilde\gamma(t')}$ are on the left of $\phi_{\widetilde\gamma(s_1)}$, the other cases are treated similarly. 
We will deduce by induction that for all $k=1,2, \cdots$ there is $r_k \in \R$ such that  $\phi_{\widetilde\gamma(s')}$ is below $\phi_{S^k\widetilde\gamma(t')}$ relative to $\phi_{\widetilde\gamma(r_k)}$ and   $\phi_{\widetilde\gamma(s')}$ and $\phi_{S^k\widetilde\gamma(t')}$ are on the left of $\phi_{\widetilde\gamma(r_k)}$.  
This will give a contradiction for $k=m$. 
So let $k\geq 1$ and assume that there is $r_k \in \R$ such that $\phi_{\widetilde\gamma(s')}$ is below $\phi_{S^k\widetilde\gamma(t')}$ relative to $\phi_{\widetilde\gamma(r_k)}$, and $\phi_{\widetilde\gamma(s')}$ and $\phi_{S^k\widetilde\gamma(t')}$ are on the left of $\phi_{\widetilde\gamma(r_k)}$.  
Note that $S^k\widetilde\gamma$ and $S^{k+1}\widetilde\gamma$ intersect in $S^k\widetilde\gamma(s_1) = S^{k+1}\widetilde\gamma(t_1)$, 
$\phi_{S^k\widetilde\gamma(s')}$ is below $\phi_{S^{k+1}\widetilde\gamma(t')}$ relative to $\phi_{S^k\widetilde\gamma(s_1)}$, and $\phi_{S^k\widetilde\gamma(s')}$ and $\phi_{S^{k+1}\widetilde\gamma(t')}$ are on the left of $\phi_{S^k\widetilde\gamma(s_1)}$. 
In particular, there are subpaths of $\widetilde\gamma$,  $S^k\widetilde\gamma$, and of $S^{k+1}\widetilde\gamma$, respectively, say $\gamma_i:[a_i,b_i] \to \dom(\mathcal{F})$, $a_i \leq b_i$ for $i=1,2,3$, 
such that $\gamma_1(b_1) = \gamma_2(a_2)$, $\gamma_1(a_1) = \gamma_3(a_3)$, and $\gamma_2(b_2) = \gamma_3(b_3)$, and 
\begin{align*}
\begin{split}
&\{ \gamma_1(b_1) = \gamma_2(a_2), \,   \gamma_1(a_1) = \gamma_3(a_3),  \,  \gamma_2(b_2) = \gamma_3(b_3)\} = \\&\{\widetilde{\gamma}(s_k) = S^k\widetilde\gamma(t_k), \, \widetilde\gamma(s_{k+1}) = S^{k+1}\widetilde\gamma(t_{k+1}), \, S^k\widetilde\gamma(s_1) = S^{k+1}\widetilde\gamma(t_1) \}.
\end{split}
\end{align*} 
It follows with $r_{k+1} = \min\{s_k,s_{k+1}\}$ that $\phi_{S^k\widetilde\gamma(s')}$ is below $\phi_{S^{k+1}\widetilde\gamma(t')}$ relative to $\phi_{\widetilde\gamma(r_{k+1})}$.  Indeed, if $\phi_{\widetilde\gamma(r_{k+1})}$ is on the right of $\phi_{S^k\widetilde\gamma(s_1)}$, this is obvious, and if $\phi_{\widetilde\gamma(r_{k+1})}$ is on the left of $\phi_{S^k\widetilde\gamma(s_1)}$, then, since $\widetilde{\mathcal{F}}$ is non-singular, $\phi_{\widetilde\gamma(r_{k+1})}$ intersects all sides of the triangle formed by $\gamma_1, \gamma_2$, and $\gamma_3$, and hence intersects both  $S^k\widetilde\gamma$ and $S^{k+1}\widetilde\gamma$, which also implies the above. 
Also, $\phi_{\widetilde{\gamma}(s')}$ is clearly below $\phi_{S^k\widetilde\gamma(t')}$ relative to $\phi_{\widetilde\gamma(r_{k+1})}$.
With $u := \max\{s',t'\}$ we have that   
$\phi_{S^k\widetilde\gamma(u)}$ is below $\phi_{S^{k+1}\widetilde\gamma(t')}$ relative to $\phi_{\widetilde\gamma(r_{k+1})}$, and $\phi_{\widetilde{\gamma}(s')}$ is below $\phi_{S^k\widetilde\gamma(u)}$ relative to  $\phi_{\widetilde\gamma(r_{k+1})}$. 
By the transitivity of the relation 'below relative to  $\phi_{\widetilde\gamma(r_{k+1})}$',
we conclude that $\phi_{\widetilde{\gamma}(s')}$  is below $\phi_{S^{k+1}\widetilde\gamma(t')}$ relative to $\phi_{\widetilde\gamma(r_{k+1})}$. Moreover, by construction, $\phi_{\widetilde{\gamma}(s')}$  and $\phi_{S^{k+1}\widetilde\gamma(t')}$ are on the left of $\phi_{\widetilde\gamma(r_{k+1})}$, so the induction step is complete. 
This gives the desired contradiction, hence the lifts $S^k\widetilde\gamma$, $k=0,1,\cdots$, are pairwise equivalent. 

Let $Z\subset \widetilde{\dom(\mathcal{F})}$ be the union of the images of the lifts $S^k\widetilde\gamma$, $k=0, \cdots, m-1$. 
For each $t$ consider the unique point $x\in \widetilde{\dom(\mathcal{F})}$ with the property that 
\begin{itemize}
\item $x \in \phi_{\widetilde\gamma(t)}$, and 
\item there is no $y\in Z, y \neq x \text { such that } y \in \phi^{+}_x$.
\end{itemize}
Since $S^k\widetilde\gamma, k=0, \cdots m-1$ are pairwise equivalent, it is easy to see that this map defines a transverse line $t \mapsto \widetilde\gamma'(t)$ in $\widetilde{\dom(\mathcal{F})}$, equivalent to $\widetilde\gamma$. 
The deck transformation $S$ leaves the image of $Z = \cup_{k=0}^{m-1}S^k\widetilde\gamma$ in $\widetilde{\dom(\mathcal{F})}$ invariant. 
Since $S$ is orientation preserving, preserves $Z$ and the foliation $\widetilde{\mathcal{F}}$, $\widetilde\gamma'$ is also invariant under $S$. Hence $\widetilde\gamma'$ is a lift of a loop $\Gamma'$ on $\dom(\mathcal{F})$ with the desired properties. The proof of the lemma is complete. 
\end{proof}

\begin{proof}[Proof of Lemma \ref{Lemma10}]
By perturbing $\Gamma$ in its equivalence class we may assume that for any $\widetilde{x}\in \widetilde{\dom(\mathcal{F})}$ there are at most two lifts $\widetilde\gamma$ of $\Gamma$ that intersect in $\widetilde{x}$. 
Let $\Lambda: S^1 \to (\dom(\mathcal{F}), g_{\hyp})$ be a closed geodesic that is freely homotopic to $\Gamma$. We have $\selfi(\Lambda) = \selfi_{\dom(\mathcal{F})}([\Gamma])=k$. Let  $\widetilde{\lambda}:\R \to \widetilde{\dom(\mathcal{F})}$ be a lift of $\Lambda$, $T$ the shift for $\widetilde{\lambda}$. Since $\Lambda$ is a primitive closed geodesic in a hyperbolic surface, we have lifts $\widetilde\lambda_1, \cdots, \widetilde\lambda_{k}$ with  $\widetilde\lambda^{+}_i \in (\widetilde\lambda^{+}, \widetilde\lambda^-)\subset S_{\infty}$ and $\widetilde\gamma^{-}_i \in (\widetilde\lambda^-, \widetilde\lambda^{+})\subset S_{\infty}$  such that for every $l \in \Z$ and $i\neq j$,  $T^l\widetilde\lambda_i$ and $\widetilde\lambda_j$ are not translates of each other.   
Lifting a free homotopy from $\Lambda$ to $\Gamma$ to homotopies of the universal cover $\widetilde{\dom(\mathcal{F})}$ that extend $\widetilde\lambda_1, \cdots, \widetilde\lambda_{k}$, respectively,  we obtain lifts $\widetilde\gamma'_1, \cdots, \widetilde\gamma'_{k}$, with the same properties. By applying multiples of the  shifts $T'_1, \cdots, T'_n$ for $\widetilde\gamma'_1, \cdots, \widetilde\gamma'_{k}$, respectively, and applying multiples of $T$ to them, we may additionally assume that $\widetilde\gamma'_i|_{[0,1)}$ and $\widetilde\gamma|_{[0,1)}$ intersect, for each $i\in \{1, \cdots, k\}$.  Choose $s_i, s'_i \in \R$ with $\widetilde\gamma(s_i) = \widetilde\gamma'_i(s'_i)$. 
Let then $S_i$, $i\in \{1, \cdots, k\}$, be the deck transformation with $S_i \widetilde\gamma = \widetilde\gamma'_i$.
With the following choice of lifts $\widetilde\gamma_1, \cdots, \widetilde\gamma_k$ of $\gamma$ and the parameters $t_1, \cdots, t_n$, $t'_1 \cdots, t'_n$, the properties of the lemma are satisfied. 
If $s_i < s'_i$ set $\widetilde\gamma_i := \widetilde\gamma'_i$, and $t_i := s_i$, $t'_i:=s'_i$. If $s_i > s'_i$, set $\widetilde\gamma_i := S_i^{-1}\widetilde\gamma$, and $t_i := s'_i$, $t'_i := s_i$. 
\end{proof}


\section{Geometric self-intersections, growth of periodic points and entropy}\label{sec:proof_entropy}

In this section, let $M$ be an oriented closed surface, $f: M\to M$ a homeomorphism isotopic to the identity, $I$ a maximal identity isotopy for $f$, and $\mathcal{F}$ an oriented foliation transverse to $I$. As in the introduction, denote by $N_{\per}(f,n)$ the number of  $n$-periodic points of $f$ of period $\leq n$, let $\Per^{\infty}(f) := \limsup_{n\to +\infty} \log(N_{\per}(f,n))/n$, and let $h_{\topo}(f)$ denote the topological entropy of $f$.  Let $\Gamma$ be an $\mathcal{F}$-transverse loop that is not freely homotopic to a multiple of a simple loop in $\dom(I)$, say $[\Gamma]_{\widehat{\pi}(\dom(I))} = m \alpha$, where $\alpha$ is a non-simple primitive class in $\dom(I)$ and $m\in \N$. 
In this section we prove the following 

\begin{prop}\label{prop:forcing1}
If $\Gamma$ is linearly admissible of order $q$,  then 
\begin{align}
\Per^{\infty}(f) \geq \frac{m}{q} \max\left\{\frac{\log(\selfi_{\dom(I)}(\alpha) + 1)}{16}, \frac{\log 2}{2} \right\}.
\end{align}
\end{prop}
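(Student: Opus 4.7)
The plan is first to reduce to a loop $\Gamma'$ representing the primitive class $\alpha$, and then to exploit the $\mathcal{F}$-transverse self-intersections of $\Gamma'$ to manufacture many distinct admissible loops, each of which Proposition~\ref{Proposition26} converts into a periodic orbit of $f$. By Lemma~\ref{Lemma9} I first replace $\Gamma$ by an $\mathcal{F}$-transverse loop $\Gamma'$ with $[\Gamma']_{\widehat{\pi}(\dom(I))} = \alpha$ and $\Gamma$ equivalent to $(\Gamma')^m$. Writing the natural lifts $\gamma(t) = \gamma'(mt)$, the linear admissibility of $\Gamma$ of order $q$ at speed $(r_k, s_k)$ translates directly into linear admissibility of $\Gamma'$ of order $q/m$ at speed $(m r_k, s_k)$. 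I then invoke Lemma~\ref{Lemma10} to further modify $\Gamma'$ within its equivalence class so that all $k := \selfi_{\dom(I)}(\alpha)$ geometric self-intersections of $\Gamma'$ are realized as $\mathcal{F}$-transverse self-intersections.

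\textbf{The $(\log 2)/2$ bound.} This is the bound of \cite{LeCalvezTal1}, refined in \cite{Silva}. I fix one $\mathcal{F}$-transverse self-intersection at $\gamma'(t_1) = \gamma'(t_1')$, set $u = \gamma'|_{[t_1, t_1']}$, $v = \gamma'|_{[t_1', t_1 + 1]}$, and for each binary word $w = (n_1, \ldots, n_N) \in \{0,1\}^N$ build the $\mathcal{F}$-transverse loop $\Lambda_w = u^{n_1} v u^{n_2} v \cdots u^{n_N} v$ by iterated application of Proposition~\ref{Proposition23} on the natural lift of $(\Gamma')^{N}$. Its iterates $(\Lambda_w)^K$ are admissible of order $\leq KNq/m$ in $\gamma'$-parameter, so $\Lambda_w$ is linearly admissible of order $Nq/m$ and still carries an $\mathcal{F}$-transverse self-intersection. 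Proposition~\ref{Proposition26} then associates a periodic orbit of period at most a constant times $Nq/m$ to each $\Lambda_w$; distinct words $w$ give distinct free homotopy classes, hence distinct orbits, and counting the $2^N$ orbits (up to constant losses) yields the claimed bound.

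\textbf{The $\log(k+1)/16$ bound.} For the new input I use all $k$ $\mathcal{F}$-transverse self-intersections simultaneously. At least $\lceil k/2\rceil$ of them share a common sign, and Proposition~\ref{Proposition23_1} allows shortcutting any subfamily of these. Organizing the arcs of $\Gamma'$ cut at the same-sign self-intersection parameters into a combinatorial shortcut graph with essentially $\lceil k/2\rceil + 1$ outgoing edges per vertex, closed walks of combinatorial length $N$ in this graph translate, via Proposition~\ref{Proposition23_1} applied to $(\Gamma')^N$, into $\mathcal{F}$-transverse admissible loops whose linear-admissibility order is bounded by a constant multiple of $q/m$. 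There are at least $(k/2+1)^N$ such walks; distinct walks yield distinct free homotopy classes up to cyclic rotations, and each associated periodic orbit has period at most a constant times $Nq/m$. The constant $16$ absorbs a factor $2$ for the same-sign restriction, a factor $2$ from cyclic identifications of the words, and further constants from the admissibility-order bookkeeping.

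\textbf{Main obstacle.} The most delicate step is verifying that distinct shortcut patterns produce genuinely distinct periodic orbits, i.e.\ that the associated admissible loops are pairwise non-freely-homotopic in $\dom(I)$. This requires careful combinatorial analysis of the resulting words in $\pi_1(\dom(I))$ modulo conjugation, and is compounded by the simultaneous need to preserve an $\mathcal{F}$-transverse self-intersection in every $\Lambda_w$ (so that Proposition~\ref{Proposition26} applies), to maintain the sign consistency required by Proposition~\ref{Proposition23_1}, and to retain uniform admissibility-order bounds across all the shortcut loops.
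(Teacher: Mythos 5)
Your outer scaffolding matches the paper's: reduce to a primitive loop $\Gamma'$ via Lemma~\ref{Lemma9}, realize all $k=\selfi_{\dom(I)}(\alpha)$ self-intersections as $\mathcal{F}$-transverse via Lemma~\ref{Lemma10}, manufacture shortcut loops indexed by words, and convert admissible loops to periodic orbits via Proposition~\ref{Proposition26}. But the two steps you wave at as bookkeeping are exactly where the content lies, and your proposal contains a genuine gap at both.

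First, your admissibility argument takes a different and strictly weaker route. You invoke Proposition~\ref{Proposition23_1}, which forces all shortcuts to have the same sign, so you only have $\lceil k/2\rceil + 1$ available choices per block; that gives $\log(\lceil k/2\rceil+1)$ in the numerator, not $\log(k+1)$. The paper avoids the same-sign restriction entirely: it applies Proposition~\ref{Proposition23} one shortcut at a time and maintains an inductive invariant ("reducibility") asserting that the partially-shortcut path still has an $\mathcal{F}$-transverse self-intersection at the next scheduled point. The reason this works is a careful choice of spacer length: each shortcut arc $\gamma_{i,l}$ has $l=8$ copies of $\Gamma'$ glued between consecutive shortcut points, and Lemma~\ref{Lemma7}~(1),(3) then guarantees the needed transversality of the relevant lifts. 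Your explanation of the factor $16$ (a factor $2$ for same-sign, a factor $2$ for cyclic identifications, etc.) is not what happens; it is $8 \times 2$, the spacer length $8$ times the quasi-palindromic word length $2n$.

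Second, and more seriously, your proposal asserts --- once for the $\log 2/2$ bound explicitly ("distinct words $w$ give distinct free homotopy classes") and once for the $\log(k+1)/16$ bound implicitly ("distinct walks yield distinct free homotopy classes up to cyclic rotations") --- a distinctness claim that is false in that generality and is, in fact, the crux of the entire proof. Arbitrary words in shortcut arcs can certainly give equivalent transverse loops; nothing rules this out a priori. The paper does not count arbitrary words at all: it restricts to \emph{quasi-palindromic} words, introduces the "spread" property for the family of shortcut paths, proves that the family $\mathcal{P}_1$ of length-$8$ shortcuts spreads (Claim~\ref{spread1}, using Lemmas~\ref{Lemma6} and~\ref{Lemma7}), shows that distinct quasi-palindromic words give $\widetilde{\mathcal{F}}$-transversally intersecting lifts (Claim~\ref{struwwelpeter}), and finally bounds the number of quasi-palindromic words yielding an equivalent loop by $Ln^2$ (Claim~\ref{few_equivalent}). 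The quasi-palindromic restriction costs the extra factor $2$ in the denominator ($2n$ rather than $n$) but is essential to making the spread argument work. Your "Main obstacle" paragraph correctly identifies this as the delicate point, but acknowledging the hole is not the same as filling it: without the spread/quasi-palindrome machinery or an equivalent substitute, the counting in your proposal does not go through, and no lower bound on $\Per^\infty(f)$ is obtained.
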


\begin{prop}\label{prop:forcing2}
If  $\Gamma$ is linearly admissible of order $q$, then  
\begin{align}
h_{\topo}(f) \geq \frac{m}{q} \max\left\{\frac{\log(\selfi_{\dom(I)}(\alpha) + 1)}{16}, \frac{\log 2}{2} \right\}.
\end{align}
\end{prop}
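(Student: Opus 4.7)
The plan is to parallel Proposition \ref{prop:forcing1}, but to upgrade the counting of periodic points into a genuine topological-entropy lower bound by tracking how many pairwise $\mathcal{F}$-inequivalent admissible loops can be produced by iterated shortcutting and concatenation, and then organizing the resulting periodic orbits into an invariant set with symbolic dynamics.

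First I would reduce to the case $m = 1$ via Lemma \ref{Lemma9}: write $\Gamma$ as equivalent to $(\Gamma')^m$ with $[\Gamma'] = \alpha$ primitive, noting that linear admissibility of order $q$ for $\Gamma$ yields linear admissibility of order $q/m$ for $\Gamma'$, so the desired bound $\frac{m}{q}C$ with $C = \max\{\log(k+1)/16, \log 2/2\}$ for $\Gamma$ reduces to the bound $\frac{1}{q'}C$ for $\Gamma'$ with $q' = q/m$. Next, apply Lemma \ref{Lemma10} to realize $k := \selfi_{\dom(I)}(\alpha)$ of the geometric self-intersections of $\Gamma'$ as $\mathcal{F}$-transverse self-intersections supported on pairwise non-translate lifts, and by pigeonhole extract $N := \lceil k/2 \rceil$ of them sharing a single sign (say positive) at parameters $t_1 < t'_1 < \cdots < t_N < t'_N$ in $[0,1)$. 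For each subset $S \subset \{1,\ldots,N\}$, Proposition \ref{Proposition23_1} guarantees that the loop $\gamma_S$ obtained from $\gamma'|_{[0,1]}$ by excising every subsegment $\gamma'|_{[t_i,t'_i]}$ with $i \in S$ is admissible of order $q$, and iterated application of Proposition \ref{fundprop} shows that every concatenation $\gamma_{S_1}\gamma_{S_2}\cdots\gamma_{S_n}$ is admissible of order $nq$.

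The second step is the combinatorial/dynamical core. Each such concatenation is, by Proposition \ref{Proposition26}, associated to a periodic point of period dividing $nq$. I need to count how many words $(S_1,\ldots,S_n)$ produce pairwise $\mathcal{F}$-inequivalent loops, since inequivalent loops yield distinct periodic orbits. The key input here is Lemma \ref{Lemma7}$(3)$: any two equivalent subpaths supported on lifts of $\gamma'$ that are not translates of one another have length strictly less than $6$. This bounds, uniformly in $n$, the multiplicity with which any single equivalence class can be hit by distinct shortcut words, so that the number of inequivalent loops grows like $(N+1)^n/C_0$ for some constant $C_0$ independent of $n$. Collecting the resulting periodic orbits into a closed $f$-invariant set and using the standard fact that the induced map semiconjugates onto a subshift of the word space (shortcut decisions determine a symbolic code, separation of codes follows from inequivalence of the corresponding transverse loops) yields $h_{\topo}(f) \geq \frac{1}{nq}\log\bigl((N+1)^n/C_0\bigr) \to \frac{\log(k+1)}{16q}$ after absorbing the sign-halving loss, the subpath-length constant from Lemma \ref{Lemma7}$(3)$, and the admissibility-factor loss in Proposition \ref{fundprop} into the constant $16$. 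The parallel bound $\log 2/(2q)$ follows from the same scheme applied with the single $\mathcal{F}$-transverse self-intersection granted by Corollary \ref{cor2}, where at each iterate one freely chooses shortcut or no-shortcut, producing a $2$-shift factor over $2q$ iterates.

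The principal obstacle is the collision analysis in the second step: passing from "many admissible loops" to "many $\mathcal{F}$-inequivalent loops, hence many orbits, hence positive topological entropy" requires a uniform-in-$n$ control on how often distinct symbolic words can produce equivalent transverse loops. This is precisely where the geometric rigidity supplied by Lemmata \ref{Lemma4}--\ref{Lemma7} enters, with Lemma \ref{Lemma7}$(3)$ providing the crucial finite bound that prevents the collapse of the symbolic count. Once this rigidity is pinned down, the passage from orbit count to entropy is a standard semi-conjugacy argument, and the explicit constant $16$ is read off from the chain of reductions.
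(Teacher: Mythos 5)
Your outline gets the counting right (Lemmata \ref{Lemma10}, \ref{Lemma7}, Propositions \ref{Proposition23_1}, \ref{Proposition26} do produce on the order of $(k+1)^n$ pairwise $\mathcal{F}$-inequivalent linearly admissible loops of order $\approx 16nq$), but the final step — ``collecting the resulting periodic orbits into a closed $f$-invariant set and using the standard fact that the induced map semiconjugates onto a subshift of the word space'' — is not a standard fact, and this is exactly where the actual work of Proposition \ref{prop:forcing2} lies. Inequivalence of the transverse loops $\Gamma_\rho$ gives distinct periodic \emph{points} via Proposition \ref{Proposition26}, but it gives no separation at a fixed metric scale: a priori the points associated to different codes can all accumulate on $\fix(I)$, where the dynamics is trivial (and outside $\dom(\mathcal{F})$ there is no foliation to read codes from). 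An exponential supply of periodic points does not imply positive topological entropy; that is precisely why the paper proves Proposition \ref{prop:forcing1} and \ref{prop:forcing2} separately. The missing ingredient is the argument from \cite[Prop.~38]{LeCalvezTal1} adapted in the paper's proof: pass to the one-point compactification $\dom(I)\cup\{\infty\}$, build coverings $\mathcal{V}^p$ refining near $\infty$, and show — via the ``spread'' structure, paths $\tau_i$ that have a leaf on each side, and a comparison loop $\Gamma^*$ — that orbit segments which stay near $\infty$ for many iterates can only realize few inequivalence classes of transverse paths, so that each element of $\bigvee_{0\le i\le 16nq}\hat f^{-i}(\mathcal{V}^p)$ meets at most $M^{8nq/p}$ of the $(k+1)^{n-1}/Ln^2$ classes. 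It is this estimate, not a semi-conjugacy, that converts the orbit count into a lower bound on $N_{16nq}(\hat f,\mathcal{V}^p)$ and hence on $h_{\topo}$.

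You have also misattributed the role of Lemma \ref{Lemma7}$(3)$: in the paper it controls the multiplicity of equivalence classes among the $\Gamma_\rho$'s (as in Claim \ref{few_equivalent}, giving the polynomial factor $Ln^2$), not the step from orbit count to entropy. The ``collision analysis'' you name as the principal obstacle is in fact the easier of the two difficulties and was already resolved in the proof of Proposition \ref{prop:forcing1}; the genuinely new obstacle in Proposition \ref{prop:forcing2} is the entropy extraction near $\fix(I)$, which your proposal asserts away.
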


\begin{rem}
Note that the lower bound $m\log(2) / 2q$ on the topological entropy of $f$ is larger then the bound obtained in Proposition 38 in \cite{LeCalvezTal1} and the bound obtained in Theorem $N$ in \cite{LeCalvezTal2}. 
\end{rem}

Theorem \ref{intro:thm1} follows from Propositions \ref{prop:forcing1} and \ref{prop:forcing2}.
 
\begin{proof}[Proof of Theorem \ref{intro:thm1}]
If $x$ is a $q$-periodic point of $f$ then $I^q(x)$ is homotopic with fixed endpoints to a $\mathcal{F}$-transverse loop $\Gamma$ in $\dom(I)$. By assumption, $[\Gamma]_{\widehat{\pi}(\dom(I))} = m\alpha$. The  natural lift $\gamma$ of $\Gamma$ satisfies that $\gamma|_{[0,k]}$ is admissible of order $kq$, for every $k\in \N$.   In particular, $\Gamma$ is linearly admissible of order $q$. Hence the lower bounds on $Per^{\infty}(f)$ and $h_{\topo}(f)$ follow from Propositions \ref{prop:forcing1} and \ref{prop:forcing2}. 
\end{proof}

Before we proceed with the proofs of Propositions \ref{prop:forcing1} and \ref{prop:forcing2}, we give a proof of Theorem \ref{intro:thm2} from the introduction.  

\begin{proof}[Proof of \ref{intro:thm2}]
For any identity isotopy $I$ for $f$ there is by \cite[corollary 1.3]{Beguin2016} a maximal identity isotopy $\widehat{I}$ for $f$ such that $\fix(I) \subset \fix(\widehat{I})$ and such that the loop $\widehat{I}(y)$ is homotopic to $I(y)$ in $\dom(I)$ for any $y\in \dom(I) \cap \fix(f)$. In particular $[\widehat{I}(x)] = \alpha$ in $\widehat{\pi}(M)$ and so $\widehat{I}(x)$ defines a free homotopy class $\widehat{\alpha}$ of loops in $\dom(\widehat{I})$ whose pushforward by the inclusion  $\dom(\widehat{I}) \hookrightarrow M$ is $\alpha$. The class $\widehat{\alpha}$ is primitive and $\selfi_{M}(\widehat{\alpha}) \geq \selfi_{M}(\alpha)$. So the statement follows from Theorem \ref{intro:thm1}.  
\end{proof}
		
The main steps in the proofs of Propositions \ref{prop:forcing1} and \ref{prop:forcing2} are adaptions of the main steps in the proofs of Propositions 31 and 38 in \cite{LeCalvezTal1}. The notations and arguments are kept closely to those in the corresponding proofs in \cite{LeCalvezTal1}.

\begin{proof}[Proof of Proposition \ref{prop:forcing1}]
By modifying $\Gamma$ in its equivalence class if necessary we may assume by Lemma \ref{Lemma9} that there is a  $\mathcal{F}$-transverse loop $\Gamma'$, primitive in $\dom(I)$, and $m\in \N$ such that $\Gamma = (\Gamma')^m$. 
We first give the proof of Proposition \ref{prop:forcing1} for $m=1$, i.e. $\Gamma = \Gamma'$.  It will be easy to adapt that proof for $m>1$. 
Let $k= \selfi_{\dom(I)}([\Gamma])$. By Lemma \ref{Lemma10} there are, after modifying $\Gamma$ further in its equivalence class, 
pairwise distinct points $x_1, \cdots, x_k \in \dom(\mathcal{F})$, pairwise distinct parameters $t_1, \cdots, t_k$, $t'_1, \cdots, t'_k \in [0,1)$ with  $t_i < t'_i$, and lifts $\widetilde\gamma_1, \cdots, \widetilde\gamma_k$ of $\Gamma$, pairwise not translates of each other,  such that, for all $i=1, \cdots, k$, $x_i = \Gamma(t_i) = \Gamma(t'_i)$ and $\widetilde\gamma$ and $\widetilde\gamma_i$ intersect $\widetilde{\mathcal{F}}$-transversally (positively or negatively) in $\widetilde\gamma(t_i) = \widetilde\gamma_i(t'_i)$. 
  \color{black}

By Lemma \ref{Lemma6} there is $\overline{x} \in \{x_1, \cdots, x_k\}$  and $\overline{t}, \underline{t} \in [0,1)$ with $\overline{x}= \Gamma(\overline{t}) = \Gamma(\underline{t})$, such that for any lifts $\widetilde\gamma$ and $\widetilde\gamma'$ of $\Gamma$ that intersects $\mathcal{F}$-transversally and positively, resp. negatively, $\widetilde\gamma$ does not intersect the leaves $\phi_{\widetilde\gamma'(\overline{t} +k)}$, resp. $\phi_{\widetilde\gamma'(\underline{t} +k )}$, for all $k\in \Z$. We may assume that $\overline{x} = x_1$, i.e. $\{t_1, t'_1\} = \{\overline{t}, \underline{t}\}$. 
For each $i \in \{1,\cdots k\}$ and $l\in \N$ let $\gamma_{i,l}$ be the transverse path $\gamma_{[0,t_i]}\gamma_{[t'_i,l]}$ in $\dom(\mathcal{F})$, furthermore let $\gamma_{0,l} := \gamma_{[0,l]}$. Let $n\geq 1$. 
For any $\rho= (\rho_1, \cdots, \rho_n)$ with $\rho_i \in \{0, \cdots, k\}$, and $l\geq 1$, 
consider the path $\gamma_{\rho,l} := \gamma_{\rho_1,l}\gamma_{\rho_2,l} \cdots \gamma_{\rho_n,l}$. 
The path defines a transverse loop $\Gamma_{\rho,l}$.

\begin{claim}\label{claim:admissible}
\begin{enumerate}
\item For each $\rho= (\rho_1, \cdots, \rho_n)$ with $\rho_i \in \{0, \cdots, k\}$, $i = 1, \cdots, n$  and  $l\geq 2$, $\Gamma_{\rho,l}$ is linearly admissible of order $lnq$. 
\item For each $\rho = (\rho_1, \cdots, \rho_n)$ with $\rho_i \in \{0,1\}$, $i=1, \cdots n$,  
$\Gamma_{\rho,1}$ is linearly admissible of order $nq$.
\end{enumerate}
 \end{claim}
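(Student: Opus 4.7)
The plan is to reduce both parts of the claim to a shortcut-preservation argument applied to long admissible subpaths of $\gamma$, exploiting the linear admissibility of $\Gamma$ at order $q$. By hypothesis there exist sequences $r_k, s_k \to \infty$ with $\limsup r_k / s_k \geq 1/q$ such that $\gamma|_{[0, r_k]}$ is admissible of order $\leq s_k$. Setting $K_k := \lfloor r_k / (nl) \rfloor$ for part~(1) and $K_k := \lfloor r_k / n \rfloor$ for part~(2), the subpath $\gamma|_{[0, K_k n l]}$ (respectively $\gamma|_{[0, K_k n]}$) inherits admissibility of order $\leq s_k$, and $\limsup K_k / s_k \geq 1/(nlq)$ (respectively $\geq 1/(nq)$). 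Since the natural lift of $\Gamma_{\rho, l}$ restricted to $[0, K_k]$ is obtained from this base path by applying shortcuts at the positions
\[
(t_{\rho_j} + (m-1)nl + (j-1)l,\ t'_{\rho_j} + (m-1)nl + (j-1)l), \qquad 1 \leq m \leq K_k,\ 1 \leq j \leq n,\ \rho_j \neq 0,
\]
the claim reduces to showing that applying these shortcuts preserves the admissibility order.

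For part~(2), every shortcut position corresponds to a translate under the shift $T$ of the single $\widetilde{\mathcal{F}}$-transverse intersection between $\widetilde\gamma$ and $\widetilde\gamma_1$ at $x_1 = \overline{x}$; these translates all carry the same sign, and because $t_1, t_1' \in [0, 1)$ the corresponding shortcut intervals sit inside disjoint unit blocks of $[0, K_k n]$ and are therefore totally ordered. Hence Proposition~\ref{Proposition23_1} applies directly to the chosen subset of shortcut positions (those with $\rho_j = 1$), yielding that $\gamma'|_{[0, K_k]}$ is admissible of order $\leq s_k$ and establishing the desired linear admissibility of order $nq$.

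For part~(1), the shortcuts may occur at different points $x_i$ with possibly different signs, so Proposition~\ref{Proposition23_1} is not directly available. The assumption $l \geq 2$ combined with $t_i, t_i' \in [0, 1)$ guarantees that the shortcut intervals are pairwise disjoint and totally ordered within $[0, K_k n l]$. I would then apply Proposition~\ref{Proposition23} inductively, one shortcut at a time from left to right. The main technical step, which I expect to be the crux of the argument, is to verify that after a shortcut at $(s_*, t_*)$ is performed (witnessed by a deck transformation $U$ with $\widetilde\gamma(s_*) = U\widetilde\gamma(t_*)$), every remaining shortcut position $(s_{**}, t_{**})$ still represents an $\mathcal{F}$-transverse self-intersection of the new path
\[
\widetilde\delta \;:=\; \widetilde\gamma|_{[0, s_*]} \cdot U\widetilde\gamma|_{[t_*, K_k n l]}.
\]
This is checked by a direct lift computation: if the original intersection at $(s_{**}, t_{**})$ is witnessed by $\widetilde\gamma$ and $V\widetilde\gamma$, then the corresponding pair of lifts of the new path is $\widetilde\delta$ together with a conjugate such as $U V U^{-1}\widetilde\delta$ or $V U^{-1} \widetilde\delta$ (depending on whether $s_{**}, t_{**}$ both lie past $t_*$ or straddle $[s_*, t_*]$), and $\widetilde{\mathcal{F}}$-transversality of the new intersection is preserved because deck transformations act on $\widetilde{\dom(I)}$ as leaf-preserving orientation-preserving homeomorphisms. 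Iterating this procedure across all shortcuts in order yields $\gamma'|_{[0, K_k]}$ admissible of order $\leq s_k$, which combined with the asymptotic bound on $K_k / s_k$ completes the proof.
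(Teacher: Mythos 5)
Your proof strategy for part~(2) is sound and arguably cleaner than the paper's: taking the long admissible subpath $\gamma|_{[0,K_k n]}$ and invoking Proposition~\ref{Proposition23_1} directly (all shortcut positions being translates of the single intersection at $\overline{x}$, hence all of the same sign and pairwise non-nested since $t'_1-t_1<1$) is a valid route. The paper instead runs an explicit induction and justifies each reducibility step via Lemma~\ref{Lemma6}, but these come to the same thing.

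For part~(1), however, there is a genuine gap in the step you yourself flag as the crux. You want to apply Proposition~\ref{Proposition23} inductively, and for that you must show that after performing a shortcut the remaining positions are still $\mathcal{F}$-transverse self-intersections of the modified path $\widetilde\delta = \widetilde\gamma|_{[0,s_*]}\cdot U\widetilde\gamma|_{[t_*,\cdot]}$. Your justification — that the new pair of lifts is $\widetilde\delta$ together with a deck-conjugate of itself, and that deck transformations preserve leaves — does not establish this. Deck transformations preserving $\widetilde{\mathcal{F}}$ tells you that if $\widetilde\gamma$ and $V\widetilde\gamma$ intersect $\widetilde{\mathcal{F}}$-transversally then so do $W\widetilde\gamma$ and $WV\widetilde\gamma$; it says nothing about $\widetilde\delta$, which is a concatenation of pieces of \emph{different} lifts of $\gamma$ and is not a deck-translate of any lift of $\gamma$. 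The definition of $\mathcal{F}$-transverse intersection requires exhibiting leaves $\phi_{\widetilde\gamma(a_1)},\phi_{\widetilde\gamma(b_1)},\phi_{V\widetilde\gamma(a_2)},\phi_{V\widetilde\gamma(b_2)}$ with the right above/below relations, and a priori $a_1,b_1,a_2,b_2$ may lie arbitrarily far from the intersection point — in particular on the side of $s_*$ (resp.\ $t_*$) that the shortcut has replaced. If the crossing is only visible beyond the cut point, the intersection of $\widetilde\delta$ with its translate need not be $\widetilde{\mathcal{F}}$-transverse.

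What makes the argument work is a locality statement: Lemma~\ref{Lemma7}(1) guarantees that two lifts of $\gamma$ which intersect $\widetilde{\mathcal{F}}$-transversally can only be $\widetilde{\mathcal{F}}$-equivalent over an interval of length $\leq 1$, so the crossing can always be witnessed within a window of bounded width (length $3$ in the paper) around the intersection point. The hypothesis $l\geq 2$ is then used precisely to ensure that such a window around each remaining shortcut position lies entirely inside the unmodified tail $\gamma|_{[l(j+1)-1,\,\cdot\,]}$, so that the restricted subpaths $\overline\tau_1, \overline\tau_2$ are genuine subpaths of lifts of $\gamma$ and the transversality can be read off locally. Without invoking Lemma~\ref{Lemma7}(1) (or some equivalent locality argument), the inductive step you propose does not go through. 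Note also that even your treatment of the case where $(s_{**},t_{**})$ straddles $[s_*,t_*]$ cannot be right as stated: once the shortcut removes the interval $[s_*,t_*]$, a self-intersection of the original path with one foot inside the removed interval is simply no longer a self-intersection of $\widetilde\delta$.
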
 

\begin{proof}
Let $l\geq 1$. Let $\rho= (\rho_1, \cdots, \rho_{n})$, with $\rho_i \in \{0, \cdots, k\}$ if $l\geq 2$ and $\rho_i \in \{0, 1\}$ if $l=1$. 
Consider the paths $\gamma|_{[0,l(pn +1)]}$, $p \in \N$. Since $\Gamma$ is linearly admissible of order $q$, there is a sequence $s_p= s_p(n,l)$, $p \in \N$, with $s_p \to + \infty$, $\limsup_{p \to \infty} (lpn)/s_p \geq 1/q$, and such that 
$\gamma|_{[0,l(pn +1)]}$ is admissible of order $\leq s_p$. 
Fix now $p \in \N$ and let $\hat{\gamma} := \gamma|_{[0,l]}(\gamma_{\rho,l})^p$. 
We will show that $\hat{\gamma}$ is admissible of order $\leq s_p$.
From this it follows that the same holds for $(\gamma_{\rho,l})^p$. So, since $p$ was arbitrary, the claim follows. 
 
Let $\hat{\rho} = (\hat{\rho}_1, \cdots , \hat{\rho}_{pn}) = (\rho_1,\cdots, \rho_n, \rho_1, \cdots, \rho_n, \rho_1, \cdots, \cdots, \rho_n)$ the $pn$-tuple that is build from $p$ repetitions of $\rho$. 
Consider the transverse paths $\hat{\gamma}_j$, $j \in \{0, \cdots, pn\}$ given by  
$\hat{\gamma}_0 :=  \gamma|_{[0,l]}\gamma|_{[0,lpn]}$,  $\hat{\gamma}_j := \gamma|_{[0,l]}\gamma_{\hat{\rho}_1,l} \cdots \gamma_{\hat{\rho}_j,l}\gamma|_{[lj,lpn]}$,  for $j\in\{1,\cdots, pn-1\}$, and $\hat{\gamma}_{pn} := \hat{\gamma}$.
If  $j\in \{0,\cdots,  np-1\}$ we will say that $\hat{\gamma}_j$
is \textit{reducible} if  it has an $\mathcal{F}$-transverse self-intersection at $\gamma|_{[lj,lpn]}(lj + t_{\hat{\rho}_{j+1}}) = \gamma|_{[lj,lpn]}(lj + t'_{\hat{\rho}_{j+1}})$. 
For $j\in\{0,\cdots, pn\}$ consider the statement 
\begin{align*}
(R_j): \text{ The path } \hat{\gamma}^j_{\rho} \text{ is admissible of order }\leq s_p, \text{and it is reducible if } j< pn.
\end{align*}
We want to show $(R_{pn})$, and we prove it by induction. 
Note that $(R_0)$ holds. Indeed, $\hat{\gamma}_0 = \gamma|_{[0,l]}\gamma|_{[0,lpn]} =  \gamma|_{[0,l(pn+1)]}$ is admissible or order $\leq s_p$ and $\gamma|_{[0,l(pn+1)]}$ has an $\mathcal{F}$-transverse self-intersection at $\gamma(l+ t_{\hat{\rho}_1}) = \gamma(l+ t'_{\hat{\rho}_1})$ by assumption. 
Assume now that $(R_j)$ holds for some  $j\in\{0,\cdots, pn-1\}$. 
Applying Proposition \ref{Proposition23} at $\gamma|_{[lj,lpn]}(lj + t_{\hat{\rho}_{j+1}}) = \gamma|_{[lj,lpn]}(lj + t'_{\hat{\rho}_{j+1}})$ yields that $\hat{\gamma}_{j+1}$ is admissible of order $\leq s_p$. 
We are left to show that $\hat{\gamma}_{j+1}$ is reducible if $j \in\{0,\cdots, pn-2\}$.
So let $s_1 := l(j+1) + t_{\hat{\rho}_{j+2}}$ and $s_2:= l(j+1) + t'_{\rho_{j+2}}$. 
Take two lifts $\tau_1$ and $\tau_2$ of $\hat{\gamma}_{j+1}$ that intersect at a lift $\widetilde{y}$ of $y:=\gamma|_{[l(j+1), lpn]}(s_1) = \gamma|_{[l(j+1), lpn]}(s_2)$. 

For $l\geq 2$, consider the subpaths
$\overline{\tau}_1 = \tau_1|_{[l(j+1)-1,l(j+1)+2]}$ and $\overline{\tau}_2 = \tau_2|_{[l(j+1)-1, l(j+1)+2]}$ of $\tau_1$ and $\tau_2$. It is enough to show that these subpaths intersect $\widetilde{\mathcal{F}}$-transversally at $\widetilde{y}$. 
Note that since $l\geq 2$, $\overline{\tau}_1$ and $\overline{\tau}_2$ are itself subpaths of lifts $\widetilde\gamma_1: \R \to \widetilde{\dom(\mathcal{F})}$ and $\widetilde\gamma_2: \R \to \widetilde{\dom(\mathcal{F})}$ of $\gamma$ that intersect $\widetilde{\mathcal{F}}$-transversally at $\widetilde{y}$. 
Therefore, since $|s_i - (l(j+1)-1)|\geq 1$ and $|s_i - (l(j+1) + 2)|\geq 1$ for $i=1,2$ , we conclude by Lemma \ref{Lemma7} $(1)$ that already $\overline{\tau}_1$ and $\overline{\tau}_2$ intersect $\widetilde{\mathcal{F}}$-transversally. 

For $l=1$, and $\rho_i \in \{0, 1\}$ for all $i \in \{1, \cdots, n\}$, consider the subpaths $\overline{\tau}_1 = \tau_1|_{[j + t'_1, j+1 + t'_1]}$ of $\tau_1$ and $\overline{\tau}_2 = \tau_2|_{[j+1 +t_1, j+2+t_1]}$, which are also subpaths of lifts $\widetilde\gamma_1$, $\widetilde\gamma_2$ of $\gamma$ that intersect $\widetilde{\mathcal{F}}$-transversally at $\widetilde{y}$. 
It follows by Lemma \ref{Lemma6} that already the subpaths $\overline{\tau}_1$ and $\overline{\tau}_2$ intersect $\widetilde{\mathcal{F}}$-transversally at $\tau_1|_{[j + t'_1, j+1 + t'_1]}(j+1+t_1) = \tau_2|_{[j+1 +t_1, j+2+t_1]}(j+1 +t'_1)$, positively if $t'_1= \overline{t}$ and negatively if $t'_1= \underline{t}$. 

In both cases considered above, $(R_{pn})$ follows now by induction, i.e. $\hat{\gamma}$ is admissible of order $\leq s_p$.
\end{proof}

Let $z = \Gamma(0) \in \dom(\mathcal{F})$. Choose a lift $\widetilde{z}$ of $z$ in $\widetilde{\dom(\mathcal{F})}$. 
Let $l\geq 1$. For $J\subset \{0, \cdots, k\}$ consider the family of paths $\gamma_{i,l}$, $i \in J$, where $\gamma_{i,l}$ is defined as above. Set $t_{\min}  := \min\{t_i, i \in J \setminus \{0\} \, \}$ and $t'_{\max} := \max\{t'_i, i \in J \setminus \{0\}\, \}$. 
For all $i\in J$, and suitable $a_i< b_i$,  let  $\widetilde\gamma_{i,l}:[a_{i}, b_{i}] \to \widetilde{\dom(\mathcal{F})}$ be the lift of the path $\gamma_{i,l}\gamma_{[0,t_{\min}]}$ that starts at $\widetilde{z}$. 
Furthermore denote $\gamma'_{i,l} := \gamma|_{[0,l-1 + t_i]}\gamma|_{[l-1 +t'_i, l]}$ and consider the lifts $\widetilde\gamma'_{i,l}:[a'_{i}, b'_{i}] \to \widetilde{\dom(\mathcal{F})}$ of the paths $\gamma_{[t'_{max},1]}\gamma'_{i,l}$, $i \in J$, that end at $\widetilde{z}$. 
We say that the paths $\gamma_{i,l}$, $i\in J$, \textit{spread}  
if for all $i\neq j \in J$, 
$\phi_{\widetilde\gamma_{i,l}}(b_{i})$ is above or below $\phi_{\widetilde\gamma_{j,l}}(b_{j})$ relative to $\phi_{\widetilde{z}}$, and 
$\phi_{\widetilde\gamma'_{i,l}}(a'_{i})$ is above or below $\phi_{\widetilde\gamma'_{j,l}}(a'_{j})$ relative to  $\phi_{\widetilde{z}}$.
We get two total orders $\prec$ and $\prec^*$ on $J$, by saying that $i\prec j$ if $\phi_{\widetilde\gamma_{i,l}}(b_i)$ is below $\phi_{\widetilde\gamma_{j,l}}(b_j)$ relative to $\phi_{\widetilde{z}}$, and by saying that $i \prec^* j$ if  $\phi_{\widetilde\gamma'_{i,l}}(a'_{i})$ is below $\phi_{\widetilde\gamma'_{j,l}}(a'_{j})$ relative to  $\phi_{\widetilde{z}}$.
For each $i\in J$ we define $i^*\in J$ by requiring that  $\#\{j\in J\, |\, j \prec i \} =  \#\{j\in J\, |\, i^*\prec^* j^* \}$. Note that if $j\prec i$, then $i^* \prec^* j^*$. 
We say that a \textit{quasi-palindromic word  of length $2n$ for $J$} is a $2n$-tuple $\hat{\rho} = (\rho_1, \cdots, \rho_{2n})$ with $\rho_j \in J$ and 
$\rho_{n+j} = \rho^*_{n-j+1}$ for all $j \in \{1, \cdots, n\}$. In the following we will only consider quasi-palindromic words $\rho$ with $\rho_0= 1$. For such a quasi-palindromic word $\rho$ of length $2n$ for $J$ we consider the lift $\widetilde\gamma_{\rho} = \widetilde\gamma_{\rho}^-\widetilde\gamma_{\rho}^+$ of $\prod_{1\leq j\leq 2n} \gamma_{\rho_j,l} = \gamma_{[0,l]} \prod_{2\leq j\leq 2n} \gamma_{\rho_j,l}=$ \linebreak $\gamma_{[0,1]}\prod_{1\leq j \leq n} \gamma'_{\rho_j,l} \gamma_{[1,l]}\prod_{n+1 \leq j \leq 2n} \gamma_{\rho_j,l}$, where $\widetilde\gamma_{\rho}^-$ is the lift of $\gamma_{[0,1]}\prod_{1\leq j\leq n}\gamma'_{\rho_j,l}\gamma_{[1,l]}$ ending at $\widetilde{z}$ and $\widetilde\gamma_{\rho}^+$ is the lift of $\prod_{n+1\leq j\leq 2n} \gamma_{\rho_j,l}$ starting at $\widetilde{z}$. Let $T_{\rho}$ be the deck transformation that sends the starting point of $\widetilde\gamma_{\rho}$ to its ending point. Let  $\widetilde\gamma^{\infty}_{\rho} := \prod_{k\in \Z} T^k_{\rho}(\widetilde\gamma_{\rho})$. It is a lift of $\Gamma_{\rho,l}$. We will also consider the path  $\widetilde\gamma^2_{\rho} = \widetilde\gamma_{\rho} T_{\rho}( \widetilde\gamma_{\rho})$.

\begin{claim}\label{struwwelpeter}
Let $l\geq 1$ and let $\gamma_{i,l}, i \in J$, be a family of paths as above that spread. If $\rho$ and $\rho'$ are two distinct quasi-palindromic words of the same length with $\rho_1 = \rho'_1 = 0$, then the paths $\widetilde\gamma_{\rho}$ and $\widetilde\gamma_{\rho'}$  intersect $\widetilde{\mathcal{F}}$-transversally at $\widetilde{z}$. 
\end{claim}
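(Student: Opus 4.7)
The plan is to exploit the quasi-palindromic structure of $\rho$ and $\rho'$ to localize where the lifts $\widetilde\gamma_\rho$ and $\widetilde\gamma_{\rho'}$ first split away from a common transverse sub-arc through $\widetilde z$, and then to deduce the transverse intersection from the spreading hypothesis on both sides of that sub-arc. Since $\rho \neq \rho'$ are both quasi-palindromic with $\rho_1 = \rho'_1 = 0$, they must differ at some position in $\{2, \ldots, n\}$. Let $p^*$ be the largest such index and set $q = n - p^* + 1$. By the quasi-palindromic relation, $n+q$ is the smallest index in $\{n+1, \ldots, 2n\}$ where the two words differ, with $\rho_{n+q} = (\rho_{p^*})^*$ and $\rho'_{n+q} = (\rho'_{p^*})^*$. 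All the pieces sitting strictly between these two disagreement positions, together with the framing segments $\gamma_{[0,1]}$ and $\gamma_{[1,l]}$, coincide for $\rho$ and $\rho'$, so the two lifts share a single $\widetilde{\mathcal F}$-transverse sub-arc $\widetilde\delta$ beginning at a common lift $\widetilde w^-$ of $z$, passing through $\widetilde z$, and ending at a common lift $\widetilde w^+$ of $z$, while they strictly diverge beyond the endpoints of $\widetilde\delta$.

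Next I would transport the spreading hypothesis via the deck transformations $T_\pm$ sending $\widetilde z$ to $\widetilde w^\pm$. Just past $\widetilde w^+$ the paths $\widetilde\gamma_\rho, \widetilde\gamma_{\rho'}$ respectively contain the translated segments $T_+\widetilde\gamma_{(\rho_{p^*})^*, l}$ and $T_+\widetilde\gamma_{(\rho'_{p^*})^*, l}$, whose terminal leaves are $\prec$-ordered (according to the $\prec$-ranks of $(\rho_{p^*})^*$ and $(\rho'_{p^*})^*$) relative to $\phi_{\widetilde w^+}$. Symmetrically, just before $\widetilde w^-$ the paths contain $T_-\widetilde\gamma'_{\rho_{p^*}, l}$ and $T_-\widetilde\gamma'_{\rho'_{p^*}, l}$, whose initial leaves are $\prec^*$-ordered relative to $\phi_{\widetilde w^-}$. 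The combinatorial bridge between these two local orderings is the defining property of $*$: it is the unique order-reversing bijection from $(J, \prec)$ to $(J, \prec^*)$, which together with the orientation of $\widetilde{\mathcal F}$ along the common arc $\widetilde\delta$ ensures that the past-side and future-side comparisons produce opposite above/below configurations relative to $\phi_{\widetilde z}$.

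To conclude, I would pick witnessing parameters $a_1, a_2$ at the initial points of the translated $\widetilde\gamma'$-segments (just before $\widetilde w^-$) and $b_1, b_2$ at the terminal points of the translated $\widetilde\gamma$-segments (just past $\widetilde w^+$), and verify that the corresponding four leaves are in general position with $\phi_{\widetilde z}$ and that they witness the two defining conditions of $\widetilde{\mathcal F}$-transverse intersection at $\phi_{\widetilde z}$. The main obstacle is this final compatibility step: one must show that the $\prec^*$- and $\prec$-orderings supplied by spreading at the two translated base-points $\widetilde w^\pm$ combine, via the defining property of $*$ and the orientation carried by $\widetilde\delta$, to \emph{opposite} past/future configurations relative to $\phi_{\widetilde z}$, and that the three-leaf general-position condition is met. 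This will rely on the geometric input that distinct lifts of $\gamma$ have distinct asymptotics on $S_\infty$ (cf.\ Corollary~\ref{cor2} and Lemma~\ref{lem:transverse_sep}), which places the relevant leaves in the required configuration with $\phi_{\widetilde z}$.
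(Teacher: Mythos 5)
Your plan is essentially the paper's proof: you identify the outermost pair of disagreement positions (your $p^*$ and $n+q$ correspond to the paper's $n-j+1$ and $n+j$), observe that the two lifts share a transverse arc through $\widetilde z$ between those positions, and then combine the spread orderings at the two branching points with the order-reversing property of $*$ to produce opposite above/below configurations on the two sides of $\phi_{\widetilde z}$. The ``main obstacle'' you flag at the end is exactly the content of the paper's two ``below $\ldots$ / above $\ldots$ relative to $\phi_{\widetilde z}$'' assertions, which the paper treats as immediate from the spread hypothesis once translated by the deck transformation carrying $\widetilde z$ to the branching point; so the approach is the same, you've just left the final bookkeeping as a remark rather than carrying it out.
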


\begin{proof}
If $\rho \neq \rho'$, there is $j$, $1\leq j \leq n$ with $\rho_{n+i} = \rho'_{n+i}$ for $0 < i < j$ and $\rho_{n+j} \neq \rho'_{n+j}$. W.l.o.g. $\rho_{n+j} \prec \rho'_{n+j}$, which implies that $\rho'_{n-j+1}\prec^* \rho_{n-j+1}$. 
The leaf of $\widetilde{\mathcal{F}}$ through the endpoint of the subpath of  $\widetilde\gamma_{\rho}^+$ that lifts $\prod_{n+1 \leq i \leq n+j} \gamma_{\rho_i,l}$ is below the leaf through the endpoint of the subpath of  $\widetilde\gamma_{\rho'}^+$ that lifts $\prod_{n+1 \leq i \leq n+j} \gamma_{\rho'_i,l}$ relative to $\phi_{\widetilde{z}}$. Dually, the 
leaf of $\widetilde{\mathcal{F}}$ through the starting point of the subpath of  $\widetilde\gamma_{\rho}^-$ that lifts $\prod_{n-j +1 \leq i \leq n} \gamma'_{\rho_i,l}\gamma_{[1,l]}$ is above the leaf through the starting point of the subpath of $\widetilde\gamma^{-}_{\rho'}$ that lifts $\prod_{n-j +1 \leq i \leq n} \gamma'_{\rho'_i,l}\gamma_{[1,l]}$ relative to $\phi_{\widetilde{z}}$. 
The claim follows.
\end{proof}

\begin{claim}\label{few_equivalent}
Let $\gamma_{i,l}, i \in J$, be a family of paths that spread. Then there is a constant $L > 0$  such that for a given a quasi-palindromic word $\rho$  
of length $2n$ with $\rho_1 =0$, there are at most $Ln^{2}$ different such quasi-palindromic words $\rho'$  such that $\Gamma_{\rho}$ and $\Gamma_{\rho'}$ are equivalent.  
\end{claim}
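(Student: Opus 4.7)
The plan is to reduce the equivalence of $\Gamma_{\rho,l}$ and $\Gamma_{\rho',l}$ to the question of matching lifts in $\widetilde{\dom(\mathcal{F})}$, and then to apply the rigidity statements of Section~\ref{sec:proof_foliation}, in particular Lemma~\ref{Lemma7}, to bound the number of admissible~$\rho'$.

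First I would unfold the definition of equivalence for $\mathcal{F}$-transverse loops. Given an equivalence between $\Gamma_{\rho,l}$ and $\Gamma_{\rho',l}$, there exist lifts of their natural lifts in $\widetilde{\dom(\mathcal{F})}$ that share a common shift $T$ and are $\widetilde{\mathcal{F}}$-equivalent via a reparametrization $h:\R\to\R$ commuting with the unit translation. Normalizing the $\rho$-side to be the already-chosen lift $\widetilde\gamma_\rho^\infty$, the $\rho'$-side takes the form $U\widetilde\gamma_{\rho'}^\infty$ for a deck transformation $U$ satisfying $T=T_\rho=U T_{\rho'} U^{-1}$.

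Second I would bound the number of possible $U$ modulo the freedom that leaves the lifted loop unchanged. Since $\Gamma_{\rho,l}$ carries an $\mathcal{F}$-transverse self-intersection (as exploited in the proof of Claim~\ref{claim:admissible}), Lemma~\ref{lem:transverse_not_simple} ensures that $\Gamma_{\rho,l}$ is not freely homotopic to a multiple of a simple loop, so $T_\rho$ is a hyperbolic element of the fundamental group and its centralizer is cyclic, generated by a primitive root $R$ with $T_\rho=R^r$ for some $r\ge 1$. Hence $U$ is uniquely determined modulo multiplication by $R$, and two values of $U$ differing by $R^{mr}$ produce the same lifted loop; since a single period of $\widetilde\gamma_\rho^\infty$ decomposes into $2n$ sub-segments, the cyclic ambiguity contributes at most $2nr\le Cn$ distinct effective lifts, for some constant $C$ independent of $\rho$.

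Third, for each choice of $U$, I would analyse the equivalence through the decomposition of a period of $\widetilde\gamma_\rho^\infty$ into the $2n$ pieces $\widetilde\gamma_{\rho_j,l}$. Under the reparametrization $h$ each such piece is matched with a corresponding subpath of $U\widetilde\gamma_{\rho'}^\infty$, which itself decomposes into $\widetilde\gamma_{\rho'_k,l}$-pieces; the subpaths of either kind are concatenations of subpaths of lifts of~$\gamma$. The matched subpaths of lifts of $\gamma$ are equivalent, so by Lemma~\ref{Lemma7}$(3)$, if the underlying lifts are not translates of each other, their common length is strictly less than~$6$. For $l\ge 6$ this forces the underlying lifts to coincide up to a translate, and Lemma~\ref{Lemma7}$(2)$ pins down the shift parameters up to an $O(1)$ ambiguity, determining $\rho'_j$ from $\rho_j$, $U$, and a bounded alignment shift. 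For $l<6$ the total parameter length of the piece is itself bounded, so the alignment lies in a bounded set. In both cases, summing the ambiguities across the $2n$ pieces and imposing the quasi-palindromic constraint contributes at most an additional $O(n)$ factor, producing the announced bound $Ln^2$. The main obstacle I anticipate is a careful bookkeeping of boundary effects at the joints between consecutive pieces $\gamma_{\rho_j,l}$, particularly in the $l=1$ case where neighbouring pieces overlap significantly near the base point $z$ and $h$ could distort the correspondence there; I expect that the leaf-separation data provided by Lemma~\ref{Lemma6} will rule out spurious alignments arising from such boundary effects.
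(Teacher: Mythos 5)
Your proposal takes a genuinely different route from the paper's (piece-by-piece rigidity via Lemma~\ref{Lemma7} rather than counting $\widetilde{\mathcal{F}}$-transverse intersections), but there are two gaps that I don't see how to close without essentially reverting to the paper's argument.

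The first gap is in your Step~2. You assert that the centralizer structure of $T_\rho$ confines the effective deck transformation $U$ to an $O(n)$-sized set. But the identity $T_\rho = U T_{\rho'} U^{-1}$ only constrains $U$ \emph{relative to a fixed} $\rho'$: two distinct candidates $\rho'$ and $\rho''$ have conjugate but \emph{unequal} shifts $T_{\rho'}, T_{\rho''}$, so the corresponding $U_{\rho'}, U_{\rho''}$ need not differ by a centralizer element at all. Bounding the size of the set of effective $U$ is thus equivalent to bounding the number of admissible $\rho'$, which is what the claim asserts in the first place. Moreover the inequality $2nr \le Cn$ (i.e.\ that the root exponent $r$ of $T_\rho$ is bounded independently of $\rho$ and $n$) is not argued and is not obviously true: for words $\rho$ of length $2n$ that are themselves periodic, $T_\rho$ is a proper power whose exponent can grow with $n$. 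In the paper the corresponding bound is obtained in one stroke: since $\widetilde\gamma_\rho$ and $\widetilde\gamma^2_\rho$ decompose into $2n$ and $4n$ pieces and there is a uniform constant $L'$ bounding the number of deck transformations making any two fixed pieces intersect, the number of $S$ for which $\widetilde\gamma_\rho$ and $S(\widetilde\gamma^2_\rho)$ meet at all is at most $8L'n^2$, and one only needs to know that the map $\rho'\mapsto S_{\rho'}$ is injective and lands in this set.

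The second gap is that you never invoke Claim~\ref{struwwelpeter}, which is the pivot of the paper's proof. It serves two purposes: it shows that distinct quasi-palindromic words $\rho',\rho''$ with $\rho'_1=\rho''_1=0$ give lifts $\widetilde\gamma_{\rho'}, \widetilde\gamma_{\rho''}$ that intersect $\widetilde{\mathcal{F}}$-transversally (hence $S_{\rho'}\neq S_{\rho''}$), and it shows that $\widetilde\gamma_\rho$ and $S_{\rho'}(\widetilde\gamma^2_\rho)$ intersect $\widetilde{\mathcal{F}}$-transversally, so that $S_{\rho'}$ really is among the $O(n^2)$ deck transformations just counted. Without it, your Step~3 has to establish injectivity by hand through the piecewise matching, and the accounting there ("summing the ambiguities across the $2n$ pieces and imposing the quasi-palindromic constraint contributes at most an additional $O(n)$ factor") is not explained: a continuous reparametrization $h$ need not respect the piece boundaries, so a single piece of $\widetilde\gamma_\rho^\infty$ can straddle several pieces of $U\widetilde\gamma_{\rho'}^\infty$, and a naive per-piece ambiguity of $O(1)$ compounds multiplicatively to something exponential in $n$ unless a global alignment is fixed first — which is exactly what you have not established in Step~2. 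You anticipate difficulties at the joints yourself; I think they are not merely technical, but are where the argument actually needs a new idea (namely the one supplied by Claim~\ref{struwwelpeter} and the "spread" property).
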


\begin{proof}
The proof is the same as the proof of Lemma 35 in \cite{LeCalvezTal1}. 
There is a constant $L'$ such that the number of deck transformations $S$ such that $\widetilde\gamma_{i,l}$ and $S(\widetilde\gamma_{j,l})$ intersect is $\leq L'$ for all $i, j\in J$. 
Hence there are at most $8 L'n^2$ deck transformations $S$ such that $\widetilde\gamma_{\rho}$ and $S(\widetilde\gamma^2_{\rho})$ have an $\widetilde{\mathcal{F}}$-transverse intersection.
If $\rho$ and $\rho'$ are two quasi-palindromic words of length $2n$ with $\rho_1 = \rho'_1 = 0$  such that $\Gamma_{\rho}$ and $\Gamma_{\rho'}$ are equivalent, then there is a deck transformation $S_{\rho'}$ such that $\widetilde\gamma_{\rho'}$ is equivalent to a subpath of $S_{\rho'}(\widetilde\gamma^2_{\rho})$. Moreover, if $\rho' \neq \rho''$, it follows that $S_{\rho'} \neq S_{\rho''}$ by Claim \ref{struwwelpeter}. 
Also by Claim \ref{struwwelpeter}, $\widetilde\gamma_{\rho}$ and $S_{\rho'}(\widetilde\gamma^2_{\rho})$ intersect $\widetilde{\mathcal{F}}$- transversally. This proves the claim. 
\end{proof}

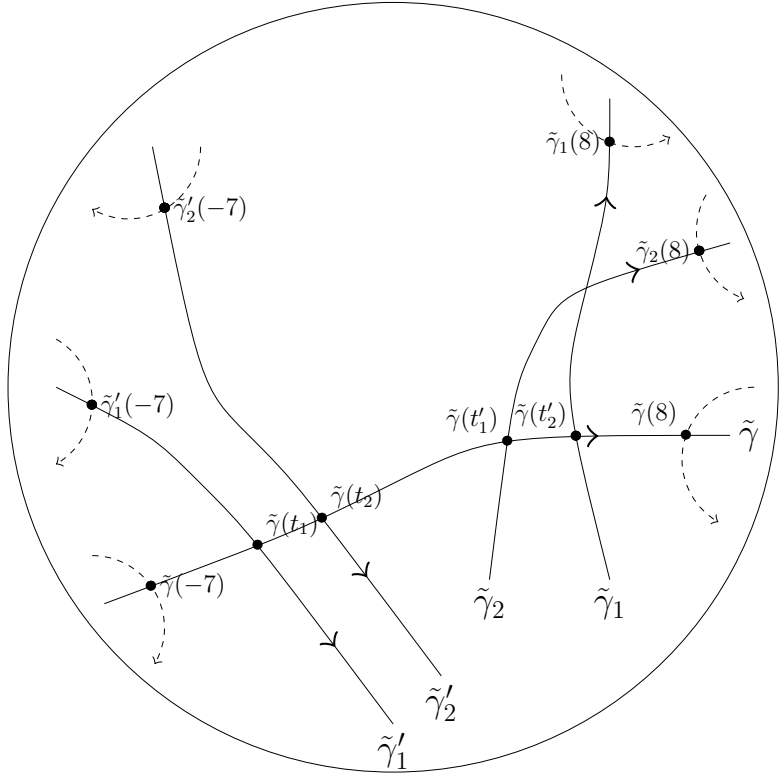
\begin{figure}
    \centering
		\scalebox{0.8}{
    \begin{tikzpicture}[scale=0.8]
        \coordinate (O) at (0,0);
        
        \draw (O) circle (8);
        
        \path[name path=lift1]  (-6,-4.5) .. controls (-2,-3) .. (0,-2) .. controls (2,-1) .. (7,-1);
        \path[name path=lift2]  (-7,0) .. controls (-5,-1) .. (-4,-2) .. controls (-3,-3) .. (0,-7);
        \path[name path=lift3]  (-5,5)  .. controls (-4,0) .. (-3,-1) .. controls (-2,-2) .. (1,-6);
        \path[name path=lift4]  (2,-4)  .. controls (2.5,0) .. (3,1) .. controls (3.5,2) .. (7,3);
        \path[name path=lift5]  (4.5,-4)  .. controls (3.5,0) .. (4,2) .. controls (4.5,4) .. (4.5,6);

\foreach \i in {1,2,3,4,5} {
            \draw[use path=lift\i, lift];
        }        
        
  \path[name path=leaf1] (-4,5) arc (0:-120:1.5);
       \path[name path=leaf2]  (-7,1) arc (60:-60:1.5);
       \path[name path=leaf3]  (-6.25,-3.5) arc (90:-30:1.5);
        \path[name path=leaf4]  (3.5,6.5) arc (180:300:1.5);
        \path[name path=leaf5]  (6.5,4) arc (150:250:1.5);
        \path[name path=leaf6]  (7.5,0) arc (90:240:1.5); 
        
\foreach \i in {1,2,3,4,5,6}   {
\draw[use path=leaf\i, leaf];
}   
        
\foreach \i in {2,3,4,5} {            
                    \path[name intersections={of={lift1} and {lift\i}, by={I1\i}}];
                    \node at (I1\i) {\textbullet};
                      
        }        

\foreach \i in {2,3,4,5} {            
                    \path[name intersections={of={lift1} and {lift\i}, by={I1\i}}];
                    \node at (I1\i) {\textbullet};
                      
        }        
        
\foreach\i in {1,2,3,4,5} {
\foreach \j in {1,2,3,4,5,6}
{  \path[name intersections={of={lift\i} and {leaf\j}, by={J\i\j}}];
        \node at (J\i\j) {\textbullet};
        }
        }
        

         \draw (I12)[anchor=south west] node {$\tilde\gamma(t_1)$};   
         \draw (I13)[anchor=south west] node {$\tilde\gamma(t_2)$};        
         \draw (I14)[anchor=south east] node {$\tilde\gamma(t'_1)$};        
         \draw (I15)[anchor=south east] node {$\tilde\gamma(t'_2)$};        
 
         \draw (J13)[anchor=west] node {$\tilde\gamma(-7)$};
         \draw (J22)[anchor=west] node {$\tilde\gamma'_1(-7)$};
         \draw (J31)[anchor=west] node {$\tilde\gamma'_2(-7)$};
         \draw (J16)[anchor=south east] node {$\tilde\gamma(8)$};
         \draw (J54)[anchor=east] node {$\tilde\gamma_1(8)$};
         \draw (J45)[anchor=east] node {$\tilde\gamma_2(8)$};

         \draw (7,-1)[anchor=west] node {\Large $\tilde\gamma$};
         \draw (0,-7)[anchor=north] node {\Large $\tilde\gamma'_1$};
         \draw (1,-6)[anchor=north] node {\Large $\tilde\gamma'_2$};
         \draw (2,-4)[anchor=north] node {\Large $\tilde\gamma_2$};
         \draw (4.5,-4)[anchor=north] node {\Large $\tilde\gamma_1$};

    \end{tikzpicture}
		}
    \caption{Some lifts of the loop $\Gamma$ to $\widetilde{\mathrm{dom}(\mathcal{F})}$, with $t_1 < t'_1, t_2 < t'_2 \in [0,1)$, and $\Gamma(t_1) = \Gamma(t'_1)$, $\Gamma(t_2)= \Gamma(t'_2)$ in the situation that the family $\gamma_{0,8}, \gamma_{1,8}, \gamma_{2,8}$ spreads, with $1\prec 2\prec 0$ and $1^* = 0$, $2^* = 1$, $0^*=2$.}
    \label{fig:spread}
\end{figure}

\begin{claim}\label{spread1}\begin{enumerate}
\item The family $\mathcal{P}_1$ of paths $\gamma_{i,8}$, $i \in \{0, 1, \cdots, k\}$, spread.
\item The pair $\mathcal{P}_2$ of paths $\gamma_{0,1}$, $\gamma_{1,1}$ spread.
\end{enumerate} 
\end{claim}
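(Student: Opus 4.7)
The plan is to show that the statement about the forward endpoints ($\widetilde\gamma_{i,l}(b_i)$) reduces, after unwinding the lift construction, to showing that the leaves $\phi_{\widetilde\gamma_i(l+t_{\min})}$ (with $\widetilde\gamma_0:=\widetilde\gamma$, and $\widetilde\gamma_i$ for $i\geq 1$ the lift with $\widetilde\gamma_i(t'_i)=\widetilde\gamma(t_i)$) are pairwise in the above/below relation relative to $\phi_{\widetilde z}=\phi_{\widetilde\gamma(0)}$; the statement for the backward endpoints $\widetilde\gamma'_{i,l}(a'_i)$ is obtained by the mirror argument with the roles of $\overline t$ and $\underline t$ exchanged. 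The main inputs will be Lemma \ref{Lemma6} (leaf avoidance) and Lemma \ref{Lemma7} (overlap bounds between non-translated lifts).

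For part (2), we have only the pair $\{0,1\}$ and $l=1$. Since $x_1 = \overline x$, we have $\{t_1,t'_1\}=\{\overline t,\underline t\}$; I treat the case where $\widetilde\gamma$ and $\widetilde\gamma_1$ intersect $\widetilde{\mathcal{F}}$-transversally and positively with $t_1=\overline t$, the remaining parity/sign combinations being symmetric. Then Lemma \ref{Lemma6}(1) guarantees that $\widetilde\gamma$ is disjoint from the leaf $\phi_{\widetilde\gamma_1(\overline t +1)}=\phi_{\widetilde\gamma_1(1+t_{\min})}$. Because $\widetilde\gamma|_{[0,1+t_1]}$ is an $\widetilde{\mathcal{F}}$-transverse path joining $\widetilde z$ to $\widetilde\gamma(1+t_1)$ that avoids this leaf, the three leaves $\phi_{\widetilde z},\phi_{\widetilde\gamma(1+t_1)},\phi_{\widetilde\gamma_1(1+t_1)}$ are pairwise distinct, and the last of them lies entirely on one side of $\widetilde\gamma$; this forces $\phi_{\widetilde\gamma_1(1+t_1)}$ to be above or below $\phi_{\widetilde\gamma(1+t_1)}$ relative to $\phi_{\widetilde z}$. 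The remaining non-separation conditions (no other leaf separates the other two) follow from the fact that $\widetilde\gamma$ is $\widetilde{\mathcal{F}}$-transverse, hence each of its leaves meets $\widetilde\gamma$ in only one point, so the oriented order along $\widetilde\gamma$ between the leaves $\phi_{\widetilde z}$ and $\phi_{\widetilde\gamma(1+t_1)}$ prevents either of those from separating the third. The symmetric application at the other end, using Lemma \ref{Lemma6}(2) together with the leaf $\phi_{\widetilde\gamma_1(\underline t -1)}$ (or the corresponding pair), yields the spread condition for $\widetilde\gamma'_{0,1},\widetilde\gamma'_{1,1}$.

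For part (1), $l=8$ and we need the pairwise spread for all $\binom{k+1}{2}$ pairs $i\neq j$. The key new ingredient is that the lifts $\widetilde\gamma_i$, $i\in J$, are pairwise not translates of each other, so Lemma \ref{Lemma7} applies: if $\widetilde\gamma_i$ and $\widetilde\gamma_j$ meet $\widetilde{\mathcal{F}}$-transversally then the length of any equivalent subpaths is bounded by $1$ (part (1)), and otherwise by $< 6$ (part (3)). Since $l=8$ exceeds both bounds, the parameter $l+t_{\min}$ falls strictly outside the "overlap window," so that the subpaths $\widetilde\gamma_i|_{[-,\,l+t_{\min}]}$ and $\widetilde\gamma_j|_{[-,\,l+t_{\min}]}$ (suitably aligned by the common starting point $\widetilde z$, using $\widetilde\gamma(t_i)=\widetilde\gamma_i(t'_i)$) cannot be foliation-equivalent all the way to the end. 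This forces the endpoints to sit on distinct leaves, and, combined with the Lemma \ref{Lemma6} avoidance applied to the transverse intersections $\widetilde\gamma\cap\widetilde\gamma_i$, shows that for each pair the three leaves $\phi_{\widetilde z},\phi_{\widetilde\gamma_i(8+t_{\min})},\phi_{\widetilde\gamma_j(8+t_{\min})}$ are pairwise disjoint with none separating the other two. The backward spread property for $\widetilde\gamma'_{i,8}$ is obtained by running the same argument on the time-reversed paths with $\underline t$ in place of $\overline t$.

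The hard part will be the "none separates the other two" verification in part (1): once distinctness of the leaves is guaranteed by the overlap bound, one still needs to rule out configurations in which, say, $\phi_{\widetilde z}$ separates $\phi_{\widetilde\gamma_i(8+t_{\min})}$ from $\phi_{\widetilde\gamma_j(8+t_{\min})}$. This will be handled by constructing explicit paths (running back along $\widetilde\gamma_j$ to the intersection point $\widetilde\gamma(t_j)=\widetilde\gamma_j(t'_j)$, then along $\widetilde\gamma$ to $\widetilde\gamma(t_i)$, then forward along $\widetilde\gamma_i$) that join the two endpoints without crossing $\phi_{\widetilde z}$, using transversality of $\widetilde\gamma$, $\widetilde\gamma_i$, $\widetilde\gamma_j$ to the foliation so that each leaf is crossed at most once by each lift.
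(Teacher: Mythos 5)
Your plan follows the same overall route as the paper. For part (2), the appeal to Lemma~\ref{Lemma6} at the distinguished intersection point $\overline{x}$ (so that $\phi_{\widetilde\gamma_1(1+t_1)}$ misses $\widetilde\gamma_{0,1}$ and, symmetrically, $\phi_{\widetilde\gamma_0(t'_1)}$ misses $\widetilde\gamma_{1,1}$) is exactly what is done, and likewise for the $\gamma_{0,8}$--vs--$\gamma_{i,8}$ sub-case of part (1).

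For the remaining pairs in part (1) you correctly identify Lemma~\ref{Lemma7} together with $8>6$ as the engine, but one reduction is left unstated in your sketch: what has to be compared are the \emph{concatenated} lifts $\widetilde\gamma_{i,8}=\widetilde\gamma|_{[0,t_i]}\widetilde\gamma_i|_{[t'_i,\,\cdot\,]}$ and $\widetilde\gamma_{j,8}$ (which share the initial point $\widetilde z$), not the raw lifts $\widetilde\gamma_i,\widetilde\gamma_j$ directly. The paper first invokes Lemma~\ref{Lemma7}(1) for the always-$\widetilde{\mathcal{F}}$-transverse pair $(\widetilde\gamma_i,\widetilde\gamma)$ to rule out $\widetilde\gamma_i|_{[t'_i+1,8]}$ being absorbed into the $\widetilde\gamma$-portion $\widetilde\gamma|_{[0,t_j]}$ of $\widetilde\gamma_{j,8}$; only after that is Lemma~\ref{Lemma7}(3) applied to the pair $(\widetilde\gamma_i,\widetilde\gamma_j)$, with $8-(t'_i+1)>6$ giving the contradiction. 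This two-step reduction is the part your plan glosses over as ``suitably aligned.'' Finally, the explicit-path construction you anticipate for the non-separation check is an unnecessary detour: for two transverse paths in $\widetilde{\dom(\mathcal{F})}$ emanating from the same point $\widetilde z$, if neither is equivalent to a subpath of the other, the endpoint leaves are automatically above or below one another relative to $\phi_{\widetilde z}$ — this is the clean criterion the paper uses, and it dispenses with the case analysis you were bracing for.
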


\begin{proof}
We first show $(2)$. Let  $\widetilde\gamma_{0,1}= \widetilde\gamma_0|_{[0,1+t_1]}$ and $\widetilde\gamma_{1,1} = \widetilde\gamma_0|_{[0,t_1]}\widetilde\gamma_1|_{[t'_1,1+t_1]}$ be the lifts of $\gamma_{0,1}\gamma|_{[0,t_1]}$ and $\gamma_{1,1}\gamma|_{[0,t_1]}$, respectively  that start at the point $\widetilde{z}$.  Here $\widetilde\gamma_0$ and $\widetilde\gamma_1$ are suitable lifts of $\gamma$. 
Note that $\widetilde\gamma_0$ and $\widetilde\gamma_1$ have an $\widetilde{\mathcal{F}}$-transverse intersection. So by Lemma \ref{Lemma6} and since $\{t_1, t'_1\} = \{\overline{t}, \underline{t}\}$ we have that
$\phi_{\widetilde\gamma_0(t'_1)}$ does not intersect $\widetilde\gamma_{1,1}$ and  $\phi_{\widetilde\gamma_1(1+t_1)}$ does not intersect $\widetilde\gamma_{0,1}$. It is then easy to see that $\phi_{\widetilde\gamma_0(1+ t_1)}$ is above or below $\phi_{\widetilde\gamma_1(1+t_1)}$ relative to $\phi_{\widetilde{z}}$. Similarly one sees that the leaf through the starting point of $\widetilde\gamma'_{0,1}$ is above or below the leaf through the starting point of $\widetilde\gamma'_{1,1}$ relative to $\phi_{\widetilde{z}}$,  where these paths lift $\gamma|_{[t_1',1]}\gamma'_{0,1}$ and $\gamma|_{[t_1',1]}\gamma'_{1,1}$ with a common endpoint. 

We now turn to (1). Consider first a pair $\gamma_{0,8} = \gamma_{[0,8]}$ and $\gamma_{i,8}= \gamma_{[0,t_i]}\gamma|_{[t'_i,8]}$, for some $i\in \{1,\cdots, k\}$. The same argument as in the proof of (2) shows that such a pair of paths spread. 
Let now $i, j \in \{1, \cdots, k\}$ with $i\neq j$, and consider $\gamma_{i,8}=\gamma_{[0,t_i]}\gamma|_{[t'_i,8]}$ and $\gamma_{j,8}= \gamma_{[0,t_j]}\gamma|_{[t'_j,8]}$, and their lifts $\widetilde\gamma_{i,8}= \widetilde\gamma|_{[0,t_i]}\widetilde\gamma_i|_{[t_i',8]}$ and 
$\widetilde\gamma_{j,8}= \widetilde\gamma|_{[0,t_j]}\widetilde\gamma_j|_{[t_j',8]}$ that start at $\widetilde{z}$. Here $\widetilde\gamma$, $\widetilde\gamma_i$ and $\widetilde\gamma_j$ are suitable lifts of $\gamma$. 
We will show that $\phi_{\widetilde\gamma_i(8)}$ is below or above $\phi_{\widetilde\gamma_j(8)}$ relative to $\phi_{\widetilde{z}}$. 
  Since the lifts $\widetilde\gamma_{i,8}$ and $\widetilde\gamma_{j,8}$ start at the same point it is sufficient to show that 
neither $\widetilde\gamma_{i,8}$ is equivalent to a subpath of $\widetilde\gamma_{j,8}$ nor $\widetilde\gamma_{j,8}$ is equivalent to a subpath of $\widetilde\gamma_{i,8}$. Assume that $\widetilde\gamma_{i,8}$ is equivalent to a subpath of $\widetilde\gamma_{j,8}$.
Then in particular $\widetilde\gamma_i|_{[t'_i+1,8]}$ is equivalent to a subpath of $\widetilde\gamma_{j,8}$. 
Since $\widetilde\gamma_i$ and $\widetilde\gamma$ intersect $\mathcal{F}$-transversally in $\widetilde\gamma_i(t'_i) = \widetilde\gamma(t_i)$ it follows by Lemma \ref{Lemma7} (1) that no subpath of $\widetilde\gamma_i|_{[t'_i+1,8]}$ is equivalent to a subpath of $\widetilde\gamma|_{[0,t_j]}$. 
Hence $\widetilde\gamma_i|_{[t'_i+1,8]}$ is equivalent to a subpath of $\widetilde\gamma_j|_{[t'_j,8]}$.  
Since $\widetilde\gamma_i$ and $\widetilde\gamma_j$ are no translates of each other, and $8-(t'_i +1) > 6$ we obtain a contradiction by Lemma \ref{Lemma7} (3). Analogously $\widetilde\gamma_{j,8}$ is not equivalent to a subpath of $\widetilde\gamma_{i,8}$. 
Similarly one can show, for $i, j\in \{0,\cdots, k\}$ with $i\neq j$, that the leaf through the starting point of $\widetilde\gamma'_{i,8}$ is above or below the leaf through the starting point of $\widetilde\gamma'_{j,8}$ relative to $\phi_{\widetilde{z}}$, where these paths are lifts of $\gamma'_{i,8}$ and $\gamma'_{j,8}$, respectively, with common endpoint at $\widetilde{z}$. 
\end{proof}

By Claim \ref{few_equivalent} applied to the family $\mathcal{P}_1$ and by Claim \ref{claim:admissible} $(1)$, there is a constant $L>0$ such that for every $n$ there are at least $(k +1)^{n-1}/ Ln^2$ many different equivalence classes of linearly admissible loops of order $16nq$, hence, by Proposition \ref{Proposition26}, there are at least  
${(k +1)^{n-1}/ Ln^2}$ many fixed points of $f^{16nq}$, and so 
$$\Per^{\infty}(f) \geq \limsup_{n\to +\infty} \frac{1}{16nq}\log(k +1)^{n-1}/ Ln^2 = \log(k +1)/16q.$$ 
Similarly, by Claim \ref{few_equivalent} applied to the pair of paths $\mathcal{P}_2$, we obtain that 
${\Per^{\infty}(f) \geq (\log 2 )/2q}$. 

The proof is easy to adapt to the general case $m>1$. As noted above, we can assume that $\Gamma = (\Gamma')^m$ and $\Gamma'$ satisfies the properties of Lemma \ref{Lemma10}. We can carry out the proof with $\Gamma'$ instead of $\Gamma$, where we consider $n$ that are multiple of $m$. Along the proof of Claim \ref{claim:admissible} one obtains that the loops $\Gamma'_{\rho,l}$ are even linearly admissible of order $\leq lnq/m$.
So one obtains at least $(k +1)^{n-1}/ Ln^2$ many different equivalence classes of linearly admissible loops of order $16nq/m$, and also at least $2^{n-1}/ Ln^2$ many different equivalence classes of linearly admissible loops of order $2nq/m$, and so the lower bounds on $\Per^{\infty}(f)$ follow as above.  
\end{proof}

\begin{proof}[Proof of Proposition \ref{prop:forcing2}]
As before we first assume that $\Gamma$ is primitive in $\dom(I)$, i.e. $m=1$.  
The proof in \cite{LeCalvezTal1} carries over with slight, technical changes. Hence we will indicate these changes and refer for the full proof to \cite{LeCalvezTal1}.
The outline of the proof in \cite{LeCalvezTal1} can be described as follows. One considers a pair of paths that spread with some suitable $l\geq 1$ and such that for every quasi-palindromic word $\rho$ of length $2n$ (in the situation in \cite{LeCalvezTal1} it is also a palindromic word), there is a linearly admissible loop $\Gamma_{\rho}$ of order $2lnq$. Every such loop gives rise to a fixed point of $f^{2lnq}$.  Furthermore there are at least $2^n / Ln^2$ equivalence classes of such loops $\Gamma_{\rho}$. 
Consider the one-point compactification $\dom(I)\cup \{\infty\}$ of $\dom(I)$ and the extension $\hat{f}$  of $f|_{\dom(I)}$ that fixes $\{\infty\}$. For every $p\in \N$ a family of suitable coverings $\mathcal{V}^p$ of $\dom(I)\cup \{\infty\}$ is constructed (the coverings differ by the size of their neighbourhoods at $\infty$) such that for each element $V= \cap_{0\leq k \leq 2lnq}\hat{f}^{-k}(V^k)$, $V^k \in \mathcal{V}^p$, of the covering $\bigvee_{0\leq k \leq 2lnq} \hat{f}^{-k}(\mathcal{V}^p)$
there are at most $M^{lnq/p}$ equivalence classes of those loops $\Gamma_{\rho}$ that are associated to some fixed point of $\hat{f}^{2lnq}$ that lies in $V$. Here $M$ is a suitable constant independent of $p$.  Hence the minimal cardinality $N_{2lnq}(\hat{f}, \mathcal{V}^p)$ of a subcover of $\bigvee_{0\leq k \leq 2lnq} f^{-k}(\mathcal{V}^p)$ is at least $2^n/(Ln^2M^{lnq/p})$. A lower bound on the entropy can then be directly obtained, using the classical definition of topological entropy. 
The main idea in the proof is to show that orbits that stay close to $\infty$ for "many" iterates, only contribute to rather "few" non-equivalent paths.  See \cite{LeCalvezTal1} for the proof. 

One can adapt the proof to other families of paths that spread.
We consider the two families $\mathcal{P}_1 = \{ \gamma_{0,8}, \cdots ,\gamma_{k,8}\}$ and $\mathcal{P}_2 = \{\gamma_{0,1}, \gamma_{1,1}\}$. We use the same notation as in the proof of Proposition \ref{prop:forcing1} and let for a given $n \in \N$,  $\Gamma_{\rho,l}=\prod_{i=1}^{2n}\gamma_{\rho_i,l}$,  where $\rho = (\rho_1, \cdots, \rho_{2n})$ with $\rho_i \in \{0,\cdots, l\}$,  $l=8$ in the case of family $\mathcal{P}_1$, and $l=1$ in the case of family $\mathcal{P}_2$. 
In order to find the suitable coverings that allows us to make the conclusions above, it is clear from the proof in \cite{LeCalvezTal1} that it is sufficient to check the following condition:
There is a finite family of paths $\tau_1, \cdots, \tau_N$, and a transverse loop $\Gamma^*$ with natural lift $\gamma^*$ such that 
\begin{itemize} 
\item  $\tau_i$, for any $i\in \{1,\cdots,  N\}$, has a leaf on its right and a leaf on its left, and
\item if a path $\sigma$ in $\dom(I)$ is equivalent to a subpath of $\prod_{i=1}^{2n}\gamma_{\rho_i,l}$, for some $n \in \N$, and there is no subpath of $\sigma$ that is equivalent to $\tau_i$, for some $i\in \{1, \cdots, N\}$, then $\sigma$ is equivalent to a subpath of $\gamma^*$. 
\end{itemize}

For $\mathcal{P}_1$ this condition holds true, as it is easy to check for the choices
$\tau_i:= \gamma_{i,8}$, $i \in \{0, \cdots k\}$, and $\Gamma^* :=  \prod_{1\leq i\leq k}\prod_{1\leq j \leq k} \tau_i \tau_j$.
One then can conclude from Proposition \ref{prop:forcing1}, considering quasi-palindromic words $\rho$ with $\rho_1=0$ and arguing as in the proof of \cite[Proposition 38]{LeCalvezTal1}, that there is a constant $M_1 > 0$ and for every $p \in \N$ a covering $\mathcal{V}_1^p$ such that 
the cardinality of $\bigvee_{0\leq k \leq 16nq} f^{-k}(\mathcal{V}_1^p)$ is at least $(k+1)^{n-1}/Ln^2M_1^{8nq/p}$.
We conclude that 
\begin{align}
\begin{split}
h_{\topo}(f) \geq &\sup_{p \in \N} h_{\topo}(\hat{f},\mathcal{V}_1^p) \geq \sup_{p \in \N} \lim_{n\to \infty} \frac{1}{16nq} \log((k+1)^{n-1}/Ln^2 M_1^{8nq/p}) = \\ &\sup_{p \in \N} \{(\log (k+1))/16q - \log M_1/ 2p\} = (\log(k+1))/16q, 
\end{split}
\end{align}
where $h_{\topo}(\hat{f},\mathcal{U}) := \limsup_{n \to \infty} \log(N_n(\hat{f}, \mathcal{U}))/n$ for a covering $\mathcal{U}$ of $\dom(I) \cup \{\infty\}$. 

In the case of $\mathcal{P}_2$ we choose some large $u\in \N$ and consider only quasi-palindromic words $\rho$ of length $2n$ such that $\rho_{\nu u} = 0$ for all $\nu \in \N$. 
Let $\tau_1 := \gamma^2_{0,1}$, $\tau_2:= \gamma_{0,1}\gamma_{1,1}, \tau_3:=\gamma_{1,1}\gamma_{0,1}$, and $\Gamma^*=\gamma^3_{0,1}\gamma^{u}_{1,1}$. $\tau_1, \tau_2$, and $\tau_3$ have all a leaf on its right and a leaf on its left, as can be easily checked. 
Any path $\sigma$ that is equivalent to a subpath of $\prod_{1\leq j \leq 2n}\gamma_{\rho_j,1}$ and does not have a subpath equivalent to some $\tau_i, 1\leq i\leq 3$, must be equivalent to a subpath of $\gamma_{0,1}$, $\gamma^2_{0,1}$, $\gamma^3_{0,1}$, $\gamma^2_{0,1}\gamma_{1,1}$, $\gamma_{1,1}\gamma^2_{0,1}$,  $\gamma_{0,1}\gamma^s_{1,1}$, $\gamma^s_{1,1}\gamma_{0,1}$, $\gamma_{1,1}^{s}$, $ 1\leq s \leq u-1$, hence is a subpath of the natural lift $\gamma^*$ of $\Gamma^*$.  
One can conclude that there is a constant $M_2 > 0$ and for every $p \in \N$ a covering $\mathcal{V}_2^p$ such that 
the cardinality of $\bigvee_{0\leq k \leq 2nq} f^{-k}(\mathcal{V}_2^p)$ is at least $2^{\frac{u-1}{u}(n-1)}/Ln^2 M_2^{nq/p}$.
Hence
\begin{align}
\begin{split}
h_{\topo}(f) \geq &\sup_{p \in \N} h_{\topo}(\hat{f},\mathcal{V}_2^p) \geq \sup_{p \in \N} \lim_{n\to \infty} \frac{1}{2nq} \log(2^{\frac{u-1}{u}(n-1)}/Ln^2 M_2^{nq/p}) = \\ &\sup_{p \in \N} \left\{\frac{u-1}{u}(\log 2)/2q - \log M_2 / 2p\right\} = \frac{u-1}{u}\log 2/2q. 
\end{split}
\end{align}
But since $u$ can be chosen arbitrarily large, we get that 
$h_{\topo}(f) \geq \log 2/2q$. 

The proof for the case $m>1$ is again almost identical. From the proof of Proposition \ref{prop:forcing1} for the families $\mathcal{P}_1$ or $\mathcal{P}_2$ we get, for every quasi-palindromic word $\rho$ of order $2n$ with $\rho_1 = 0$ and $n\in \N$ that is a multiple of $m$, transverse loops $\Gamma_{\rho,l}$ that are linearly admissible of order $2lnq/m$. One argues then as above to get the claimed lower bounds of the Proposition.

\end{proof}

\section{Turaev's cobracket and orbit growth}\label{sec:Turaev}
Goldman's bracket \cite{goldman} and Turaev's cobracket \cite{Turaev1991} define a Lie bialgebra structure on the free $\Z$-module of non-trivial free homotopy classes of loops $\widehat{\pi}(M)^*:= \widehat{\pi}(M)\setminus\{[*]\}$ on a surface $M$.\footnote{Similar mappings were investigated earlier in \cite{Turaev1978}.}  Turaev's cobracket $v:\Z[\widehat{\pi}(M)^*] \to \Z[\widehat{\pi}(M)^*] \otimes \Z[\widehat{\pi}(M)^*]$ applied to a free homotopy class $\alpha$ gives some information about the free homotopy classes of the loops that split at intersection points of any representative $\Gamma$ of $\alpha$. We adopt the construction of $v$ and refine it to keep additional information about how these loops relate to $\Gamma$ in the fundamental group $\pi_1(M,\Gamma(0))$. Via this invariant we then  define an exponential growth rate $\Tu^{\infty}(\alpha)$ of $\alpha$, and prove Theorem \ref{HT}.

Recall that we denote by $\mathcal{S}(\Gamma)= \{y \in M \,|\, y = \Gamma(t) = \Gamma(t'), t\neq t'\}$ the set of self-intersection points of a loop $\Gamma:S^1 \to M$. We say that a smooth loop $\Gamma$ is in \textit{general position} if it is an immersion, it has only double intersection points which are moreover transverse, and $\Gamma(0) \notin \mathcal{S}(\Gamma)$.
We first recall the construction of Turaev's cobracket. Let $\alpha \in \widehat{\pi}(M)^*$ be a free homotopy class of loops in $M$. Choose a smooth representative $\Gamma:[0,1] \to M$, $\Gamma(0) = \Gamma(1)$ of $\alpha$ that is in general position.
For  $y \in \mathcal{S}(\Gamma)$ let $\vec{v}_1$ and $\vec{v}_2$ be the two tangent vectors of $\Gamma$ at $y$, labeled such that the pair $\vec{v}_1, \vec{v}_2$ is positively oriented. Consider two loops $u^y_1, u^y_2$ that start both at $y$ in the direction $\vec{v}_1$ and $\vec{v}_2$, respectively, follow along $\Gamma$ until their first return to $y$.
Set 
\[ v(\alpha) := \sum_{y \in \mathcal{S}(\Gamma)} [u^y_1]^* \otimes [u^y_2]^* -  [u^y_2]^* \otimes [u^y_1]^* \,\,  \in \,\,\Z[\widehat{\pi}(M)^{*}] \otimes \Z[\widehat{\pi}(M)^{*}],\]
where $[u]^*$ denotes the free homotopy class of $u$ if $u$ is non-contractible and $[u]^* := 0$ otherwise. 
One can show that the right-hand side is invariant under free homotopies and hence $v$ is well-defined, see \cite{Turaev1991}. Extending $v$ linearly to $\Z[\widehat{\pi}(M)^{*}]$, sending the trivial class to $0$, defines Turaev's cobracket. 

To refine this construction we make the following definitions. Choose a basepoint  $x_0 \in M$. For $g \in \pi_1(M,x_0)$, we say that two elements $a$ and $a'$ in $\pi_1(M,x_0)$ are \textit{$g$-equivalent} if there is $k \in \Z$ with $g^k a {g^{-k}} = a'$.
Denote by $\pi_1(M,x_0)^* = \pi_1(M,x_0) \setminus \{1\}$ the set of elements of $\pi_1(M,x_0)$ without the identity $1$, and by $\pi_1(M,x_0)^*_{g}$ the set of $g$-equivalence classes in $\pi_1(M,x_0)^*$. Let $\Z[\pi_1(M,x_0)^*_{g}]$ be the free $\Z$-module over $\pi_1(M,x_0)^{*}_g$. 
We view the disjoint union  ${\mathcal{H} := \cup_{g \in \pi_1(M,x_0)} \Z[\pi_1(M,x_0)^*_{g}] \otimes \Z[\pi_1(M,x_0)^*_g]}$ as a bundle  $p: \mathcal{H} \to \pi_1(M,x_0)$ over $\pi_1(M,x_0)$ with fibre ${p^{-1}(g) =\Z[\pi_1(M,x_0)^*_{g}] \otimes \Z[\pi_1(M,x_0)^*_g]}$. 
We define  a section $\mu:\pi_1(M,x_0) \to \mathcal{H}$ as follows. 
Let $\Gamma$ with $\Gamma(0) = \Gamma(1) = x_0$ be any smooth loop in general position that  represents  $g$. For each $y\in \mathcal{S}(\Gamma)$ let $v_1, v_2, u^y_1, u^y_2$ be as above, and additionally consider two paths $q^y_1, q^y_2$  that start both at $\Gamma(0)$, follow $\Gamma$ in positive direction, 
and end at $y$ such that the following holds:
$q^y_1$ ends at $y$ as soon as it reaches $y$ the first time for which its tangent vector coincides with $v_2$, and  $q^y_2$ ends at $y$ as soon as it reaches $y$ the first time for which its tangent vector coincides with $v_1$. Let $a^y_i = \langle q^y_i u^y_i \overbar{q^y_i}\rangle  \in \pi_1(M,x_0), \, i=1,2$, be the element represented by the loop $ q^y_i u^y_i \overbar{q^y_i}$, where $\overbar{q^y_i}$ is the reverse path of $q^y_i$, see figures~\ref{fig:a1},\ref{fig:a2}. One checks that 
\begin{equation}\label{ab=g}
[a^y_2]_g = [(a^y_1)^{-1}g]_g, [a^y_1]_g = [g(a^y_2)^{-1}]_g.
\end{equation} 

Define
\begin{equation}\label{defmu} 
\mu(g) := \sum_{y \in \mathcal{S}(\Gamma)} [a^y_1]^*_g \otimes [a^y_2]^*_g - [a^y_2]^*_g \otimes [a^y_1]^*_g\, \, \in\,  \,\mathcal{H},
\end{equation}
where $[a]_g^*$ denotes the $g$-equivalence class of $a$ if $a\neq 1$ and $[1]_g^* := 0$.

\begin{figure}[!ht]
	\centering
	\begin{minipage}{.5\textwidth}
		\centering
		\begin{tikzpicture}
			\draw [thick,->] (-1,1) arc (45:360-45:{sqrt(2)}) -- (0,0) edge (1,1) -- (1,1) arc (180-45:-180+45:{sqrt(2)}) -- (0,0) edge (-1,1) -- cycle;
			
			\node at (-1,-1)[circle, fill, inner sep=1pt]{};
			\draw (-1,-1) node[anchor=north west] {$\Gamma(0)$};
			
			\node at (0,0)[circle, fill, inner sep=1pt]{};
			\draw (0,0) node[anchor=west] {$y$};
		    
		    \draw (1,1) node[anchor=south] {$v_1$};
		    \draw (-1,1) node[anchor=south] {$v_2$};
		\end{tikzpicture}
		\caption{A self-intersecting loop $\Gamma$.}
		\label{fig:loop1}
	\end{minipage}%
	\begin{minipage}{.5\textwidth}
		\centering
		\begin{minipage}{.5\textwidth}
		    \centering
		    \begin{tikzpicture}
			    \draw (0,0) -- (1,1) arc (180-45:-180+45:{sqrt(2)});
			    \draw [thick,->] (1,-1) -- (0,0);
			    \draw (0,0) node[anchor=east] {$y$};
		    \end{tikzpicture}
		    \caption{$u_1^y$}
		    \label{fig:u1}
	    \end{minipage}%
	    \begin{minipage}{.5\textwidth}
		    \centering
		    \begin{tikzpicture}
		        \draw (0,0) -- (-1,1) arc (45:360-45:{sqrt(2)});
		        \draw [thick,->] (-1,-1) -- (0,0);
		        \draw (0,0) node[anchor=west] {$y$};
		    \end{tikzpicture}
		    \caption{$u_2^y$}
		    \label{fig:u2}
	    \end{minipage}
	\end{minipage}
	\\
    \begin{minipage}{.6\textwidth}
	    \centering
	    \hspace{-1.5cm}%
		\begin{minipage}{.5\textwidth}
		    \centering
		    \begin{tikzpicture}[scale=0.7]
			    \draw (-1,-1) -- (1,1) arc (180-45:-180+45:{sqrt(2)});
			    \draw [thick,->] (1,-1) -- (0,0);
			    \draw (0,0) node[anchor=east] {$y$};
			    \draw (-1,-1) node[anchor=north] {$\Gamma(0)$};
		    \end{tikzpicture}
		    \caption{$q_1^y$}
		    \label{fig:q1}
	    \end{minipage}%
	    \hspace{-1.5cm}%
	    \begin{minipage}{.5\textwidth}
		    \centering
		    \begin{tikzpicture}
		        \draw [thick,->] (-1,-1) -- (0,0);
		        \draw (0,0) node[anchor=west] {$y$};
			    \draw (-1,-1) node[anchor=north] {$\Gamma(0)$};
		    \end{tikzpicture}
		    \caption{$q_2^y$}
		    \label{fig:q2}
	    \end{minipage}
    \end{minipage}%
	\begin{minipage}{.6\textwidth}
		\centering
		\hspace{-5cm}%
		\begin{minipage}{.5\textwidth}
		    \centering
		    \begin{tikzpicture}[scale=0.7]
			    \draw (-1,-1) -- (1,1) arc (180-45:-180+45:{sqrt(2)}) -- (0.1,-0.1);
			    \draw [thick,->] (0.1,-0.1) -- (-0.9,-1.1);
			    \draw (0,0) node[anchor=east] {$y$};
			    \draw (-1,-1) node[anchor=north] {$\Gamma(0)$};
	    			\node at (-1,-1)[circle, fill, inner sep=1pt]{};
		    \end{tikzpicture}
		    \caption{A loop representing $a_1^y$}
		    \label{fig:a1}
	    \end{minipage}%
	    \hspace{-1.5cm}%
	    \begin{minipage}{.5\textwidth}
		    \centering
		    \begin{tikzpicture}
		        \draw (-1,-1) -- (0,0);
		        \draw [thick,->] (0,0) -- (-1,1);
		        \draw (-1,1) arc (45:360-45:{sqrt(2)});
		        \draw (0,0) node[anchor=west] {$y$};
			    \draw (-1,-1) node[anchor=north west] {$\Gamma(0)$};
	    			\node at (-1,-1)[circle, fill, inner sep=1pt]{};
		    \end{tikzpicture}
			\hspace{-1cm}
		    \caption{A loop representing $a_2^y$}
		    \label{fig:a2}
	    \end{minipage}
	\end{minipage}
	
\end{figure}

We now show that $\mu$ is well-defined, and respects conjugation in the following sense. Let $g\in \pi_1(M,x_0)$. Conjugation by $h\in \pi_1(M,x_0)$ defines a map  $\phi^g_h: \pi_1(M,x_0)^*_g \to \pi_1(M,x_0)^*_{hgh^{-1}}$, $[a]_g \mapsto [hah^{-1}]_{hgh^{-1}}$. By extending linearly to $\Z[\pi_1(M,x_0)_g]^*$ and taking tensor products, we obtain mappings $\overline{\phi_h}= \bigcup_{g\in \pi_1(M,x_0)} \phi^g_h \otimes \phi^g_h :\mathcal{H} \to \mathcal{H}$.

\begin{lem}\label{lem:mu_welldefined}
$\mu$ is well defined and $\mu(hgh^{-1}) = \overline{{\phi}_h}(\mu(g))$ for all $h,g\in \pi_1(M,x_0)$. 
\end{lem}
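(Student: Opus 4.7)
The plan is to prove well-definedness of $\mu(g)$ first (independence of the choice of smooth generic representative of $g$), and then to deduce the equivariance formula $\mu(hgh^{-1}) = \overline{\phi_h}(\mu(g))$ as a consequence. For well-definedness, I would use the fact that any two smooth based loops at $x_0$ in general position representing $g$ can be connected by a generic based smooth homotopy $(\Gamma_s)_{s\in[0,1]}$, which stays in general position except at finitely many parameters where either a Reidemeister-II move (birth or death of a bigon bounding an embedded disk $D$) or a Reidemeister-III move (triangle flip supported in a disk $D'$) occurs. Between such moments, $\mathcal{S}(\Gamma_s)$ varies continuously with its tangent data, so the loops $u^y_i$ and auxiliary paths $q^y_i$ vary continuously as loops at $x_0$, and homotopy invariance of $\pi_1(M,x_0)$ forces each $a^y_i$ to remain in the same element. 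At an RII event, both new intersection points $y_1, y_2$ have one of $u^{y_j}_i$ equal to $\partial D$, hence null-homotopic, so $a^{y_j}_i = 1$ and the corresponding summands in \eqref{defmu} vanish. At an RIII event, the three intersection points persist, and because the modification is supported in the contractible disk $D'$ which can be chosen disjoint from $x_0$, each sub-loop $q^y_i u^y_i \overbar{q^y_i}$ is altered only by a homotopy through $D'$ and represents the same element of $\pi_1(M,x_0)$.

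For the equivariance formula I would then use well-definedness to choose a convenient representative of $hgh^{-1}$. Pick a smooth generic representative $\Gamma$ of $g$, a smooth loop $\eta$ at $x_0$ representing $h$ in general position with $\Gamma$, and a small parallel push-off $\eta'$ of $\eta$ meeting $\eta$ only at $x_0$. After rounding corners and smoothing, the concatenation $\eta \cdot \Gamma \cdot \overbar{\eta'}$ yields a smooth generic loop $\Gamma_h$ based at $x_0$ representing $hgh^{-1}$. Its self-intersection set decomposes as $\mathcal{S}(\Gamma) \sqcup \mathcal{E}$: for each $y \in \mathcal{S}(\Gamma)$ the local structure near $y$ is unchanged so the $u^y_i$ are the same, but the path from the basepoint to $y$ is now $\eta \cdot q^y_i$, giving $\tilde a^y_i = \eta\, a^y_i\, \overbar{\eta} = h a^y_i h^{-1}$ in $\pi_1(M,x_0)$; while the extra intersections in $\mathcal{E}$ (coming from self-intersections of $\eta$, intersections of $\eta$ with $\Gamma$, and with $\eta'$) cluster by construction into bigon pairs bounding disks, each of which contributes $0$ to \eqref{defmu} exactly as in the RII argument. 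Substituting into the definition and using that trivial $\pi_1$-elements contribute zero to $\mu$ yields $\mu(hgh^{-1}) = \overline{\phi_h}(\mu(g))$.

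The hard part will be the RIII invariance: the three intersection points momentarily collide at the exceptional parameter, so one must rule out a discontinuous permutation of them, and also a swap of the tangent-labels $(\vec v_1, \vec v_2)$ at any one of them, which would interchange the two tensor slots in \eqref{defmu}. This is handled by shrinking $D'$ enough that tangent frames are locally near-constant, continuously tracking the three intersections across the collision, and comparing the resulting $\pi_1$-classes of sub-loops using that $D'$ is contractible and disjoint from $x_0$. A related subtlety is the condition $\Gamma(0) \notin \mathcal{S}(\Gamma)$, which adds a codimension-one event where a self-intersection would pass through $x_0$ during the homotopy; this is treated by a small perturbation of $x_0$ off the self-intersection locus, exactly as in the original treatment of Turaev's cobracket in \cite{Turaev1991}.
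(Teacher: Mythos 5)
There is a genuine gap at the codimension-one event where a self-intersection of $\Gamma_s$ passes through the basepoint $x_0$. You acknowledge this event but propose to handle it by ``a small perturbation of $x_0$ off the self-intersection locus, exactly as in Turaev's original treatment.'' That fix is not available here. In Turaev's setting, $v(\alpha)$ lives in $\Z[\widehat{\pi}(M)^*]^{\otimes 2}$, which is basepoint-independent, so the starting point of the parametrization can be freely perturbed. In the present refinement, $\mu(g)$ lives in $\Z[\pi_1(M,x_0)^*_g]^{\otimes 2}$: the point $x_0$ is part of the data and cannot be moved. For a generic homotopy from $\Gamma$ to $\Gamma'$, the locus where a self-intersection coincides with the (tracked) basepoint is codimension one and generically unavoidable, and when it is crossed, one of the two elements $a^{y}_i \in \pi_1(M,x_0)$ jumps by conjugation with $g^{\pm1}$ (the loop $q^y_i$ gains or loses an extra circuit of $\Gamma$). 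The sum \eqref{defmu} is therefore \emph{not} invariant at the level of $\Z[\pi_1(M,x_0)^*]^{\otimes 2}$; it becomes invariant only after passing to $g$-equivalence classes. This is the central new phenomenon that distinguishes $\mu$ from $v$, and it is precisely what the quotient $[\cdot]_g$ is there to absorb — the paper's proof handles it as the separate case $A_4$ and computes the conjugation explicitly. Your argument never invokes the $g$-equivalence at all while establishing invariance, which is a sign that the key step is missing.

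There are also two smaller problems. Your RII analysis claims that at both vertices $y_1,y_2$ of a newly born bigon one of the subloops $u^{y_j}_i$ equals $\partial D$ and is null-homotopic; in general this is false — the subloops at $y_1$ and $y_2$ are nontrivial, and the correct mechanism is that their two contributions cancel against each other after the orientation-labeling flip (i.e. $[u^{y_1}_1]=[u^{y_2}_2]$, $[u^{y_1}_2]=[u^{y_2}_1]$). You also omit the monogon (RI) event from the list of codimension-one strata; it must be handled (there the vanishing-of-a-subloop argument is the correct one). Finally, RIII, which you flag as ``the hard part,'' is actually identical to Turaev's argument and uninteresting here; the basepoint-crossing event is the one that requires new work.
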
 
\begin{proof}
In the space of $C^{\infty}$ loops $C^{\infty}(S^1, M)$ we denote the set of loops in general position by $\Omega$ and  denote by $\Omega^*$ the subspace of loops that satisfy the properties of being in general position except at one exceptional point $y = \Gamma(t_0)$, where exactly one of the following occurs:
(1) the differential $\frac{d}{d t} \Gamma(t_0)$ vanishes and $\frac{d^2}{dt^2}\Gamma(t_0) \neq 0$; (2) there is a self-tangency at $y$; (3) there is a triple intersection at $y$; (4) $y = \Gamma(0) \in \mathcal{S}(\Gamma)$.
 $\Omega^*$ has  codimension $1$ in $\Omega$. 

Let $g$ and $g'$ be two elements in $\pi_1(M,x_0)$, and $\Gamma, \Gamma'$ be free loops in $\Omega$ with $\Gamma(0) = \Gamma'(0) = x_0$, representing $g$ and $g'$, respectively. 
A generic path of loops $\Gamma_s, s\in [0,1]$, from $\Gamma$ to $\Gamma'$ lies in $\Omega \cup \Omega^*$ and intersects $\Omega^*$ transversally for finitely many parameter values  $A\subset (0,1)$ (See \cite[p.291-294]{goldman}).
According to which of the cases (1) - (4) occur for some $s_* \in A$, we have that near $s_*$ the homotopy $\Gamma_s$ behaves like one of the  following elementary moves: 
(1) birth-death of a monogon; (2) birth-death of a bigon; (3)  jumping over an intersection; (4)  jumping over $\Gamma_s(0)$. (For $(1) - (3)$ see \cite{goldman}, the forth move is illustrated in Figure \ref{fig:strand}). 
Write $A$ as a disjoint union $A= A_1 \cup A_2 \cup A_3 \cup A_4$ such that $s_*\in A_i$ if and only if situation $(i)$ occurs near $s_*$.

\begin{figure}
    \centering
    \scalebox{0.7}{
    \begin{tikzpicture}[scale=0.4]
        \coordinate (O) at (0,0);
        
          \draw[dashed] (O) circle (8);
       
        \path[name path=strand1] (320:8) .. controls (2.5,-1) and (2.5,2) .. (80:8)   ;
        \path[name path=strand2] (200:8) .. controls (-3,0) and (4,2) .. (40:8) node[midway]{\textbullet} node[midway, above]{\Large $\Gamma_s(0)$};

        \foreach \i in {1,2} {
            \draw[use path=strand\i,lift];
						}
\draw[ <->] (12,0) -- (18,0);
   \begin{scope}[shift={(30,0)}]
		\coordinate (1) at (0,0);

       \draw[dashed] (1) circle (8);
							\path[name path=strand3] (310:8) .. controls (-3,-3) and (-3,4) .. (90:8) ;
        \path[name path=strand4] (200:8) .. controls (-3,0) and (4,2) .. (40:8) node[midway]{\textbullet} node[midway, above]{\Large $\Gamma_s(0)$};

			\foreach \i in {3,4} {
            \draw[use path=strand\i,lift];
						}

		  \end{scope}

        


    \end{tikzpicture}
		}
    \caption{Jumping over $\Gamma_s(0)$ }
    \label{fig:strand}
\end{figure}
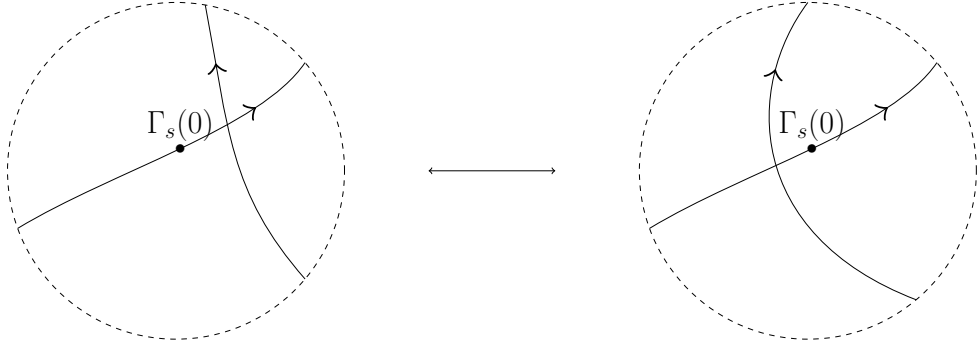

Consider the $s$-family of paths $b_s(t)$, $s\in [0,1]$,  with $b_0(t) \equiv x_0$ and $b_s(t) = \Gamma_{ts}(0)$, which all start at $x_0$ and end at $\Gamma_s(0)$. 
For each $s \in [0,1]$, and each $y_s \in \mathcal{S} (\Gamma_s)$ that is not an exceptional point, consider paths $q^{y_s}_1$ and $q^{y_s}_2$ as in the definition of $\mu$ for $\Gamma_s$, from $\Gamma_s(0)$ to $y_s$, and parametrized proportional to $\Gamma_s$. 
Similarly define $u^{y_s}_i$, $i=1,2$.    
Consider for any $s\in [0,1] \setminus A$  the loops $p^{y_s}_i = b_s q^{y_s}_i u^{y_s}_i  \overbar{q^{y_s}_i} \overbar{b_s}$ and the following element in $\Z[\pi_1(M,x_0)_g^*] \otimes \Z[\pi_1(M,x_0)_g^*]$:
\[ 
 \xi_s := \sum_{y_s \in \mathcal{S}(\Gamma_s)} [\langle p^{y_s}_1 \rangle ]_{g} \otimes 
[\langle p^{y_s}_2 \rangle ]_{g}- [\langle p^{y_s}_2 \rangle ]_{g}\otimes[\langle p^{y_s}_1 \rangle ]_{g}. \]

We claim that $\xi_s$ is constant along the homotopy $\Gamma_s$. 
To see this, one has to check how $\xi_s$ is affected when $s$ crosses some $s_* \in A$, since outside of $A$ one can choose all the data continuously in $s$.  
The argument that $\xi_s$ does not change near some $s_* \in A_1 \cup A_2 \cup A_3$ is similar to the argument of invariance of Turaev's cobracket \cite{Turaev1991}, this holds even without taking $g$-equivalence classes. 
We check invariance near some $s_* \in A_4$. Let, for $s\in [s_*-\epsilon, s_*+\epsilon]$, $\epsilon>0$ sufficiently small,  $y_s$ be those intersection point of $\Gamma_s$ such that $y_{s_*} = \Gamma_{s_*}(0)$ and $y_s$ varies continuously in $s\in [s_*-\epsilon, s_*+\epsilon]$. Let $q_i^{y_s}, u_i^{y_s}$, $i=1,2$, be defined as above. 
Note that either for $i=1$ or $i=2$ we have that $\lim_{s\searrow s_*} q_i^{y_s}\equiv\Gamma_{s_*}(0)$ is constant and $\lim_{s\nearrow s_*} q^{y_s}(t)= \Gamma_{s_*}(t)$, or vice versa, interchanging $s\searrow s_*$ with $s\nearrow s_*$. 
Assume that  $\lim_{s\searrow s_*} q_1^{y_s}\equiv\Gamma_{s_*}(0)$, the argument for the other cases is analogous. 
Clearly $\langle b_{s}  q^{y_s}_2 u^{y_s}_2  \overbar{q^{y_{s}}_2}\overbar{b_s} \rangle \in \pi_1(M,x_0)$ is identical for all $s\in [s_*-\epsilon,s_*+\epsilon]$, since all paths vary continuously in $s$. 
Also, $\lim_{s\searrow s_*}\langle b_s  q^{y_s}_1 u^{y_s}_1  \overbar{q^{y_{s_*}}_1} \overbar{b_s} \rangle = \langle b_{s_*}  u^{y_{s_*}}_1  \overbar{b_{s_*}} \rangle=:f\in \pi_1(M,x_0)$, and
 $\lim_{s\nearrow s_*}\langle b_s  q^{y_s}_1 u^{y_s}_1  \overbar{q^{y_{s_*}}_1} \overbar{b_s} \rangle = 
\langle b_{s_*}  \Gamma_{s_*}   u^{y_{s_*}}_1   \overbar{\Gamma_{s_*}}  \overbar{b_{s_*}}\rangle = gfg^{-1}$, 
since $g = \langle b_{s_*}  \Gamma_{s_*}   \overline{b_{s_*}} \rangle$. 
All other terms of $\xi_s$ for $s\in [s_*-\epsilon, s_*+\epsilon]$ correspond to non-exceptional intersection points if $\epsilon$ is sufficiently small and clearly do not change along $s\in [s_*-\epsilon, s_*+\epsilon]$. 

If we take in the above discussion $g'= g$ and a homotopy $\Gamma_s$ such that $\Gamma_s(0)$ stays close to $x_0$, then the loop $b_1(t)$ is contractible, and using $\xi_1 = \xi_2$ we see that the definition of $\mu(g)$ is independent whether choosing the loops $\Gamma$ or $\Gamma'$. 

If we take $g'= hgh^{-1}$ for some $h\in \pi_1(M,x_0)$, then 
\begin{align*}
\begin{split}
 \mu(g') &=\sum_{y \in \mathcal{S}(\Gamma_1)} [a^{y}_1]_{g'} \otimes [a^{y}_2]_{g'}- [a^y_2]_{g'}\otimes[a^y_1]_{g'} \\
 &=\sum_{y \in \mathcal{S}(\Gamma_1)} [h h^{-1} a^{y}_1 h h^{-1}]_{g'} \otimes [h h^{-1} a^{y}_2 h h^{-1}]_{g'}- [h h^{-1}  a^y_2 h h^{-1}]_{g'}\otimes[h h^{-1} a^y_1 h h^{-1}]_{g'} 
\\&= \overline{{\phi}_h}(\xi_1)  \\ &=\overline{{\phi}_h}(\mu(g)),
\end{split}
\end{align*}
where in the third equation we use that the loop $b_1$ represents $h^{-1}$ as well as the definition of $\xi_1$, and in the last equation 
we use that $\xi_s$ is constant along the homotopy $\Gamma_s$.   

\end{proof}

Note that $\widehat{\pi}(M)$ can be identified with the set of conjugacy classes of $\pi_1(M,x_0)$.
By Lemma \ref{lem:mu_welldefined}, $\mu$ induces a section 
$\widehat{\mu}:\widehat{\pi}(X) \to \widehat{\mathcal{H}}$, 
where $\widehat{\mathcal{H}}$ is obtained from $\mathcal{H}$ if we mod out by the action $\pi_1(M,x_0) \times \mathcal{H} \to \mathcal{H}$, $(h,\xi) \mapsto \overline{\phi_h}(\xi)$ and is hence actually a bundle over $\widehat{\pi}(M).$  Tuarev's cobracket can be recovered from $\widehat{\mu}$ by sending (tensors of) $g$-equivalence classes in $\widehat{\mathcal{H}}$ to (tensors of) conjugacy classes.

We will now define a growth rate $\Tu^{\infty}(\alpha)$ of $\alpha$ in terms of $\widehat{\mu}(\alpha)$. Roughly speaking, up to a modification due to some parametrization issues, it is the exponential growth rate in $k$ of the number of free homotopy classes of loops that can be obtained by following a loop representing $\alpha$ $k$-times and each time allowing a shortcut at some $\widehat{\mu}(\alpha)$-relevant self-intersection point and the turning at these points has to be always to the  right or always to the left.   
 
Let $S=\{s_1, \cdots, s_m\} \subset \pi_1(M,x_0)$. Denote by $B(n,S)$ the set of elements in $\pi_1(M,x_0)$ that can be written as a product of $\leq n$ factors that are elements in $S$.
Let $\widehat{B}(n,S)$ be the set of conjugacy classes of $B(n,S)$, $\widehat{N}(n,S) := \#\widehat{B}(n,S)$ the cardinality of  $\widehat{B}(n,S)$, and  $\Gamma(S):= \limsup_{n\to \infty} \frac{\log(\widehat{N}(n,S))}{n}< \infty$.
Given $g\in \pi_1(M,x_0)$ and a set $\mathfrak{S}$ of $g$-equivalence classes of elements in $\pi_1(M,x_0)$, we define 
$\Gamma(\mathfrak{S},g) = \inf\{\Gamma(S)\, | \, S \subset \pi_1(M,x_0), \,[S]_g = \mathfrak{S}\}$, where by $[S]_g$ we denote the set of $g$-equivalence classes of elements of $S \subset \pi_1(M,x_0)$. 

For a set $\mathfrak{S} = \{[a_1]_g, \ldots, [a_k]_g\}$ of $g$-equivalence classes, and $h\in \pi_1(M,x_0)$ denote  $h\cdot \mathfrak{S} \cdot h^ {-1} := \{[ha_1h^{-1}]_{hgh^{-1}}, \ldots, [ha_kh^{-1}]_{hgh^{-1}}\}$, and any $h$ induces in this way a bijection from the class of finite sets of $g$-equivalence classes to the class of finite sets of $hgh^{-1}$-equivalence classes. Moreover, one checks directly that 
\begin{equation}\label{conjugationS}
\Gamma(\mathfrak{S},g) = \Gamma(h \cdot \mathfrak{S} \cdot h^{-1},h g h^{-1}), 
\end{equation} 
for all $g,h\in \pi_1(M,x_0)$. 


Let now $\alpha \in \widehat{\pi}(M)$. Choose $g\in \pi_1(M,x_0)$, representing the class $[g] = \alpha$. 
$\mu(g)\in \mathcal{H}$ can be written as 
\begin{equation} 
\mu(g) = \sum_{\mathfrak{a}, \mathfrak{b} \in \pi_1(M,x_0)^*_g} k_{\mathfrak{a}, \mathfrak{b}}\left( \mathfrak{a} \otimes \mathfrak{b}\right),
\end{equation}
 where $k_{\mathfrak{a},\mathfrak{b}} \in \Z$, $\mathfrak{a}, \mathfrak{b}$ are $g$-equivalence classes, and $k_{\mathfrak{a},\mathfrak{b}} = - k_{\mathfrak{b},\mathfrak{a}}$. Let $\Comp(\mu(g))$ be the collection of terms $\mathfrak{a} \otimes \mathfrak{b}$ with $k_{\mathfrak{a},\mathfrak{b}} >0$. By \eqref{ab=g},  $\mathfrak{a}\otimes \mathfrak{b} \neq \mathfrak{a'}\otimes \mathfrak{b'} \in\Comp(\mu(g))$ implies  $\mathfrak{a} \neq \mathfrak{a'}$ and $\mathfrak{b} \neq \mathfrak{b'}$.  
 Note that by Lemma \ref{lem:mu_welldefined}, for any $h\in \pi_1(M,x_0)$,
 $\phi_h^g:\pi_1(M,x_0)_g \to \pi_1(M,x_0)_{hgh^{-1}}$ defines a bijection 
 \begin{equation}\label{eq:Comp}
  \Comp(\mu(g)) \cong \Comp(\mu(hgh^{-1})). 
 \end{equation}
 
 Let $\mathcal{T} \subset \Comp(\mu(g))$, $\mathcal{T}_+ = \{\mathfrak{a} \, | \, \mathfrak{a} \otimes \mathfrak{b} \in \mathcal{T}\}$, and   $\mathcal{T}_- = \{\mathfrak{b} \, | \, \mathfrak{a} \otimes \mathfrak{b} \in \mathcal{T}\}$. Note that $\# \mathcal{T} = \# \mathcal{T}_+ = \# \mathcal{T}_-$. 
Consider 
\[
\Gamma_{\mathcal{T}}^{g} := \min_{\pm} \min_{\mathfrak{S}}\left\{\Gamma(\mathfrak{S} \cup [g]_g, g)\, \middle| \, \mathfrak{S} \subset \mathcal{T}_{\pm}, \, \#\mathfrak{S}= \left\lceil \frac{1}{2}\#\mathcal{T}\right\rceil \right\}. \]
By \eqref{conjugationS}, $\Gamma_{\mathcal{T}}^{g} = \Gamma_{h\cdot \mathcal{T}\cdot h^{-1}}^{hgh^{-1}}$, for any $h\in \pi_1(M,x_0)$.
In particular, by \eqref{eq:Comp},
$\max \{\Gamma_{\mathcal{T}}^{g}\, | \, \mathcal{T} \subset \Comp(\mu(g))\} = \max \{ \Gamma_{\mathcal{T}'}^{hgh^{-1}} \, | \mathcal{T}' \subset \Comp(\mu(hgh^ {-1}))\}$. 
Hence this expression is independent of the choice of $g$ with $[g]=\alpha$  and we denote it by \begin{equation*}
\Tu^{\infty}(\alpha) :=\max \{\Gamma_{\mathcal{T}}^{g}\, | \, \mathcal{T} \subset \Comp(\mu(g))\}.
\end{equation*}   
 


We now give a proof of Theorem \ref{HT}. 

\begin{proof}[Proof of Theorem \ref{HT}]
We may assume $T^\infty(\alpha) \neq 0$, in particular $\selfi_{M}(\alpha) \neq 0$. We consider a maximal identity isotopy $\widehat{I}$ of $f$, and a foliation $\mathcal{F}$ transverse to $\widehat{I}$, which is possible by Theorem \ref{Theorem_foliation}. Let $\Gamma$ be a $q$-admissible $\mathcal{F}$-transverse path associated to $x$. Since being $\mathcal{F}$-transverse is an $C^0$-open condition, we can choose $\Gamma$ to be smooth. Moreover, we can choose $\Gamma$ to be in general position. 
 We have that $[\Gamma]_{\widehat{\pi}(M)} = \alpha$ is primitive. Note that also $[\Gamma]_{\widehat{\pi}(\dom(\widehat{I}))}$ is primitive. 
Let $x_0 = \Gamma(0)$ and let $g=\langle \Gamma \rangle \in \pi_1(M,x_0)$ be the element that is represented by $\Gamma$.
Let $\mathfrak{a}\otimes \mathfrak{b}$ be any element in $\Comp(\mu(g))$.   
By the construction of the map $\mu(g)$, there is an intersection point $x= \Gamma(t) = \Gamma(t')$, $t<t'$, and an element $a\in \pi_1(M,x_0)$ with $[a]_g = \mathfrak{a}$ such that $\gamma|_{[0,t']}\gamma|_{[t,t']}\overbar{\gamma|_{[0,t']}}$ is a representative of $a$ and  $\gamma|_{[0,t]}\gamma|_{[t',1]}\gamma|_{[0,t]}\overbar{\gamma|_{[0,t]}} = \gamma|_{[0,t]}\gamma|_{[t',1]}$ is a representative of $b$, or vice versa. 
\begin{claim}\label{claim:nontransverse}
One can choose $t$ and $t'$ above that any two lifts $\widetilde\gamma$ and $\widetilde\gamma'$ of $\Gamma$ that intersect in $\widetilde\gamma(t) = \widetilde\gamma'(t')$, intersect $\widetilde{\mathcal{F}}$-transversally.
\end{claim}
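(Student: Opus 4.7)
The plan is to modify $\Gamma$ within its $\mathcal{F}$-equivalence class to a smooth general-position representative whose self-intersections are all $\widetilde{\mathcal{F}}$-transverse, and then read the conclusion off the defining formula for $\mu(g)$. Since modifying $\Gamma$ in its $\mathcal{F}$-equivalence class preserves the property of being associated to the periodic point $x$ (the class of transverse trajectories associated to $x$ is what equivalence tracks), such a modification is admissible. Moreover, by Lemma \ref{lem:mu_welldefined} the element $\mu(g) \in \mathcal{H}$ depends only on the free homotopy class and is unchanged under the modification.

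To produce such a representative, I would apply Lemma \ref{Lemma10} to obtain, after modification in the equivalence class, $k = \selfi_{\dom(\mathcal{F})}([\Gamma])$ geometric self-intersections $x_1, \ldots, x_k$ of $\Gamma$, each witnessed in the universal cover by a pair of lifts intersecting $\widetilde{\mathcal{F}}$-transversally. A further reduction within the equivalence class ensures that $\Gamma$ realizes the geometric minimum, so all its self-intersections are among those produced by Lemma \ref{Lemma10} and are therefore $\widetilde{\mathcal{F}}$-transverse. With this $\Gamma$, the formula
\[
\mu(g) = \sum_{y \in \mathcal{S}(\Gamma)} [a^y_1]^*_g \otimes [a^y_2]^*_g - [a^y_2]^*_g \otimes [a^y_1]^*_g
\]
expresses $\mu(g)$ as a signed count over $\widetilde{\mathcal{F}}$-transverse self-intersections only. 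Since $\mathfrak{a} \otimes \mathfrak{b} \in \Comp(\mu(g))$ means $k_{\mathfrak{a},\mathfrak{b}} > 0$, at least one $y$ must contribute with $([a^y_1]_g, [a^y_2]_g) = (\mathfrak{a}, \mathfrak{b})$ (rather than $(\mathfrak{b}, \mathfrak{a})$, which contributes the opposite sign), and the parameters $t, t'$ with $y = \Gamma(t) = \Gamma(t')$ are then the required ones.

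The main obstacle is verifying that one can realize the geometric minimum $\selfi_{\dom(\mathcal{F})}([\Gamma])$ of self-intersections by a representative inside the $\mathcal{F}$-equivalence class, beyond what is explicitly guaranteed by Lemma \ref{Lemma10}. A robust alternative, which I would fall back on if the minimization step proved delicate, is to argue directly that every non-$\widetilde{\mathcal{F}}$-transverse self-intersection of $\Gamma$ contributes trivially to $\mu(g)$: any such crossing can be eliminated by a local homotopy through $\mathcal{F}$-transverse loops in the equivalence class (sliding one of the two positively transverse branches along a leaf arc through $y$), and applying the invariance in Lemma \ref{lem:mu_welldefined} to this homotopy forces the removed summand $[a^y_1]^*_g \otimes [a^y_2]^*_g - [a^y_2]^*_g \otimes [a^y_1]^*_g$ to vanish, i.e. $[a^y_1]_g = [a^y_2]_g$. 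Consequently, no non-$\widetilde{\mathcal{F}}$-transverse self-intersection can contribute to any element of $\Comp(\mu(g))$, and the claim follows.
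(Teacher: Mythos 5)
Both routes you propose have genuine gaps, and the second one contains a concrete error in the final step.

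On the first route: Lemma~\ref{Lemma10} modifies $\Gamma$ within its $\mathcal{F}$-equivalence class so that at least $k=\selfi_{\dom(\mathcal{F})}([\Gamma])$ self-intersections have the desired $\widetilde{\mathcal{F}}$-transversality, but it does \emph{not} produce a representative with exactly $k$ self-intersections, and it is not clear that the geometric minimum can be realized within an $\mathcal{F}$-equivalence class (being $\mathcal{F}$-equivalent is a rigid condition tracking the ordered trace on leaves, and bigons formed by $\Gamma$ need not be removable by homotopies that stay transverse and equivalent). You flag this yourself, so I turn to the fallback.

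On the fallback: the assertion that every non-$\widetilde{\mathcal{F}}$-transverse self-intersection $y$ contributes trivially to $\mu(g)$ \emph{individually}, in the sense that $[a^y_1]_g=[a^y_2]_g$, is false in general and cannot follow from the sliding argument you sketch. A transverse double point cannot be removed by an isotopy in isolation; crossings appear and disappear in pairs (bigon moves) or via monogons. What happens when two lifts $\widetilde\gamma,\widetilde\gamma'$ do \emph{not} intersect $\widetilde{\mathcal{F}}$-transversally is that, by Lemma~\ref{Lemma7}, they are $\widetilde{\mathcal{F}}$-equivalent only along a finite interval, and hence meet in an even number of points that come in consecutive pairs $(\widetilde\gamma(a)=\widetilde\gamma'(a'),\,\widetilde\gamma(b)=\widetilde\gamma'(b'))$ with $\widetilde\gamma|_{(a,b)}$ disjoint from $\widetilde\gamma'$. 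The correct statement — and what the paper proves — is that the two terms of $\mu(g)$ coming from $y=\Gamma(a)$ and $y'=\Gamma(b)$ cancel \emph{each other}: one checks that the $g$-equivalence classes produced at $a$ and at $b$ are swapped, by an explicit computation in $\pi_1$ using the homotopy between $\widetilde\gamma|_{[0,a]}\widetilde\gamma'|_{[a',1]}\widetilde\gamma'|_{[1,k'+1]}$ and $\widetilde\gamma|_{[0,k]}\widetilde\gamma|_{[k,b]}\widetilde\gamma'|_{[b',k'+1]}$ together with \eqref{ab=g}. Thus each non-$\widetilde{\mathcal{F}}$-transverse crossing typically contributes a \emph{nonzero} summand; it is only the signed sum over a cancelling pair that vanishes. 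Applying the invariance of Lemma~\ref{lem:mu_welldefined} to a bigon removal (if such a homotopy were available) would show precisely this pairwise cancellation, not the vanishing of a single summand, and so cannot be used to conclude $[a^y_1]_g=[a^y_2]_g$. Your approach therefore needs to be replaced by a pairing argument along the lines of the paper's: first reduce, via Lemma~\ref{Lemma7}, to finitely many consecutive crossings of the two lifts, then verify the algebraic cancellation directly.
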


We will prove the claim below. Let now $\mathcal{T}\subset \Comp(\mu(g))$.
By the construction of $\mu(g)$, given any term $\mathfrak{a} \otimes \mathfrak{b} \in \mathcal{T}$, then $\mathfrak{a}$ or $\mathfrak{b}$ has a representative in $\pi_1(M,x_0)$ given trough a loop $\gamma|_{[0,t]}\gamma|_{[t',1]}$ for  $t<t'$ as above. It follows that for $m=\left\lceil \frac{1}{2}\#\mathcal{T}\right\rceil$  there is 
$\mathfrak{S}=\{\mathfrak{a}_1, \cdots, \mathfrak{a}_m\}  \subset \mathcal{T}_+$ or $\mathfrak{S} = \{\mathfrak{a}_1, \cdots, \mathfrak{a}_m\} \subset \mathcal{T}_-$ such that, with the above choices 
 of corresponding parameters $t_1, \cdots, t_m$, $t'_1, \cdots, t'_m\in [0,1)$, $t_i < t'_i$, and a set $S=\{a_1, \cdots, a_m\}\subset \pi_1(M,x_0)$ with $[S]_g = \mathfrak{S}$, we have that $\gamma_i := \gamma|_{[0,t_i]}\gamma|_{[t_i',1]}$ represents the element $a_i$, and any two lifts $\widetilde\gamma$ and $\widetilde\gamma'$ of $\Gamma$ that intersect in $\widetilde\gamma(t_i) = \widetilde\gamma'(t'_i)$, intersect $\mathcal{F}$-transversally, for all $i=1, \cdots, m$. 

Set $\gamma_0:= \gamma|_{[0,1]}$. 
With similar notation as in the proof of Proposition \ref{prop:forcing1} for 
$\rho= (\rho_1, \cdots, \rho_n)$ with $\rho_{i} \in \{0,1, \cdots, k\}$, 
consider the path $\gamma_{\rho} := \gamma_{\rho_1}\gamma_{\rho_2} \cdots \gamma_{\rho_n}$. 
The path defines a transverse loop $\Gamma_{\rho}$.
As in in the proof of Claim \ref{claim:admissible} of the proof of Proposition \ref{prop:forcing1}, one shows that 
for any  $\rho= (\rho_1, \cdots, \rho_n)$ with $\rho_i \in \{0, \cdots,k\}$, $i = 1, \cdots, n$,
 $\Gamma_{\rho}$ is linearly admissible of order $nq$: The fact that, for any $p\in \N$,  $\widehat{\gamma} := \gamma_{[0,1]}\gamma_{\rho}^p$ is admissible of order $pnq$ follows from Proposition \ref{Proposition23_1}, since by the assumption $\mathfrak{S} \subset \mathcal{T}_+$ or $\mathfrak{S} \subset \mathcal{T}_-$  and so all relevant $\mathcal{F}$-transverse self-intersections have the same sign.

Consider the set of free homotopy classes that are defined by $\Gamma_{\rho}$ for all $\rho = (\rho_1, \cdots, \rho_n)$ with $\rho_1 = \rho_2 = \rho_3 = 0$.  
With the notation above, this set coincides with $\widehat{N}(n-3,S)$. It follows from Lemma \ref{Lemma7} $(1)$ that $\Gamma_{\rho}$ has a $\mathcal{F}$-transverse self-intersection and hence by Proposition \ref{Proposition26}, there is for each $\rho$ of the above form a periodic point $z\in \dom(\mathcal{F})$ of period $nq$ such that $\Gamma_{\rho}$ is associated to $z$, so there are at least $\widehat{N}(n-3,S)$ periodic points of period $nq$ that are of pairwise different free homotopy classes. Hence,
\begin{align*}
\HP^{\infty}(f) = \limsup_{n\to\infty} \frac{\log(N_{\hp}(f,nq))}{nq} \geq \limsup_{n\to \infty} \frac{\log(\widehat{N}(n-3,S))}{nq} \geq \frac{1}{q}\Gamma_{\mathcal{T}}^{g}.
\end{align*}
Taking the maximum over $\mathcal{T}\subset \Comp(\mu(g))$ we obtain $\HP^{\infty}(f) \geq \frac{1}{q}\Tu^{\infty}(\alpha)$.  
\end{proof}

\begin{proof}[Proof of claim \ref{claim:nontransverse}]
By Lemma \ref{Lemma7} any two lifts $\widetilde\gamma$ and $\widetilde\gamma'= S\widetilde\gamma$ , where $S \neq T^k$ and $T$ shift of $\widetilde\gamma$, can only be $\widetilde{\mathcal{F}}$-equivalent along a finite interval. In particular, if in addition they do not intersect $\widetilde{\mathcal{F}}$-transversally, then $\widetilde\gamma|_{(B, \infty)}$, and $\widetilde\gamma|_{(-\infty,-B)}$ will be on the same side of $\widetilde\gamma'$ for $B$ sufficiently large. 
It is now sufficient to show that for any such lifts and $a,b, a', b'\in \R$, $a < b$, $a'< b'$ such that  $\widetilde\gamma(a) = \widetilde\gamma'(a')$, $\widetilde\gamma(b) = \widetilde\gamma'(b')$, and $\widetilde\gamma|_{(a,b)}$ does not intersect $\widetilde\gamma'$, the contributions to $\mu(\alpha)$ at the intersection points $y=\Gamma(a)$ and $y'=\Gamma(b)$ cancel each other.
So take such $a,b,a',b'$ and $\widetilde\gamma, \widetilde\gamma'$,
We may, by interchanging the lifts and translating if necessary assume that $0<a<a'<1$. 
We note that $b<b'$. Otherwise, each of the iterates $S^{k}\widetilde\gamma|_{[a,b]}$, $k\in \N$ is equivalent to a subpath of $\widetilde\gamma|_{[a,b]}$, and moreover one checks that all intersect $\widetilde\gamma$, a contradiction. Similarly one sees that $b'< b+1$. 
Let  $k,k'\in \Z$ with $k<b<k+1$ and $k'<b'<k'+1$. We have that 
$\widetilde\gamma|_{[0,a]}\widetilde\gamma'|_{[a',1]}\widetilde\gamma'|_{[1,k'+1]}$ is homotopic relative to endpoints to $\widetilde\gamma|_{[0,k]}\widetilde\gamma|_{[k,b]}\widetilde\gamma'|_{[b',k'+1]}$. 
By the construction of $\mu(g)$ and by \eqref{ab=g} it is enough to show that 
$[f]_g = [f']_g$, where 
$f \in \pi_1(M,x_0)$  is represented by the loop in $M$ that we obtain by  projecting $\widetilde\gamma|_{[0,a]}\widetilde\gamma'|_{[a',1]}$ to $\dom(\widehat{I})\subset M$, and 
$f'$ is,  if $(i)$  $b<b'< k+1 =k'+1$,  represented by the loop in $M$ that we obtain by  projecting $\widetilde\gamma|_{[k,b]}\widetilde\gamma'|_{[b',k'+1]}$ to $\dom(\widehat{I})\subset M$, and, if $(ii)$  $b<k'=k+1< b'<b+1$, represented by the loop in $M$ that we obtain by  projecting $\widetilde\gamma|_{[k,b]}\overline{\widetilde\gamma'|_{[k',b']}}$ to $\dom(\widehat{I})\subset M$. 
In case $(i)$ we get $fg^{k'} = g^kf$, and in case $(ii)$ we get $fg^{k'} = g^kf'g$. Hence in both cases $[f]_g = [f']_g$.

\end{proof}
  
\section{Floer theory and persistence modules prerequisites}
\label{sec:floer}
    \subsection{A bit of Floer homology}
    \label{sec:floer_hom}
        Let $(M^{2n},\omega)$ be a symplectic manifold, and let $H: S^1 \times M \to \R$ be a smooth function, called a Hamiltonian function. The function $H$ induces a time-dependent vector field $X_H: S^1 \times M \to TM$ on $M$ that satisfies:
        \[
            \forall t \in S^1: \omega(\cdot, X_{H_t}) = d{H_t}(\cdot) .
        \]
        where $X_{H_t}: M \to TM$ is the vector field at time $t \in S^1$ and $H_t: M \to \R$ is $H_t(\cdot) = H(t,\cdot)$. The flow induced by $H$, or by $X_H$, is the family of maps $\varphi_H^t: S^1 \times M \to M$ satisfying
        \[
            \forall t \in S^1, x \in M: \frac{d}{dt} \varphi_H^t(x) = X_H(t,x) .
        \]
        The time-one-map induced by $H$ is the map $\varphi_H: M \to M$ defined by $\varphi_H = \varphi_H^1$. We say also $H$ generates $\varphi_H$. The set of Hamiltonian diffeomorphisms $\Ham(M,\omega) := \{\varphi_H \, | \, H: S^1 \times M \to \R \text{ smooth} \}$ is given a group structure by composition.
        
        In~\cite{hofer1990topological}, H. Hofer defined a remarkable metric $d_{\hofer}: \Ham(M,\omega) \times \Ham(M,\omega) \to \R$. This metric is induced by the Hofer norm $||\cdot||_{\hofer}: \Ham(M,\omega) \to \R$, which is defined as follows:
        \[
            ||\psi||_{\hofer} = \inf \left\{ \int_0^1 \max_M H_t - \min_M H_t \ dt \, \middle|\,  H \text{ smooth, } \varphi_H = \psi \right\} , 
        \] 
        where the minimum runs over all Hamiltonians $H$ which have $\psi$ as the time-1-map of their Hamiltonian flow.

        Recall the setting of Morse theory: a manifold $X$ and a Morse function $f: X \to \R$ are given. Morse theory reveals a connection between the homology of sub-level sets $\{x \in X \, | \, f(x) < t\}_{t \in \R}$ and critical points of $f$. Floer theory is an an analogous theory, set in an infinite-dimensional manifold of loops on a manifold, which is equipped with the action functional, see~\cite{salamon1999lectures}.
        
        Let $(M^{2n},\omega)$ be a closed, symplectically aspherical symplectic manifold and let $\alpha \in \widehat\pi(M)$ be a free homotopy class of loops in $M$. Denote $\mathcal{L}_\alpha(M) = \{x: S^1 \to M \text{ smooth } \, | \, [x]_{\widehat\pi(M)} = \alpha\}$ the set of all smooth loops in $M$ which represent the class $\alpha$. Assume that $\alpha$ is symplectically atoroidal; that is, that for any loop $\rho$ in $\mathcal{L}_\alpha(M)$, which is to be thought of as a function $\rho: \mathbb{T}^2 \to M$, the following holds:
        \[
            \int_{\mathbb{T}^2} \rho^* \omega = \int_{\mathbb{T}^2} \rho^* c_1 = 0 ,
        \]
        where $c_1$ is the first Chern class of $M$ and $\mathbb{T}^2$ is the 2-torus.
				
				Note for future reference that if $M$ is a surface of genus $\geq 2$ this condition holds trivially, since $[\mathbb{T}^2,M] = 0$. Fix a reference loop $\eta_\alpha \in \mathcal{L}_\alpha(M)$ and let $x \in \mathcal{L}_\alpha(M)$. The above condition means that the quantity $\int_{\bar{x}} \omega := \int_{S^1 \times [0,1]} \bar{x}^* \omega$ is well-defined and independent of $\bar{x}$, for an map $\bar{x}: S^1 \times [0,1] \to M$ with $\bar{x}|_{S^1 \times \{0\}} = \eta_\alpha$ and $\bar{x}|_{S^1 \times \{1\}} = x$.
        
        Let $H: S^1 \times M \to \R$ be a Hamiltonian function on $M$. The action functional associated to $H$, $\mathcal{A}_H: \mathcal{L}_\alpha(M) \to \R$, is defined as follows:
        \[
            \mathcal{A}_H(x) = \int_0^1 H(t,x(t)) dt - \int_{\bar{x}} \omega .
        \]
        
        A fixed point $z$ of $\varphi_H$ shall be called \textit{nondegenerate} if the differential $(\varphi_H)_*: T_z M \to T_z M$ does not have 1 as an eigenvalue. A Hamiltonian function $H: S^1 \times M \to \R$ and its generated time-one-map $\varphi_H$ shall be called \textit{nondegenerate} if all fixed points $z$ of $\varphi_H$ are nondegenerate. A nondegenerate Hamiltonian diffeomorphism $\phi$ has isolated fixed points. The property of nondegeneracy is analogous, in the analogy between Morse and Floer theories, to the function $f: X \to \R$ being a Morse function.
        
        Let $x \in \mathcal{L}_\alpha(M)$. The tangent space to $\mathcal{L}_\alpha(M)$ at the point $x$, $T_x \mathcal{L}_\alpha(M)$, is identified with the space of vector fields $\xi$ along $x$, i.e. with the space of maps $\xi: S^1 \to TM$ which are compositions of $x$ with a section of $TM \to M$. One shows that the differential of $\mathcal{A}_H$ is
        \[
            (d\mathcal{A}_H)_x(\xi) = \int_0^1 \omega(\xi, X_H - \dot{x}(t)) dt .
        \]
        
        This formula for the differential of the action implies the following characterization of critical points of the action functional:
        
        \begin{prop}[The Least Action Principle]
        \label{prop:least_action}
            The critical points of $\mathcal{A}_H$ are exactly the periodic orbits of the flow generated by the Hamiltonian $H$.
        \end{prop}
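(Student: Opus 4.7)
The proof will be a direct application of the formula for $(d\mathcal{A}_H)_x$ given just above the statement, together with the nondegeneracy of the symplectic form $\omega$ and the fundamental lemma of the calculus of variations. The plan is to prove the two implications of the ``iff'' separately, both of which follow quickly from the identity
\[
(d\mathcal{A}_H)_x(\xi) \;=\; \int_0^1 \omega_{x(t)}\bigl(\xi(t),\, X_{H_t}(x(t)) - \dot{x}(t)\bigr)\, dt.
\]

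First I would establish the easy direction: suppose $x \in \mathcal{L}_\alpha(M)$ is a $1$-periodic orbit of the flow of $X_H$, i.e.\ $\dot{x}(t) = X_{H_t}(x(t))$ for all $t \in S^1$. Then the integrand above vanishes identically in $t$ for every variation $\xi$, hence $(d\mathcal{A}_H)_x \equiv 0$, so $x$ is a critical point of $\mathcal{A}_H$.

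For the converse, suppose $x$ is a critical point of $\mathcal{A}_H$. Set $Y(t) := X_{H_t}(x(t)) - \dot{x}(t) \in T_{x(t)}M$, a smooth vector field along $x$; my goal is to show $Y \equiv 0$. The critical-point hypothesis says $\int_0^1 \omega_{x(t)}(\xi(t), Y(t))\,dt = 0$ for every smooth vector field $\xi$ along $x$. If $Y(t_0) \neq 0$ for some $t_0 \in S^1$, I would use nondegeneracy of $\omega_{x(t_0)}$ to pick a vector $v_0 \in T_{x(t_0)}M$ with $\omega_{x(t_0)}(v_0, Y(t_0)) > 0$, extend it to a smooth section $v(t)$ of $x^*TM$ with $v(t_0) = v_0$, and then take $\xi(t) := \chi(t)\, v(t)$ for a nonnegative bump function $\chi$ supported in a small enough neighbourhood of $t_0$ (where the integrand remains positive). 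This produces a variation $\xi$ with strictly positive integral, contradicting the critical-point condition. Hence $Y \equiv 0$, i.e.\ $x$ solves Hamilton's equation.

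The only mildly delicate point is the standard variations-lemma step in the converse; it is routine once one invokes the pointwise nondegeneracy of $\omega$, and otherwise the argument is essentially a direct manipulation of the given formula, so I do not expect any serious obstacle. (The formula for $(d\mathcal{A}_H)_x$ itself is taken as given from the statement preceding the proposition.)
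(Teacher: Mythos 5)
Your proof is correct and is precisely the standard argument. The paper itself states this proposition without a written-out proof, merely observing that it "follows" from the displayed formula for $(d\mathcal{A}_H)_x$; your two-directional argument — the immediate forward implication, and the converse via pointwise nondegeneracy of $\omega$ plus a bump-function localization (fundamental lemma of the calculus of variations) — is exactly the routine verification the paper implicitly leaves to the reader.
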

        
        Endow $\mathcal{L}_\alpha(M)$ with an auxiliary Riemannian metric as follows. Choose a loop of $\omega$-compatible almost complex structures $J(t)$. The inner product at the point $x \in \mathcal{L}_\alpha(M)$, $\langle \cdot,\cdot \rangle_{x}: T_x \mathcal{L}_\alpha(M) \times T_x \mathcal{L}_\alpha(M) \to \R$, is defined to be
        \[
            \langle \xi,\zeta \rangle_x = \int_0^1 \omega(\xi(t),J(t) \zeta(t)) dt .
        \]
        
        Denote
        \[
            P(M,H)_\alpha = \{x \in \mathcal{L}_\alpha(M) \, |\,  x \text{ is a 1-periodic orbit of } \varphi_H^t\} .
        \]
        
        We wish to grade $P(M,H)_\alpha$; this is done using the Conley-Zehnder index as follows (for a definition of Conley-Zehnder index, see~\cite{gutt2012conley}). First, if $\Phi$ is a path of symplectic matrices where $\Phi(1)$ does not have 1 as an eigenvalue, denote by $\mu_{CZ}(\Phi)$ the Conley-Zehnder index of the path $\Phi$. Fix a trivialization $\eta_\alpha^* TM \simeq S^1 \times (\R^{2n},\omega_0)$ of the symplectic vector bundle $\eta_\alpha^* TM$. Let $x \in P(M,H)_\alpha$. For any annulus $w: [0,1] \times S^1 \to M$ connecting $x$ to $\eta_\alpha$, $w$ defines a trivialization $x^* TM \simeq S^1 \times (\R^{2n},\omega_0)$. Note that the symplectic a-toroidality condition implies that this trivialization does not depend on $w$. Using the trivialization of $x^* TM$, the differential $d(\varphi_H^t)_{x(0)}$ for $t \in [0,1]$ is a symplectic map of $(\R^{2n},\omega_0)$. Denote $Ind(x) = n - \mu_{CZ}(\{t \mapsto d(\varphi_H^t)_{x(0)}\})$. Note that indeed by nondegeneracy of $H$, $d(\varphi_H^1)_{x(0)}$ does not have 1 as an eigenvalue.
        
        Let $x,y \in P(M,H)_\alpha$ with $Ind(x) = Ind(y)+1$; denote by $\tilde{\mathcal{M}}(x,y)$ the space of solutions $u(s,t): \R \times S^1 \to M$ to the Floer equation:
        \[
           \bar{\partial}_{H,J}(u) =  \partial_s u + J(t) \left( \partial_t u - X_{H_t} \right) = 0, 
        \]
        which have boundary conditions $\lim_{s \to -\infty} u(s,t) = x(t), \ \lim_{s \to \infty} u(s,t) = y(t)$. Note that the loops $\{u(s, \cdot)\,  | \, s \in \R \}$ for $u \in \tilde{\mathcal{M}}(x,y)$ are gradient descent trajectories on the space $\mathcal{L}_\alpha(M)$ with respect to the auxiliary metric defined above and the action functional $\mathcal{A}_H$. The space $\tilde{\mathcal{M}}(x,y)$ has an obvious $\R$-action, and one can show that for a generic choice of $J$, $\tilde{\mathcal{M}}(x,y) / \R$ is a compact 0-dimensional manifold, i.e. a finite set of points. Denote in that case $n(x,y) = \#(\tilde{\mathcal{M}} / \R) \mod 2$.
        
        Denote the Floer complex over $\Z_2$, filtered by action less than $r \in \R$:
        \[
            CF_k^r(M,H)_\alpha = Span_{\Z_2}\{x \in P(M,H)_\alpha \, | \, Ind(x) = k,\,  \mathcal{A}_H(x) < r\} ,
        \]
        
        and consider the linear map $\partial_k: CF_k^r(M,H)_\alpha \to CF_{k-1}^r(M,H)_\alpha$ which linearly extends the following map on $\{x \in P(M,H)_\alpha \, | \, Ind(x) = k, \mathcal{A}_H(x) < r\}$:
        \[
            x \mapsto \sum_{\substack{y \in P(M,H)_\alpha \\ Ind(y) = k-1}} n(x,y) y .
        \]
        
        It can be shown that $\partial: CF_*^r(M,H)_\alpha \to CF_*^r(M,H)_\alpha$ is a differential, i.e. that $\partial^2 = 0$. Here $CF_*^r(M,H)_\alpha = \bigoplus_k CF_k^r(M,H)_\alpha$ denotes the Floer complex in all indices. Since $(CF_*^r(M,H)_\alpha, \partial)$ defines a chain complex, it has a well-defined homology, the filtered Hamiltonian Floer homology of action below $r$ in class $\alpha$, which is denoted $HF_*^r(M,H)_\alpha$. One shows that this homology does not depend on the choice of almost complex structure $J(t)$, and in addition, since $M$ is symplectically aspherical, that it does not in fact depend on the Hamiltonian $H$, but rather on the time-one-map of its flow. Thus for $\phi \in \Ham(M,\omega)$, denote its Hamiltonian Floer homology in free homotopy class $\alpha$, filtered with action less than $t$ and over $\Z_2$, by $HF_*^r(\phi)_\alpha$.
    
    \subsection{Persistence modules and barcodes}
    \label{sec:persistence}
        We shall begin this subsection by defining a persistence module. For more background on all the definitions and theorems appearing in this subsection see~\cite{polterovich2019topological}.
        
        \begin{defn}
        \label{defn:per_mod}
            Let $\mathbb{F}$ be a field. A \textit{persistence module} is a pair $(V,\pi)$ where $V = (V_t)_{t \in \R}$ is a family of finite-dimensional $\mathbb{F}$-vector spaces and $\pi = (\pi_{s,r})_{s,r \in \R, s \leq r}$ is a family of linear maps $\pi_{s,r}: V_s \to V_r$ such that the following conditions hold:
            \begin{itemize}
                \item For all $r \leq s \leq t$, $\pi_{s,t} \circ \pi_{r,s} = \pi_{r,t}$.
                \item There exists a finite set $Spec(V,\pi) \subset \R$, such that for any $t \not\in Spec(V,\pi)$ there exists a neighborhood $U$ of $t$ such that for all $r,s \in U$ with $r \leq s$: $\pi_{r,s}$ is an isomorphism.
                \item For any $t \in \R$ there exists $\epsilon$ such that for all $t - \epsilon < s \leq t$, $\pi_{s,t}$ is an isomorphism.
                \item There exists $s_- \in \R$ such that for all $s < s_-$, $V_s = 0$.
            \end{itemize}
        \end{defn}
        
        We present a few examples of persistence modules.
        \begin{itemize}
            \item Let $\mathbb{F}$ be a field, and let $I \subset \R$ be an interval of the form $(a,b]$ with $a \in \R$, $a < b \in \R \cup \{\infty\}$, where the interval $(a,\infty] \subset \R$ is to be interpreted as equal to $(a,\infty) \subset \R$. The persistence module $\mathbb{F}(I)$ consists of the following data: the vector spaces $\mathbb{F}(I)_t$, for $t \in \R$, are
            \[
                \mathbb{F}(I)_t = \left\{\begin{array}{ll}
                                        \mathbb{F}  & t \in I \\
                                        0           & t \not\in I \end{array}\right. ,
            \]
            and the linear maps $\pi_{s,r}: \mathbb{F}(I)_s \to \mathbb{F}(I)_r$ are
            \[
                \pi_{s,r} = \left\{
                                    \begin{array}{ll}
                                        id_\mathbb{F} & s,r \in I \\
                                        0             & else
                                    \end{array}
                            \right. .
            \]
            
            \item Let $\mathbb{F}$ be a field, let $X$ be a closed manifold and let $f: X \to \R$ be a Morse function. The Morse homology $H_*(\{x \in X | f(x) < t\};\mathbb{F})$ for $t \in \R$ induces an $\mathbb{F}$-persistence module: the vector fields are $V_t = H_*(\{x \in X\,  |\,  f(x) < t\};\mathbb{F})$, and the linear maps $\pi_{r,s}: V_r \to V_s$ are induced by the inclusion maps $i_{r,s}: \{x \in X \, | \, f(x) < r\} \hookrightarrow \{x \in X \, |\,  f(x) < s\}$.
            
            \item Let $(M,\omega)$ be a symplectic manifold, let $\alpha \in \widehat\pi(M)$ and let $\phi \in \Ham(M,\omega)$ be a nondegenerate Hamiltonian diffeomorphism. Denote by $HF_*^\bullet(\phi)_\alpha = (HF_*^r(\phi)_\alpha)_{r \in \R}$ the persistence module whose vector spaces are the filtered Floer homology vector spaces in class $\alpha$, $HF_*^r(\phi)_\alpha$ for $r \in \R$, and whose linear maps $\pi_{r,s}: HF_*^r(\phi)_\alpha \to HF_*^s(\phi)_\alpha$ are induced by the inclusion maps $CF_*^r(M,H)_\alpha \to CF_*^s(M,H)_\alpha$, where $H$ is a Hamiltonian that generates $\phi$.
        \end{itemize}
        
        The direct sum of two persistence modules is defined as follows.
        
        \begin{defn}
            Let $(V,\pi)$ and $(W,\tau)$ be two persistence modules over the same field. Their direct sum, denoted $V \oplus W$, is the persistence module whose vector spaces $(V \oplus W)_t$ are $(V \oplus W)_t = V_t \oplus W_t$ and whose linear maps are $(\pi \oplus \tau)_{r,s}: (V \oplus W)_r \to (V \oplus W)_s$ are $(\pi \oplus \tau)_{r,s} = \pi_{r,s} \oplus \tau_{r,s}$.
        \end{defn}
        
        Consider the notion of a barcode, defined below.
        
        \begin{defn}
        \label{defn:barcodes}
            A \textit{barcode} is a finite multiset of intervals. Explicitly, a barcode is a finite set of pairs of intervals of $\R$ and their multiplicities, $\{(I_i,m_i)\}_{i=1}^N$ for some $N \in \N$, where $I_i \subset \R$ is an interval of the form $(a,b]$ for some $a \in \R,a < b \in \R \cup \{\infty\}$, and $m_i \in \N$ is the multiplicity of $I_i$. The intervals which make up a barcode are called its \textit{bars}.
        \end{defn}
        
        \begin{figure}[!ht]
            \centering
            \begin{tikzpicture}
                \node (A) at (-8,0) {};
                \node (B) at (8,0) {};
                
                \node (C1) at (-6,1) {};
                \node at (C1.east) {(};
                
                \node (C2) at (7,1) {};
                \node at (C2.east) {...};
                
                \node (D1) at (-6,2) {};
                \node at (D1.east) {(};
                
                \node (D2) at (-4,2) {};
                \node at (D2.west) {]};
                
                \node (E1) at (-6,3) {};
                \node at (E1.east) {(};
                
                \node (E2) at (-4,3) {};
                \node at (E2.west) {]};
                
                \node (F1) at (0,2) {};
                \node at (F1.east) {(};
                
                \node (F2) at (7,2) {};
                \node at (F2.east) {...};
                
                \node (G1) at (1,3) {};
                \node at (G1.east) {(};
                
                \node (G2) at (4,3) {};
                \node at (G2.west) {]};
                
                \draw[->] (A) -- (B);
                \draw (C1) -- (C2);
                \draw (D1) -- (D2);
                \draw (E1) -- (E2);
                \draw (F1) -- (F2);
                \draw (G1) -- (G2);
            \end{tikzpicture}
            \caption{An example barcode.}
            \label{fig:barcode_ex}
        \end{figure}
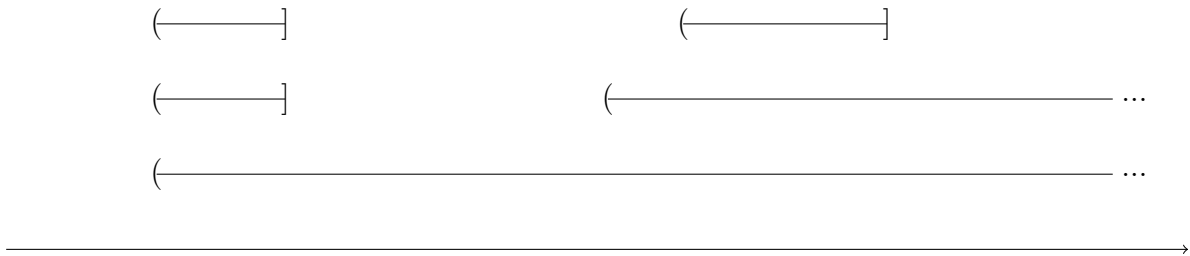
        
        Persistence modules and barcodes are matched in a 1-to-1 manner, as stated in Theorem~\ref{thm:normal_form}.
        
        \begin{thm}[Normal Form Theorem]
        \label{thm:normal_form}
            Let $(V,\pi)$ be a persistence module over a field $\mathbb{F}$. Then there exists a unique barcode $\mathcal{B}(V) = \{(I_i,m_i)\}_{i=1}^N$ such that
            \[
                V = \bigoplus_{i=1}^N \mathbb{F}(I_i)^{m_i} ,
            \]
            where here equality is to be understood as persistence modules isomorphism, and uniqueness of the barcode is up to permutation of the order in which its bars appear.
        \end{thm}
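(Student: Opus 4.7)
The plan is to reduce the statement to a finite-dimensional problem. First I would use the discreteness built into Definition~\ref{defn:per_mod} to pick a finite sequence $t_0 < t_1 < \cdots < t_N$ with $\spec(V,\pi) \subset \{t_1,\ldots,t_N\}$, $V_{t_0}=0$, and such that on each of the open intervals $(t_{i-1},t_i]$ the structure maps $\pi_{s,r}$ are isomorphisms. The whole module is then recovered, up to canonical isomorphism, from the finite diagram $V_{t_1}\xrightarrow{f_1}V_{t_2}\xrightarrow{f_2}\cdots\xrightarrow{f_{N-1}}V_{t_N}$, where $f_i=\pi_{t_i,t_{i+1}}$; and a decomposition of this diagram into ``interval summands'' (sequences $0\to\cdots\to\mathbb{F}\xrightarrow{\id}\cdots\xrightarrow{\id}\mathbb{F}\to 0\to\cdots\to 0$) translates back into the desired decomposition $V\cong\bigoplus_i\mathbb{F}(I_i)^{m_i}$, with each $I_i$ of the form $(t_{a},t_{b}]$.

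For existence I would produce a basis inductively. The clean way is to invoke Gabriel's theorem for the linearly oriented $A_N$ quiver, whose indecomposable representations are exactly the interval modules. A hands-on alternative is to build, by downward induction on $i$, a basis $B_i$ of $V_{t_i}$ such that $f_{i-1}(B_{i-1}\setminus\Ker f_{i-1})\subset B_i$ and such that $B_i$ contains a basis of $\Ker f_i$, of $\Ker(f_{i+1}f_i)$, and so on; concretely, one picks complements
\begin{equation*}
V_{t_i}\;=\;\im(f_{i-1})\oplus C_i,\qquad \Ker f_i\;\subset\; V_{t_i},
\end{equation*}
in a compatible way, and tracks the ``lifetime'' of each basis vector. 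Every basis element then has a well-defined birth time $t_a$ (where it first appears outside the image) and death time $t_b$ (where it first falls into the kernel of a later map), producing the bar $(t_a,t_b]$.

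For uniqueness I would exploit the single numerical invariant $(s,t)\mapsto \rk(\pi_{s,t})$. In any direct sum $\bigoplus_I \mathbb{F}(I)^{m_I}$, the rank of the map from level $s$ to level $t$ equals
\begin{equation*}
\rk(\pi_{s,t}) \;=\; \sum_{I=(a,b]\,:\,a<s,\ t\leq b} m_I.
\end{equation*}
A two-dimensional inclusion–exclusion on the function $r(s,t):=\rk(\pi_{s,t})$ then recovers each multiplicity $m_{(t_a,t_b]}$ purely from $(V,\pi)$, so any two barcodes yielding isomorphic persistence modules must agree as multisets.

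The main obstacle is the existence step: carrying out the inductive basis choice so that it is simultaneously compatible with the images $\im(f_{i-1}f_{i-2}\cdots f_j)$ and the kernels $\Ker(f_{j-1}\cdots f_i)$ for every relevant $j$. This is the place where one either packages the combinatorics neatly (via Gabriel's theorem) or writes out a careful induction. Everything else, including the passage from the finite diagram back to the continuous persistence module and the uniqueness computation, is formal.
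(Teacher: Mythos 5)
The paper does not prove this theorem; it is quoted as background, with the reader directed to~\cite{polterovich2019topological}, so there is no in-text proof to compare your attempt against. Your outline is nevertheless a correct rendering of the standard argument. Reducing to the finite diagram $V_{t_1}\to\cdots\to V_{t_N}$ via the finite-spectrum, left-continuity, and vanishing-for-small-$s$ clauses of Definition~\ref{defn:per_mod} is sound, though you should include one extra sample point past the largest spectral value so that the eventual stable stage of $V$ is also recorded as a node of the diagram (a harmless indexing fix). Appealing to Gabriel's theorem for the linearly oriented $A_N$ quiver gives existence immediately, and uniqueness then comes for free from Krull--Schmidt, since each interval module has local endomorphism ring $\mathbb{F}$; your rank-function inclusion--exclusion is a second, more elementary route to uniqueness, and the identity $m_{(a,b]}=\rk\,\pi_{a^+,b}-\rk\,\pi_{a^-,b}-\rk\,\pi_{a^+,b^+}+\rk\,\pi_{a^-,b^+}$, with $a^\pm$ and $b^+$ denoting values just to either side of $a$ and just past $b$ respectively, does recover each multiplicity from $(V,\pi)$ alone. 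The hands-on inductive basis construction that you correctly flag as the delicate part is the Zomorodian--Carlsson style argument; the fact that makes it go through is that the kernels $\Ker f_i\subseteq\Ker(f_{i+1}f_i)\subseteq\cdots$ and the images $\im f_{i-1}\supseteq\im(f_{i-1}f_{i-2})\supseteq\cdots$ are each a single flag in $V_{t_i}$, and any two flags in a finite-dimensional vector space over a field admit a common compatible basis. With either the quiver-theoretic or the elementary route, your sketch is essentially complete and matches the standard literature proof the paper is implicitly relying on.
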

        
        We next recall a metric on the space of all barcodes, called the bottleneck distance, and denoted $d_{bot}$.
        
        \begin{defn}
        \label{defn:bottleneck_dist}
            Let $I = (a,b]$ be an interval with $a \in \R, a < b \in \R \cup \{\infty\}$, and let $\delta > 0$. Denote by $I^{-\delta}$ the interval $(a-\delta, b+\delta]$. Let $\mathcal{B}$ be a barcode and $\delta > 0$. Denote $\mathcal{B}_\delta = \{(I,m) \in \mathcal{B}\,  |\,  I = (a,b] \text{ with } b-a > \delta \}$, i.e. $\mathcal{B}_\delta$ is the set of bars of $\mathcal{B}$ which have length greater than $\delta$.
            
            Let $X,Y$ be multisets. A matching $\mu: X \to Y$ is a bijection $\mu: X^\prime \to Y^\prime$, where $coim \ \mu = X^\prime \subseteq X$, $im \ \mu = Y^\prime \subset Y$ are sub-multisets.
            
            Let $\mathcal{A},\mathcal{B}$ be barcodes. A \textit{$\delta$-matching} between $\mathcal{A},\mathcal{B}$ is a matching $\mu: \mathcal{A} \to \mathcal{B}$ such that $\mathcal{A}_{2\delta} \subseteq coim \ \mu$, $\mathcal{B}_{2\delta} \subseteq im \ \mu$, and for any $I \in coim \ \mu$: $I \subset (\mu(I))^{-\delta}$ and $\mu(I) \subset I^{-\delta}$.
            
            The \textit{bottleneck distance} between two barcodes $\mathcal{A},\mathcal{B}$ is
            \[
                d_{bot}(\mathcal{A},\mathcal{B}) = \inf \{ \delta \, |\,  \exists \mu: \mathcal{A} \to \mathcal{B} \text{ $\delta$-matching} \} .
            \]
        \end{defn}
        
        It can be easily shown that this is a genuine metric and not a pseudo-metric, i.e. that if $\mathcal{A},\mathcal{B}$ are two distinct barcodes, then $d_{bot}(\mathcal{A},\mathcal{B}) > 0$. Theorem~\ref{thm:dynamic_stability} relates the Hofer distance between two Hamiltonian diffeomorphisms and the bottleneck distance of the barcodes associated to their Floer homology persistence modules (see~\cite{polterovich2019topological}).
        
        \begin{thm}[Dynamical Stability Theorem]
        \label{thm:dynamic_stability}
            Let $(M,\omega)$ be a symplectic manifold with $\pi_2(M) = 0$, $\alpha \in \widehat\pi(M)$, and $\phi,\psi \in \Ham(M,\omega)$ nondegenerate. Then
            \[
                d_{bot}(\mathcal{B}(HF_*^\bullet(\phi)_\alpha), \mathcal{B}(HF_*^\bullet(\psi)_\alpha)) \leq d_{\hofer}(\phi,\psi) .
            \]
        \end{thm}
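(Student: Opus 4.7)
The plan is to prove the inequality by interleaving the two persistence modules $\HF_*^\bullet(\phi)_\alpha$ and $\HF_*^\bullet(\psi)_\alpha$ via Floer continuation maps, with the interleaving parameter controlled by the Hofer distance, and then to invoke the isometry theorem relating the interleaving distance and the bottleneck distance of barcodes.

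First, I would fix Hamiltonians $H, K : S^1 \times M \to \R$ generating $\phi$ and $\psi$ respectively, chosen so that $\int_0^1 \bigl( \max_M(H_t - K_t) - \min_M(H_t - K_t) \bigr)\, dt$ is arbitrarily close to $d_{\hofer}(\phi,\psi)$. Using a smooth monotone homotopy $(G_s)_{s \in \R}$ from $H$ to $K$ and a generic $s$-family of $\omega$-compatible almost complex structures, I would count rigid solutions of the $s$-dependent Floer equation $\partial_s u + J_s(t)(\partial_t u - X_{G_s}) = 0$ with asymptotes in $P(M,H)_\alpha$ and $P(M,K)_\alpha$ to define a chain map $\Psi_{HK} : \CF_*(H)_\alpha \to \CF_*(K)_\alpha$. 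The standard action estimate, obtained by integrating $\tfrac{d}{ds}\mathcal{A}_{G_s}(u(s,\cdot))$ along a solution and using the monotonicity of $G_s$, shows that $\Psi_{HK}$ restricts to filtered maps $\CF_*^r(H)_\alpha \to \CF_*^{r + c(H,K)}(K)_\alpha$, where $c(H,K) := \int_0^1 \max_M (H_t - K_t)\, dt$. These induce filtered continuation morphisms
\[
\Psi_{HK} : \HF_*^r(\phi)_\alpha \to \HF_*^{r + c(H,K)}(\psi)_\alpha,
\]
and symmetrically $\Psi_{KH} : \HF_*^r(\psi)_\alpha \to \HF_*^{r + c(K,H)}(\phi)_\alpha$, with $c(K,H) := \int_0^1 \max_M (K_t - H_t)\, dt$.

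Next I would show that the compositions $\Psi_{KH} \circ \Psi_{HK}$ and $\Psi_{HK} \circ \Psi_{KH}$ agree with the persistence structure maps of shift $c(H,K) + c(K,H)$, i.e.\ with the maps induced by the inclusions $\CF_*^r \hookrightarrow \CF_*^{r + c(H,K) + c(K,H)}$. On the chain level this is witnessed by a chain homotopy obtained by considering a two-parameter family of homotopies interpolating between the concatenated homotopy $H \rightsquigarrow K \rightsquigarrow H$ and a constant one, and extracting the boundary of the corresponding one-dimensional parametrized moduli space. An action estimate on this parametrized family ensures that the chain homotopy itself respects the action shift. Together with the fact that the resulting persistence module depends only on the time-one-map (and not on the Hamiltonian), this yields a $\delta$-interleaving of the modules $\HF_*^\bullet(\phi)_\alpha$ and $\HF_*^\bullet(\psi)_\alpha$ for any $\delta > \tfrac{1}{2}(c(H,K) + c(K,H))$. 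Since $c(H,K) + c(K,H) = \int_0^1 \bigl( \max_M (H_t - K_t) - \min_M (H_t - K_t)\bigr)\, dt$ and $H, K$ were arbitrary Hamiltonians generating $\phi, \psi$, taking infimum over such pairs produces a $\delta$-interleaving for every $\delta > d_{\hofer}(\phi, \psi)$.

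Finally, I would invoke the isometry theorem for persistence modules, which asserts that for two modules satisfying the axioms of Definition~\ref{defn:per_mod} the interleaving distance equals the bottleneck distance between their normal-form barcodes given by Theorem~\ref{thm:normal_form}. This immediately delivers
\[
d_{bot}\bigl(\mathcal{B}(\HF_*^\bullet(\phi)_\alpha),\, \mathcal{B}(\HF_*^\bullet(\psi)_\alpha)\bigr) \leq d_{\hofer}(\phi, \psi).
\]
The main technical obstacle is keeping the action filtration under control throughout the Floer-theoretic constructions: in particular verifying that not only the continuation chain maps but also the chain homotopies required for the interleaving relations shift action by the correct amount. Once that bookkeeping is in place the result reduces to a now-standard application of the interleaving-to-bottleneck isometry.
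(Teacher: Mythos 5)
The paper does not prove Theorem~\ref{thm:dynamic_stability}; it is quoted as a background result from~\cite{polterovich2019topological}, so there is no in-paper argument to compare against. Your sketch reproduces the standard proof: build Floer continuation maps via a monotone homotopy, extract an action shift estimate, upgrade the maps and the chain homotopy to a $\delta$-interleaving, then invoke the isometry between the interleaving distance and $d_{bot}$. That skeleton is correct.

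There is one step that is glossed over and deserves to be made explicit: the passage from ``$\inf_{H,K}\int_0^1\bigl(\max_M(H_t-K_t)-\min_M(H_t-K_t)\bigr)\,dt$ over generators $H$ of $\phi$ and $K$ of $\psi$'' to ``$d_{\hofer}(\phi,\psi)$.'' A priori $H_t-K_t$ is not a Hamiltonian generating $\phi^{-1}\psi$ (or $\phi\psi^{-1}$), because of the nontrivial composition formula for Hamiltonian flows, so ``taking infimum over such pairs'' does not by itself produce the Hofer distance. What saves the argument is a small lemma: given any $F$ generating a path $\theta_t$ from $\mathrm{id}$ to $\phi^{-1}\psi$, and any $H$ generating $\phi$ with flow $\phi_t$, the Hamiltonian $K_t:=H_t+F_t\circ\phi_t^{-1}$ generates the path $\phi_t\circ\theta_t$, which ends at $\psi$; and $K_t-H_t=F_t\circ\phi_t^{-1}$ has the same oscillation as $F_t$. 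Since $\|\phi^{-1}\psi\|_{\hofer}=\|\phi\psi^{-1}\|_{\hofer}=d_{\hofer}(\phi,\psi)$ by bi-invariance, taking the infimum over $F$ shows that the quantity you minimize over $(H,K)$ is indeed $\leq d_{\hofer}(\phi,\psi)$. With this lemma in place, the rest of your outline — the filtered chain maps, the chain homotopy realizing the interleaving relations and respecting the shifted filtration, and the isometry theorem — goes through as you describe. Note also that the reference~\cite{polterovich2019topological} (and the original Polterovich--Shelukhin argument) actually constructs the barcode matching somewhat more directly, bypassing the interleaving-isometry formalism, but your route is an equally standard and conceptually cleaner way to reach the same conclusion.
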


\section{Eggbeater maps}\label{sec:egg_beaters}

We first recall in subsection~\ref{sec:egg_beater_defs} the definitions of eggbeater surfaces $(C_g, \omega_0)$ and eggbeater maps on $C_g$. The eggbeater maps on surfaces $(\Sigma_g, \sigma_g)$ of genus $g$ are the images under the pushforward $(i_{\Sigma_g})_*$ of eggbeaters on $C_g$ for specific embeddings $i_{\Sigma_g}:(C_g, \omega_0) \to (\Sigma_g, \omega)$. For some eggbeater maps $\phi$ and well-chosen free homotopy classes $\alpha$, one obtains lower bounds on the length of some bars in $HF_*^\bullet(\phi)_\alpha$, which is discussed in  subsection~\ref{sec:action_gaps}, as well as computations for $\Tu^{\infty}(\alpha)$ resp. $\selfi(\alpha)$, which is discussed in subsection \ref{sec:topology}. We then  prove Theorems~\ref{thm:stable} and~\ref{thm:generic}, using tools of Section \ref{sec:floer}.
 
\color{black}

    \subsection{Definition of the eggbeater surfaces and maps}
    \label{sec:egg_beater_defs}
        The eggbeater surface $C_g$ is constructed as follows (see Figure~\ref{fig:eggbeater_def}). Fix $L \geq 4$ and denote by $C^\prime$ the cylinder of width 2 and length $L$, $[-1,1] \times \R / L\Z$, equipped with the standard symplectic form $dx \wedge dy$. Let $g \geq 2$ be an integer, which will later be the genus of the surface of interest. Denote by $C_V^\prime, C_H^\prime, C_1^\prime, C_2^\prime, C_3^\prime$ five copies of $C^\prime$, with $c_V: C^\prime \to C_V^\prime, c_H: C^\prime \to C_H^\prime, c_1: C^\prime \to C_1^\prime, c_2: C^\prime \to C_2^\prime, c_3: C^\prime \to C_3^\prime$ identity maps, and consider the squares $S_0 = [-1,1] \times [-1,1] / L\Z \subset C^\prime$, $S_1 = [-1,1] \times [\frac{L}{2}-1,\frac{L}{2}+1] / L\Z \subset C^\prime$. Define the symplectomorphism $VH: c_V(S_0) \bigsqcup c_V(S_1) \to c_H(S_0) \bigsqcup c_H(S_1)$ by $VH = VH_0 \bigsqcup VH_1$, where $VH_0: c_V(S_0) \to c_H(S_0), VH_1: c_V(S_1) \to c_H(S_1)$ are defined as follows:
        \begin{myequation}
        	VH_0(x,[y]) = (-y,[x]) , \\
        	VH_1(x,[y]) = \left(y-\frac{L}{2}, \left[-x+\frac{L}{2}\right]\right) .
        \end{myequation}
        
        The eggbeater surface $C_g$, which depends on the genus $g$ of the surface of interest, is the following disjoint union between a surface $C_V^\prime \bigcup_{VH} C_H^\prime$ and possibly other annuli:
        \begin{myequation}
        	C_2 = C_V^\prime \bigcup_{VH} C_H^\prime \bigsqcup C_1^\prime \bigsqcup C_2^\prime ,\\
        	C_3 = C_V^\prime \bigcup_{VH} C_H^\prime \bigsqcup C_1^\prime \bigsqcup C_2^\prime \bigsqcup C_3^\prime ,\\
        	\text{for $g \geq 4$: } C_g = C_V^\prime \bigcup_{VH} C_H^\prime .
        \end{myequation}

        The eggbeater surface $C_3$, for example, is depicted in Figure~\ref{fig:eggbeater_def}.
        
        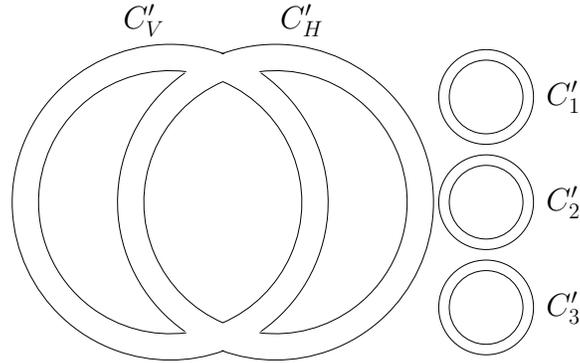
\begin{figure}
            \centering
            \scalebox{0.7}{
            \begin{tikzpicture}
                \def\a{70};
                \def\b{55};
                \def\c{83};
                \def\d{66.5};
                
                \draw (-1,0) ++(\a:3) arc (\a:360-\a:3);
                \draw (-1,0) ++(\b:3) arc (\b:-\b:3);
                \draw (-1,0) ++(\c:2.5) arc (\c:360-\c:2.5);
                \draw (-1,0) ++(\d:2.5) arc (\d:-\d:2.5);
                
                \draw (1,0) ++(180-\a:3) arc (540-\a:180+\a:3);
                \draw (1,0) ++(180-\b:3) arc (180-\b:180+\b:3);
                \draw (1,0) ++(180-\c:2.5) arc (540-\c:180+\c:2.5);
                \draw (1,0) ++(180-\d:2.5) arc (180-\d:180+\d:2.5);
                
                \draw (5,2) circle (0.9);
                \draw (5,2) circle (0.7);
                \draw (5,0) circle (0.9);
                \draw (5,0) circle (0.7);
                \draw (5,-2) circle (0.9);
                \draw (5,-2) circle (0.7);
                
                \draw (-1.5,3) node[anchor=south] {\Large $C_V^\prime$};
                \draw (1.5,3) node[anchor=south] {\Large $C_H^\prime$};
                \draw (6,2) node[anchor=west] {\Large $C_1^\prime$};
                \draw (6,0) node[anchor=west] {\Large $C_2^\prime$};
                \draw (6,-2) node[anchor=west] {\Large $C_3^\prime$};
            \end{tikzpicture}}
            \caption{The eggbeater surface $C_3$.}
            \label{fig:eggbeater_def}
        \end{figure}
        
        Eggbeater surfaces are symplectic manifolds with the standard symplectic form $\omega_0 = dx \wedge dy$ on each copy of $C^\prime$, see Figure~\ref{fig:eggbeater_def}. Denote the natural injections of individual cylinders into $C_g$ by $i_V: C_V^\prime \hookrightarrow C_g, i_H: C_H^\prime \hookrightarrow C_g$, and in the cases $g = 2,3$, also denote the natural injections $i_1: C_1^\prime \hookrightarrow C_g, i_2: C_2^\prime \hookrightarrow C_g$. If $g = 3$ denote $i_3: C_3^\prime \hookrightarrow C_g$. We will identify $C_V^\prime, C_H^\prime, C_1^\prime, C_2^\prime, C_3^\prime$ with their images in $C_g$ ($g \geq 2$); this will be clear from context. \\
        
        These eggbeater surfaces $C_g$ will, later in this section, be embedded in a closed surface of genus $g \geq 2$. A Hamiltonian function will be defined on $C_g$ which will induce some dynamics on it. We want to push forward these dynamics to the closed surface of genus $g$, by extending them with the identity map. In order for this map to be a Hamiltonian diffeomorphism, some condition on the embedding and on the Hamiltonian function on $C_g$ must apply. This condition is given in the following definition.
        
        \begin{defn}
        \label{defn:pushforward}
            Let $X,Y$ be compact topological spaces, and $i: X \hookrightarrow Y$ a continuous embedding. Let $f: X \to \R$ be a continuous map on $X$, and assume the following condition holds:
            \begin{itemize}
                \item For any path-component $C$ of $Y \setminus i(X)$, $f \restriction_{i^{-1}(\partial C)}$ is constant.
            \end{itemize}
            
            Let $C_y$ be the path-component of $Y$ that contains $y \in Y$, and denote $D_i = \bigcup_{y \in \Ima(i)} C_y \subseteq Y$. For all $y \in D_i$, denote by $\beta_{i,y}: [0,1] \to C_y$ a continuous path with $\beta_{i,y}(0) = y$, $\beta_{i,y}(1) \in \Ima(i)$, and such that if $\beta_{i,y}(t) \in \Ima(i)$ for some $t \in [0,1]$, then $\beta_{i,y} \restriction_{[t,1]}$ is constant. Note that if $y \in \Ima(i)$, then $\beta_{i,y} \equiv y$.
            
            Denote the following, not necessarily continuous, map:
            \begin{myequation}
                b_i: D_i \to \Ima(i) ,\\
                y \mapsto \beta_{i,y}(1) .
            \end{myequation}
            Define the following map, the \textit{pushforward} of $f$ through $i$:
            \begin{myequation}
                i_* f: D_i \to \R ,\\
                y \mapsto f \circ i^{-1} \circ b_i(y) .
            \end{myequation}
            By the condition on $f, i$, this is a continuous map $D_i \to \R$, and doesn't depend on the choice of the $\beta_{i,y}$s.
        \end{defn}
        
        We turn to describing the dynamics on the eggbeater surface. Consider the function $u_0: [-1,1] \to \R$ given by $u_0(s) = 1-|s|$. Take an even, non-negative, sufficiently $C^0$-close smoothing $u$ to $u_0$ such that $u$ is supported away from $\{\pm 1\}$, both $u-u_0$ and $\int_{-1}^r (u(s)-u_0(s)) ds$ are supported in a sufficiently small neighborhood of $\{\pm 1, 0\}$, and $\int_{-1}^1 (u(s)-u_0(s)) ds = 0$. For $k \in \N$, define the following autonomous Hamiltonian function on $C^\prime$:
        \begin{myequation}
        	h_k: C^\prime \to \R , \\
        	h_k(x,[y]) = -\frac{1}{2}k + k \int_{-1}^x u(s) ds .
        \end{myequation}
                
        The Hamiltonian $h_k$ induces the following five autonomous Hamiltonian functions:
        \begin{myequation}
            (i_V \circ c_V)_* h_k, \ (i_H \circ c_H)_* h_k : C_V^\prime \bigcup_{VH} C_H^\prime \to \R ,\\
            (i_1 \circ c_1)_* h_k: C_1^\prime \to \R ,\\
            (i_2 \circ c_2)_* h_k: C_2^\prime \to \R ,\\
            (i_3 \circ c_3)_* h_k: C_3^\prime \to \R .
        \end{myequation}
        
        Consider the following two autonomous Hamiltonian functions $C_g \to \R$. Abusing the notation, we will denote the two Hamiltonian functions with the same notation, for all three options $g=2,g=3,g \geq 4$, even though the definitions differ. For $g = 2$, denote:
        \begin{myequation}
            h_{V,k} = (i_V \circ c_V)_* h_k \bigsqcup - (i_1 \circ c_1)_* h_k \bigsqcup (i_2 \circ c_2)_* h_k ,\\
            h_{H,k} = (i_H \circ c_H)_* h_k \bigsqcup (i_1 \circ c_1)_* h_k \bigsqcup (i_2 \circ c_2)_* h_k .
        \end{myequation}
        
        For $g = 3$, denote:
        \begin{myequation}
            h_{V,k} = (i_V \circ c_V)_* h_k \bigsqcup (i_1 \circ c_1)_* h_k \bigsqcup (i_2 \circ c_2)_* h_k \bigsqcup 0 \restriction_{C_3^\prime} ,\\
            h_{H,k} = (i_H \circ c_H)_* h_k \bigsqcup 0 \restriction_{C_1^\prime} \bigsqcup (i_2 \circ c_2)_* h_k \bigsqcup (i_3 \circ c_3)_* h_k .
        \end{myequation}
        
        For $g \geq 4$, denote:
        \begin{myequation}
            h_{V,k} = (i_V \circ c_V)_* h_k ,\\
            h_{H,k} = (i_H \circ c_H)_* h_k .
        \end{myequation}
        
        These Hamiltonian functions generate Hamiltonian diffeomorphisms denoted $f_{V,k}, f_{H,k} \in \Ham_c(C_g, \omega_0)$ respectively.
        
        Define a homomorphism
        \begin{myequation}
        	\Psi_k: F_2 \to \Ham_c(C_g,\omega_0) ,\\
        	V \mapsto f_{V,k}, H \mapsto f_{H,k} .
        \end{myequation}
        Note that the image of a word $w = V^{N_1} H^{M_1} ... V^{N_r} H^{M_r} \in F_2$ is $f_{H,k}^{M_r} \circ f_{V,k}^{N_r} \circ ... \circ f_{H,k}^{M_1} \circ f_{V,k}^{N_1}$. We call these images \textit{eggbeater maps} 
				in $C_g$ \color{black}. \\
        
       Denote by $S_0,S_1 \subset C$ the identification of the squares $S_{V,0},S_{H,0}$ and $S_{V,1},S_{H,1}$.  In fact, $S_0 \cup S_1 = C_V^\prime \cap C_H^\prime$ (recall: $C_V^\prime$ and $C_H^\prime$ are identified with their images in $C_g$). Fix two points $s_0 \in S_0, s_1 \in S_1$. Define 4 paths: two paths $q_1,q_3$ from $s_0$ to $s_1$, and two paths $q_2,q_4$ from $s_1$ to $s_0$ as shown in Figure~\ref{fig:qs}; $q_1, q_2$ are paths on $C_V^\prime$, and $q_3,q_4$ are paths on $C_H^\prime$.

        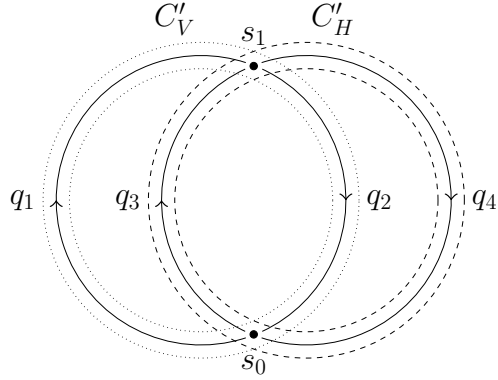
\begin{figure}
            \centering
            \scalebox{0.7}{
            \begin{tikzpicture}
                \begin{scope}[decoration={markings,mark=at position 0.5 with {\arrow[scale=2]{>}}}]
                    \draw[dotted] (-1,0) circle (3);
                    \draw[dotted] (-1,0) circle (2.5);
                    \draw (-1.5,3) node[anchor=south] {\Large $C_V^\prime$};
                    
                    \draw[dashed] (1,0) circle (3);
                    \draw[dashed] (1,0) circle (2.5);
                    \draw (1.5,3) node[anchor=south] {\Large $C_H^\prime$};
                    
                    \filldraw[black] (0,-2.55) circle (2pt);
                    \draw (0,-2.8) node[anchor=north] {\Large $s_0$};
                    \filldraw[black] (0,2.55) circle (2pt);
                    \draw (0,2.8) node[anchor=south] {\Large $s_1$};
                    
                    \draw[postaction={decorate}] (-1,0) ++(288:2.75) arc (288:72:2.75);
                    \draw[postaction={decorate}] (-1,0) ++(65:2.75) arc (65:-65:2.75);
                    \draw[postaction={decorate}] (1,0) ++(468:2.75) arc (468:252:2.75);
                    \draw[postaction={decorate}] (1,0) ++(245:2.75) arc (245:115:2.75);
                    
                    \draw (-4,0) node[anchor=east] {\Large $q_1$};
                    \draw (2,0) node[anchor=west] {\Large $q_2$};
                    \draw (-2,0) node[anchor=east] {\Large $q_3$};
                    \draw (4,0) node[anchor=west] {\Large $q_4$};
                \end{scope}
            \end{tikzpicture}}
            \caption{The paths $q_1, q_2, q_3, q_4$ in $C_g$.}
            \label{fig:qs}
        \end{figure}

        Note that $\pi_1(C_g,s_0) \simeq F_3$, the free group on 3 generators. The 3 generators $a,b,c$ of $\pi_1(C_g,s_0)$ are taken to be:
        \begin{myequation}
        a = \langle q_1  q_2 \rangle_{\pi_1(C_g,s_0)} , \\
        b = \langle q_3  q_4\rangle _{\pi_1(C_g,s_0)} , \\
        c = \langle q_3  q_2 \rangle_{\pi_1(C_g,s_0)} .
        \end{myequation}
        That is, $a$ is the class of a loop going around $C_V^\prime$, positively oriented (i.e. $a = [t\mapsto (0,Lt) \in C_V^\prime]$ as elements in $\pi_1(C_g)$), and $b$ is the class of a loop going around $C_H^\prime$, again positively oriented.
        
    \subsection{Action gaps in eggbeater maps}\label{sec:action_gaps}
        The next result is a consequence of the proof of Proposition 5.1 in~\cite{polterovich2016autonomous}. Fix $L > 4$.
        \begin{prop}
        \label{prop:action_gap}
            Let $w = VH \in F_2$. There exist certain $\nu,\mu \in (0,1)$, an unbounded subset $K \subset \N$ and a family of primitive free homotopy classes $\alpha_k = \left[ a^{\frac{\nu k}{L}} b^{\frac{\mu k}{L}} \right] \in \widehat\pi(C_g)$, for $k \in K$, such that for large enough $k \in K$, there are exactly $4$ nondegenerate fixed points of $\Psi_k(w)$ that are in class $\alpha_k$, and different such fixed points have action gaps that grow linearly with $k$: that is, for such fixed points $y,z$:
            \begin{myequation}
                |\mathcal{A}(y) - \mathcal{A}(z)| \geq c \cdot k + O(1) ,
            \end{myequation}
            as $k \to \infty$, for some global constant $c>0$.
        \end{prop}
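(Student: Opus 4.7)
The strategy is to reduce to an explicit analysis of fixed points of the composition $\Psi_k(VH) = f_{H,k} \circ f_{V,k}$ inside the intersection squares $S_0, S_1 \subset C_V^\prime \cap C_H^\prime$, following the scheme of Proposition 5.1 in \cite{polterovich2016autonomous}.

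First I would observe that any $1$-periodic orbit of the Hamiltonian flow generating $\Psi_k(w)$ whose free homotopy class is $\alpha_k = [a^{\nu k/L} b^{\mu k/L}]$ with $\nu, \mu > 0$ must have its trajectory genuinely wrap around both $C_V^\prime$ and $C_H^\prime$, and hence its basepoint must lie in $S_0 \cup S_1$. Using the explicit formula $h_k(x,[y]) = -k/2 + k\int_{-1}^x u(s)\, ds$, the time-$1$ shear on each cylinder is $(x,[y]) \mapsto (x,[y+ku(x)])$. Hence for a point in $S_{V,i}$ with $x$-coordinate $x_V$, applying $f_{V,k}$ produces a displacement by $ku(x_V)$ in the $y_V$-direction; the gluing $VH_0$ or $VH_1$ then identifies the image with a point of $S_{H,j}$ whose $x_H$-coordinate equals (up to sign and translation by $L/2$) the previous $y$-value, and $f_{H,k}$ shears it by $ku(x_H)$ in the $y_H$-direction.

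Next I would write down the fixed point equation in these coordinates. Choosing a component $S_{V,i} \cap S_{H,j}$ and a pair of winding numbers $(n_V, n_H) \in \Z^2$, the equation reduces to solving $u(x_V) = \nu$ and $u(x_H) = \mu$ for $(x_V, x_H) \in [-1,1]^2$, where $\nu = n_V L / k$ and $\mu = n_H L / k$. Since $u$ is a smoothing of the tent function $u_0(s) = 1 - |s|$, each equation $u(s) = c$ (with $c \in (0,1)$ generic) admits exactly two solutions $s_+ > 0 > s_-$, both transverse (i.e.\ $u'(s_\pm) \neq 0$), so for each fixed pair $(i,j)$ one obtains $4$ solutions. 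Now I would select the unbounded subset $K \subset \N$ and the constants $\nu, \mu \in (0,1)$ so that along $k \in K$ the numbers $n_V = \nu k / L$ and $n_H = \mu k / L$ are integers, one chooses only the pair $(i,j)$ in which the gluing maps $VH_i$ land the shear orbit correctly, and one verifies that solutions with other pairs $(n_V', n_H')$ produce either degenerate winding (zero winding in one factor) or lie outside the admissible squares. This produces exactly $4$ fixed points $y^{\epsilon_V, \epsilon_H}_k$ (indexed by signs $\epsilon_V, \epsilon_H \in \{\pm\}$ of the solutions $s_\pm$), each primitive and in class $\alpha_k$. Nondegeneracy follows by computing the differential: the linearisation is a product of shear matrices whose off-diagonal entries are $ku'(x_V), ku'(x_H) \neq 0$, so the linearised return map has trace $\neq 2$ and eigenvalues $\neq 1$ for large $k$.

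Finally I would compute the action. For a fixed point $y = y^{\epsilon_V, \epsilon_H}_k$ the capping $\bar y$ can be chosen to run, by a homotopy through the shear, from the reference loop of $\alpha_k$ to $y$; this gives
\begin{equation*}
\mathcal{A}(y^{\epsilon_V, \epsilon_H}_k) = \int_0^1 H(t, y(t))\, dt - \int_{\bar y} \omega = h_{V,k}(y) + h_{H,k}(f_{V,k}(y)) + \text{linear symplectic area term} + O(1),
\end{equation*}
and the explicit form of $h_k$ shows that the leading term is $k \cdot \Phi(x_{V,\epsilon_V}, x_{H,\epsilon_H})$, where $\Phi$ depends on the signs $\epsilon_V, \epsilon_H$ through the values $x_{V,\epsilon_V}, x_{H,\epsilon_H}$ of the solutions of $u(\cdot) = \nu, \mu$. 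Since $u$ is an even smoothing of $u_0$, the symplectic area contribution from traversing the shear is (to leading order in $k$) antisymmetric in the sign of $x_{V,\epsilon_V}$ and $x_{H,\epsilon_H}$, and hence the four values of $\mathcal{A}(y^{\epsilon_V, \epsilon_H}_k)$ are pairwise separated by a quantity of the form $c k + O(1)$ with $c > 0$.

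The main obstacle is the last step: extracting the precise linear-in-$k$ action gap requires a careful accounting of the symplectic areas of the capping disks used for the four orbits, since the ``naive'' computation based only on Hamiltonian values along trajectories would give only an $O(1)$ separation. The key input is that the four orbits differ by the sign choices $(\epsilon_V, \epsilon_H)$ which reflect the orbit geometrically across $x = 0$; because the shear trajectories have non-zero $y$-displacement of order $k$, this reflection changes the enclosed symplectic area by order $k$, giving the desired linear growth. This is precisely the computation carried out in the proof of \cite[Proposition~5.1]{polterovich2016autonomous}, and the present statement is obtained by reading off the action differences rather than their minimum.
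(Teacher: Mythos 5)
Your proposal correctly reconstructs the argument from \cite[Proposition 5.1]{polterovich2016autonomous}, which is exactly what the paper cites here without giving an independent proof; the fixed-point equation $u(x_V)=\nu$, $u(x_H)=\mu$, the four sign choices, the nondegeneracy from $u'\neq 0$, and the linear-in-$k$ action separation coming from the shears' symplectic area are all the right ingredients. One small quibble: since $u$ is even with $\int_{-1}^1 u=1$, the Hamiltonian values $h_k(\pm x_V)$ themselves already differ by $O(k)$, so it is not quite that the ``naive'' Hamiltonian-value term gives only $O(1)$ separation; but your identification of the area contribution as the mechanism requiring care matches the reference.
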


        In order to get a similar result for eggbeater-like maps on surfaces of genus $\geq 2$, let $(\Sigma_g,\sigma_g)$ be a surface of genus $g$, equipped with symplectic form $\sigma_g$. In~\cite{polterovich2016autonomous} ($g\geq4$) and~\cite{ArnonThesis} ($g=2,3$), symplectic embeddings $i_{\Sigma_g}: (C_g, \omega_0) \hookrightarrow (\Sigma_g, \sigma_g)$ are constructed, with the property that the pairs $i_{\Sigma_g}, h_{V,k}$ and $i_{\Sigma_g}, h_{H,k}$ both satisfy the condition of Definition~\ref{defn:pushforward}.
				\footnote{To find such an embedding one in general has to multiply $\omega_0$ and the Hamiltonians in the construction with a sufficiently small $\delta >0$, and we implicitly assume this, since everything below will not depend on this rescaling.}  Thus Hamiltonian functions $(i_{\Sigma_g})_* h_{V,k}, (i_{\Sigma_g})_* h_{H,k}: \Sigma_g \to \R$ are defined. These Hamiltonian functions generate Hamiltonian diffeomorphisms on $\Sigma_g$ denoted $(i_{\Sigma_g})_* f_{V,k}, (i_{\Sigma_g})_* f_{H,k}$. Similarly to the construction of $\Psi_k$ in $C_g$, define a homomorphism
        \begin{myequation}
            (i_{\Sigma_g})_* \Psi_k: F_2 \to \Ham(\Sigma_g,\sigma_g) ,\\
            V \mapsto (i_{\Sigma_g})_* f_{V,k}, H \mapsto (i_{\Sigma_g})_* f_{H,k} .
        \end{myequation}
We say that the diffeomorphisms in $\Ham(\Sigma_g,\sigma_g)$ that are elements in the images of $(i_{\Sigma_g})_* \Psi_k$ are \textit{eggbeater maps in $\Sigma_g$}. The set of eggbeaters on $\Sigma_g$ is denoted by $\mathcal{E}_g$. \color{black}

Furthermore, we will use later the following properties of the embeddings $i_{\Sigma_g}$ given in \cite{ArnonThesis}: 
\begin{itemize}
\item $(i_{\Sigma_g})_*: \pi_1(C_g,s_0) \to \pi_1(\Sigma_g,i_{\Sigma_g}(s_0))$, $g\geq 2$, are injective;
\item In the case $g=2$, writing $\pi_1(C_2,s_0)= \langle a, b, c\rangle$, and, for a suitable choice of generators, $\pi_1(\Sigma_2,s_0)=  \langle g_1, \cdots, g_{4} \, |\, [g_1,g_2][g_3,g_4]\rangle$, the pushforward $(i_{\Sigma_2})_*:\pi_1(C_2,s_0)  \to \pi_1(\Sigma_2,i_{\Sigma_g}(s_0))$ is given by 
\begin{equation}\label{definitioni*}
(i_{\Sigma_2})_*(a)= g_1g_3,\, (i_{\Sigma_2})_*(b) = g_3 g_2 g_1^{-1} g_2^{-1}   ,\,  (i_{\Sigma_2})_*(c) = g_3; 
\end{equation}
\item The induced maps $(i_{\Sigma_g})_*:\widehat{\pi}(C_g,s_0) \to \widehat{\pi}(\Sigma_g,i_{\Sigma_g}(s_0))$ on the set of conjugacy classes of the fundamental groups,  which can be identified with free homotopy classes of loops in $C_V^\prime \bigcup_{VH} C_H^\prime$ resp. $\Sigma_g$, are injective for $g\geq 3$, and $(i_{\Sigma_2})_*$ is injective up to the relations
\begin{equation}\label{relations}
\begin{aligned}
&(i_{\Sigma_2})_*[c^j] =  (i_{\Sigma_2})_*[(ac^{-1}b)^j], \, j\in \Z \setminus \{0\},  \text{ and } \\ &(i_{\Sigma_2})_*[(ac^{-1})^j] = (i_{\Sigma_2})_*[(b^{-1}c)^j],\,  j\in \Z \setminus \{0\}. 
\end{aligned}
\end{equation}
\end{itemize}
\color{black}
				
        In~\cite{polterovich2016autonomous} and~\cite{ArnonThesis} results analogous to Proposition~\ref{prop:action_gap} in surfaces of genus $\geq 2$ are shown. The case of genera $g \geq 4$ is a consequence of Proposition 5.1 in~\cite{polterovich2016autonomous}, and the cases $g = 2,3$ are consequences of Proposition 3.9 in~\cite{ArnonThesis}. The results state:
        \begin{prop}
        \label{prop:action_gap_surface}
            Let $\Sigma_g$ be a surface of genus $g \geq 2$, and let $w = VH \in F_2$. There exist certain $\nu,\mu \in (0,1)$, an unbounded subset $K \subset \N$ and a family of primitive free homotopy classes $\alpha_k = (i_{\Sigma_g})_* \left[ a^{\frac{\nu k}{L}} b^{\frac{\mu k}{L}} \right] \in \widehat\pi(\Sigma_g)$, for $k \in K$, such that for large enough $k \in K$, there are exactly $4$ nondegenerate fixed points of $(i_{\Sigma_g})_* \Psi_k(w)$ of class $\alpha_k$, and different such fixed points have action gaps that grow linearly with $k$: that is, for such fixed points $y,z$:
            \begin{myequation}
                |\mathcal{A}(y) - \mathcal{A}(z)| \geq c \cdot k + O(1) ,
            \end{myequation}
            as $k \to \infty$, for some global constant $c>0$.
        \end{prop}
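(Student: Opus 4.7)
The plan is to transport Proposition~\ref{prop:action_gap} along the symplectic embedding $i_{\Sigma_g}\colon (C_g,\omega_0)\hookrightarrow (\Sigma_g,\sigma_g)$. First I will apply Proposition~\ref{prop:action_gap} to obtain constants $\nu,\mu\in(0,1)$, an unbounded set $K\subset\N$, and, for each sufficiently large $k\in K$, exactly four nondegenerate fixed points $x_1^{(k)},\ldots,x_4^{(k)}$ of $\Psi_k(w)$ in $C_g$ representing the primitive class $\beta_k := [a^{\nu k/L}\, b^{\mu k/L}]\in\widehat{\pi}(C_g)$, with pairwise action gaps bounded below by $ck+O(1)$. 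Setting $\alpha_k := (i_{\Sigma_g})_*\beta_k$, the goal is then to show that the four points $y_i^{(k)} := i_{\Sigma_g}(x_i^{(k)})$ exhaust the nondegenerate fixed points of $(i_{\Sigma_g})_*\Psi_k(w)$ in class $\alpha_k$ and that their action gaps carry over unchanged.

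The map $(i_{\Sigma_g})_*\Psi_k(w)$ coincides with $\Psi_k(w)$ on $i_{\Sigma_g}(C_g)$ and is generated by Hamiltonians that are locally constant on every complementary component, hence is the identity there; thus each $y_i^{(k)}$ is a nondegenerate fixed point in class $\alpha_k$. Conversely, any fixed point $y$ of $(i_{\Sigma_g})_*\Psi_k(w)$ whose trajectory represents the nontrivial class $\alpha_k$ must lie in $i_{\Sigma_g}(C_g)$, since a point outside has a constant and hence contractible trajectory. So $y=i_{\Sigma_g}(x)$ for some fixed point $x$ of $\Psi_k(w)$ with $(i_{\Sigma_g})_*[x]=\alpha_k$. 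For $g\ge 3$ the injectivity of $(i_{\Sigma_g})_*$ on $\widehat{\pi}(C_g)$ forces $[x]=\beta_k$. For $g=2$ the induced map on conjugacy classes is injective only up to the relations~\eqref{relations}; however, every class appearing in those relations admits only cyclically reduced representatives involving the generator $c$, while $\beta_k$ has the cyclically reduced representative $a^{\nu k/L}b^{\mu k/L}$ that uses only $a$ and $b$. Consequently, no relation can merge $\beta_k$ with a different class, and one again concludes $[x]=\beta_k$.

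To transfer the action gaps, I choose the reference loop $\eta_{\alpha_k} := i_{\Sigma_g}\circ \eta_{\beta_k}$ and, for each $x_i^{(k)}$, a bounding cylinder $\bar{x}_i^{(k)}\subset C_g$ joining $\eta_{\beta_k}$ to $x_i^{(k)}$; then $\bar{y}_i^{(k)} := i_{\Sigma_g}\circ\bar{x}_i^{(k)}$ connects $\eta_{\alpha_k}$ to $y_i^{(k)}$. The relations $i_{\Sigma_g}^{*}\sigma_g=\omega_0$ and $(i_{\Sigma_g})_* h_{\bullet,k}\circ i_{\Sigma_g}=h_{\bullet,k}$ on $C_g$ make both terms of the action integral coincide, so $\mathcal{A}_{\Sigma_g}(y_i^{(k)})=\mathcal{A}_{C_g}(x_i^{(k)})$, and the lower bound $ck+O(1)$ on action gaps propagates verbatim. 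Primitivity of $\alpha_k$ in $\widehat{\pi}(\Sigma_g)$ is a separate ingredient that I borrow from \cite{polterovich2016autonomous,ArnonThesis}, where it follows from injectivity of $(i_{\Sigma_g})_*$ on $\pi_1$ together with standard root-uniqueness properties of surface groups.

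I expect the main obstruction to be the $g=2$ bijection step, where one has to rule out that the explicit relations~\eqref{relations} identify $\beta_k$ with some extra class representing an additional fixed point. Once the cyclic-reduction argument dispatches this issue, the rest is essentially pushforward bookkeeping, with the action-value computation relying only on the symplectic nature of $i_{\Sigma_g}$ and on the definition of the pushforward Hamiltonian.
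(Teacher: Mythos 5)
The paper itself gives no proof of Proposition~\ref{prop:action_gap_surface}: it is quoted as a consequence of Proposition~5.1 in~\cite{polterovich2016autonomous} (for $g\geq 4$) and Proposition~3.9 in~\cite{ArnonThesis} (for $g=2,3$), and the authors explicitly decline to assert that the constants match those of Proposition~\ref{prop:action_gap}. Your argument takes a genuinely different route, deriving the surface statement directly from Proposition~\ref{prop:action_gap} by transporting along $i_{\Sigma_g}$, and the core of it is sound: the pushforward Hamiltonians are locally constant off $i_{\Sigma_g}(C_g)$, so any $1$-periodic orbit in the nontrivial class $\alpha_k$ is confined to the image and pulls back to a fixed point of $\Psi_k(w)$; injectivity of $(i_{\Sigma_g})_*$ on $\widehat\pi(C_g)$ for $g\geq 3$, and your cyclic-reduction observation that $\beta_k$ uses only $a,b$ while every class occurring in the relations~\eqref{relations} has cyclically reduced form containing $c^{\pm1}$ for $g=2$, then pins down the count of exactly $4$; and since $i_{\Sigma_g}^*\sigma_g=\omega_0$ and the Hamiltonian values agree on the image, the action \emph{differences} transport verbatim once you take the reference loop and all bounding cylinders inside $i_{\Sigma_g}(C_g)$. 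This gains a little over the paper's phrasing, since it shows the constants $\nu,\mu,K,c$ can in fact be taken the same. Two small caveats. The stated identity $\mathcal{A}_{\Sigma_g}(y_i^{(k)})=\mathcal{A}_{C_g}(x_i^{(k)})$ is only an equality up to an additive constant once one accounts for how the $C_g$-action is normalized (e.g.\ via a primitive of $\omega_0$ rather than a reference loop); only the gaps are canonically preserved, which is all you use. And for primitivity of $\alpha_k$, injectivity of $(i_{\Sigma_g})_*$ on $\pi_1$ plus root-uniqueness in surface groups is not quite enough on its own: one also needs that $(i_{\Sigma_g})_*\pi_1(C_g)$ is root-closed (isolated) in $\pi_1(\Sigma_g)$, which is the standard fact for $\pi_1$-injective subsurface embeddings with essential boundary. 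You are citing the references for this anyway, so it is a matter of stating the required input correctly rather than a gap.
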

        Note that the $K, \alpha_k, \nu, \mu, c$ given by this proposition may or may not be the same as those of Proposition~\ref{prop:action_gap}. Also, for future reference, note the following key observation: if a Hamiltonian diffeomorphism $\phi$ has an action gap of $A > 0$ for fixed points of class $\alpha$ (in the sense of Proposition~\ref{prop:action_gap_surface}), then all the bars in the barcode of its Floer persistence module, $\mathcal{B}(HF_*^\bullet(\phi)_\alpha)$, are of length $\geq A$.

\begin{rem}\label{rem:entropy_eggb}
The topological entropy of eggbeater maps $\phi = (i_{\Sigma_g})_*\Psi_k(a)$, $a \in F_2$   is positive as long as $a$ is not of the form $H^n$ or $V^n$. This can be seen e.g. directly by showing the existence of a horseshoe, as in \cite{Devaney1978}, or with the results in the present paper. 
We moreover note that there is a constant $c_0>0$ such that 
$h_{\topo}(\phi) \leq \log((c_0 k)^n)$ where $n$ is the length of $a \in F_2$, and $k$ as in the definition of $\phi$. To see this, we observe that $\max_{\Sigma} \|df_{V,k}\|$ and $\max_{\Sigma} \|df_{H,k}\|$ are bounded from above by $\max_{\Sigma} \|dh_k\|$, which is bounded from above by $c_0 k$ for a constant $c_0>0$. 
Hence the exponential growth rate of the lengths $\phi(\gamma)$ of any path $\gamma$ in $\Sigma_g$ is bounded from above by $\log((c_0 k)^n)$. By \cite{Newhouse} this yields an upper bound on the topological entropy of $\phi$.  
\end{rem}    
    
    \subsection{Self-intersection number and $\Tu^{\infty}$ for a family of free homotopy classes}\label{sec:topology}
    \label{sec:self_intersections}
        We now focus our attention on the family of free homotopy classes given by Proposition~\ref{prop:action_gap_surface}, namely $(i_{\Sigma_g})_* \left[ a^m b^n \right] \in \widehat\pi(\Sigma_g)$, for $m,n \in \N$. We first compute their geometric self-intersection number and below compute a lower bound on their $\Tu^{\infty}$-growth rate, defined in section \ref{sec:Turaev}. 
    				
Recall from the introduction, that for a compact surface $M$, and a loop $\Gamma:S^1 \to M$ in general position, we denote by $\selfi(\Gamma)$ the total number of self-intersections of $\Gamma$, and that the (geometric) self-intersection number $\selfi_M(\alpha)$ of a free homotopy class $\alpha \in \widehat{\pi}(M)$ is defined to be $\min \selfi(\Gamma)$, where the minimum is taken over all self-transverse loops $\Gamma: S^1 \to M$ that represent  $\alpha$. This is well defined, since loops in general position in $M$ have finitely many self-intersections.
  
      \begin{defn}
            Let $M$ be a closed surface, and let $\Gamma: S^1 \to M$ be a loop. The loop $\Gamma$ is said to be in \textit{minimal position} if $\selfi(\Gamma) = \selfi_{M}([\Gamma]_{\widehat{\pi}(M)})$.
        \end{defn}
        
        \begin{defn}
            Let $M$ be a closed surface, and $\Gamma: S^1 \to M$ be a loop. The loop $\Gamma$ is said to form \textit{a bigon} with itself if there exists an embedding of the disc $D \hookrightarrow M$ such that the boundary of its image is the union of two arcs of $\Gamma$. Similarly, $\Gamma$ is said to form \textit{a monogon} with itself if there exists an embedding of the disc $D \hookrightarrow M$ such that the boundary of its image is a single arc of $\Gamma$.
        \end{defn}
        
        In order to check whether a given loop $\Gamma: S^1 \to M$ is in minimal position, and so calculate the self-intersection number of $[\Gamma]$, one may use the following fact (Lemma 2.2 from~\cite{HassScott1985}).
        
        \begin{fact}
        \label{cl:bigon_crit}
            Let $M$ be a closed surface, and $\Gamma: S^1 \to M$ a loop. If $\Gamma$ does not form any bigons or monogons with itself then $\Gamma$ is in minimal position.
        \end{fact}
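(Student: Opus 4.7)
The plan is to prove the contrapositive: assuming $\Gamma$ is not in minimal position, I will produce a bigon or monogon formed by $\Gamma$ with itself. First, I would put a hyperbolic metric on $M$ (for the easy cases $M=S^2$ and $M=\mathbb{T}^2$ the statement is either vacuous or handled by the flat metric) and replace the ``competitor'' loop with the geodesic representative $\Gamma^*$ of $[\Gamma]$. A classical fact is that $\Gamma^*$ realizes the minimal self-intersection number, so by hypothesis $\selfi(\Gamma^*) < \selfi(\Gamma)$.

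Next I would work in the universal cover $\widetilde{M} \cong \mathbb{H}^2$. Lifts of $\Gamma^*$ are honest geodesics, so any two non-translate lifts meet in at most one point. Each lift $\widetilde\gamma$ of $\Gamma$ is properly homotopic, with fixed ideal endpoints, to a corresponding geodesic lift $\widetilde\gamma^*$ of $\Gamma^*$. Hence for every pair of non-translate lifts $\widetilde\gamma_1,\widetilde\gamma_2$ of $\Gamma$ that meet in at least two points, their geodesic counterparts $\widetilde\gamma_1^*,\widetilde\gamma_2^*$ share the same pair of ideal endpoints and meet at most once, so any two consecutive intersections of $\widetilde\gamma_1$ and $\widetilde\gamma_2$ bound an embedded disk in $\widetilde{M}$ whose boundary consists of one subarc from each lift. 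A similar bigon-in-the-cover is produced by a single lift $\widetilde\gamma$ whose self-intersections exceed those of $\widetilde\gamma^*$ (here two non-translate sublifts of $\widetilde\gamma$ after cutting bound the disk; this case will give the monogon).

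Now I would apply the standard innermost-disk argument. Among all such bigonal disks in $\widetilde{M}$ produced by arbitrary pairs of lifts of $\Gamma$, pick one, $D$, that is innermost — meaning its interior is disjoint from every other arc of every other lift. This is possible because each pair of lifts contributes only finitely many intersection arcs into any compact region, so the collection of bigonal disks is partially ordered by inclusion and admits minimal elements. Then project $D$ down to $M$. If the two lifts defining $D$ are inequivalent (not related by any deck transformation), the projection is an embedded disk whose boundary is two distinct arcs of $\Gamma$ meeting at two self-intersection points — a bigon. If the two lifts are related by a deck transformation that identifies the two corner points of $D$, the projection is a disk whose boundary is a single arc of $\Gamma$ beginning and ending at a single self-intersection point — a monogon. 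Innermost-ness together with general position of $\Gamma$ guarantees the projection is embedded.

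The main obstacle is justifying that an ``excess'' intersection of $\Gamma$ over $\Gamma^*$ truly lifts to a bigonal disk in $\widetilde{M}$ and that one can ensure the innermost such disk projects to an embedded disk. The delicate point is handling the case where the two lifts bounding $D$ are translates under the deck group: here one must check carefully, using that the deck transformation acts as an isometry preserving ideal endpoints, that the identification at the corners is precisely what yields a monogon rather than some pathological non-embedded quotient, and that no other deck translate of $D$ overlaps $D$ in its interior (which would force a smaller innermost disk inside, contradicting the choice of $D$).
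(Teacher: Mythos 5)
The paper does not prove this statement; it cites it as Lemma~2.2 of Hass--Scott \cite{HassScott1985}, so there is no ``paper proof'' to compare against. Your sketch follows the same general strategy that proofs of this result in the literature use (hyperbolic geodesic as the minimizer, linking of ideal endpoints in $\mathbb{H}^2$, innermost-disk extraction), so the route is sound. Two concrete gaps, though.

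First, your description of the monogon case is off. A monogon of $\Gamma$ corresponds to a self-intersection $\Gamma(t)=\Gamma(t')$ for which the loop $\Gamma|_{[t,t']}$ is null-homotopic; lifting, this is $\widetilde\gamma(t)=\widetilde\gamma(t')$ for a \emph{single} lift $\widetilde\gamma$, i.e.\ the lift itself has a self-intersection and the sub-loop $\widetilde\gamma|_{[t,t']}$ is a closed curve in $\widetilde M\cong\mathbb R^2$. It is not produced by ``two lifts related by a deck transformation identifying the two corner points of $D$'' --- if $\widetilde\gamma_2=T^k\widetilde\gamma_1$ with $T$ the shift of $\widetilde\gamma_1$ then $\widetilde\gamma_1$ and $\widetilde\gamma_2$ have the same image and form no bigon at all. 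You need to include $1$-gons in the cover (arcs of a single lift closing up at a self-intersection) as a separate class of disks over which the innermost argument runs.

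Second, the step ``any two consecutive intersections of $\widetilde\gamma_1$ and $\widetilde\gamma_2$ bound an embedded disk'' needs justification: lifts of $\Gamma$ are generally not embedded, so the concatenation of the two sub-arcs between consecutive intersections is a closed curve in $\mathbb R^2$ that may itself have self-crossings (coming from self-intersections of $\widetilde\gamma_1$ or $\widetilde\gamma_2$ along those arcs, or from further crossings between the two lifts that happen to lie ``inside''). You have to first pass to an innermost Jordan subloop of this closed curve, or argue combinatorially, before invoking the Jordan curve theorem. This is precisely the technical heart of the lemma, and the sketch currently elides it rather than addressing it. The finiteness you invoke (only finitely many lifts meet a compact set, so an innermost disk exists among finitely many candidates) is fine once the disks themselves are known to exist and be embedded, but that existence is what still needs the work.
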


        Construct a loop $\Gamma=\Gamma_{m,n}: S^1 \to C_g$, whose image is a subset of $i_V(C_V^\prime) \bigcup_{VH} i_H(C_H^\prime)$ and whose free homotopy class is the primitive free homotopy class $\left[ a^m b^n \right]$, as follows: $\Gamma_{m,n}$ starts at $s_0$, and performs $m$ rounds of the annulus $C_V^\prime$, while regularly spiralling inwards. After finishing $m$ rounds of $C_V^\prime$, $\Gamma$ starts making $n$ rounds of the annulus $C_H^\prime$, while regularly spiralling outwards. As $\Gamma$ finishes these rounds, it reaches the square $c_V(S_0)$ again, and connects back to $s_0$, without any further self-intersections. An example loop $\Gamma_{m,n}$ can be seen in Figure~\ref{fig:minimal_pos}.
        
        \begin{figure}
            \centering
            \begin{tikzpicture}
                \draw[gray] (-2,0) circle (4);
                \draw[gray] (-2,0) circle (3);
                \draw[gray] (2,0) circle (4);
                \draw[gray] (2,0) circle (3);
                \filldraw[black] (0,-3.2) circle (2pt) node[below] {\Large $s_0$};
                
                \draw[shift={(-2,0)}] [domain=11289:12362,variable=\t,smooth,samples=300] plot ({\t}: {-0.0003*\t});
                \draw[shift={(2,0)}] [domain=10564:12362,variable=\t,smooth,samples=300] plot ({-\t}:{0.0003*\t});
                    
            \end{tikzpicture}
            \caption{The loop $\Gamma_{3,5}: S^1 \to C_g$. This figure only shows the annuli $C_V^\prime$, $C_H^\prime$.}
            \label{fig:minimal_pos}
        \end{figure}
        
        By invoking Fact~\ref{cl:bigon_crit}, one deduces that $\Gamma_{m,n}$ is in minimal position. Counting self-intersections in $\Gamma_{m,n}$, one sees that $\selfi_{C_g}(\left[ a^m b^n \right]) = mn + (m-1)(n-1)$, where the $(m-1)(n-1)$ term comes from the self-intersections in the square $c_V(S_0)$, and the $mn$ term comes from the self-intersections in the other square $c_V(S_1)$.
        
        However, 
				we also want to calculate the self-intersection number $\selfi_{\Sigma_g}((i_{\Sigma_g})_* \left[ a^m b^n \right])$ of the pushforward of the free homotopy class considered up to this point under $i_{\Sigma_g}$, for $g \geq 2$. This is performed in the following lemma.
        
        \begin{lem}
        \label{cl:self_intersections_surface}
            Let $\Sigma_g$ be a closed surface of genus $g \geq 2$, and consider the embedding $i_{\Sigma_g}: C_g \to \Sigma_g$. The following holds:
            \[
                \selfi_{\Sigma_g}((i_{\Sigma_g})_* \left[ a^m b^n \right]) = mn + (m-1)(n-1) .
            \]
        \end{lem}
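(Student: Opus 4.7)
The plan is to show that the explicit representative $i_{\Sigma_g}\circ \Gamma_{m,n}$ of $(i_{\Sigma_g})_*[a^m b^n]$ is already in minimal position in $\Sigma_g$, thereby realizing the geometric self-intersection number and giving both inequalities at once. The upper bound $\selfi_{\Sigma_g}((i_{\Sigma_g})_*[a^m b^n])\leq mn+(m-1)(n-1)$ is immediate: since $i_{\Sigma_g}$ is an embedding, $i_{\Sigma_g}\circ \Gamma_{m,n}$ is in general position and inherits exactly the $mn+(m-1)(n-1)$ transverse self-intersections of $\Gamma_{m,n}$.

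For the matching lower bound, the plan is to verify the bigon criterion (Fact~\ref{cl:bigon_crit}) for $i_{\Sigma_g}\circ\Gamma_{m,n}$ in $\Sigma_g$. The key input is $\pi_1$-injectivity of the composition $C_V^\prime\cup_{VH} C_H^\prime \hookrightarrow C_g \xrightarrow{i_{\Sigma_g}} \Sigma_g$, which follows from the stated properties of $i_{\Sigma_g}$. Suppose, toward a contradiction, that $i_{\Sigma_g}\circ \Gamma_{m,n}$ bounds a bigon $D\subset \Sigma_g$, with boundary arcs $\alpha_1,\alpha_2$ joining a pair of self-intersection points. Both arcs lie in $i_{\Sigma_g}(C_V^\prime\cup_{VH} C_H^\prime)$, and the simple closed curve $\alpha_1\bar\alpha_2$ is null-homotopic in $\Sigma_g$ (it bounds the embedded disk $D$). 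By $\pi_1$-injectivity, its preimage under $i_{\Sigma_g}$ is a null-homotopic simple closed curve in $C_V^\prime\cup_{VH} C_H^\prime$; by the Jordan curve theorem on an orientable surface, this preimage bounds an embedded disk, which is then a bigon for $\Gamma_{m,n}$ in $C_g$, contradicting the minimality of $\Gamma_{m,n}$ established earlier via Fact~\ref{cl:bigon_crit}. The monogon case is handled identically: a null-homotopic closed sub-loop of $i_{\Sigma_g}\circ \Gamma_{m,n}$ would, by $\pi_1$-injectivity, pull back to a null-homotopic sub-loop of $\Gamma_{m,n}$, producing a monogon in $C_g$ and another contradiction.

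The main obstacle is the low-genus case $g=2$, where $\chi(C_2)=-4<-2=\chi(\Sigma_2)$ makes the embedding $i_{\Sigma_2}$ topologically subtle and where $(i_{\Sigma_2})_*$ identifies distinct free homotopy classes via the relations~\eqref{relations}. The saving observation is that the argument above uses only $\pi_1$-injectivity, which holds for all $g\geq 2$ by the stated properties of $i_{\Sigma_g}$, and not injectivity on free homotopy classes: a loop trivial in $\pi_1(\Sigma_g)$ pulls back to one trivial in $\pi_1(C_g)$ regardless of any identifications on conjugacy classes. Hence the bigon criterion argument applies uniformly for every $g\geq 2$ and yields the claim.
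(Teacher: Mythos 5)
Your proposal is correct and follows essentially the same route as the paper: verify the bigon criterion for $i_{\Sigma_g}\circ\Gamma_{m,n}$ by pulling a putative monogon/bigon boundary back through the embedding $i_{\Sigma_g}$, using $\pi_1$-injectivity of $(i_{\Sigma_g})_*$ to conclude it is null-homotopic in $C_g$ and hence produces a monogon/bigon for $\Gamma_{m,n}$, contradicting what was established in $C_g$. Your additional remarks — that only $\pi_1$-injectivity (not injectivity on conjugacy classes) is needed, so the $g=2$ relations are irrelevant, and the explicit citation of a null-homotopic simple closed curve bounding an embedded disk — are sound elaborations of steps the paper leaves implicit.
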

        
        \begin{proof}
            Recall that in~\cite{polterovich2016autonomous} and~\cite{ArnonThesis} the embeddings $i_{\Sigma_g}$ are shown to induce injections $(i_{\Sigma_g})_*: \pi_1(C_g,s_0) \to \pi_1(\Sigma_g,i_{\Sigma_g}(s_0))$. Assume by contradiction that $i_{\Sigma_g} \circ \Gamma_{m,n}: S^1 \to \Sigma_g$ is not in minimal position. By Fact~\ref{cl:bigon_crit}, $i_{\Sigma_g} \circ \Gamma_{m,n}$ forms a monogon or a bigon with itself.
            
            Let $\delta: S^1 \to \Sigma_g$ be the boundary of this monogon or bigon; $\delta$ is null-homotopic. Since $\Ima \delta \subset \Ima i_{\Sigma_g} \circ \Gamma_{m,n}$ and $i_{\Sigma_g}$ is injective, $i_{\Sigma_g}^{-1} \circ \delta : S^1 \to C_g$ is a well-defined loop in $C_g$, which is made up of one or two arcs of $\Gamma_{m,n}$. Since $(i_{\Sigma_g})_*$ is injective, $i_{\Sigma_g}^{-1} \circ \delta$ is also null-homotopic. This implies that $\Gamma_{m,n}$ forms a monogon or a bigon with itself, in contradiction to the above discussion.
        \end{proof}

We give now a lower bound on the growth rate $\Tu^{\infty}(\alpha)$, where $\alpha = {i_{\Sigma_g}}_*[a^mb^n] \in \widehat{\pi}(\Sigma_g)$. 
We show 
\begin{lem}\label{lem:T_comp}
$\Tu^{\infty}(\alpha) \geq \log(\lceil\frac{ mn }{2}\rceil+1)$.  
\end{lem}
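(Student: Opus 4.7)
The plan is to take the spiralling loop $\Gamma_{m,n}$ constructed in Section~\ref{sec:self_intersections} (pushed forward via $i_{\Sigma_g}$) as a representative of $\alpha$, and to exploit its explicit self-intersection pattern to produce a large $\mathcal{T}\subset\Comp(\mu(g))$, where $g = \langle (i_{\Sigma_g})_*\Gamma_{m,n}\rangle$. Identify $F_3 = \pi_1(C_V^\prime\cup_{VH}C_H^\prime) = \langle a,b,c\rangle$ with its image in $\pi_1(\Sigma_g)$ under the injective $(i_{\Sigma_g})_*$, so that $g = a^m b^n$. A direct computation using the paths $p_V,p_V',p_H,p_H'$ and the relation $c = \langle p_H p_V'\rangle$ yields $a_1^{y_{ij}} = a^m b^{j-1} c a^{-i}$ for each of the $mn$ self-intersections $y_{ij}$ in $c_V(S_1)$, with the orientation convention at $y_{ij}$ determined by the symplectomorphism $VH_1$. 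Cyclic reduction in $F_3$ gives $a^{m-i} b^{j-1} c$, so the $mn$ classes $[a_1^{y_{ij}}]_g$ are pairwise distinct $g$-equivalence classes in $F_3$, and hence in $\pi_1(\Sigma_g)$ as well (the latter using injectivity of $(i_{\Sigma_g})_*$ and, for $g=2$, the observation that the relations \eqref{relations} only identify classes of the form $[c^j]\leftrightarrow[(ac^{-1}b)^j]$ or $[(ac^{-1})^j]\leftrightarrow[(b^{-1}c)^j]$, which none of our classes pair across). An analogous computation at the $(m-1)(n-1)$ self-intersections in $c_V(S_0)$ produces only contributions whose components lie in the $c$-free subgroup $\langle a,b\rangle$, so these cannot cancel any $S_1$-term in $\mu(g)$. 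Setting $\mathcal{T}$ to be the $mn$ $S_1$-terms of $\mu(g)$ thus gives $\mathcal{T}\subset\Comp(\mu(g))$ with $k := \lceil\tfrac12\#\mathcal{T}\rceil = \lceil mn/2\rceil$.

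The core estimate will be $\Gamma(\mathfrak{S}\cup[g]_g,g)\geq\log(k+1)$ for every subset $\mathfrak{S}\subset\mathcal{T}_+$ of size $k$ and every choice of representatives $S = \{\tilde s_1,\dots,\tilde s_k,g\}$, where $\tilde s_l = g^{\ell_l}s_l g^{-\ell_l}$ and $s_l = a^m b^{j_l-1}c a^{-i_l}$. A telescoping argument identifies any product $\prod_{t=1}^n x_t$, with $x_t\in S$, up to $F_3$-conjugation, with $\prod_t y_t g^{\delta_t}$, where $y_t\in\{s_{l_t}, g\}$, $\sum_t \delta_t = 0$, and the $\delta_t$ are linear functions of the chosen $\ell$'s and the sequence. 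The $F_3$-cyclic reduction of this expression contains a single $c$ per $s$-factor appearing in $(y_t)$; between two consecutive $c$-occurrences one finds a spacer word of shape $a^{m-i_t}(b^na^m)^{\eta_t}b^{j_{t+1}-1}$ for $\eta_t\geq 0$, or $a^{-i_t}(b^{-n}a^{-m})^{|\eta_t|-1}b^{j_{t+1}-1-n}$ for $\eta_t<0$, where the integer $\eta_t$ aggregates the number of intermediate $g$-generators with the $\ell$-difference $\ell_{l_{t+1}}-\ell_{l_t}$. Reading off the leading $a$-block, the number of $(b^n a^m)^{\pm 1}$-blocks, and the trailing $b$-block from the spacer determines $(i_t,\eta_t,j_{t+1})$ uniquely, and with the $\ell$'s fixed $\eta_t$ then pins down the number of intermediate $g$-generators. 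Thus the whole sequence $(x_1,\dots,x_n)$ is reconstructed up to cyclic rotation, so distinct cyclic sequences yield distinct $F_3$-conjugacy classes of products. Since all such products have only positive $c$-exponents, the relations \eqref{relations} cannot identify two of them in $\pi_1(\Sigma_g)$, so they remain distinct as conjugacy classes there too.

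A Burnside-type count of cyclic sequences in $S^n$ then gives $\widehat{N}(n,S) \geq (k+1)^n/n + O((k+1)^{n/2})$ for large $n$, and hence $\Gamma(S)\geq\log(k+1)$; taking the infimum over $S$ yields $\Gamma(\mathfrak{S}\cup[g]_g,g)\geq\log(k+1)$. Since the argument for $\mathcal{T}_-$ is verbatim the same after replacing $c$ by $c^{-1}$ throughout (with the only borderline case being the single product $(s^{(-)}_{1,n})^n$, whose free homotopy class in $\Sigma_2$ coincides with $[c^n]$ by \eqref{relations} but not with that of any other of our products), one obtains $\Gamma_{\mathcal{T}}^g\geq\log(k+1)$ and finally $\Tu^\infty(\alpha)\geq\log(\lceil mn/2\rceil+1)$. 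The hardest step will be establishing the uniqueness of the spacer-reading procedure: one must carefully treat degeneracies such as empty spacers arising when $i_t = m$ and $j_{t+1}=1$, runs of consecutive $c$'s coming from $s_{m,1}$-only subsequences, and the $\eta_t\geq 0$ versus $\eta_t<0$ sign boundary, so as to confirm an unambiguous reading of the cyclic word back into the original sequence.
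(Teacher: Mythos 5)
Your approach is essentially the same as the paper's: both take the explicit spiral representative $\Gamma_{m,n}$, compute $\mu(a^m b^n)$ directly, discard the $(m-1)(n-1)$ self-intersections in $S_0$ (whose contributions live in $\langle a,b\rangle$) and keep the $mn$ $c$-carrying terms from $S_1$ as $\mathcal{T}$, then bound $\Gamma(S\cup\{a^mb^n\})$ from below by reading the cyclic $F_3$-reduced word via the isolated $c$-occurrences and the $a$-/$b$-blocks between them, passing to $\pi_1(\Sigma_g)$ via injectivity (and the explicit relations \eqref{relations} for $g=2$). The two texts differ only in presentation — you make explicit the $g$-conjugate representatives $g^{\ell_l}s_l g^{-\ell_l}$ and spell out the Burnside count, while the paper compresses the spacer reading into the statement that $w_{j-1}$ ends with $b^{k'}$, $k'\equiv k\pmod n$, and $w_j$ starts with $a^{-i'}$, $i'\equiv i\pmod m$ — but the decomposition, the key lemma, and the counting are the same, and the degeneracies you flag (empty spacers, runs of $c$'s, sign boundaries) are precisely what the paper's modular reading silently handles.
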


\begin{proof}
We use the notation from section \ref{sec:Turaev}, and consider  
$$\mu:\pi_1(\Sigma_g,s_0) \to \mathcal{H} = \bigcup_{y\in \pi_1(\Sigma_g,s_0)} \Z[\pi_1(\Sigma_g,s_0)^*_y]\otimes  \Z[\pi_1(\Sigma_g,s_0)^*_y],$$
defined in section \ref{sec:Turaev}. Here we write $s_0$ instead of $i_{\Sigma_g}(s_0)$, also we will write $a, b, c, \Gamma_{m,n}$ instead of $(i_{\Sigma_g})_*a, (i_{\Sigma_g})_*b, (i_{\Sigma_g})_*c\in \pi_1(\Sigma,s_0)$, $i_{\Sigma_g} \circ \Gamma_{m,n}:[0,1] \to \Sigma_g$, etc. whenever the context is clear. 
We can read from the loop $\Gamma_{m,n}$ that $\mu(a^mb^n)$ has the form 
\begin{equation}\label{mu1}
\begin{split}
\mu&(a^m b^n)= \sum_{i=1}^{m-1}\sum_{k=1}^{n-1} [a^ib^k]_{a^m b^n} \, \otimes \, [a^m b^{n-k}a^{-i}]_{a^m b^n}\,   -\, [a^m b^{n-k}a^{-i}]_{a^m b^n}  \, \otimes \,   [a^ib^k]_{a^m b^n}  
+ \\ &\sum_{i=1}^{m}\sum_{k=0}^{n-1}  [a^m b^{k} c a^{-i}]_{a^m b^n} \, \otimes \, [a^i c^{-1} b^{n-k}]_{a^m b^n} \, - \, [a^i c^{-1} b^{n-k}]_{a^m b^n}  \, \otimes \, [a^m b^{k} c a^{-i}]_{a^m b^n}
\end{split}
\end{equation}
In particular, if $m=1$ and $n=1$,
\begin{equation}
\mu(ab) = [aca^{-1}]_{ab} \otimes [ac^{-1}b]_{ab} -[ac^{-1}b]_{ab}\otimes [aca^{-1}]_{ab}. 
\end{equation}

We first claim that no terms cancel in \eqref{mu1}. Note first that this is the case if the expression in \eqref{mu1} is considered as one in terms of the free group in $a,b,c$.   
Therefore, if the genus is $g \geq 3$, no cancellation of terms in \eqref{mu1}   follows from the fact that $(i_{\Sigma_g})_*:\widehat{\pi}(C_g,s_0) \to \widehat{\pi}(\Sigma_g,s_0)$ is injective.
$(i_{\Sigma_2})_*:\widehat{\pi}(C_2,s_0) \to \widehat{\pi}(\Sigma_2,s_0)$ is injective up to \eqref{relations}, so clearly one only has to check whether a cancellation occurs for the terms in the second row of \eqref{mu1} with ($i=1$, $k=n-1$) and ($i=m$, $k=0$).
If $m\geq 2$ or $n\geq 2$ one readily checks that this holds already on the level of conjugacy classes.
Consider now the remaining case $m=n=1$.
By \eqref{definitioni*}, 
$$(i_{\Sigma_2})_*(ab) =  g_1g_3^2g_2g_1^{-1}g_2^{-1},  (i_{\Sigma_2})_*(aca^{-1}) = g_1g_3g_1^{-1},   (i_{\Sigma_2})_*(ac^{-1}b) = g_1 g_3 g_2 g_1^{-1} g_2^{-1}.$$ 
So $\mu(ab) = 0$ if and only if there is $k \in \Z$ such that $f_k=1$, where 
\begin{equation}\label{rel}
f_k:= (g_1g_3^2g_2g_1^{-1}g_2^{-1})^{k}g_1g_3g_1^{-1}(g_1g_3^2g_2g_1^{-1}g_2^{-1})^{-k} g_2 g_1 g_2^{-1} g_3^{-1} g_1^{-1}.
\end{equation}  
If $k=0$ or $k=-1$, one checks that $f_k \neq 1$. If $k > 0$, after freely and cyclically reducing the word on the right-hand side of \eqref{rel} we obtain $$(g_1g_3^2g_2g_1^{-1}g_2^{-1})^{k-1} g_1 g_3 g_1^{-1} (g_1g_3^2g_2g_1^{-1}g_2^{-1})^{-k} g_2 g_1 g_2^{-1} g_3 g_2 g_1^{-1} g_2^{-1} ,$$
and if $k<-1$ after freely and cyclically reducing this word we obtain 
$$(g_1g_3^2g_2g_1^{-1}g_2^{-1})^{k+1} g_2 g_1 g_2^{-1} g_3 g_2g_1^{-1}g_2^{-1} (g_1g_3^2g_2g_1^{-1}g_2^{-1})^{-(k+2)} g_1 g_3 g_1^{-1}.$$
Since in neither case the reduced words contain a subword of more than $4$ elements which is also a subword of a cyclic permutation of a relator $[g_1,g_2][g_3,g_4]$ or its inverse, Dehn's algorithm shows $f_k \neq 1$ for all $k\in \Z$. 
Hence the terms in \eqref{mu1} do not cancel. 

To estimate $\Tu^{\infty}(\alpha)$, 
choose $$\mathcal{T} = \{ [a^m b^{k} c a^{-i}]_{a^m b^n} \, \otimes \, [a^i c^{-1} b^{n-k}]_{a^m b^n} \, | \, i=1, \cdots, m; \,  k=0, \cdots, n-1\}.$$ Then, $\mathcal{T}_+ = \{[a^m b^{k} c a^{-i}]_{a^m b^n} \, | \,  i=1, \cdots, m; \, k=0, \cdots, n-1\}$ and $\mathcal{T}_- = \{[a^i c^{-1} b^{n-k}]_{a^m b^n} \, | \,  i=1, \cdots, m;\,  k=0, \cdots, n-1\}$. 
We show that for any  $\mathfrak{S} \subset \mathcal{T}_-$ or $\mathfrak{S} \subset \mathcal{T}_+$, 
and any set $S \subset \pi_1(\Sigma_g,s_0)$ with $[S]_{a^mb^n} =\mathfrak{S}$, it holds that
$\Gamma(S\cup \{a^mb^n\}) \geq  \log(\#S + 1)$. This yields that $\Tu^{\infty}(\alpha) \geq \log(\lceil \#\frac{\mathcal{T}}{2} \rceil + 1) = \log(\lceil \#\frac{mn}{2} \rceil + 1)$. We argue for $\mathfrak{S} \subset \mathcal{T}_+$, the other case is analogous. 

Let  $P = q^{s_0} p_1 q^{s_1} p_2 \cdots p_{l'} q^{s_{l'}}$, where $p_1, \cdots, p_{l'} \in S$, $l'\in \N$, $q=a^mb^n$, and $s_0, \cdots, s_{l'} \in \N$, with $l' + s_0 + \cdots + s_{l'} =: l$. Consider $P$ as a word $w$ in the letters $a$, $b$, and $c$, and consider the corresponding reduced word $w'$.
It is clear, since $[S]_{a^mb^n} \subset \mathcal{T}_+$ and from the type of elements in $\mathcal{T}_+$, that the word $w'$ can be written as 
$$w'= w_0 c w_1 c w_2 \cdots c w_{l'},$$ for some reduced, possibly empty, words $w_0, \cdots, w_{l'}$ in $a$ and $b$. 
Moreover, if $[p_{j}] = [a^m b^{k} c a^{-i}]_{a^m b^n}$, then $w_{j-1}$ ends with $b^{k'}$ such that $k'\sim k \mod n$, and 
$w_j$ starts  with $a^{-i'}$ such that $i'\sim i \mod m$. 
Therefore, such a word $w'$ determines the original elements $p_1, \cdots p_{l'} \in S$. And so, $w'$ also determines the original choice of $s_0, \cdots, s_{l'}$. Indeed, if  $p_1 = u c v$, then $w_0 u^{-1} = (a^mb^n)^{s_0}$, etc. 
Moreover, if two such words $w'_1$ and $w_2'$, coming from products $P_1$ and $P_2$, are equal up to cyclic permutation and reduction as elements in $F_3 = \langle a, b, c\rangle$, then so are  $P_1$, $P_2$ as products of symbols in $S \cup \{q\}$.
 If the genus $g$ is $\geq 3$, then $(i_{\Sigma_g})_*: \widehat{\pi}(C_g,s_0)= \langle a, b, c\rangle/\text{conj} \to \widehat{\pi}(\Sigma_g)$ is injective and we obtain that $\widehat{N}(l,S \cup \{a^mb^n\}) \geq \sum_{i=1}^{l} \frac{(\# S+1)^l}{l}$, so $\Gamma(S\cup \{a^mb^n\}) \geq  \log(\#S + 1)$. $(i_{\Sigma_2})_*$ is injective up to relation \eqref{relations}, but for each $l$, only $4$ products could appear in those equations and hence also in this case $\Gamma(S\cup \{a^mb^n\}) \geq  \log(\#S + 1)$. 
Note that in the case $m=n=1$ clearly $\Tu^{\infty}([ab])\leq \log(2)$, and hence in fact $\Tu^{\infty}([ab])= \log(2)$. 
\end{proof}
 


\subsection{Proofs of Theorems~\ref{thm:stable} and~\ref{thm:generic}}
				 This subsection contains, as an application of Theorem~\ref{HT}, the proofs of Theorems~\ref{thm:stable} and~\ref{thm:generic}, using Proposition~\ref{prop:action_gap_surface} and the tools from Section~\ref{sec:floer}.
					
	In the following, for $R>0, \phi \in \Ham(\Sigma_g,\sigma_g)$, we denote by $B_{d_{\hofer}}(\phi,R) \subset \Ham(\Sigma_g, \sigma_g)$ the set of $\psi \in \Ham(\Sigma_g,\sigma_g)$ with $d_{\hofer}(\psi, \phi) < R$. 
	
        \begin{proof}[Proof of Theorem~\ref{thm:stable}]
				            Let $\nu,\mu \in (0,1), K \subset \N$ be those given by applying Proposition~\ref{prop:action_gap_surface}, and for $k \in K$, denote $m_k = \frac{\nu k}{L}, n_k = \frac{\mu k}{L} \in \N, \alpha_k = (i_{\Sigma_g})_* \left[ a^{m_k} b^{n_k} \right] \in \widehat\pi(\Sigma_g)$. Recall that by Proposition~\ref{prop:action_gap_surface}, $\alpha_k$ are primitive classes. There is $c>0$ such that the action gap for $(i_{\Sigma_g})_* \Psi_{k}(VH)$ guaranteed by Proposition~\ref{prop:action_gap_surface} is $\geq c k$. Note also that $m_k n_k \geq \frac{\mu\nu}{L^2} k^2$. 
Write the elements in $K$ as a sequence $(k_l)_{l\in \N}$, and let $\phi_l :=(i_{\Sigma_g})_* \Psi_{k_l}(VH)$. 
$M_l := \|\phi_l\|_{\hofer}$ grows linearly in $k_l$, since by construction of $\phi_l$   $M_l$ grows at most linearly, and by the statement on the action gaps and the Dynamical Stability theorem at least linearly in $k_l$.
By the construction $\phi_l$ the statement on the action gaps and the construction of $\phi_l$ it follows that  $M_l := \|\phi_l\|_{\hofer}$ grows linearly in $k_l$.  
Hence there are constants $\delta>0$ and $C>0$ such that the action gaps for $\phi_l$ are $\geq 2\delta M_l + 1$, and  $m_{k_l} n_{k_l} \geq 2CM_l^2$. 								
						By Lemma~\ref{lem:T_comp}, $\Tu^{\infty}(\alpha_{k_l})\geq \log(C M_l^2)$. 
						
						Let $\psi \in  B_{d_{\hofer}}(\phi_l,\delta M_l)$.
						We show that there is a fixed point $z\in \Sigma_g$ of $\psi$ in class $\alpha_{k_l}$. 
						To simplify notation write $\phi=\phi_l, \alpha = \alpha_{k_l}, M = M_l$. 
            
            Recall that Proposition~\ref{prop:action_gap_surface} guarantees us that $\phi$ has an action gap $\geq 2\delta M + 1$, and that $\phi$ has at least one fixed point in class $\alpha$. Thus, by definition, $\mathcal{B}(HF_*^\bullet(\phi)_{\alpha})$ has a bar of length $\geq 2\delta M + 1$.
            
            Let $H: S^1 \times \Sigma_g \to \R$ be a Hamitonian function that generates $\psi$. Recall that the set $\{F: S^1 \times \Sigma_g \to \R \, |\,  F \text{ is nondegenerate}\}$ is $C^\infty$-dense in the space of all functions $S^1 \times \Sigma_g \to \R$, with the $C^\infty$ topology. Thus one can take a sequence $H_j \xrightarrow{C^\infty} H$ of nondegenerate Hamiltonian functions. Denote by $\psi_j$ the Hamiltonian diffeomorphism generated by $H_j$. Note that $d_{\hofer}(\psi_j,\psi) \to 0$. Since $\psi \in B_{d_{\hofer}}(\phi,\delta M)$, there exists an integer $j_0 \in \N$ such that $\psi_j \in B_{d_{\hofer}}(\phi,\delta M)$ for $j > j_0$. By Theorem~\ref{thm:dynamic_stability}, $d_{bot}(\mathcal{B}(HF_*^\bullet(\phi)_{\alpha}), \mathcal{B}(HF_*^\bullet(\psi_j)_{\alpha})) \leq d_{\hofer}(\phi,\psi_j) < \delta M$. Thus $\mathcal{B}(HF_*^\bullet(\psi_j)_{\alpha})$ has a bar of length $\geq 1$. By definition of the barcode, this means that $\psi_j$ has a fixed point $z_j \in \Sigma_g$ in class $\alpha$ for $j > j_0$. The manifold $\Sigma_g$ is compact; take a convergent subsequence of $z_j$, $(z_j)_{j \in J}$ for some infinite $J \subset \N$, which converges to $z \in \Sigma_g$. Since $(H_j)_{j \in J} \xrightarrow{C^\infty} H$ and  by the  Arzel\`{a}-Ascoli theorem, $z$ is a fixed point of $\psi$ in class $\alpha$.
							
							Finally, by Theorem \ref{HT}, $\HP^{\infty}(\alpha) \geq \Tu^{\infty}(\alpha) \geq \log(C M_l^2)$. By a version of Ivanov's inequality, see \cite[Theorem 2.7]{Jiang1996}, we obtain that $h_{\topo}(\psi) \geq \HP^{\infty}(\alpha) \geq \log(C M_l^2)$. 
					        \end{proof}

    \begin{proof}[Proof of Theorem~\ref{thm:generic}]
	        Let $\psi \in \Ham(M,\omega)$, $\varepsilon > 0$. We find $\hat{\chi} \in \Ham(M,\omega)$ with $d_{\hofer}(\hat{\chi},\psi) < \varepsilon$ and which has some open neighborhood $V$  in $(\Ham(M,\omega), d_{\hofer})$ on which $h_{\topo} > C$; note that the union of these neighborhoods $V$ is an open and dense set with respect to $d_{\hofer}$. 

Choose $k$ sufficiently large that $\frac{\log(\selfi(\alpha_k)+1)}{16} > C$, and all fixed points of $\Psi_{k}(VH)$ in class $\alpha_k$ are nondegenerate. 
Let $\delta$ sufficiently small such that there is an embedding $i_{\Sigma_g}: (C_g, \delta\omega_0) \to (\Sigma_g, \sigma_g)$ as discussed in subsection \ref{sec:action_gaps}. Recall that when multiplying with $\delta$ we implicitly multiply the Hamiltonians $\Psi_{k}(VH)$ by $\delta$ in order not to change the dynamics. Hence we can additionally choose $\delta$ so small that  $\phi:=(i_{\Sigma_g})_*\Psi_{k}(VH)$ satisfies $\|\phi\|_{\hofer} < \varepsilon/3$. Choose a generating Hamiltonian path $\phi_t$ of $\phi$, $\phi_0 = \id$ and $\phi_1 = \phi$. 
Let $x \in \Sigma_g$ be a fixed point of $\phi$ that lies in class $\alpha_k$. 

Choose a path of generating Hamiltonian path $\psi_t$ of $\psi$. Furthermore, by multiplying the Hamiltonian that generates  $\psi_{-t}$ with a suitable cut-off function we can choose a path of Hamiltonian diffeomorphisms $\tau_t$, $\tau:=\tau_1$ with $\tau_t \circ \psi_{t}|_U = \id|_U$ for a small neighbourhood $U$ of $x$ and $\|\tau\|_{\hofer} < \varepsilon/3$. We write $\chi_{t}= \phi_t \circ \tau_t \circ \psi_t$, $\chi= \phi \circ \tau \circ \psi$. Let $H:S^1 \times \Sigma_g \to \R$ be a Hamiltonian that generates $\chi_t$. 
Then $x$ is a nondegenerate fixed point of $\chi$ in class $\alpha_k$. Let $\xi(t) = \chi_t(x)$. 
Choose tubular neighbourhoods $U_0, U$ ($\overline{U_0} \subset U$) of $\{(t,\xi(t)) \, | \, t\in  S^1 \} \subset S^1 \times \Sigma_g$ 
and $\kappa >0$ sufficiently small such that $d_0(\chi(\chi_{-t}(z)),\chi_{-t}(z)) \geq \kappa$, for all $(t,z) \in U\setminus U_0$, where $d_0$ is some fixed metric on $\Sigma_g$. Such neighbourhoods exists, since $x$ is an isolated fixed point. 
A standard perturbation argument shows, see e.g. \cite{FloerHoferSalamon}, that for any $\epsilon'>0$ there are perturbations $h:S^1 \times M \to \R$ with $\|h\|_{C^2} < \epsilon'$ and that are supported outside of $U_0$ such that all periodic orbits $\eta(t)$  of the Hamiltonian diffeomorphism $\hat{\chi}$ generated by $H+h$ for which $\{ (t,\eta(t)) \, |\, t\in S^1 \} \cap U = \emptyset$, are nondegenerate. If we choose $\epsilon'$ sufficiently small such that for any such $\hat{\chi}$ also $d_0(\hat{\chi}(\hat{\chi}_{-t}(z)), \hat{\chi}_{-t}(z)) \geq \kappa/2$, then actually all orbits of $\hat{\chi}$ are nondegenerate. If also $\epsilon'< \varepsilon/3$, one has   $d_{\hofer}(\hat{\chi}, \psi) \leq d_{\hofer}(\hat{\chi}, \chi) + d_{\hofer}(\chi, \tau \circ \psi) + d_{\hofer}(\tau \circ \psi, \psi) < \varepsilon/3 + \varepsilon/3 + \varepsilon/3 = \varepsilon$.  



			Since $\HF_*^\bullet(\hat{\chi})_{\alpha_k}$ is nonzero, its barcode has a non-zero bar, say of length  $\sigma > 0$. By the Dynamical Stability Theorem, nondegenerate diffeomorphisms in $B_{d_{\hofer}}(\hat{\chi},\sigma')$, for $\sigma'< \sigma/2$, have nonzero filtered Floer homology in class $\alpha_k$, and so have a fixed point in this class. This conclusion holds also for degenerate diffeomorphisms in $B_{d_{\hofer}}(\hat{\chi},\sigma')$ with an analogous argument as in the proof of Theorem~\ref{thm:stable}. By Theorem~\ref{intro:thm1}, this means that $h_{\topo}|_{B_{d_{\hofer}}(\hat{\chi},\sigma')} > C$, as desired.
    \end{proof}

\bibliographystyle{plain}

\begin{thebibliography}{10}

\bibitem{AlvesSchlenkAbbon}
A.~Abbondandolo, M.R.R. Alves, M.~Saglam, and F.~Schlenk.
\newblock Entropy collapse versus entropy rigidity for {R}eeb and {F}insler
  flows.
\newblock {\em arXiv preprint, arXiv:2103.01144}, 2021.

\bibitem{10authors}
D.~Alvarez-Gavela, V.~Kaminker, A.~Kislev, K.~Kliakhandler, A.~Pavlichenko,
  L.~Rigolli, D.~Rosen, O.~Shabtai, B.~Stevenson, and J.~Zhang.
\newblock Embeddings of free groups into asymptotic cones of {H}amiltonian
  diffeomorphisms.
\newblock {\em J. Topol. Anal.}, 11(2):467--498, 2019.

\bibitem{Alves-Cylindrical}
M.R.R. Alves.
\newblock Cylindrical contact homology and topological entropy.
\newblock {\em Geom. Topol.}, 20(6):3519--3569, 2016.

\bibitem{Alves-Anosov}
M.R.R. Alves.
\newblock Positive topological entropy for {R}eeb flows on 3-dimensional
  {A}nosov contact manifolds.
\newblock {\em J. Mod. Dyn.}, 10:497--509, 2016.

\bibitem{Alves-Legendrian}
M.R.R. Alves.
\newblock Legendrian contact homology and topological entropy.
\newblock {\em J. Topol. Anal.}, 11(1):53--108, 2019.

\bibitem{AlvesColinHonda}
M.R.R. Alves, V.~Colin, and K.~Honda.
\newblock Topological entropy for {R}eeb vector fields in dimension three via
  open book decompositions.
\newblock {\em J. \'{E}c. polytech. Math.}, 6:119--148, 2019.

\bibitem{LLMM}
M.R.R. Alves, L.~Dahinden, M.~Meiwes, and L.~Merlin.
\newblock ${C}^0$-robustness of topological entropy for geodesic flows.
\newblock {\em arXiv preprint, arXiv:2109.03917}, 2021.

\bibitem{AlvesMeiwes}
M.R.R. Alves and M.~Meiwes.
\newblock Dynamically exotic contact spheres of dimensions $\geq 7$.
\newblock {\em Comment. Math. Helv.}, 94(3):569--622, 2019.

\bibitem{AlvesMeiwesBraids}
M.R.R. Alves and M.~Meiwes.
\newblock Braid stability and the {H}ofer metric {I}: Non-degenerate
  {H}amiltonians.
\newblock {\em in progress}, 2021.

\bibitem{AlvesPirnapasov}
M.R.R. Alves and A.~Pirnapasov.
\newblock {R}eeb orbits that force topological entropy.
\newblock {\em Ergod. Theory Dyn. Syst.}, page 1–44, 2021.

\bibitem{Basmajian2008}
A.~Basmajian and Y.~Kim.
\newblock Geometrically infinite surfaces with discrete length spectra.
\newblock {\em Geom. Dedicata}, 137:219--240, 2008.

\bibitem{Beguin2016}
S.~B{\'e}guin, F.;~Crovisier and F.~{Le Roux}.
\newblock Fixed point sets of isotopies on surfaces.
\newblock {\em J. Eur. Math. Soc. (JEMS)}, 22(6):1971--2046, 2020.

\bibitem{BowenFranks1976}
R.~Bowen and J.~Franks.
\newblock The periodic points of maps of the disk and the interval.
\newblock {\em Topology}, 15(4):337--342, 1976.

\bibitem{Brandenbursky}
M.~Brandenbursky and M.~Marcinkowski.
\newblock Entropy and quasimorphisms.
\newblock {\em J. Mod. Dyn.}, 15:143--163, 2019.

\bibitem{Ginzburg2021}
E.~\c{C}ineli, V.~Ginzburg, and B.~Gürel.
\newblock Topological entropy of {H}amiltonian diffeomorphisms: A persistence
  homology and {F}loer theory perspective.
\newblock {\em arXiv preprint, arXiv:2111.03983}, 2021.

\bibitem{ArnonThesis}
A.~Chor.
\newblock Eggbeater dynamics on symplectic surfaces of genus $2$ and $3$.
\newblock {\em arXiv preprint, arXiv:2012.08930}, 2020.

\bibitem{Cote2020}
L.~C\^ot\'e.
\newblock Algebraically hyperbolic {L}iouville manifolds.
\newblock {\em arXiv preprint, arXiv:2011.05120}, 2020.

\bibitem{Silva}
E.~J. {Da Silva}.
\newblock Estimativas de entropia e um resultado de exist{\^e}ncia de
  ferraduras para uma teoria de forcing de homeomorfismos de superf{\'\i}cies.
\newblock {\em phD-thesis}, 2019.

\bibitem{Dahinden2018}
L.~Dahinden.
\newblock Lower complexity bounds for positive contactomorphisms.
\newblock {\em Israel J. Math.}, 224(1):367--383, 2018.

\bibitem{Dahinden2020}
L.~Dahinden.
\newblock {$C^0$}-stability of topological entropy for contactomorphisms.
\newblock {\em Commun. Contemp. Math.}, 23(6):Paper No. 2150015, 20, 2021.

\bibitem{Devaney1978}
R.~L. Devaney.
\newblock Subshifts of finite type in linked twist mappings.
\newblock {\em Proc. Amer. Math. Soc.}, 71(2):334--338, 1978.

\bibitem{Dowdall2011}
S.~Dowdall.
\newblock Dilatation versus self-intersection number for point-pushing
  pseudo-{A}nosov homeomorphisms.
\newblock {\em J. Topol.}, 4(4):942--984, 2011.

\bibitem{EntovPolterovichPy}
M.~Entov, L.~Polterovich, and P.~Py.
\newblock On continuity of quasimorphisms for symplectic maps.
\newblock In {\em Perspectives in analysis, geometry, and topology}, volume 296
  of {\em Progr. Math.}, pages 169--197. Birkh{\"a}user/Springer, New York,
  2012.
\newblock With an appendix by Michael Khanevsky.

\bibitem{FloerHoferSalamon}
A.~Floer, H.~Hofer, and D.~Salamon.
\newblock Transversality in elliptic {M}orse theory for the symplectic action.
\newblock {\em Duke Math. J.}, 80(1):251--292, 1995.

\bibitem{FrauenfelderSchlenk2006}
U.~Frauenfelder and F.~Schlenk.
\newblock Fiberwise volume growth via {L}agrangian intersections.
\newblock {\em J. Symplectic Geom.}, 4(2):117--148, 2006.

\bibitem{goldman}
W.~M. Goldman.
\newblock Invariant functions on {L}ie groups and {H}amiltonian flows of
  surface group representations.
\newblock {\em Invent. Math.}, 85(2):263--302, 1986.

\bibitem{gutt2012conley}
J.~Gutt.
\newblock The {C}onley-{Z}ehnder index for a path of symplectic matrices.
\newblock {\em arXiv preprint arXiv:1201.3728}, 2012.

\bibitem{Hall1994}
T.~Hall.
\newblock The creation of horseshoes.
\newblock {\em Nonlinearity}, 7(3):861--924, 1994.

\bibitem{Handel1985}
M.~Handel.
\newblock Global shadowing of pseudo-{A}nosov homeomorphisms.
\newblock {\em Ergodic Theory Dynam. Systems}, 5(3):373--377, 1985.

\bibitem{HassScott1985}
J.~Hass and P.~Scott.
\newblock Intersections of curves on surfaces.
\newblock {\em Israel J. Math.}, 51(1-2):90--120, 1985.

\bibitem{HironakaKin2006}
E.~Hironaka and E.~Kin.
\newblock A family of pseudo-{A}nosov braids with small dilatation.
\newblock {\em Algebr. Geom. Topol.}, 6:699--738, 2006.

\bibitem{hofer1990topological}
H.~Hofer.
\newblock On the topological properties of symplectic maps.
\newblock {\em Proceedings of the Royal Society of Edinburgh Section A:
  Mathematics}, 115(1-2):25--38, 1990.

\bibitem{HLS2016}
V.~Humili{\`e}re, F.~{Le Roux}, and S.~Seyfaddini.
\newblock Towards a dynamical interpretation of {H}amiltonian spectral
  invariants on surfaces.
\newblock {\em Geom. Topol.}, 20(4):2253--2334, 2016.

\bibitem{Ivanov1982}
N.~V. Ivanov.
\newblock Entropy and {N}ielsen numbers.
\newblock {\em Dokl. Akad. Nauk SSSR}, 265(2):284--287, 1982.

\bibitem{Jaulent2014}
O.~Jaulent.
\newblock Existence d'un feuilletage positivement transverse {\`a} un
  hom{\'e}omorphisme de surface.
\newblock {\em Ann. Inst. Fourier (Grenoble)}, 64(4):1441--1476, 2014.

\bibitem{Jiang1996}
B.~Jiang.
\newblock Estimation of the number of periodic orbits.
\newblock {\em Pacific J. Math.}, 172(1):151--185, 1996.

\bibitem{Katok1980-2}
A.~Katok.
\newblock Lyapunov exponents, entropy and periodic orbits for diffeomorphisms.
\newblock {\em Inst. Hautes {\'E}tudes Sci. Publ. Math.}, (51):137--173, 1980.

\bibitem{Khanevsky2019}
M.~Khanevsky.
\newblock Non-autonomous curves on surfaces.
\newblock {\em J. Mod. Dyn.}, 17:305--317, 2021.

\bibitem{LeCalvez2015}
P.~{Le Calvez}.
\newblock Une version feuillet{\'e}e {\'e}quivariante du th{\'e}or{\`e}me de
  translation de {B}rouwer.
\newblock {\em Publ. Math. Inst. Hautes {\'E}tudes Sci.}, (102):1--98, 2005.

\bibitem{LeCalvezTal1}
P.~{Le Calvez} and F.~A. Tal.
\newblock Forcing theory for transverse trajectories of surface homeomorphisms.
\newblock {\em Invent. Math.}, 212(2):619--729, 2018.

\bibitem{LeCalvezTal2}
P.~{Le Calvez} and F.~A. Tal.
\newblock Topological horseshoes for surface homeomorphisms.
\newblock {\em arXiv preprint, arXiv:1803.04557}, 2018.

\bibitem{MacariniSchlenk2011}
L.~Macarini and F.~Schlenk.
\newblock Positive topological entropy of {R}eeb flows on spherizations.
\newblock {\em Math. Proc. Cambridge Philos. Soc.}, 151(1):103--128, 2011.

\bibitem{Meiwesthesis}
M.~Meiwes.
\newblock {R}abinowitz floer homology, leafwise intersections, and topological
  entropy.
\newblock {\em phD-thesis}, 2018.

\bibitem{Newhouse}
S.~E. Newhouse.
\newblock Entropy and volume.
\newblock {\em Ergodic Theory Dynam. Systems}, 8$^*$(Charles Conley Memorial
  Issue):283--299, 1988.

\bibitem{Nitecki1971}
Z.~Nitecki.
\newblock On semi-stability for diffeomorphisms.
\newblock {\em Invent. Math.}, 14:83--122, 1971.

\bibitem{PolterovichHam}
L.~Polterovich.
\newblock {\em The geometry of the group of symplectic diffeomorphisms}.
\newblock Lectures in Mathematics ETH Z\"{u}rich. Birkh\"{a}user Verlag, Basel,
  2001.

\bibitem{polterovich2019topological}
L.~Polterovich, D.~Rosen, K.~Samvelyan, and J.~Zhang.
\newblock Topological persistence in geometry and analysis.
\newblock {\em arXiv preprint arXiv:1904.04044}, 2019.

\bibitem{polterovich2016autonomous}
L.~Polterovich and E.~Shelukhin.
\newblock Autonomous {H}amiltonian flows, {H}ofer{\rq}s geometry and
  persistence modules.
\newblock {\em Selecta Mathematica}, 22(1):227--296, 2016.

\bibitem{salamon1999lectures}
D.~Salamon.
\newblock Lectures on {F}loer homology.
\newblock {\em Symplectic geometry and topology (Park City, UT, 1997)},
  7:143--229, 1999.

\bibitem{smale}
S.~Smale.
\newblock Differentiable dynamical systems.
\newblock {\em Bull. Amer. Math. Soc.}, 73:747--817, 1967.

\bibitem{Song2005}
W.~T. Song.
\newblock Upper and lower bounds for the minimal positive entropy of pure
  braids.
\newblock {\em Bull. London Math. Soc.}, 37(2):224--229, 2005.

\bibitem{Thurston}
W.~P. Thurston.
\newblock On the geometry and dynamics of diffeomorphisms of surfaces.
\newblock {\em Bull. Amer. Math. Soc. (N.S.)}, 19(2):417--431, 1988.

\bibitem{Turaev1978}
V.~G. Turaev.
\newblock Intersections of loops in two-dimensional manifolds.
\newblock {\em Mat. Sb.}, 106(148)(4):566--588, 1978.

\bibitem{Turaev1991}
V.~G. Turaev.
\newblock Skein quantization of {P}oisson algebras of loops on surfaces.
\newblock {\em Ann. Sci. \'{E}cole Norm. Sup. (4)}, 24(6):635--704, 1991.

\bibitem{zhang2019}
J.~Zhang.
\newblock {$p$}-cyclic persistent homology and {H}ofer distance.
\newblock {\em J. Symplectic Geom.}, 17(3):857--927, 2019.

\end{thebibliography}

\end{document}